\definecolor{lavender}{RGB}{230,230,250}
\newenvironment{fig}
{\begin{figure}[hbt]}
{\end{figure}}
\newcommand{\bfig}{\begin{fig}}
\newcommand{\efig}{\end{fig}}
\renewcommand{\subset}{\subseteq}
\newcommand{\N}{{\mathbb{N}}}
\newcommand{\R}{{\mathbb{R}}}
\newcommand{\T}{{\mathbb{T}}}
\newcommand{\Z}{{\mathbb{Z}}}
\newcommand{\cA}{{\mathcal A}}
\newcommand{\cB}{{\mathcal B}}
\newcommand{\sA}{{\mathsf A}}
\newcommand{\sJ}{{\mathsf J}}
\newcommand{\sL}{{\mathsf L}}
\newcommand{\sM}{{\mathsf M}}
\newcommand{\sN}{{\mathsf N}}
\newcommand{\sO}{{\mathsf O}}
\newcommand{\sR}{{\mathsf R}}
\newcommand{\sfS}{{\mathsf \Sigma}}
\newcommand{\smin}{\smallsetminus}
\newcommand{\scrS}{\mathscr{S}}
\newcommand{\scrR}{{\mathscr R}}
\newcommand{\scrC}{{\mathscr C}}
\newcommand{\scrU}{{\mathscr U}}
\newcommand{\scrT}{{\mathscr{T}}}
\newcommand{\Bb}{{\mathscr{B}}}
\newcommand{\sARpair}{{\mathsf{ ARpair}}}
\newcommand{\sAR}{{\mathsf{ AR}}}
\newcommand{\sAtt}{{\mathsf{ Att}}}
\newcommand{\sRep}{{\mathsf{ Rep}}}
\newcommand{\sMorse}{{\mathsf{ Morse}}}
\newcommand{\sInvset}{{\mathsf{ Invset}}}
\newcommand{\sANbhd}{{\mathsf{ ANbhd}}}
\newcommand{\sRNbhd}{{\mathsf{ RNbhd}}}
\newcommand{\sub}{{\rm sub}}
\newcommand{\sSet}{{\mathsf{Set}}}
\newcommand{\SC}{{\mathsf{SC}}}
\newcommand{\RC}{{\mathsf{RC}}}
\newcommand{\Chi}{\raise .75ex\hbox{$\chi$}}
\newcommand{\pred}{{{\triangleleft}}}
\newcommand{\down}{\downarrow\!}
\newcommand{\up}{\uparrow\!}
\newcommand{\3}{{\mathbf{3}}}
\newcommand{\vgln}{\lb\begin{array}{rcl}}
\newcommand{\eindvgln}{\end{array}\right.}
\newcommand{\alphaOg}{\alpha_{\rm o}(\gamma_x^-)}
\newcommand{\cl}{{\rm cl}\,}
\newcommand{\Int}{{\rm int\,}}
\newcommand{\Inv}{\mbox{\rm Inv}}
\newcommand{\rmPhi}{\mathrm{\Phi}}
\newcommand{\rmOmega}{\mathrm{\Omega}}
\newcommand{\rmPsi}{\mathrm{\Psi}}
\newcommand{\rmGamma}{\mathrm{\Gamma}}
\newcommand{\rmXi}{\mathrm{\Xi}}
\newcommand{\rmTheta}{\mathrm{\Theta}}
\newcommand{\id}{\mathop{\rm id }\nolimits}
\setlist{noitemsep,topsep=1pt,parsep=1pt,partopsep=2pt}
\numberwithin{equation}{section}
 \journalname{ArXiv}
\begin{document}

\title{Priestley duality and\\  representations of recurrent dynamics 
}

\titlerunning{Priestley duality and  representations of recurrent dynamics}        



\author{William Kalies         \and
        Robert Vandervorst 
}


\institute{W.D. Kalies \at
              Department of Mathematics \\
              University of Toledo,  Toledo, USA\\
              \email{William.Kalies@UToledo.Edu}           
           \and
           R.C.A.M. Vandervorst \at
              Department of Mathematics\\
              VU University Amsterdam, The Netherlands\\
              \email{r.c.a.m.vander.vorst@vu.nl}
}

\date{Date: \today}

\maketitle

\begin{abstract}
For an arbitrary dynamical system there is a strong relationship between global dynamics and the order structure of an appropriately constructed Priestley space. 
This connection provides an order-theoretic framework for studying global dynamics.
In the classical setting, the chain recurrent set, introduced by C. Conley, \cite{Conley}, is an example of an ordered Stone space or Priestley space.
Priestley duality can be applied in the setting of dynamics on arbitrary topological spaces and  yields a notion of Hausdorff compactification of the (chain) recurrent set.

\keywords{Recurrent set \and Recurrent dynamics  \and Priestley space \and Priestley duality \and distributive lattice \and prime ideal/ultrafilter \and profinite space \and compactification}
\end{abstract}

\tableofcontents

\begin{acknowledgements}
We like to thank A. Goldstein for some useful discussions in the early stages of this project.
The work of W.D.K. was partially supported by the Air Force Office of Scientific Research  under awards  FA9550-23-1-0011 and FA9550-23-1-0400.
\end{acknowledgements}

\section{Introduction}
\label{intro}

Recurrent versus nonrecurrent dynamics is a fundamental dichotomy for understanding the global structure of a dynamical system, \cite{poincare,Smale,Conley}.
Attractors and repellers are central to this dichotomy. They form a basis for robust decompositions as they capture asymptotic dynamics of regions in phase space. 
As such attractors and repellers  are the cornerstone for defining and characterizing recurrent dynamics. 

The action of a dynamical system on its phase space determines the orbits and the invariant sets as unions of orbits. An attractor is identified through an \emph{attracting neighborhood}, $U$, which captures the eventual forward images of its closure into its interior; the corresponding \emph{attractor} is the largest invariant set in $U$. This interplay between the action of the long-term dynamics and the topology of the phase space builds a global structure between invariant sets. The philosophy of this paper is to characterize this structure using order theory. 
A key feature of this approach is that recurrence can be characterized solely in terms of  attractors and repellers, so that once they are defined, the topological properties of the phase space play a role only in sharpening this characterization. 

The set of attractors, $\sAtt(\varphi)$, for a dynamical system, $\varphi$, forms a bounded, distributive lattice, \cite{LSoA1,LSoA2,LSoA3}. From an algebraic perspective, any bounded distributive lattice, $\sL$, is dual to ordered topological space. This dual is the spectrum, $\sfS \sL$, of the lattice,
which is a Priestley space, a poset equipped with a specific topology, the Priestley topology, that is compact, Hausdorff, and zero-dimensional.

Attractors and repellers form complementary attractor-repeller pairs. The collection of attractor-repeller pairs in a system, denoted as $\sARpair(\varphi)$, also forms a bounded, distributive lattice, see Section~\ref{ARpairsgencase}.
In the classical setting, every attractor has a unique dual repeller, but this need not be the case without some conditions on the topology of the phase space and properties of $\varphi$.

The \emph{recurrent set}, $\sR(\varphi)$, is defined in Section~\ref{recurrence} as the union of all complete orbits $\gamma_x$ such that
$\gamma_x\subset A\cup R$ for all attractor-repeller pairs $(A,R)$.
The \emph{recurrent components}, $\RC(\varphi)$, are the equivalence classes of the preorder on $\sR(\varphi)$ given by
$x\le x'$ if $x'\in A$ implies
 $x\in A$ for every $A \in \sAtt(\varphi)$, which induces an order on $\RC(\varphi)$.
There is a natural topology $\scrT_\sfS$ on $\RC(\varphi)$ and a map $\RC(\varphi)\to\sfS\sARpair(\varphi)$ that is a topological embedding and an order-embedding. This embedding provides a representation of the global dynamics within a compact order model, see Section~\ref{noncompact}.

We now describe the fundamental results of this paper.
When the phase space $X$ is compact and certain conditions are imposed on $\varphi$, such as being continuous and proper\footnote{A dynamical system $\varphi$ is proper if the maps $\varphi^t$ are \emph{closed} and their fibers $\varphi^{-t}(x)$ are \emph{compact} for all $x\in X$ and for all $t\ge 0$, cf.\ Sect.\ \ref{contdynnew}.}, 
the lattice of attractor-repeller pairs $\sARpair(\varphi)$ can be identified with the lattice of attractors $\sAtt(\varphi)$ so that $\sfS\sARpair(\varphi)$ is
 isomorphic to $\sfS\sAtt(\varphi)$.
In this setting, the recurrent set $\sR(\varphi)$ is defined as the 
intersection $\bigcap A \cup A^*$ over all attractors with $A^*$ 
 denoting the dual repeller of $A$. In addition to the topology $\scrT_\sfS$, the set of recurrent components $\RC(\varphi)$ can also be given the quotient topology $\scrT_\sim$ from the equivalence relation $\sim$ on 
 $\sR(\varphi)$ induced by the subspace topology from the phase space $X$. In this setting, $\scrT_\sfS$ and $\scrT_\sim$ are identical, and we obtain a homeomorphism and order isomorphism between $\RC(\varphi)$ and $\sfS\sAtt$.
 
\vskip.2cm
\noindent {\bf Theorem A. }(Thm.\ \ref{thm:homeo})
{\em
Let $\varphi$ be a continuous and proper dynamical system on
a compact space $(X,\scrT)$.
Then, the  space of   recurrent components $(\RC(\varphi),\scrT_\sim,\le)$ and the spectrum $(\sfS\sAtt(\varphi),\scrT_{\sfS\sAtt},\subseteq)$ are homeomorphic and order-isomorphic. 
}

\vskip.2cm

The conditions on $\varphi$ in Theorem A, ie.\ continuous and proper are satisfied for the following classes of dynamical systems:
\begin{description}
    \item[(i)]$\varphi$ is a continuous and invertible dynamical system on a compact  space $X$. Note that $\varphi$ is still proper without the \emph{compactness} requirement;
    \item[(ii)] $\varphi$ is a continuous  dynamical system on a compact, Hausdorff  space $X$;
    \item[(iii)] $\varphi$ is a continuous and closed dynamical system on a compact, $T_1$-space $X$.
\end{description}

The Priestley space $\sfS\sAtt(\varphi)$  consists of the prime ideals in $\sAtt(\varphi)$. A prime ideal is given by $I=h^{-1}(0)$, where $h\colon \sAtt(\varphi)\to {\bf 2}$ is a lattice homomorphism, with ${\bf 2}=\{0,1\}$, the two-point lattice $0<1$.
The recurrent components are retrieved from the Priestley space $\sfS\sAtt(\varphi)$ via the following formula:
\[
\rmPsi(I):= \left(\bigcap_{A\in I^c}A \right) \bigcap 
 \left( \bigcap_{A\in I}A^*\right)\in \RC(\varphi), 
\]
which represents the prime ideals as recurrent components in $X$.
Applying the above formula for ideals in a finite sublattice, $\sA\subset \sAtt(\varphi),$ yields  the Morse sets in a Morse representation $\sM\sA$, cf.\ \cite{LSoA3}. 
As a
 consequence of using lattice theory for describing  recurrence, $\RC(\varphi)$ can be characterized as a limit of Morse representations.
\vskip.2cm
\noindent {\bf Theorem B. }(Thm.\ \ref{inverselimthm})
{\em
Let $\varphi$ be a continuous and proper dynamical system on
a compact space $(X,\scrT)$. The space of recurrent components $(\RC(\varphi),\scrT_\sim,\le)$ is a profinite poset and 
\[
(\RC(\varphi),\scrT_\sim,\le) \cong \varprojlim \sM\sA,
\]
where the inverse limit is taken over the finite sublattices $\sA\subset \sAtt(\varphi)$.
}
\vskip.2cm
\noindent The inverse limit  includes the order structures as well as the topological structures.
Theorem~B  implies  that Morse representations are able to approximate all fine structure in $\RC(\varphi)$, even in the case that $\RC(\varphi)$ is uncountably infinite, cf.\ \cite{AHK1,AHK2}.
In the setting of an arbitrary topological space, the approach via attractors  or repellers does not yield an appropriate duality between attractors and repellers.  Instead, by using attractor-repeller pairs, we obtain the following result:
\vskip.2cm
\noindent {\bf Theorem C. }(Thm.\ \ref{cr7})
{\em
Let $\varphi$ be a dynamical system (not necessarily continuous) on 
a  topological space  $(X,\scrT)$. Then, the set of  recurrent components $(\RC(\varphi),\scrT_\sim)$ is a Hausdorff topological space, and there exists a continuous bijection from
$(\RC(\varphi),\scrT_\sim)$ to  $(\RC(\varphi),\scrT_\sfS)$. 
In particular, the map
$\rmPhi\colon (\RC(\varphi),\scrT_\sfS) \hookrightarrow (\sfS\sARpair(\varphi),\scrT_{\sfS\sARpair})$ is a topological order-embedding}.
\vskip.2cm
\noindent Theorem~C can be regarded as a compactification of the recurrent set $\RC(\varphi)$. In Section \ref{examplesofcomp}
 we provide some explicit examples exploring this Hausdorff compactification of the recurrent set.
Furthermore, as in Theorem~C, for an arbitrary space $X$ the ordered space of recurrent components can be defined via attractor-repeller pairs so that the topology and order on $\RC(\varphi)$ embed into the Priestley space $\sfS\sARpair(\varphi).$ 

Returning to the case of continuous and proper dynamical systems on compact topological spaces
 Sections \ref{ARpairsgencase}  and \ref{contdynnew}   establish 
 the lattice of attractors $\sAtt(\varphi)$ is established via the homomorphism $\Inv\colon \sANbhd(\varphi)\to\sAtt(\varphi)$, where $\sANbhd(\varphi)$ is the lattice of attracting neighborhoods for $\varphi$. 
The lattice of attracting neighborhoods $\sANbhd(\varphi)$  defines a preorder $(X,\le)$ in the same way as the recurrence preorder $(\sR(\varphi),\le)$. The equivalence classes are denoted by $\SC(\varphi)$ and are called the strong components of $\varphi$, see Section~\ref{attnbhds}. This yields the following cospan in the category of posets:
\begin{equation}
    \label{fundcospan156}
    X \xtwoheadrightarrow{~~\pi~~} \SC(\varphi) \xhookleftarrow{~~~\supseteq~~~} \RC(\varphi).
\end{equation}
In Section \ref{attnbhds} we explain how the cospan in \eqref{fundcospan156} defines a transitive relation $\scrR\subset X\times X$, and the set of points $x\in X$ with $(x,x)\in\scrR$ is the recurrent set $\sR(\varphi)$. 
In terms of Priestley spaces we obtain the commutative diagram:
\begin{equation*}
    \begin{diagram}
    \node{\beta X}\arrow{e,l}{\sfS\iota}\node{\sfS\sANbhd(\varphi)}\node{\sfS\sAtt(\varphi)}\arrow{w,l}{\sfS\omega}\\
    \node{X}\arrow{n,l,A}{i}\arrow{e,l}{\pi}\node{\SC(\varphi)}\arrow{n,r,A}{\rmXi}\node{\RC(\varphi)}\arrow{w,l}{\supseteq}\arrow{n,lr,<>}{\rmPhi}{\cong}
    \end{diagram}
\end{equation*}
where all maps are continuous and order-preserving.
The map $i$ is a topological embedding, $\rmXi$ is a continuous injection, and the map $\rmPhi$ is a homeomorphism.
There are several advantages to the above approach
as we now explain. 

In his monograph \cite{Conley}, Conley introduces the concept of chain recurrence for a flow on a compact, Hausdorff space $X$, 
 employing pseudo-orbit chains to establish an order relation 
$\scrC$ that captures essential characteristics of the dynamics of a flow 
$\varphi$. This order relation is referred to as the \emph{Conley relation}. He demonstrates in \cite[Assrt.\ 6.2.A]{Conley} that the set of chain recurrent points, derived from 
$\scrC$,
coincides with the set $\sR(\varphi)$.
The method of characterizing chain recurrence through chains was later more thoroughly developed by Akin, cf.\ \cite{Akin1,Akin2} and McGeehee, cf.\ \cite{Mcgehee}, aiming to simplify the dynamics through chain recurrence. Additionally, Hurley explores chain recurrence within noncompact metric spaces, cf.\ \cite{Hurley1,Hurley2}.
  Proposition \ref{prop:conley-order}  establishes that chain components align exactly with recurrent components $\RC(\varphi)$
with matching orders, when the former are well-defined. In Theorems \ref{comptoConleyrecrel} and \ref{comparetoCR}, we further illustrate that the relation 
$\scrR\subset X\times X$,
defined as by the cospan \eqref{fundcospan156}, is opposite to the Conley relation $\scrC$.
\vskip.2cm
\noindent {\bf Theorem D. }(Thm.\ \ref{comptoConleyrecrel} and Thm.\ \ref{comparetoCR})
{\em
If $\varphi$ is a continuous and invertible dynamical system on a compact, Hausdorff space $X$, then the relation $\scrR$ coincides with the opposite Conley relation $\scrC^{-1}$ (chain-recurrence relation).
}
\vskip.2cm

Applying the framework  discussed in this paper to, for example, trapping regions\footnote{See Section~\ref{contdynnew} and Remark~\ref{trappexist}.}, leads to a refined relation $\scrS\supseteq \scrR$
that offers a finer representation of the dynamics compared to the Conley relation, while the recurrent set and its order structure remain unchanged, see Remark~\ref{otherattnebhd}. 
Notably, our findings are applicable in arbitrary topological spaces $X$
without requiring a chain-based approach.

\section{Attractor-repeller pairs}
\label{ARpairsgencase}
We start with the definition of a dynamical system defined on a set $X$.
A one-parameter family $\varphi=\{\varphi^t\}_{t\in \T^+}$ of  maps  $\varphi^t\colon X \to X$ is a \emph{dynamical system}
if
\begin{description}
\item[(i)] $\varphi^0(x) = x$ for all $x\in X$;
\item[(ii)] $\varphi^t\bigl(\varphi^s(x)\bigr) = \varphi^{t+s}(x)$ for all $s,t\in \T^+$ and all $x\in X$,
\end{description}
where $\T$ is either $\Z$ or $\R$.
A dynamical system which can be defined so that the group property (ii) holds on all of $\T$  is  an \emph{invertible dynamical system}.
As stated in the introduction, a topology $\scrT$ on $X$ may be  chosen to distinguish orbits. 
A \emph{continuous} dynamical system $\varphi$ satisfies that additional requirement that the maps $\varphi^t\colon X\to X$ are continuous for all $t\in \T$.

\begin{remark}
For $t\in \R$, $\R^+$ one may assume that $\varphi(t,x):=\varphi^t(x)$ is a continuous function on $\R\times X$ or $\R^+\times X$.
The latter will not be assumed in this paper, and therefore a topology on $\T$ is not required.
\end{remark}

A set $S\subset X$ is \emph{forward invariant} if $\varphi^t(S)\subset S$ for all $t\ge 0$
and \emph{backward invariant} if $\varphi^{-t}(S)\subset S$ for all $t\ge 0$, where
 $\varphi^{-t}(S)$ denotes inverse
 image of $S$ under $\varphi^t$. A subset $S$ is \emph{invariant} if $\varphi^t(S) =S$ for all $t\ge 0$ and \emph{forward-backward invariant} if $\varphi^{-t}(S) =S$ for all $t\ge 0$. Furthermore define 
 $
 \Inv(U) = \bigcup\{S\subset U\mid S\text{~~invariant}\}
 $
 and  $\Inv^+(U) = \bigcup\{S\subset U\mid S\text{~~ forward invariant}\}$. See \cite{LSoA1} for more details.
All  notions of  invariance are independent of the topology on $X$. The invariant sets for $\varphi$ form a bounded, distributive lattice, denoted by $\sInvset(\varphi)$, with binary operations $S\cup S'$ and $S\wedge S' = \Inv(S\cap S')$\footnote{The wedge of two invariant sets $S$ and $S'$ can be interpreted as the union of all complete orbits $\gamma_x$ contained in both $S$ and $S'$. }
for all $S,S'\in \sInvset(\varphi)$, cf.\ \cite[Prop.\ 2.8]{LSoA1}. The latter is a complete lattice with 
\[
\bigcup_{i\in I} S_i \in \sInvset(\varphi),\quad \bigwedge_{i\in I} S_i = \Inv\Bigl( \bigcap_{i\in I} S_i \Bigr) \in \sInvset(\varphi),
\]
 for any family of invariant sets $\{S_i\}_{i\in I}$. The atoms of $\sInvset(\varphi)$ are the complete orbits $\gamma_x$ of $\varphi$. Every invariant set is a union of complete orbits.
 
 The following lemma gives a convenient criterion for the wedge of forward invariant sets and forward-backward invariant sets.
 \begin{lemma}
    \label{invofint}
    Let $S\in \sInvset^+(\varphi)$ and let $S'\in \sInvset^\pm(\varphi)$. Then,
    $\Inv(S\cap S') = \Inv(S)\cap S'$.
\end{lemma}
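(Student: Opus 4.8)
The plan is to establish the two inclusions separately, the whole content lying in the reverse one, namely that $\Inv(S)\cap S'$ is itself an invariant set. Throughout I use two elementary facts valid for an arbitrary subset $T\subseteq X$: the operator $\Inv$ is monotone, and $\Inv(T)$ is the \emph{largest} invariant set contained in $T$ (it is a union of invariant sets, hence invariant, since $\sInvset(\varphi)$ is a complete lattice). The inclusion $\Inv(S\cap S')\subseteq \Inv(S)\cap S'$ is then immediate: from $S\cap S'\subseteq S$ and monotonicity we get $\Inv(S\cap S')\subseteq \Inv(S)$, while $\Inv(S\cap S')\subseteq S\cap S'\subseteq S'$; intersecting gives the claim, and this direction uses no invariance hypothesis on $S'$.

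For the reverse inclusion I would show that $\Inv(S)\cap S'$ is invariant; since it is contained in $S\cap S'$ and $\Inv(S\cap S')$ is the largest invariant subset of $S\cap S'$, the inclusion $\Inv(S)\cap S'\subseteq \Inv(S\cap S')$ follows. To verify invariance I check $\varphi^t\bigl(\Inv(S)\cap S'\bigr)=\Inv(S)\cap S'$ for every $t\ge 0$. For the forward inclusion, if $x\in \Inv(S)\cap S'$ then $\varphi^t(x)\in \varphi^t(\Inv(S))=\Inv(S)$ by invariance of $\Inv(S)$, and $\varphi^t(x)\in S'$ because forward-backward invariance ($S'\subseteq \varphi^{-t}(S')$) makes $S'$ forward invariant. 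For the reverse inclusion, take $y\in \Inv(S)\cap S'$; invariance of $\Inv(S)$ supplies some $x\in \Inv(S)$ with $\varphi^t(x)=y$, and since $\varphi^t(x)=y\in S'$ the other half of forward-backward invariance, $\varphi^{-t}(S')=S'$, forces $x\in S'$, so $x\in \Inv(S)\cap S'$ and $y\in \varphi^t(\Inv(S)\cap S')$.

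The delicate step is this last inclusion: it is exactly where the full strength $\varphi^{-t}(S')=S'$ is required rather than mere forward or backward invariance, because one must guarantee that the preimage $x$ of $y$ produced by the invariance of $\Inv(S)$ automatically lands in $S'$. The equality $\varphi^{-t}(S')=S'$ is what removes any need to choose a preimage carefully, since \emph{every} $t$-preimage of a point of $S'$ already belongs to $S'$. (Note that forward invariance of $S$ is never actually invoked; the statement is simply applied in that setting.) In orbit language this is transparent: $\Inv(S\cap S')$ is the union of complete orbits lying in $S\cap S'$, and forward-backward invariance of $S'$ forces any complete orbit meeting $S'$ to remain entirely in $S'$, so restricting the complete orbits contained in $S$ to those meeting $S'$ does not sever them.
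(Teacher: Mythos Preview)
Your proof is correct and follows essentially the same skeleton as the paper's: establish that $\Inv(S)\cap S'$ is invariant and lies in $S\cap S'$, then invoke maximality of $\Inv(S\cap S')$. The only difference is that the paper outsources the key invariance fact to \cite[Lem.~2.9]{LSoA1} (intersection of an invariant set with a forward-backward invariant set is invariant) and uses \cite[Lem.~2.7]{LSoA1} for the easy inclusion, whereas you verify both steps directly at the element level; your argument is thus a self-contained unpacking of the cited lemmas.
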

\begin{proof}
By \cite[Lem.\ 2.7]{LSoA1}, $\Inv(S\cap S') = \Inv\bigl(\Inv(S)\cap \Inv(S')\bigr)$, and by
\cite[Lem.\ 2.9]{LSoA1}, $\Inv(S)\cap S' \in \sInvset(\varphi)$.
Then,
\[
\begin{aligned}
    \Inv(S\cap S') &= \Inv\bigl(\Inv(S)\cap \Inv(S')\bigr)\subset \Inv\bigl(\Inv(S)\cap S'\bigr)\\
    &= \Inv(S)\cap S' \subset S\cap S',
\end{aligned}
\]
which implies, since $\Inv(S\cap S')$ is the maximal invariant set in $S\cap S'$,
that $\Inv(S\cap S')=\Inv(S)\cap S'$. 
    \qed
\end{proof}

\subsection{Attractors and repellers}
\label{attandrepnew}
Using a topology $\scrT$ on $X$, we define attracting neighborhoods and attractors.
\begin{definition}
\label{defnattfornoncomp}
A subset $U\subset X$ is a \emph{attracting neighborhood} if there exists a $\tau>0$ such that 
$\varphi^{t}(\cl U) \subset \Int U$  for all $t\ge \tau$.
A set $A\subset X$ is called an \emph{attractor} if there exists a attracting neighborhood $U$ such that $A=\Inv(\cl U)$.
\end{definition}

From the definition of attractor we obtain the following list of properties. We emphasize that at this point we do not require continuity for $\varphi$, nor any conditions on the topological space $X$. At a later stage additional requirements yield additional properties.

\begin{proposition}
\label{listofattpropsa}
The following properties hold for attractors and attracting neighborhoods:
\begin{description}
\item[(i)] An attractor satisfies $A=\Inv(\cl U)=\Inv(U) \subset \Int U$;
\item [(ii)] The attracting neighborhoods, 
denoted  $\sANbhd(\varphi)$,  have the structure of a bounded, distributive
lattice, i.e. $U,U'\in \sANbhd(\varphi)$, then $U\cup U', U\cap U'\in \sANbhd(\varphi)$;
\item [(iii)] The attractors are  denoted by $\sAtt(\varphi)$ and form a bounded, distributive lattice
with respect to the binary operations: 
\[
A\cup A' \quad \text{and}\quad A\wedge A':= \Inv(A\cap A'),~~ A,A'\in \sAtt(\varphi);
\]
\item [(iv)] $\Inv\colon \sANbhd(\varphi) \to \sAtt(\varphi)$, defined by $U\mapsto \Inv(U)$ is a surjective lattice homomorphism.
\end{description}
\end{proposition}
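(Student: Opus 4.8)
The plan is to prove the four assertions in the order (i), (ii), and then (iii) and (iv) simultaneously, since closure of $\sAtt(\varphi)$ under its two operations is precisely the statement that the map $\Inv$ preserves joins and meets. Throughout, the only inputs are the definitions of attracting neighborhood and attractor together with the lattice structure of $\sInvset(\varphi)$; no continuity of $\varphi$ and no separation or compactness of $X$ will enter.

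For (i), fix an attracting neighborhood $U$ with constant $\tau$ and put $A=\Inv(\cl U)$. Since $U\subset\cl U$ gives $\Inv(U)\subset\Inv(\cl U)$, it suffices to show $\Inv(\cl U)\subset\Int U$: then $\Inv(\cl U)$ is an invariant subset of $U$, so $\Inv(\cl U)\subset\Inv(U)$ and all three sets coincide. To see the inclusion, take $x\in A$; by invariance $A=\varphi^{\tau}(A)$, so $x=\varphi^{\tau}(z)$ for some $z\in A\subset\cl U$, whence $x\in\varphi^{\tau}(\cl U)\subset\Int U$. For (ii), I would exhibit $\sANbhd(\varphi)$ as a sublattice of the (distributive) power-set lattice of $X$: with a common constant $T=\max(\tau,\tau')$ one checks for $t\ge T$ that $\varphi^{t}(\cl(U\cup U'))=\varphi^{t}(\cl U)\cup\varphi^{t}(\cl U')\subset\Int(U\cup U')$ and $\varphi^{t}(\cl(U\cap U'))\subset\varphi^{t}(\cl U)\cap\varphi^{t}(\cl U')\subset\Int(U\cap U')$, using $\cl(U\cup U')=\cl U\cup\cl U'$ and $\cl(U\cap U')\subset\cl U\cap\cl U'$. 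Boundedness follows from $\emptyset,X\in\sANbhd(\varphi)$, and distributivity is inherited.

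For (iii) and (iv), surjectivity of $\Inv$ is immediate from (i), and preservation of meets is $\Inv(U\cap U')=\Inv(\Inv(U)\cap\Inv(U'))=\Inv(U)\wedge\Inv(U')$, where the first equality is \cite[Lem.\ 2.7]{LSoA1}. The main obstacle is preservation of joins, i.e.\ the nontrivial inclusion $\Inv(U\cup U')\subset\Inv(U)\cup\Inv(U')$; I expect this to be the heart of the argument, since with no continuity or compactness available one cannot appeal to limit sets and must argue directly from the complete-orbit structure on all of $\T$. The strategy is as follows. Let $\gamma=\{x_t\}_{t\in\T}\subset U\cup U'$ be a complete orbit and set $P=\{t:x_t\in\cl U\}$ and $P'=\{t:x_t\in\cl U'\}$, so that $P\cup P'=\T$. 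The defining property yields a forward trapping: if $t_0\in P$ then $x_t\in\Int U$ for all $t\ge t_0+\tau$, hence $[t_0+\tau,\infty)\subset P$, and similarly for $P'$ with $\tau'$. If $P=\T$ then $\gamma\subset\cl U$, so by invariance and (i) we get $\gamma\subset\Inv(\cl U)=\Inv(U)$. Otherwise choose $t_0\notin P$; the contrapositive of the trapping property forces $(-\infty,t_0-\tau]\subset P'$ (any $s\le t_0-\tau$ lying in $P$ would put $t_0\in P$), and taking the union of the forward rays $[s+\tau',\infty)\subset P'$ over all $s\le t_0-\tau$ gives $P'=\T$, hence $\gamma\subset\cl U'$ and $\gamma\subset\Inv(U')$. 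In either case $\gamma\subset U$ or $\gamma\subset U'$, which is the desired inclusion.

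Collecting these facts, $\sAtt(\varphi)$ is closed under $\cup$ and under $\wedge=\Inv(\,\cdot\,\cap\,\cdot\,)$, so it is a sublattice of the distributive lattice $\sInvset(\varphi)$ and therefore a bounded distributive lattice, proving (iii); and $\Inv$ carries the join $\cup$ and meet $\cap$ of $\sANbhd(\varphi)$ to the join $\cup$ and meet $\wedge$ of $\sAtt(\varphi)$, preserves the bounds $\emptyset,X$, and is onto, so it is a surjective lattice homomorphism, proving (iv).
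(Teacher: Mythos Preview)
Your argument is correct. Parts (i), (ii), and the meet portion of (iii)/(iv) coincide with the paper's proof, both invoking \cite[Lem.\ 2.7]{LSoA1} for the identity $\Inv(U\cap U')=\Inv(\Inv(U)\cap\Inv(U'))$.

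The genuine divergence is in the join. The paper does not argue orbitwise; instead, for each attracting neighborhood $U$ it constructs an auxiliary set $V=\bigcup_{s\in[0,\tau]}\varphi^s(U)\supseteq U$, checks by hand that $V$ is forward invariant with $\Inv(V)=\Inv(U)\subset\Int U$, and then quotes the version of \cite[Lem.\ 2.7]{LSoA1} for forward invariant sets to obtain $\Inv(V\cup V')=\Inv(V)\cup\Inv(V')$, whence $\Inv(U\cup U')=\Inv(U)\cup\Inv(U')$. Your route bypasses this construction entirely: you work directly with a complete orbit $\gamma\subset U\cup U'$ and use the trapping property of each neighborhood to force the time set $P$ (or $P'$) to swallow all of $\T$, yielding $\gamma\subset\Inv(\cl U)$ or $\gamma\subset\Inv(\cl U')$. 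This is shorter and more self-contained, needing nothing beyond the definitions. The trade-off is that the paper's detour produces, as a byproduct, the forward invariant isolating neighborhood $V$ that it later exploits (their Remark on forward invariant isolating neighborhoods and the existence of trapping regions in the compact case); your argument gives no such object.
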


\begin{proof}
\noindent{\bf(i)} 
By definition $\Inv(U) \subset \Inv(\cl U)=A\subset \cl U$. Then by invariance, $A=\varphi^t(A)\subset \varphi^t(\cl U)\subset \Int U\subset U$ for all $t\ge \tau$, and therefore $A\subset \Inv(U)$, which shows that $\Inv(\cl U)=\Inv(U)$.

\noindent{\bf(ii)} Let $U,U'\in \sANbhd(\varphi)$. By the definition of attracting neighborhood we can choose
a common $\tau''=\max\{\tau,\tau'\}$. 
This yields,
$\varphi^{t}(\cl (U\cup U') )= \varphi^{t}(\cl U \cup \cl U') =  \varphi^{t}(\cl U)\cup \varphi^{t}(\cl U') \subset \Int U\cup \Int U'
\subset \Int(U\cup U')$ and 
$\varphi^{t}(\cl (U\cap U')) \subset \varphi^{t}(\cl U \cap \cl U') \subset \varphi^{t}(\cl U) \cap \varphi^{t}(\cl U') \subset \Int U\cap \Int U' = \Int(U\cap U')$ for all $t\ge \tau''$,
 which shows that intersection and union are binary operations on $\sANbhd(\varphi)$ and $\sANbhd(\varphi) \rightarrowtail \sSet(X)$ is a lattice embedding.

\noindent{\bf(iii)} 
By \cite[Lem.\ 2.7]{LSoA1} and the definition of $\wedge$ it follows that
 $A\wedge A' = \Inv(A\cap A') \subset \Inv(U\cap U')= \Inv(U)\wedge \Inv(U') =A\wedge A'$, which implies that $A\wedge A' = \Inv(U\cap U')\subset \Int U\cap \Int U'=\Int(U\cap U')$ so that $A\wedge A'$ is an attractor.

As for the union we argue as follows.
For every attracting neighborhood $U$ we can construct a forward invariant set 
 $V\supseteq U$ 
 such that $\Inv(V)=\Inv(U)$.
 Define $V: = \bigcup_{s\in [0,\tau]}\varphi^s(U)\supseteq U$. 
Then, 
$
\varphi^t(V) = \bigcup_{s\in [t,t+\tau]}\varphi^s(U)
\subset \bigcup_{s\in [\tau,\infty)}\varphi^s(U)\\\subset U\subset V
$
for all $t\ge \tau$.
For $t\in [0,\tau]$ we have,
\[
\begin{aligned}
\varphi^t(V) &= \bigcup_{s\in [t,t+\tau]}\varphi^s(U) = \bigcup_{s\in [t,\tau]}\varphi^s(U) \cup \bigcup_{s\in [\tau,t+\tau]}\varphi^s(U)\\
 &\subset \bigcup_{s\in [t,\tau]}\varphi^s(U) \cup U \subset  V \cup U = V,
\end{aligned}
\]
which establishes 
forward invariance of $V$. If $S=\Inv(V)$, then $S=\varphi^\tau(S)\subset \varphi^\tau(V)\subset U$, which proves that $\Inv(V)=\Inv(U)\subset \Int U\subset \Int V$.
Choose $\tau>0$ for both $U$ and $U'$.
Combined with \cite[Lem.\ 2.7]{LSoA1} and the fact that $\bigcup_{s\in [0,\tau]}\varphi^s(U\cup U')= V\cup V'$, we conclude that  
\[
\begin{aligned}
\Inv(U) \cup \Inv(U') &\subset \Inv(U\cup U')=\Inv(V\cup V')\\
&= \Inv(V)\cup \Inv(V') = \Inv(U) \cup \Inv(U'),
\end{aligned}
\]
 which proves that $\Inv(U\cup U') = \Inv(U)\cup \Inv(U')$.
Using the latter we obtain $A\cup A' = \Inv(U)\cup \Inv(U') = \Inv(U\cup U')\subset \Int U\cup \Int U'\subset \Int(U\cup U')$ and $A\cup A'$ is an attractor.

Consequently, the set $\sAtt(\varphi)$ is closed with respect to $\wedge$ and $\cup$, i.e.~$A\cup A'\in \sAtt(\varphi)$ and $A\wedge A':= \Inv(A\cap A')\in \sAtt(\varphi)$ for all $A,A'\in \sAtt(\varphi)$. Hence, $\sAtt(\varphi)$ is a bounded sublattice of $\sInvset(\varphi)$, cf.\ \cite[Prop.\ 2.8]{LSoA1}, which proves that $\sAtt(\varphi)$ is a bounded, distributive lattice.

\noindent{\bf(iv)} 
The identities in the proof of (iii) yield that $\Inv(U\cap U') = \Inv(U)\wedge \Inv(U')$ and $\Inv(U\cup U') = \Inv(U) \cup \Inv(U')$, which establishes 
$\Inv$ as a lattice homomorphism from $\sANbhd(\varphi)$ to $\sAtt(\varphi)$.
\qed
\end{proof}

Proposition \ref{listofattpropsa}(i)  justifies the definition   $A=\Inv(U)$ for attractor, which will be used as main definition. 
\begin{remark}
    \label{intandclforattnbhd}
    If $U$ is an attracting neighborhood, then so are $\cl U$ and $\Int U$.
\end{remark}
\begin{remark}
    \label{forwinvisolnbhd}
    The neighborhood $V$ constructed in (iii) in the above proof is a \emph{forward invariant, isolating neighborhood} $V$ for $A$, i.e. $\varphi^t(V)\subset V$ for all $t\ge 0$ and $A=\Inv(V)\subset \Int V$. 
    Stronger isolation properties are obtained when $\varphi$ is continuous: $\Inv(\cl V)\subset \Int V$. However, without further assumptions on $X$ such as compactness, $V$ need not be an attracting neighborhood, cf.\ Remark~\ref{trappexist}.
\end{remark}
\begin{definition}
\label{defnrep2}
A subset $U\subset X$ is a \emph{repelling neighborhood} if there exists a $\tau>0$ such that 
$\varphi^{-t}(\cl U) \subset \Int U$  for all $t\ge \tau$.
A set $R\subset X$ is called a \emph{repeller} if there exists a repelling neighborhood $U$ such that $R=\Inv^+(\cl U)$.
\end{definition}

As for attractors, various properties of repellers can be derived from the properties of $\Inv^+$ which are summarized in the following proposition. 

\begin{proposition}
\label{listofreppropsa}
The following properties hold for repellers and repelling neighborhoods:
\begin{description}
\item[(i)] A repeller satisfies $R=\Inv^+(\cl U)=\Inv^+(U) \subset \Int U$;
\item[(ii)] The repelling neighborhoods, 
denoted   $\sRNbhd(\varphi)$,  have the structure of a bounded, distributive
lattice, i.e. $U,U'\in \sRNbhd(\varphi)$, then $U\cup U', U\cap U'\in \sRNbhd(\varphi)$;
\item[(iii)] A repeller $R$ is a forward-backward invariant set and invariant if $\varphi$ is surjective;
\item[(iv)] The repellers are denoted by $\sRep(\varphi)$ which has the structure of a bounded, distributive lattice 
with respect to the binary operations: 
\[
R\cup R'\quad \hbox{and}\quad R\cap R',\quad R,R'\in \sRep(\varphi);
\]
\item[(v)] $\Inv^+\colon\sRNbhd(\varphi) \to \sRep(\varphi)$, defined by $U\mapsto \alpha(U)$ is
surjevctive lattice homomorphism;
\end{description}
\end{proposition}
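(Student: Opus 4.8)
The plan is to mirror the proof of Proposition~\ref{listofattpropsa}, replacing the maps $\varphi^t$ by the preimages $\varphi^{-t}$, the operator $\Inv$ by $\Inv^+$, and invariance by forward invariance. The one structural novelty is that $\sRep(\varphi)$ carries the genuine set-operations $\cup,\cap$ rather than a modified wedge, so once closure under $\cup$ and $\cap$ is shown, distributivity and the lattice axioms come for free. Throughout I would use the characterization $\Inv^+(\cl U)=\bigcap_{t\ge 0}\varphi^{-t}(\cl U)=\{x : \varphi^t(x)\in\cl U\text{ for all }t\ge 0\}$, which holds because the forward orbit of a point is the smallest forward invariant set containing it.

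For (i), by definition $\Inv^+(U)\subset\Inv^+(\cl U)=R\subset\cl U$. Forward invariance of $R$ gives $R\subset\varphi^{-t}(R)$, whence $R\subset\varphi^{-t}(\cl U)\subset\Int U\subset U$ for $t\ge\tau$ by the repelling condition; thus $R\subset\Inv^+(U)$ and $R\subset\Int U$, proving $\Inv^+(\cl U)=\Inv^+(U)\subset\Int U$. Part (ii) is the exact dual of Proposition~\ref{listofattpropsa}(ii), and is in fact cleaner, since preimages commute with unions and intersections: with $\cl(U\cup U')=\cl U\cup\cl U'$ and $\cl(U\cap U')\subset\cl U\cap\cl U'$, the common $\tau''=\max\{\tau,\tau'\}$ yields $\varphi^{-t}(\cl(U\cup U'))\subset\Int(U\cup U')$ and $\varphi^{-t}(\cl(U\cap U'))\subset\Int(U\cap U')$ for all $t\ge\tau''$.

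For (iii), $R=\Inv^+(\cl U)$ is forward invariant by construction, so $R\subset\varphi^{-s}(R)$. For the reverse inclusion I would take $x\in\varphi^{-s}(R)$, so $\varphi^r(x)\in\cl U$ for all $r\ge s$, and recover $\varphi^r(x)\in\cl U$ for $r\in[0,s)$ by backward induction: choosing $k$ with $r+k\tau\ge s$ and applying $\varphi^{-\tau}(\cl U)\subset\Int U\subset\cl U$ repeatedly lowers the exponent from $r+k\tau$ down to $r$. Hence $\varphi^{-s}(R)=R$, i.e.\ $R$ is forward-backward invariant; if the maps $\varphi^s$ are surjective then $\varphi^s(R)=\varphi^s(\varphi^{-s}(R))=R$, so $R$ is invariant. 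In (iv), closure under intersection is immediate: $R\cap R'$ is forward invariant and, by (i), $R\cap R'\subset\Int U\cap\Int U'=\Int(U\cap U')\subset\cl(U\cap U')$, so $R\cap R'\subset\Inv^+(\cl(U\cap U'))$; conversely $\Inv^+(\cl(U\cap U'))\subset\Inv^+(\cl U\cap\cl U')\subset\Inv^+(\cl U)\cap\Inv^+(\cl U')=R\cap R'$, so $R\cap R'=\Inv^+(\cl(U\cap U'))$ is a repeller.

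The \emph{main obstacle} is the union in (iv), equivalently the $\cup$-identity needed for (v): the inclusion $\Inv^+(\cl U\cup\cl U')\subset R\cup R'$. Here I would exploit that the repelling condition holds for all $t\ge\tau$, not merely at $t=\tau$. Given $x\in\Inv^+(\cl U\cup\cl U')$ with $x\notin R$, there is $t_U$ with $\varphi^{t_U}(x)\notin\cl U$, hence $\notin\Int U$; the contrapositive of $\varphi^{-t}(\cl U)\subset\Int U$ for $t\ge\tau$ then forces $\varphi^{t_U+t}(x)\notin\cl U$ for all $t\ge\tau$. Thus the forward orbit lies in $\cl U'$ from time $t_U+\tau$ onward, and the backward induction of (iii), now applied to $U'$, propagates this to all $t\ge 0$, giving $x\in\Inv^+(\cl U')=R'$. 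This proves $\Inv^+(\cl U\cup\cl U')\subset R\cup R'$; the reverse inclusion is trivial, so $R\cup R'=\Inv^+(\cl(U\cup U'))$ is a repeller. Part (v) is then immediate: surjectivity holds by the definition of repeller, and the two identities $\Inv^+(\cl(U\cap U'))=R\cap R'$ and $\Inv^+(\cl(U\cup U'))=R\cup R'$ established in (iv) are exactly the statement that $\Inv^+$ preserves $\cap$ and $\cup$. Boundedness ($\emptyset$ and $X$ are repellers) and distributivity are automatic, since the operations are the ambient set intersection and union.
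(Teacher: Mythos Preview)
Your proof is correct, and parts (i), (ii), and the intersection half of (iv) follow the paper essentially verbatim. The genuine difference is in (iii) and the union half of (iv). The paper does not argue directly from the repelling condition; instead it constructs the backward-invariant enlargement $V=\bigcup_{s\in[0,\tau]}\varphi^{-s}(U)$, checks $\Inv^+(V)=\Inv^+(U)$, and then invokes Lemmas~2.7 and~2.10 of \cite{LSoA1} (the latter giving forward-backward invariance of $\Inv^+$ of a backward-invariant set, the former giving $\Inv^+(V\cup V')=\Inv^+(V)\cup\Inv^+(V')$ for backward-invariant $V,V'$). Your ``backward induction'' trick---iterating the implication $\varphi^{\tau}(z)\in\cl U\Rightarrow z\in\Int U$ to pull membership in $\cl U$ (resp.\ $\cl U'$) back from large times to all times---bypasses these auxiliary sets and external citations entirely, and works uniformly for $\T=\Z$ or $\R$ since the step size $\tau$ can be applied to an arbitrary starting time $r$. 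This is more elementary and self-contained; what you lose is the construction of the backward-invariant isolating neighborhood $V$ itself, which the paper records separately (Remark~\ref{backwinvisolnbhd}) and uses later in Remark~\ref{trappexist}.
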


\begin{proof}
\noindent{\bf(i)} By definition $\Inv^+(U) \subset \Inv^+(\cl U)=R\subset \cl U$. Then, by forward invariance, $\varphi^t(R)\subset R$ for all $t\ge 0$ and thus $R\subset \varphi^{-t}(R)$ for all $t\ge 0$. This implies
$R \subset\varphi^{-t}(R)\subset \varphi^{-t}(\cl U)\subset \Int U\subset U$ for all $t\ge \tau$, and therefore $R\subset \Inv^+(U)$, which shows that $\Inv^+(\cl U)=\Inv^+(U)\subset \Int U$.

\noindent{\bf(ii)} 
Let $U,U'\in \sRNbhd(\varphi)$. By the definition of repelling neighborhood we can choose
a common $\tau''=\max\{\tau,\tau'\}$. 
This yields,
$\varphi^{-t}(\cl (U\cup U') )= \varphi^{-t}(\cl U \cup \cl U') =  \varphi^{-t}(\cl U)\cup \varphi^{-t}(\cl U') \subset \Int U\cup \Int U'
\subset \Int(U\cup U')$ and 
$\varphi^{-t}(\cl (U\cap U')) \subset \varphi^{-t}(\cl U \cap \cl U') = \varphi^{-t}(\cl U) \cap \varphi^{-t}(\cl U') \subset \Int U\cap \Int U' = \Int(U\cap U')$, for all $t\ge \tau''$,
 which shows that intersection and union are binary operations on $\sRNbhd(\varphi)$ and $\sRNbhd(\varphi) \rightarrowtail \sSet(X)$ is a lattice embedding.

\noindent{\bf(iii)}
Similar to the proof of Proposition \ref{listofattpropsa}(iii), we construct a backward invariant set $V\supseteq U$ such that $\Inv^+(V)=\Inv^+(U)$.
Define  $V: = \bigcup_{s\in [0,\tau]}\varphi^{-s}(U)\supseteq U$. We obtain that $V$ is backward invariant and $\varphi^{-\tau}(V)\subset U$. If $S=\Inv^+(V)$, then $\varphi^{t}(S)\subset S$ for all $t\ge 0$ and in particular $S\subset\varphi^{-\tau}(S)$.
This implies that
$S\subset\varphi^{-\tau}(S)\subset \varphi^{-\tau}(V)\subset U$, which proves that $R=\Inv^+(U)=\Inv^+(V)\subset \Int U\subset \Int V$.
By \cite[Lem.\ 2.10]{LSoA1} we have that $R$ is forward-backward invariant.

\noindent{\bf(iv)}
If we adjust the proof of  \cite[Lem.\ 2.7]{LSoA1} for $\Inv^+$, then $\Inv^+(U\cap U') = \Inv^+(U) \cap \Inv^+(U')$ and therefore
$\Inv^+\colon \sRNbhd(\varphi) \to \sRep(\varphi)$ preserves $\cap$ if we equip $\sRep(\varphi)$ with 
$\wedge =\cap$.
The proof of \cite[Lem.\ 2.7]{LSoA1} also yields $\Inv^+(U\cup U') = \Inv^+(U) \cup \Inv^+(U')$, provided $U,U'$ are backward invariant.
If $U,U'$ are repelling neighborhoods, then we can choose a common $\tau>0$ and backward invariant sets $V\supseteq U$ and $V'\supseteq U'$ such that $\Inv^+(V)=\Inv^+(U)$, $\Inv^+(V') = \Inv^+(U'),$ and $\Inv^+(V\cup V') = \Inv^+(U\cup U')$. 
As before,
\[
\begin{aligned}
\Inv^+(U) \cup \Inv^+(U') &\subset \Inv^+(U\cup U')=\Inv^+(V\cup V')\\
&= \Inv^+(V)\cup \Inv^+(V') = \Inv^+(U) \cup \Inv^+(U'),
\end{aligned}
\]
This proves proves that $\sRep(\varphi)$ equipped with $\cap$ and $\cup$ is a bounded, distributive lattice.

\noindent{\bf (v)}
If $R=\Inv^+(U)$ and $R'=\Inv^+(U')$, then $U\cup U'\mapsto R\cup R'$ and $U\cap U'\mapsto R\cap R'$ showing that $\Inv^+$ is a surjective lattice homomorphism.
\qed
\end{proof}

\begin{remark}
    \label{backwinvisolnbhd}
    Given a repelling neighborhood $U$, an analogous construction
     provides a repelling neighborhood $V\supseteq U$ which is a \emph{backward invariant, isolating neighborhood} for $R$, i.e.\ $\varphi^{-t}(V)\subset V$ for all $t\ge 0$ and $R=\Inv^+(V)\subset \Int V.$ The existence of repelling neighborhoods that are backward invariant is discussed in Remark \ref{trappexist}. 
\end{remark}

\subsection{Duality}
\label{dualityforAR}
We now discuss a fundamental duality between attracting and repelling neighborhoods, which also yields a duality between attractors and repellers.
\begin{lemma}
\label{involattrep123}
The map $U\mapsto U^c$ defines an involutive lattice anti-isomorphism between $\sANbhd(\varphi)$ and $\sRNbhd(\varphi)$.
\end{lemma}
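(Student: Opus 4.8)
The plan is to reduce the entire statement to the single set-theoretic equivalence that, for each fixed $t$,
\[
\varphi^t(\cl U) \subset \Int U \quad\Longleftrightarrow\quad \varphi^{-t}\bigl(\cl(U^c)\bigr)\subset \Int(U^c),
\]
after which the lattice claims follow from De Morgan's laws. Since the maps $\varphi^t$ need not be invertible, I would phrase everything through inverse images rather than trying to ``apply $\varphi^{-t}$'' to a forward condition directly.

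First I would record the three elementary facts used throughout: the topological complementation identities $\cl(A^c) = (\Int A)^c$ and $\Int(A^c)=(\cl A)^c$; the commutation of inverse image with complement, $\varphi^{-t}(B^c) = \bigl(\varphi^{-t}(B)\bigr)^c$; and the adjunction $\varphi^t(A)\subset B \iff A \subset \varphi^{-t}(B)$. Using these, the forward condition $\varphi^t(\cl U)\subset \Int U$ is equivalent to $\cl U \subset \varphi^{-t}(\Int U)$. On the other side, substituting $\cl(U^c) = (\Int U)^c$ and $\Int(U^c) = (\cl U)^c$ and then commuting the inverse image past the complement turns $\varphi^{-t}(\cl(U^c))\subset \Int(U^c)$ into $\bigl(\varphi^{-t}(\Int U)\bigr)^c \subset (\cl U)^c$, which upon taking complements is again $\cl U \subset \varphi^{-t}(\Int U)$. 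Thus both conditions collapse to the same inclusion, and the equivalence holds for every $t$ with no invertibility assumption on $\varphi$.

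With this equivalence in hand, the definitions of attracting and repelling neighborhood match term-for-term: $U\in\sANbhd(\varphi)$ with constant $\tau$ iff $U^c\in\sRNbhd(\varphi)$ with the same $\tau$. Hence $U\mapsto U^c$ restricts to a well-defined bijection $\sANbhd(\varphi)\to\sRNbhd(\varphi)$, and it is involutive because $(U^c)^c = U$. Finally, both lattices carry $\cup$ as join and $\cap$ as meet (Proposition~\ref{listofattpropsa}(ii) and Proposition~\ref{listofreppropsa}(ii)), and De Morgan's laws $(U\cup U')^c = U^c \cap (U')^c$ and $(U\cap U')^c = U^c\cup (U')^c$ show that complementation interchanges join and meet; together with the swap of the bounds $\emptyset$ and $X$, this exhibits $U\mapsto U^c$ as an involutive lattice anti-isomorphism.

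I expect no serious obstacle here; the only point requiring care is to resist treating $\varphi^{-t}$ as the image under an inverse map. Keeping $\varphi^{-t}$ strictly as preimage and routing the argument through the adjunction $\varphi^t(A)\subset B \iff A\subset \varphi^{-t}(B)$ is precisely what makes the two neighborhood conditions coincide without assuming $\varphi$ invertible or imposing any hypothesis on the topology of $X$.
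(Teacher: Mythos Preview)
Your proof is correct and follows essentially the same approach as the paper: both arguments hinge on the image--preimage adjunction for $\varphi^t$ together with the complementation identities $\cl(U^c)=(\Int U)^c$ and $\Int(U^c)=(\cl U)^c$, and both finish with De Morgan's laws. Your version is marginally more streamlined in that you reduce both neighborhood conditions to the single common inclusion $\cl U \subset \varphi^{-t}(\Int U)$, whereas the paper proves the two implications separately via the unit/counit inclusions $A\subset\varphi^{-t}(\varphi^t(A))$ and $\varphi^t(\varphi^{-t}(B))\subset B$; but this is a cosmetic difference, not a distinct strategy.
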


\begin{proof}
Let $U\in \sANbhd(\varphi)$, i.e. $\varphi^t(\cl U) \subset \Int U$ for all $t\ge \tau>0$.
Then, $\cl U^c = (\Int U)^c \subset [\varphi^t(\cl U)]^c$. By taking inverse images we obtain
\[
\varphi^{-t}(\cl U^c) \subset \varphi^{-t}\bigl(\varphi^t(\cl U)^c \bigr) = \bigl[ \varphi^{-t}\bigl(\varphi^t(\cl U) \bigr)\bigr]^c
\subset (\cl U)^c = \Int U^c~~\forall t\ge \tau,
\]
which establishes $U^c\in \sRNbhd(\varphi)$. Similarly, let $U\in \sRNbhd(\varphi)$,  i.e. 
$\varphi^{-t}(\cl U) \subset \Int U$ for all $t\ge \tau>0$.
Then,
\[
\cl U^c = (\Int U)^c \subset [\varphi^{-t}(\cl U)]^c = \varphi^{-t}\bigl((\cl U)^c\bigr) =\varphi^{-t}(\Int U^c).
\]
By taking the $\varphi^t$ image we obtain: $\varphi^t(\cl U^c) \subset \varphi^{t}\bigl(\varphi^{-t}(\Int U^c) \bigr)\subset \Int U^c$ for all $t\ge \tau$, which shows that $U^c\in\sANbhd(\varphi)$ and that the map 
 $U\mapsto U^c$ defines an involutive bijection. The anti-isomorphism is given by the De Morgan laws for $\cap $ and $\cup$.
\qed
\end{proof}

As stated above, duality between attractors and repellers is not guaranteed if $X$ is not compact. Nontrivial attracting and repelling neighborhoods may have empty attractors and repellers. This problem is solved by treating attractors and repellers as pairs. 
\begin{definition}
\label{ARpairdefn234}
A pair $P=(A,R)$ of subsets in $X$ is called an \emph{attractor-repeller pair} if there exist an attracting neighborhood $U$ such that $A=\Inv(U)$ and $R=\Inv^+(U^c)$. The attractor-repeller pairs are denoted by $\sARpair(\varphi)$.
\end{definition}

\begin{remark}
Attractor-repeller pairs are equivalently defined via repelling neighborhoods $U$ via $A=\Inv(U^c)$ and $R=\Inv^+(U)$.
\end{remark}

Attractor-repeller pairs also allow a lattice structure which is defined as follows. Let $P=(A,R)$, $P'=(A',R')$ be attractor-repeller pairs. Then,
\begin{equation}
\label{ARpairbins}
P\vee P' := \bigl( A\cup A', R\cap R'\bigr),\quad\quad P\wedge P' := \bigl( A\wedge A', R\cup R'\bigr).
\end{equation}
It follows from the lattice structures of $\sAtt(\varphi)$ and $\sRep(\varphi)$ that $\sARpair(\varphi)$ with the binary operations $\vee$ and $\wedge$ defined above is a bounded, distributive lattice.
In particular, $P\le P'$ if and only if $A\subset A'$ and $R'\subset R$. 
Define the homomorphisms $\varpi\colon \sANbhd(\varphi) \to \sARpair(\varphi)$ via $U\mapsto \bigl(\Inv(U),\Inv^+(U^c) \bigr)$ and $\varpi^*\colon \sRNbhd(\varphi) \to \sARpair(\varphi)$ via $U\mapsto \bigl(\Inv(U^c),\Inv^+(U) \bigr)$.
We obtain the following commutative diagram:
\begin{equation}
        \label{dualitydiagno34}
    \begin{diagram}\dgARROWLENGTH=4.5em
    \node{\sANbhd(\varphi)}\arrow[2]{e,lr,<>}{^c}{\cong}\arrow{se,r}{\varpi}\node{}\node{\sRNbhd(\varphi)}\arrow{sw,r}{\varpi^*}\\
    \node{}\node{\sARpair(\varphi)} 
    \end{diagram}
        \end{equation}
        cf.\ Diag.\ \eqref{dualitydiagno12}.
If we choose either attracting or repelling neighborhoods we have 
the span:
  \begin{equation}
     \label{fundmaps12}
\sSet(X) \xleftarrow{~~~\iota~~~}\sANbhd(\varphi)\xtwoheadrightarrow{\varpi}\sARpair(\varphi),
\end{equation}
where $\iota$ is inclusion and where $\sSet(X)$ is the Boolean algebra of subsets of $X$.
For  an attractor-repeller pair $P=(A,R)$  complete orbits $\gamma_x$ satisfy the following `asymptotics'.
 \begin{lemma}
 \label{asympattrep}
Let  $P=(A,R)$
be an attractor-repeller pair with $A=\Inv(U)$ and $R=\Inv^+(U^c)$ for some $U\in \sANbhd(\varphi)$. Let $x\in X\smin (A\cup R)$. For a complete orbit $\gamma_x$ there exists a $\tau>0$ such that $\gamma_x(t)\subset U$ and $\gamma_x(-t)\in U^c$ for all $t\ge \tau$.
 \end{lemma}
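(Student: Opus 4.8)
The plan is to fix a single $\tau_0>0$ that simultaneously witnesses that $U$ is an attracting neighborhood, i.e.\ $\varphi^t(\cl U)\subset \Int U$ for all $t\ge \tau_0$, and that $U^c$ is a repelling neighborhood, i.e.\ $\varphi^{-t}(\cl U^c)\subset \Int U^c$ for all $t\ge \tau_0$; the latter is available from Lemma~\ref{involattrep123}. The argument then rests on two elementary trapping observations. First, if $\gamma_x(s)\in \cl U$ for some $s$, then for every $r\ge \tau_0$ the semigroup property gives $\gamma_x(s+r)=\varphi^r(\gamma_x(s))\in \varphi^r(\cl U)\subset \Int U$, so once the orbit touches $\cl U$ at time $s$ it stays in $\Int U$ for all times $\ge s+\tau_0$. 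Dually, if $\gamma_x(-s)\in \cl U^c$, then writing $\gamma_x(-s)=\varphi^r(\gamma_x(-s-r))$ shows $\gamma_x(-s-r)\in \varphi^{-r}(\cl U^c)\subset \Int U^c$ for $r\ge\tau_0$, so once the backward orbit touches $\cl U^c$ it remains in $\Int U^c$ thereafter.

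For the forward conclusion I would argue by the dichotomy ``the forward orbit meets $U$ or it does not.'' If $\gamma_x(t)\in U^c$ for all $t\ge 0$, then the forward orbit $\{\gamma_x(t)\mid t\ge 0\}$ is a forward invariant subset of $U^c$, hence contained in $\Inv^+(U^c)=R$, forcing $x\in R$ and contradicting the hypothesis $x\notin A\cup R$. Therefore $\gamma_x(s_0)\in U\subset \cl U$ for some $s_0\ge 0$, and the forward trapping observation yields $\gamma_x(t)\in \Int U\subset U$ for all $t\ge s_0+\tau_0=:\tau_1$.

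For the backward conclusion I would use the same dichotomy applied to $U^c$, but now the contradiction runs through the attractor rather than the repeller. If the backward orbit never meets $U^c$, i.e.\ $\gamma_x(-t)\in U\subset \cl U$ for all $t\ge 0$, then applying the forward trapping observation at the times $s=-n$ and letting $n\to\infty$ shows $\gamma_x(t)\in \Int U$ for every $t\in\T$; thus the complete orbit $\gamma_x$ is an invariant set contained in $\cl U$, so $\image\gamma_x\subset \Inv(\cl U)=A$ and $x\in A$, again a contradiction. Hence $\gamma_x(-s_0)\in U^c\subset \cl U^c$ for some $s_0\ge 0$, and the backward trapping observation yields $\gamma_x(-t)\in \Int U^c\subset U^c$ for all $t\ge s_0+\tau_0=:\tau_2$. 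Taking $\tau=\max\{\tau_1,\tau_2\}$ then proves both assertions at once (and in fact gives the stronger statement with $\Int U$ and $\Int U^c$).

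The step I expect to be the crux is the backward direction: the naive attempt to mirror the forward argument by pushing the backward orbit into the repeller does not close, and the correct move is to exclude the backward orbit from lingering in $\cl U$ by invoking the \emph{attracting} property of $U$ propagated toward $t=-\infty$, which pins the whole complete orbit inside the attractor $A$. The only place demanding genuine care is the inverse-image bookkeeping in the backward trapping observation, namely that $\gamma_x(-s-r)$ really lies in $\varphi^{-r}(\cl U^c)$ via $\varphi^r(\gamma_x(-s-r))=\gamma_x(-s)$; this uses no continuity of $\varphi$, consistent with the standing generality of this section.
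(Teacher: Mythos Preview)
Your argument is correct and uses the same two ingredients as the paper's proof—the forward/backward trapping coming from the attracting/repelling neighborhood axioms, and the maximality of $A=\Inv(U)$ and $R=\Inv^+(U^c)$ to force the orbit out of $U$ (resp.\ $U^c$). The only difference is organizational: the paper splits on whether $x\in U$ or $x\in U^c$ (so that one of the two conclusions is immediate and only the other requires invoking maximality), whereas you treat the forward and backward conclusions separately via dichotomies; this makes your backward step slightly longer (the ``$n\to\infty$'' propagation to pin down the whole orbit in $U$, where the paper simply notes that $\gamma_x\subset U$ would contradict $x\notin\Inv(U)$), but the substance is identical.
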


\begin{proof}
 For $x\in X\smin (A\cup R)$ there are two options: (i) $x\in U$ or (ii) $x\in U^c$.

\noindent{\bf(i)} 
By the definition of attracting neighborhood $\gamma_x(t)\subset U$ for all $t\ge \tau$. Since $A=\Inv(U)$, and $x\in U\smin A$, the orbit  $\gamma_x$ cannot be contained in $U$.
This implies that there exists a $\sigma<\tau$ such that $y=\gamma_x(\sigma)\in U^c$. By definition $\varphi^t\bigl(\gamma_x(\sigma-t)\bigr) = \gamma_x(\sigma)$. 
Since $U^c$ is a repelling neighborhood, $\gamma_x(\sigma-t)\in
\varphi^{-t}(\gamma_x(\sigma))\subset U^c$
for all $t\ge \tau$. 
If $t'=t-\sigma$, then $\gamma_x(-t')\in U^c$ for all $t'\ge \tau-\sigma$.
Without loss of generality we use $t$ instead of $t'$ and choose $\tau$ such that both inclusions hold.

\noindent{\bf(ii)}
Similarly, we can choose a $\tau>0$ such that $\gamma_x(-t)\in U^c$ for all $t\ge \tau$, which follows from the definition of repelling neighborhood.
Since $R=\Inv^+(U^c)$, and $x\in U^c\smin R$, the forward orbit $\gamma_x^+$ cannot be contained in $U^c$. Therefore,
there exists a $\sigma>0$ such that  $\gamma_x(\sigma)\subset U$, and
 $\gamma_x(t+\sigma)=\varphi^t\bigl(\gamma_x(\sigma)\bigr)\in U$
for all $t\ge \tau$. Let $t'=t+\sigma$. Then, $\gamma_x(t')\in U$ for $t'\ge \tau+\sigma$. As for (i), without loss of generality we use $t$ instead of $t'$ and choose $\tau$ such that both inclusions hold.
\qed
 \end{proof}

\begin{remark}
Define $\gamma_x^\tau = \bigcup_{t\ge \tau}\gamma_x(t)$ and $\gamma_x^{-\tau} = \bigcup_{t\ge \tau}\gamma_x(-t)$, with $\tau>0$ and $P$ as in the above lemma.
In the case that $\Inv(\cl\gamma_x^\tau)\neq \varnothing$, then 
$\Inv(\cl\gamma_x^\tau)\subset A\neq \varnothing$. The same applies to 
$\Inv^+(\cl\gamma_x^{-\tau}) \neq \varnothing$, in which case 
$\Inv^+(\cl\gamma_x^{-\tau}) \subset  R\neq \varnothing$.
\end{remark}
 
\section{Continuous dynamics on compact spaces}
\label{contdynnew}
In this section we explore extra structure when additional properties on $\varphi$, such as continuity, and/or topological properties on $X$ are assumed.

\begin{example}
\label{invnotomega}
Consider the interval $X=[0,\infty)\subset \R$ equipped with the co-finite topology $\scrT_{\rm cf}$ and map $f(x)=x+1$. Then, $f\colon X\to X$ is a continuous map on a compact space. The map $f$ is not closed since $f(X)=[1,\infty)$, which
 is not closed. The set $X$ is a repelling neighborhood with $\Inv^+(X)=X=\alpha(X)$.
Similarly, $X$ acts as an attracting neighborhood and $\Inv(X)=\varnothing \neq \omega(X) = X$. The open set $U=(0,\infty)$ is a precompact, attracting neighborhood with $\Inv(U) =\varnothing \neq \omega(U) = X$. 
The attractor-repeller pair associated to $U$ is the pair $(\varnothing,\varnothing)$. The lattice of attractor-repeller pairs consists of two elements $(\varnothing,X) < (\varnothing,\varnothing)$.
If we consider the standard metric topology $\scrT_{|\cdot|}$ on $X$, then
$f$ defines a continuous and proper dynamical system. With the metric topology $U$ is still an attracting neighborhood and $\Inv(U) = \omega(U) = \varnothing$.
\end{example}

\subsection{Continuous and proper systems}
\label{contcasearbX}
Example \ref{invnotomega} suggests that, under the assumption of continuity, a repeller can be characterized as the alpha-limit set of a repelling neighborhood, as proved in the following lemma. However, the analogous characterization is \emph{not} true  for attracting neighborhoods and attractors without additional conditions.

\begin{proposition}
\label{linktoalpha}
Let $\varphi$ be a continuous system, and
let $U\in \sRNbhd(\varphi)$ be a repelling neighborhood. Then, 
\[
R=\Inv^+(U)=\Inv^+(\cl U) =\alpha(\cl U)=\alpha(U)\subset \Int U,
\]
 and the repeller $R$ is a closed, forward-backward invariant set.
\end{proposition}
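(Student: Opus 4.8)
The plan is to treat the chain of identities in two halves. The two equalities $R=\Inv^+(\cl U)=\Inv^+(U)$ together with the inclusion $R\subset\Int U$ are already furnished by Proposition~\ref{listofreppropsa}(i), so the genuinely new content is the identification of $R$ with the alpha-limit sets
\[
\alpha(U)=\bigcap_{\tau>0}\cl\Bigl(\bigcup_{t\ge\tau}\varphi^{-t}(U)\Bigr),\qquad \alpha(\cl U)=\bigcap_{\tau>0}\cl\Bigl(\bigcup_{t\ge\tau}\varphi^{-t}(\cl U)\Bigr).
\]
Since $U\subset\cl U$ gives the monotonicity $\alpha(U)\subset\alpha(\cl U)$ for free, I would prove only the two inclusions $R\subset\alpha(U)$ and $\alpha(\cl U)\subset R$; chaining these with monotonicity yields $R\subset\alpha(U)\subset\alpha(\cl U)\subset R$, forcing all four sets to coincide. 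Closedness of $R$ is then immediate, as $\alpha(\cl U)$ is by construction an intersection of closed sets; forward--backward invariance is inherited from Proposition~\ref{listofreppropsa}(iii).

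The inclusion $R\subset\alpha(U)$ is the soft one and, notably, does not require continuity. By Proposition~\ref{listofreppropsa}(iii) the repeller $R$ is forward--backward invariant, i.e.\ $\varphi^{-t}(R)=R$ for all $t\ge0$, and by part (i) we have $R\subset\Int U\subset U$. Hence for every $t\ge0$,
\[
R=\varphi^{-t}(R)\subset\varphi^{-t}(U),
\]
so $R$ lies in every tail union $\bigcup_{t\ge\tau}\varphi^{-t}(U)$, hence in each of its closures; intersecting over $\tau$ gives $R\subset\alpha(U)$.

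The inclusion $\alpha(\cl U)\subset R$ is the heart of the argument and is exactly where continuity enters. First, the repelling condition $\varphi^{-t}(\cl U)\subset\Int U$ for $t\ge\tau$ shows that the tail union $\bigcup_{t\ge\tau}\varphi^{-t}(\cl U)$ sits inside $\Int U$, whence $\alpha(\cl U)\subset\cl U$. It then remains to show that $\alpha(\cl U)$ is forward invariant, for then it is a forward invariant subset of $\cl U$ and therefore contained in the maximal such set $\Inv^+(\cl U)=R$. To verify forward invariance I would fix $r\ge0$ and $x\in\alpha(\cl U)$, and push the defining tail closures forward using continuity of $\varphi^{r}$ in the form $\varphi^{r}(\cl W)\subset\cl(\varphi^{r}(W))$, combined with the set-level inclusion $\varphi^{r}\bigl(\varphi^{-t}(\cl U)\bigr)\subset\varphi^{-(t-r)}(\cl U)$ valid for $t\ge r$. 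Applying $\varphi^r$ to the tail closure at level $\tau+r$ and re-indexing then yields $\varphi^{r}(x)\in\cl\bigl(\bigcup_{t\ge\tau}\varphi^{-t}(\cl U)\bigr)$ for every $\tau$, i.e.\ $\varphi^{r}(x)\in\alpha(\cl U)$. I expect this forward-invariance step to be the main obstacle: without continuity the image of a closure need not lie in the closure of the image, and the argument collapses. This is precisely the asymmetry flagged before the proposition, explaining why the analogous characterization fails for attractors and attracting neighborhoods absent further hypotheses.
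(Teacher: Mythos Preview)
Your proposal is correct and follows essentially the same strategy as the paper: both arguments reduce to showing that the alpha-limit set is forward invariant (the step requiring continuity) and lands inside the neighborhood, so that maximality of $\Inv^+$ forces the desired equality. The only organizational difference is that the paper first proves the sharper containment $\alpha(\cl U)\subset\Int U$ via a nested-preimage argument (using that preimages of closed sets are closed) and then invokes Proposition~\ref{lem:props-al-om14}(vii)--(viii) from the appendix, whereas you obtain the weaker but sufficient $\alpha(\cl U)\subset\cl U$ directly and reprove forward invariance by hand; your route is slightly more self-contained, the paper's leans on the appendix lemmas it has already set up.
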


\begin{proof}
Let $U\in \sRNbhd(\varphi)$.
For  $s\ge t\ge 2\tau$ we have  $\varphi^{-s}(U) = \varphi^{-t+\tau}\bigl(\varphi^{t-s-\tau}(U) \bigr)$.
Since $s-t+\tau\ge \tau$, we have 
$
\varphi^{-s}(\cl U) = \varphi^{-t+\tau}\bigl(\varphi^{t-s-\tau}(\cl U) \bigr)\subset \varphi^{-t+\tau}(\cl U)
$
for all $s\ge t\ge 2\tau$.
Consequently, using that $t\ge 2\tau$ and continuity,
\[
\begin{aligned}
\alpha(\cl U) &= \bigcap_{t\ge 2\tau} \cl \bigcup_{s\ge t} \varphi^{-s}(\cl U)
\subset \bigcap_{t\ge 2\tau} \cl \varphi^{-t+\tau}(\cl U)
\subset \bigcap_{t\ge 2\tau} \varphi^{-t+\tau}(\cl U)\\
&\subset \bigcap_{t'\ge \tau} \varphi^{-t'}(\cl U)\subset \Int U\subset U.
\end{aligned}
\]
Using Proposition \ref{lem:props-al-om14}(vii), 
$\alpha(\cl U) \subset \alpha(\alpha(\cl U)) \subset \alpha(U)$.
Moreover, $\alpha(U) \subset \alpha(\cl U)$, which proves that $\alpha(\cl U) = \alpha(U)$.
Since
$ \alpha(U) \subset \Int U\subset U$, it follows that
$\alpha\bigl( \alpha(U)\bigr) \subset \alpha(U)\subset \alpha\bigl( \alpha(U)\bigr)$,  
which proves  $\alpha\bigl( \alpha(U)\bigr) = \alpha(U)$. 
Since $\alpha(U)\subset U$, the result follows from Proposition~\ref{lem:props-al-om14}(viii).
\qed
\end{proof}

\begin{remark}
By the properties of alpha-limit sets, if $U$ is precompact, then $U\neq \varnothing$ implies that $R=\Inv^+(U)\neq \varnothing$, cf.\ Proposition \ref{lem:props-al-om14}(ii).
In particular, if $X$ is compact, then $R=\Inv^+(U)$ is nonempty whenever $U$ is nonempty. In contrast,
Example \ref{invnotomega} provides an example of a nonempty, attracting neighborhood for which the corresponding attractor is the empty set even though $X$ is compact.
\end{remark}

In order to relate attractors to the omega-limit sets we consider continuous and \emph{proper} dynamical systems. 
\begin{definition}
    \label{properdynsys}
A dynamical system is  \emph{proper}\footnote{For a detailed discussion on proper maps see \cite[Ch.\ 1, Sect.\ 10]{bourbaki}.} if: 
\begin{description}
    \item[(i)] $\varphi$ has compact fibers, i.e.~$\varphi^{-t}(x)$ is compact for all $x\in X$ and for all $t>0$;
    \item[(ii)] $\varphi$ is closed,  i.e. $\cl\varphi^t(U)=\varphi^t(\cl U)$ for all $U\subset X$ and for all $t>0$.
\end{description}
\end{definition}

\begin{proposition}
    \label{contclosedom}
    Let $\varphi$ be a continuous and proper system. If $U\in \sANbhd(\varphi)$, then
\[
A=\Inv(U) = \omega(U) \subset \Int U,
\]
and the attractor $A$ is a closed, invariant set. 
\end{proposition}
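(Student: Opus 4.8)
The plan is to mirror, in forward time, the argument for the dual statement in Proposition~\ref{linktoalpha}, with the closed-map hypothesis of properness playing the role that continuity alone played there. I would begin from the defining expression $\omega(\cl U)=\bigcap_{t\ge 2\tau}\cl\bigcup_{s\ge t}\varphi^s(\cl U)$, where $\tau>0$ is the time furnished by the attracting-neighborhood property $\varphi^t(\cl U)\subset\Int U$ for all $t\ge\tau$. The key algebraic step is the semigroup factorization $\varphi^s(\cl U)=\varphi^{t-\tau}\bigl(\varphi^{s-t+\tau}(\cl U)\bigr)$, valid for $s\ge t\ge 2\tau$; since $s-t+\tau\ge\tau$, the attracting property gives $\varphi^{s-t+\tau}(\cl U)\subset\Int U\subset\cl U$, whence $\varphi^s(\cl U)\subset\varphi^{t-\tau}(\cl U)$ and therefore $\bigcup_{s\ge t}\varphi^s(\cl U)\subset\varphi^{t-\tau}(\cl U)$.

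Taking closures then yields $\omega(\cl U)\subset\bigcap_{t\ge 2\tau}\cl\varphi^{t-\tau}(\cl U)$. This is exactly the point at which properness becomes indispensable: the closed-map hypothesis $\cl\varphi^{t-\tau}(\cl U)=\varphi^{t-\tau}(\cl U)$ lets me drop the closures, giving $\omega(\cl U)\subset\bigcap_{t\ge 2\tau}\varphi^{t-\tau}(\cl U)\subset\bigcap_{t'\ge\tau}\varphi^{t'}(\cl U)\subset\Int U$, where the last inclusion is again the attracting property. By monotonicity of $\omega$ (the analogue of the parts of Proposition~\ref{lem:props-al-om14} invoked in Proposition~\ref{linktoalpha}) we then get $\omega(U)\subset\omega(\cl U)\subset\Int U\subset U$. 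Note that Example~\ref{invnotomega} shows this containment genuinely fails without closedness: there $f$ is continuous but not closed, and $\omega(U)=X\not\subset\Int U$.

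It remains to identify $\omega(U)$ with the attractor $A=\Inv(U)$, which I would do by two inclusions. For $\omega(U)\subset\Inv(U)$, the set $\omega(U)$ is invariant and contained in $U$, so it is contained in the largest invariant subset $\Inv(U)$; here the compact-fibers part of properness enters, upgrading the automatic forward invariance of $\omega(U)$ to full invariance (cf.\ Proposition~\ref{lem:props-al-om14}, and possibly packaged in part~(viii) as in Proposition~\ref{linktoalpha}). For the reverse inclusion, $A=\Inv(U)$ is invariant with $A\subset U$, so $A=\varphi^s(A)\subset\varphi^s(U)$ for every $s\ge 0$; hence $A\subset\bigcup_{s\ge t}\varphi^s(U)\subset\cl\bigcup_{s\ge t}\varphi^s(U)$ for all $t$, giving $A\subset\omega(U)$. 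Combining these yields $A=\Inv(U)=\omega(U)\subset\Int U$, and $A$ is closed and invariant because omega-limit sets are closed and, under properness, invariant.

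I expect the main obstacle to be the two distinct uses of properness and verifying that the omega-limit identities borrowed from Proposition~\ref{lem:props-al-om14} hold in this noncompact, merely-proper setting: closedness of the maps is what forces the crucial step $\cl\varphi^{t-\tau}(\cl U)=\varphi^{t-\tau}(\cl U)$ and hence $A\subset\Int U$, while compactness of the fibers is what promotes forward invariance of $\omega(U)$ to full invariance. Both are essential, as witnessed by Example~\ref{invnotomega}, where the continuous but non-proper $f$ admits a nonempty attracting neighborhood with empty attractor yet nonempty omega-limit set.
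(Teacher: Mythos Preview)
Your proof is correct, but it takes a more hands-on route than the paper. The paper's proof is a two-line citation: an attracting neighborhood is eventually forward invariant ($\varphi^t(U)\subset U$ for $t\ge\tau$), so Proposition~\ref{invofomega123}(ii) gives $\omega(U)=\Inv(\cl U)$ directly, and Proposition~\ref{listofattpropsa}(i) already supplied $\Inv(\cl U)=\Inv(U)\subset\Int U$. In that argument, closedness of $\varphi^t$ enters only implicitly through Proposition~\ref{invofomega123}(i) (invariance of $\omega(U)$), not in locating $\omega(U)$ inside $\Int U$: eventual forward invariance alone yields $\omega(U)\subset\cl U$, and the containment in $\Int U$ then comes for free from the earlier attractor result. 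Your approach instead mirrors Proposition~\ref{linktoalpha} and uses closedness \emph{twice}: once in the factorization step $\cl\varphi^{t-\tau}(\cl U)=\varphi^{t-\tau}(\cl U)$ to force $\omega(\cl U)\subset\Int U$, and once (via compact fibers) for invariance of $\omega(U)$. This buys you a self-contained argument that makes the role of properness very explicit, at the cost of some redundancy; the paper's modular route is shorter because the heavy lifting has been packaged into the appendix. One small correction: the omega-limit facts you need (invariance under properness, and the analogue of $\Inv^+=\alpha$ when $\alpha(U)\subset U$) live in Proposition~\ref{invofomega123}, not Proposition~\ref{lem:props-al-om14}, which treats alpha-limits.
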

\begin{proof}
By definition, an attracting neighborhood is \emph{eventually} forward invariant, i.e. $\varphi^t(U)\subset U$ for all $t\ge \tau$ for some $\tau>0$.
By Propositions \ref{listofattpropsa}(i) and  \ref{invofomega123}, $A=\Inv(U)=\Inv(\cl U) = \omega(U)\subset \Int U$.
\qed
\end{proof}

Example \ref{ordertopnoninv} provides an example that without properness Proposition \ref{contclosedom} is not true in general.
Here we list some settings which imply that a dynamical system $\varphi$ on $X$ is proper, cf.\ Remark~\ref{speccaseforom}.
\begin{description}
    \item[(i)]$\varphi$ is a continuous and invertible. Note that \emph{no} conditions on the topology of $X$ are necessary;
    \item[(ii)] $\varphi$ is continuous with $X$ compact and Hausdorff;
    \item[(iii)] $\varphi$ is a continuous and closed with $X$ compact and $T_1$.
\end{description}
  
\noindent For continuous and invertible systems we also have additional invariance properties on repellers due to the time-reversal symmetry.
\begin{proposition}
\label{linktoboth}
Let $\varphi$ be a continuous and invertible dynamical system.
Then, 
both attractors and repellers are closed, invariant sets and are given by omega-limit and alpha-limit sets respectively.
\end{proposition}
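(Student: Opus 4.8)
The plan is to assemble this statement from the results already established for continuous systems (Proposition \ref{linktoalpha}) and for continuous and proper systems (Proposition \ref{contclosedom}), together with the observation recorded in the list following Proposition \ref{contclosedom} that a continuous and invertible system is automatically proper with no hypotheses on the topology of $X$. I would treat attractors and repellers separately, since each inherits its characterization from a different one of these earlier propositions, and the only genuine work is reconciling the invariance conclusions with the symmetric setting afforded by invertibility.

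For the attractor half, I first invoke case (i) of the list after Proposition \ref{contclosedom}: continuity plus invertibility implies that $\varphi$ is proper. Proposition \ref{contclosedom} then applies verbatim, so for every attracting neighborhood $U\in\sANbhd(\varphi)$ the corresponding attractor satisfies $A=\Inv(U)=\omega(U)\subset\Int U$ and is a closed, invariant set. Since every attractor arises as $\Inv(U)$ for some $U\in\sANbhd(\varphi)$ by definition, this disposes of the attractor claim.

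For the repeller half, continuity alone suffices to apply Proposition \ref{linktoalpha}: for every $U\in\sRNbhd(\varphi)$ the repeller satisfies $R=\Inv^+(U)=\alpha(U)\subset\Int U$ and is a closed, forward-backward invariant set. It remains to promote \emph{forward-backward invariant} to \emph{invariant}. Here I would use invertibility: each $\varphi^t$ is a bijection, so $\varphi^{-t}(R)$ coincides with the image of $R$ under the inverse map $(\varphi^t)^{-1}$, whence $\varphi^{-t}(R)=R$ is equivalent to $\varphi^t(R)=R$. Thus forward-backward invariance is the same as invariance in the invertible setting. Equivalently, one simply notes that an invertible system is surjective and appeals to Proposition \ref{listofreppropsa}(iii), which already asserts that a repeller is invariant whenever $\varphi$ is surjective.

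I expect the only point requiring care — rather than a real obstacle — to be this last invariance upgrade, since the paper deliberately distinguishes \emph{invariant} ($\varphi^t(S)=S$) from \emph{forward-backward invariant} ($\varphi^{-t}(S)=S$), and one must verify that the two notions coincide precisely because each $\varphi^t$ is invertible. Everything else is a direct citation, so I would keep the written proof to a few lines: establish properness via case (i), quote Proposition \ref{contclosedom} for attractors and Proposition \ref{linktoalpha} for repellers, and close the repeller case with the surjectivity clause of Proposition \ref{listofreppropsa}(iii).
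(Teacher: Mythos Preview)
Your proposal is correct. The attractor half matches the paper exactly: establish properness from continuity and invertibility, then invoke Proposition~\ref{contclosedom}. For the repeller half, however, the paper takes a different route: rather than citing Proposition~\ref{linktoalpha} and then upgrading forward-backward invariance to invariance via surjectivity (Proposition~\ref{listofreppropsa}(iii)), the paper simply invokes time reversal $t\mapsto -t$, under which repellers for $\varphi$ become attractors for the reversed (still continuous and invertible, hence proper) system, so Proposition~\ref{contclosedom} applies once more and yields closedness, invariance, and the alpha-limit characterization in one stroke. Your approach is more citation-based and makes the invariance upgrade explicit, which has the virtue of isolating exactly where surjectivity is used; the paper's symmetry argument is shorter and avoids any separate analysis of the repeller case. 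Both are sound.
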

\begin{proof}
For a continuous and invertible system the maps $\varphi^t\colon X\to X$ are homeomorphisms, and therefore closed maps. Since $\varphi^{-t}(x)=\{y\}$ are singleton sets they are compact. This proves that $\varphi$ is proper. By Proposition \ref{contclosedom} attractors are closed, invariant sets given by the omega limit set. By time reversal $t\mapsto -t$ the same applies to repellers.
    \qed
\end{proof}

\begin{remark}
Given a nonempty attracting or repelling neighborhood in $X$, the corresponding omega-limit or alpha-limit set as in Proposition \ref{linktoboth} is not neccessarily  nonempty. 
If $X$ is a compact, then omega-limit and alpha-limit sets of nonempty attracting and repelling neighborhoods are nonempty.
\end{remark}

\begin{remark}
    For a continuous and closed  systems, attractors are closed, invariant sets.
\end{remark}

\subsection{Compact spaces and duality}
\label{compspcase}
If $X$ is compact, there is stronger duality between attractors and repellers. 
\begin{lemma}
\label{nested45}
Let $U_0 \supset U_1 \supset U_2 \supset \cdots$ be a nested sequence of nonempty, compact subsets of $X$. Suppose $\bigcap_{n\ge 0} U_n \subset \Int V$. Then, there exists $n_*\ge 0$ such that $U_n \subset \Int V$ for all $n\ge n_*$.
\end{lemma}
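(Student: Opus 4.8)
The plan is to dualize to complements and invoke compactness. Set $C := X \smin \Int V$, which is closed, and put $K_n := U_n \cap C$. Since $C$ is closed and $U_n$ is compact, each $K_n$ is compact (a closed subset of the compact set $U_n$), the family is nested, and the hypothesis $\bigcap_{n\ge 0} U_n \subset \Int V$ gives
\[
\bigcap_{n\ge 0} K_n = \Bigl(\bigcap_{n\ge 0}U_n\Bigr)\cap C \subset \Int V \cap C = \varnothing.
\]
So the whole problem is to promote this empty total intersection of a nested compact family into a single empty member.

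First I would produce an index $n_*$ with $K_{n_*} = \varnothing$. For any $x \in K_0 = U_0 \cap C$ we have $x \notin \Int V$, hence $x \notin \bigcap_{n} U_n$, so $x \in U_0 \smin U_n$ for some $n$; thus $\{U_0 \smin U_n\}_{n\ge 0}$ is an open cover of $K_0$. As $K_0$ is a closed subset of the compact set $U_0$ it is compact, so finitely many of these sets suffice, and since the family is increasing in $n$ a single index $n_*$ already works: $K_0 \subset U_0 \smin U_{n_*}$, i.e. $K_0 \cap U_{n_*} = \varnothing$. Consequently $U_{n_*}\cap C \subset K_0 \cap U_{n_*} = \varnothing$, and by nesting $U_n \cap C \subset U_{n_*}\cap C = \varnothing$ for every $n \ge n_*$, which is exactly $U_n \subset \Int V$. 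This finishes the argument.

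The hard part will be making the cover genuinely open, i.e. ensuring that $U_0 \smin U_n$ is open in $U_0$, equivalently that each $U_n$ is closed in $U_0$. Everything else is just the finite-intersection-property packaging of compactness and is routine. This closedness is automatic when compact subsets are closed, in particular in a Hausdorff space, which is the setting of the results this lemma feeds into; one could equally phrase the whole step as Cantor's nested-intersection theorem for compact sets. I would therefore either carry the ambient Hausdorff (or $\mathrm{KC}$) hypothesis implicit in the surrounding section, or simply note that in all the applications the $U_n$ may be taken closed, so that no separation axiom beyond what is already assumed is actually invoked.
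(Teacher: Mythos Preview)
Your proof is correct and takes essentially the same approach as the paper: both pass to $K_n = U_n \cap (\Int V)^c$ and exploit compactness, the paper by contradiction via Cantor's intersection theorem on the nested $W_n$, you directly via the open-cover formulation on $K_0$. Your explicit flagging of the need for the $U_n$ to be closed (Hausdorff/KC) is apt---the paper's invocation of Cantor for the $W_n$ implicitly requires the same hypothesis, so you have not lost any generality relative to the original argument.
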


\begin{proof}
Suppose not, then $U_n \cap (\Int V)^c \not =  \varnothing$ for all $n$.
By definition $(\Int V)^c = \cl V^c$ and  is therefore a compact set in $X$.\footnote{The statement of the lemma also holds in arbitrary topological spaces since $\cl V^c$ is closed and thus $U_n\cap \cl V^c$ is compact.} This implies that $W_n:=U_n\cap \cl V^c$ is compact and nonempty for all $n\ge 0$.
By the Cantor Intersection Theorem $\bigcap_n U_n \not = \varnothing$. Since the sets $W_n$ also form a nested sequence of of nonempty, compact subsets of $X$,  we also have $\bigcap_n W_n \not = \varnothing$.
Now,
\[
\varnothing \not =  \bigcap_n W_n  \subset  \bigcap_n U_n \subset \Int V.
\]
On the other hand  $\varnothing\not=\bigcap_n W_n\subset \cl V^c = (\Int V)^c$, a contradiction.
\qed
\end{proof}

\begin{lemma}
\label{att1234}
Let $\varphi$ be a continuous and proper dynamical system with $X$ compact. 
Let $U,U'\in \sANbhd(\varphi)$ with $\omega(U) = \omega(U')$. Then, $\alpha(U^c) =\alpha(U'^c)$.
Similarly, if $U,U'\in \sRNbhd(\varphi)$ with $\alpha(U)=\alpha(U')$, then
$\omega(U^c) = \omega(U'^c)$.
\end{lemma}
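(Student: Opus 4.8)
The plan is to show that the dual repeller attached to an attracting neighborhood depends only on its attractor (and dually). Both assertions follow from the same mechanism, so I concentrate on the first. Write $R=\alpha(U^c)$ and $R'=\alpha(U'^c)$. By Lemma~\ref{involattrep123} the complements $U^c,U'^c$ are repelling neighborhoods, and by Proposition~\ref{linktoalpha} we have $R=\Inv^+(U^c)=\Inv^+(\cl U^c)\subseteq\cl U^c$ and likewise for $R'$, with both $R,R'$ forward--backward invariant repellers by Proposition~\ref{listofreppropsa}(iii). Since the hypothesis $\omega(U)=\omega(U')$ is symmetric in $U,U'$, it suffices to prove $R'\subseteq R$.

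The decisive step is to convert the equality of attractors into an \emph{eventual containment}. Since $\omega(U)=\omega(U')\subseteq\Int U'$ by Proposition~\ref{contclosedom}, I consider the nested family of closed (hence compact, as $X$ is compact) sets $K_t:=\cl\bigcup_{s\ge t}\varphi^s(U)$, which decrease to $\bigcap_t K_t=\omega(U)\subseteq\Int U'$. If some $K_{t_0}$ is empty the containment below is trivial; otherwise Lemma~\ref{nested45} produces $\sigma>0$ with $K_\sigma\subseteq\Int U'$. In either case, using closedness of $\varphi$ to write $\varphi^\sigma(\cl U)=\cl\varphi^\sigma(U)\subseteq K_\sigma$, I obtain
\[
\varphi^\sigma(\cl U)\subseteq\Int U'.
\]

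From this inclusion I pass to complements and preimages. Taking complements gives $\cl U'^c=(\Int U')^c\subseteq(\varphi^\sigma(\cl U))^c$, and applying $\varphi^{-\sigma}$ (which commutes with complementation) yields
\[
\varphi^{-\sigma}(\cl U'^c)\subseteq\bigl(\varphi^{-\sigma}\varphi^\sigma(\cl U)\bigr)^c\subseteq(\cl U)^c=\Int U^c\subseteq U^c.
\]
Now I exploit the invariance of $R'$: being forward--backward invariant, $R'=\varphi^{-\sigma}(R')\subseteq\varphi^{-\sigma}(\cl U'^c)\subseteq U^c$, and being in particular forward invariant it is contained in the maximal forward invariant subset of $U^c$, i.e.\ $R'\subseteq\Inv^+(U^c)=R$. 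Exchanging the roles of $U$ and $U'$ gives $R\subseteq R'$, hence $R=R'$.

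The second statement is entirely dual: for $U,U'\in\sRNbhd(\varphi)$ with $\alpha(U)=\alpha(U')\subseteq\Int U'$, one runs the same nested-set argument on $L_t:=\cl\bigcup_{s\ge t}\varphi^{-s}(\cl U)$ (whose intersection is $\alpha(U)$ by Proposition~\ref{linktoalpha}) to produce $\sigma$ with $\varphi^{-\sigma}(\cl U)\subseteq\Int U'$, then passes to complements and \emph{images} to get $\varphi^\sigma(\cl U'^c)\subseteq U^c$, and finally uses that $\Inv(U'^c)=\omega(U'^c)$ is invariant to conclude $\omega(U'^c)\subseteq\Inv(U^c)=\omega(U^c)$, with equality by symmetry. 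The only genuinely nontrivial input is the compactness-driven reduction to eventual containment via Lemma~\ref{nested45}; everything else is formal manipulation of images, preimages, and invariance.
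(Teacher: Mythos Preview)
Your proof is correct and follows essentially the same route as the paper: both use Proposition~\ref{contclosedom} to get $\omega(U)\subset\Int U'$, then Lemma~\ref{nested45} applied to the nested compact family $\cl\,\rmGamma_t^+(U)$ to obtain $\varphi^\sigma(\cl U)\subset\Int U'$, and then the complementation computation from Lemma~\ref{involattrep123} to pass to preimages on the repelling side. The only cosmetic difference is the wrap-up: the paper concludes via $\alpha(\cl U^c)=\alpha\bigl(\varphi^{-\sigma}(\cl U^c)\bigr)\subset\alpha(\cl U'^c)$ using shift-invariance and monotonicity of $\alpha$, whereas you push $R'$ directly into $U^c$ using its forward--backward invariance and then invoke $R'\subset\Inv^+(U^c)=R$; both are equally short and valid.
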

\begin{proof}
Let $U,U'\in \sANbhd(\varphi)$.
Since $\omega( U) = \omega(U')$, Proposition \ref{contclosedom} implies that $\omega(U')\subset \Int U$ and $\omega(U)\subset \Int U'$. 
Consider the nested family of nonempty, compact sets
$U_t=\cl\rmGamma^+_t(U)=\cl\bigl(\bigcup_{s\ge t}\phi^s(U)\bigr)$. 
Since  $\omega(U) =\bigcap_{t\ge 0} U_t$ is contained in the open
set $\Int U'$, Lemma \ref{nested45} yields the existence of a $\tau>0$ such that $U_t \subset \Int U'$ for all $t\ge \tau$. Therefore,  $\varphi^t(\cl U) \subset \cl\phi^t(U) \subset U_t \subset \Int U'$ for all $t\ge \tau$. 
Similarly, $\varphi^t(\cl U')\subset \Int U$ for all $t\ge \tau$, for a common choice of $\tau>0$.
This implies that
$\varphi^{-\tau}(\cl U^c)\subset \Int U'^c$   and 
$\varphi^{-\tau}(\cl U'^c)\subset \Int U^c$, cf.~Lemma \ref{involattrep123}. 
Therefore, since $U^c,U'^c\in \sRNbhd(\varphi)$,
\[
\alpha(U^c)=\alpha(\cl U^c) = \alpha\bigl(\varphi^{-\tau}(\cl U^c)\bigr) \subset \alpha(\Int U'^c) \subset \alpha(\cl U'^c)=\alpha(U'^c),
\]
and similarly $\alpha( U'^c) \subset \alpha(U^c)$.
Summarizing, we established that $\alpha( U^c) = \alpha( U'^c)$.
For repelling neighborhoods the proof is similar using the fact that 
$\alpha(U)=\alpha(U')$ if and only if $\alpha(\cl U)=\alpha(\cl U')$.
\qed
\end{proof}

\begin{remark}
    Note that for a continuous and proper system on a compact  space, an attracting neighborhood is equivalently characterized by $\omega(U)\subset \Int U$. Indeed, by Proposition \ref{contclosedom}, if $U$ is an attracting neighborhood then $\omega(U)\subset \Int U$. Conversely, as in the proof of
    Lemma \ref{att1234}, the latter condition yields $\varphi^t(\cl U)\subset \Int U$ for all $t\ge \tau$ for some $\tau>0$.
    Similarly, repelling neighborhoods are characterized by $\alpha(U)\subset \Int U$.
\end{remark}

\begin{remark}
    \label{trappexist}
    Given an attracting neighborhood $U$, then $\omega(U)\subset \Int U$. By compactness of $X$,  $\cl \bigcup_{s\in [\tau,\infty)} \varphi^s(U) \subset \Int U$, cf.\ Lemma~\ref{att1234}.
     By Remark \ref{forwinvisolnbhd}, it follows that $V=\bigcup_{s\in [0,\tau]}\varphi^s(U)\supseteq U$ is  forward invariant, 
     and by continuity
\[
\varphi^\tau(\cl V) \subset \cl\varphi^\tau(V) =
\cl \bigcup_{s\in [\tau,2\tau]}\varphi^s(U)
\subset \cl \bigcup_{s\in [\tau,\infty)}\varphi^s(U)
 \subset \Int U\subset \Int V.
\]
 which proves that $V$ is a \emph{trapping region} for $A$. This is stronger than forward invariant, isolating neighborhood and stronger than attracting neighborhood. 
By Lemma \ref{involattrep123}, $V^c\subset U^c$ is a \emph{repelling region} for $A^*$. 
Therefore, a repelling neighborhood contains a repelling region for the same repeller. Note that this is stronger than the existence of a backward invariant, isolating neighborhood as in 
Remark~\ref{backwinvisolnbhd}.
\end{remark}

The duality Lemma \ref{att1234} allows us 
to introduce the notions of dual repeller and dual attractor.
\begin{definition}
\label{defn:duals1}
Let $\varphi$ be a continuous and proper dynamical system on a compact space.
For an attractor
$A\in \sAtt(\phi)$, with attracting neighborhood $U$,
the \emph{dual repeller} 
of $A$  is defined by
\begin{equation}
    \label{dualatt2323}
A^* =\omega(U)^* =\alpha(U^c) \in \sRep(\varphi).
\end{equation}
For a repeller
$R\in \sRep(\phi)$, with repelling neighborhood $U$,
the \emph{dual  attractor}\  of $R$  is defined by
\begin{equation}
    \label{dualrep3434}
R^* =\alpha(U)^* =\omega(U^c)\in \sAtt(\varphi).
\end{equation}
\end{definition}

The pairs $(A,A^*)$ and $(R^*,R)$ are 
attractor-repeller pairs as in Definition~\ref{ARpairdefn234}.
Under the conditions in Definition~\ref{defn:duals1}, for a pair
$(A,R)\in\sARpair(\varphi)$, $R$ is uniquely determined by $A$ and vice-versa, by Lemma~\ref{att1234}. 
Example~\ref{invnotomega} provides a counterexample in the case that $X$ is not compact, i.e.~two pairs $(A,R)$ and $(A,R')$ with $R\neq R'$.

From the definitions in \eqref{dualatt2323} and \eqref{dualrep3434}, the duality maps $A\mapsto A^*$ and $R\mapsto R^*$ are involutive bijections.
By Lemma \ref{att1234} the notions of dual repeller and dual attractor are well-defined.  The wedge on $\sAtt(\varphi)$ can also be given by $A\wedge A'=\omega(A\cap A')$.
 obtain the following commuative diagram:
 \begin{equation}
        \label{dualitydiagno12}
    \begin{diagram}
    \node{\sANbhd(\varphi)}\arrow{e,lr,<>}{^c}{\cong}\arrow{s,l}{\omega} \node{\sRNbhd(\varphi)}\arrow{s,r}{\alpha} \\
    \node{\sAtt(\varphi)} \arrow{e,lr,<>}{^*}{\cong} \node{\sRep(\varphi)}
    \end{diagram}
        \end{equation}
    and  all maps are lattice homomorphisms, cf.\ \cite{LSoA1}.
\begin{lemma}
    \label{pairsandatt}
    Let $\varphi$ be a continuous and proper system with $X$ compact, then
    \[
\sAtt(\varphi) \cong \sARpair(\varphi),
    \]
via the isomorphism $A \mapsto (A,A^*)$.
\end{lemma}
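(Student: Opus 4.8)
The plan is to exhibit $\Psi\colon A\mapsto (A,A^*)$ as a well-defined bijection that respects the lattice operations, drawing on the duality already assembled in this section. First I would settle \emph{well-definedness}. Given an attractor $A$ with attracting neighborhood $U$, Definition~\ref{defn:duals1} sets $A^*=\alpha(U^c)$, and Lemma~\ref{att1234} guarantees this value is independent of the chosen neighborhood: two attracting neighborhoods with the same attractor $A=\omega(U)=\omega(U')$ yield $\alpha(U^c)=\alpha(U'^c)$. Moreover $(A,A^*)$ is a genuine attractor-repeller pair in the sense of Definition~\ref{ARpairdefn234}, because $A=\Inv(U)$ and, by Proposition~\ref{linktoalpha} (continuity), $A^*=\alpha(U^c)=\Inv^+(U^c)$. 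Thus $\Psi$ maps into $\sARpair(\varphi)$.

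Next I would check \emph{bijectivity}. Injectivity is immediate by reading off the first coordinate. For surjectivity, take any pair $(A,R)\in\sARpair(\varphi)$ with $A=\Inv(U)$ and $R=\Inv^+(U^c)$; Proposition~\ref{linktoalpha} gives $R=\Inv^+(U^c)=\alpha(U^c)=A^*$, so $(A,R)=\Psi(A)$. This is precisely the step where continuity (to identify $\Inv^+$ with $\alpha$) and compactness via Lemma~\ref{att1234} (to make $R$ a single-valued function of $A$) are essential; without them the assignment fails, as Example~\ref{invnotomega} shows.

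For the \emph{homomorphism} property, the cleanest route is through the commutative square \eqref{dualitydiagno12}. Complementation $^c\colon\sANbhd(\varphi)\to\sRNbhd(\varphi)$ is a lattice anti-isomorphism by Lemma~\ref{involattrep123}, while $\omega$ and $\alpha$ are lattice homomorphisms, so the induced duality $^*\colon\sAtt(\varphi)\to\sRep(\varphi)$ is itself a lattice anti-isomorphism. Since join/meet in $\sAtt(\varphi)$ are $\cup/\wedge$ and join/meet in $\sRep(\varphi)$ are $\cup/\cap$, this anti-isomorphism delivers the De Morgan identities $(A\cup A')^*=A^*\cap A'^*$ and $(A\wedge A')^*=A^*\cup A'^*$. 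Comparing with the operations on pairs from \eqref{ARpairbins}, I get $\Psi(A)\vee\Psi(A')=(A\cup A',\,A^*\cap A'^*)=\Psi(A\cup A')$ and $\Psi(A)\wedge\Psi(A')=(A\wedge A',\,A^*\cup A'^*)=\Psi(A\wedge A')$, so $\Psi$ preserves both operations and is the desired isomorphism.

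I expect no serious obstacle: the argument is essentially bookkeeping, with every nontrivial input already in hand. The single point demanding care is the identification $R=A^*$ matching the repeller of an \emph{abstractly given} attractor-repeller pair with the \emph{dual} repeller, and this rests squarely on Proposition~\ref{linktoalpha} and Lemma~\ref{att1234}; flagging where the compactness and continuity hypotheses enter is the only conceptual content.
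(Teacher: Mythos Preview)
Your proposal is correct and follows essentially the same approach as the paper: the key input is Lemma~\ref{att1234}, which gives the ``if and only if'' between $\omega(U)=\omega(U')$ and $\alpha(U^c)=\alpha(U'^c)$ and hence the bijection, with the lattice-homomorphism property read off from the duality diagram~\eqref{dualitydiagno12}. The paper's own proof is a single sentence citing Lemma~\ref{att1234}, leaving surjectivity (via Proposition~\ref{linktoalpha}) and the De~Morgan identities implicit; you have simply unpacked these steps.
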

\begin{proof}
Lemma \ref{att1234} states that $\omega(U)=\omega(U')$
if and only if $\alpha(U^c) = \alpha(U'^c)$, which proves the lemma.
    \qed
\end{proof}

If we use  attractor-repeller pairs in compact spaces, we obtain the following statement concerning the asymptotic behavior of orbits.
\begin{lemma}
 \label{asympattrep1221}
 Let $\varphi$ be a continuous and proper system with $X$ compact.
Let  $P=(A,A^*)$ 
be an attractor-repeller pair and 
let $x\in X\smin (A\cup A^*)$. For a complete orbit $\gamma_x$ it holds that $\omega(x)\subset A$ and $\alphaOg\subset A^*$.
 \end{lemma}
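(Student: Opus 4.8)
The plan is to reduce the whole statement to Lemma~\ref{asympattrep} together with the elementary monotonicity and shift-invariance of the $\omega$- and $\alpha$-limit operators. Write the pair as $P=(A,A^*)$ where, by Proposition~\ref{contclosedom}, $A=\Inv(U)=\omega(U)$ for an attracting neighborhood $U$, and, by Definition~\ref{defn:duals1}, $A^*=\alpha(U^c)$ with $U^c\in\sRNbhd(\varphi)$ (and $\alpha(U^c)=\Inv^+(U^c)=R$ by Proposition~\ref{linktoalpha}). Since $x\in X\smin(A\cup A^*)$ is exactly the hypothesis $x\in X\smin(A\cup R)$ of Lemma~\ref{asympattrep}, that lemma supplies a $\tau>0$ with $\gamma_x(t)\in U$ for all $t\ge\tau$ and $\gamma_x(-t)\in U^c$ for all $t\ge\tau$. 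Thus the forward tail obeys $\gamma_x^{\tau}\subset U$ and the backward tail obeys $\gamma_x^{-\tau}\subset U^c$, and this containment of tails is the only dynamical input needed.

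For the forward assertion I would invoke shift-invariance of the $\omega$-limit set, $\omega(x)=\omega\bigl(\varphi^{\tau}(x)\bigr)=\omega\bigl(\gamma_x(\tau)\bigr)$, and then monotonicity: since $\gamma_x(\tau)\in U$, we have $\omega\bigl(\gamma_x(\tau)\bigr)\subset\omega(U)=A$, whence $\omega(x)\subset A$. Equivalently, one may observe directly that $\omega(x)=\omega(\gamma_x^{\tau})$ (the $\omega$-limit depends only on a forward tail) and apply monotonicity to $\gamma_x^{\tau}\subset U$ to land in $\omega(U)=A$. Either way the identification $A=\omega(U)$ from Proposition~\ref{contclosedom} closes the argument.

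For the backward assertion the reasoning is the mirror image. Because $\gamma_x^{-\tau}\subset U^c$ and the alpha-limit $\alpha_{\rm o}$ along the fixed complete orbit depends only on its backward tail, monotonicity with respect to the containing set yields $\alphaOg\subset\alpha(U^c)=A^*$, the last equality being Definition~\ref{defn:duals1}. I expect the main obstacle to lie precisely here: in the non-invertible setting the backward orbit $\gamma_x^{-}$ is a selection among preimages, so one must verify that $\alpha_{\rm o}$ evaluated along this particular complete orbit is genuinely monotone in the set containing its backward tail, i.e.\ that $\gamma_x^{-\tau}\subset U^c$ forces $\alphaOg\subset\alpha(U^c)$. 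Once that monotonicity is confirmed, the remaining steps are routine citations of Proposition~\ref{contclosedom}, Proposition~\ref{linktoalpha}, and Definition~\ref{defn:duals1}.
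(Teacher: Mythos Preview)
Your approach is correct and matches the paper's: both invoke Lemma~\ref{asympattrep} to obtain $\tau>0$ with $\gamma_x(\tau)\in U$ and $\gamma_x(-\tau)\in U^c$, then use $\omega(x)=\omega(\gamma_x(\tau))\subset\omega(U)=A$ for the forward claim. The obstacle you flag for the backward claim is resolved in the paper by the one-line observation that $\varphi^s\bigl(\gamma_x(-\tau-s)\bigr)=\gamma_x(-\tau)\in U^c$ forces $\gamma_x(-\tau-s)\in\varphi^{-s}(U^c)$ for all $s\ge 0$, so the backward tail sits termwise inside the defining intersection for $\alpha(U^c)$ and hence $\alphaOg\subset\alpha(U^c)=A^*$; this is precisely the ``monotonicity'' you sought, and it needs only the single inclusion $\gamma_x(-\tau)\in U^c$ rather than the full tail containment.
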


 \begin{proof}
Let $U\in\sANbhd(\varphi)$. By Lemma \ref{asympattrep} there exists a $\tau>0$ such that $\gamma_x(-\tau)\subset U^c$. Then,
$\varphi^s\bigl( \gamma_x(-\tau-s)\bigr) =\gamma_x(-\tau)\in U^c$ and thus
$\gamma_x(-\tau-s) \in \varphi^{-s}\bigl( \gamma_x(-\tau)\bigr)\subset \varphi^{-s}(U^c)$ for all $s\ge 0$.
By the definition of orbital alpha-limit set this implies
\[
\begin{aligned}
\alphaOg &= \bigcap_{t\ge 0} \cl \bigcup_{s\ge t} \gamma_x(-s)
=\bigcap_{t\ge \tau} \cl \bigcup_{s\ge t} \gamma_x(-s)
= \bigcap_{t'\ge 0} \cl \bigcup_{s'\ge t'} \gamma_x(-s'-\tau)\\
&\subset \bigcap_{t'\ge 0} \cl \bigcup_{s'\ge t'} \varphi^{-s'}(U^c)=A^*.
\end{aligned}
\]
Moreover $\gamma_x(\tau)\in U$  implies $\omega(x)=\omega(\gamma_x(\tau))\subset \omega(U)=A$.
     \qed
 \end{proof}

\begin{lemma}
    \label{dualsviacapcup}
    Let $\varphi$ be a continuous and proper system with $X$ compact.
     Let $P=(A,A^*)$ be a an attractor-repeller pair.
     Then,
     \[
     A^*=\bigcap \bigl\{ U^c~|~U\in \sANbhd(\varphi)~\hbox{with~~}\omega(U) =A \bigr\}.
     \]
\end{lemma}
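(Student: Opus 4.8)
The plan is to prove the two inclusions separately, after first translating the right-hand side into a statement purely about repelling neighborhoods. Write $\mathcal{U}=\{U\in\sANbhd(\varphi):\omega(U)=A\}$ and $\mathcal{W}=\{W\in\sRNbhd(\varphi):\alpha(W)=A^*\}$. By Lemma~\ref{involattrep123} the complement map is a bijection $\sANbhd(\varphi)\to\sRNbhd(\varphi)$, and Lemma~\ref{att1234} matches the two conditions: for $U\in\mathcal{U}$ we get $\alpha(U^c)=A^*$, and for $W\in\mathcal{W}$ we get $\omega(W^c)=A$. Hence $\{U^c:U\in\mathcal{U}\}=\mathcal{W}$, so it suffices to prove $A^*=\bigcap\mathcal{W}$.

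The inclusion $A^*\subseteq\bigcap\mathcal{W}$ is immediate: for every $W\in\mathcal{W}$, Proposition~\ref{linktoalpha} gives $A^*=\alpha(W)\subseteq\Int W\subseteq W$.

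For the reverse inclusion I would exhibit, inside $\mathcal{W}$, a single countable nested family that already intersects down to $A^*$. Fix $U_0\in\mathcal{U}$ and set $W_0=U_0^c$, a repelling neighborhood with $\alpha(W_0)=A^*$; choose $\tau>0$ with $\varphi^{-t}(\cl W_0)\subseteq\Int W_0$ for all $t\ge\tau$, and define $W_n:=\varphi^{-n\tau}(\cl W_0)$. Each $W_n$ is closed by continuity, and the identity $\varphi^{-t}(W_n)=\varphi^{-n\tau}\bigl(\varphi^{-t}(\cl W_0)\bigr)\subseteq\varphi^{-n\tau}(\Int W_0)\subseteq\Int W_n$ for $t\ge\tau$ shows $W_n$ is again a repelling neighborhood; here one uses that $\varphi^{-n\tau}(\Int W_0)$ is open and contained in $W_n$, hence lies in $\Int W_n$. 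Since the same chain gives $A^*\subseteq W_n\subseteq W_0$, monotonicity of $\Inv^+$ together with $\Inv^+(A^*)=A^*$ and $\Inv^+(W_0)=\alpha(W_0)=A^*$ forces $\alpha(W_n)=\Inv^+(W_n)=A^*$, so $W_n\in\mathcal{W}$. It then remains to compute $\bigcap_{n\ge1}W_n=A^*$: the estimate from the proof of Proposition~\ref{linktoalpha} gives $\alpha(\cl W_0)\subseteq\varphi^{-t}(\cl W_0)$ for every $t\ge\tau$, hence $A^*\subseteq W_n$; conversely, if $x\in\bigcap_n W_n$ then $\varphi^{n\tau}(x)\in\cl W_0$ for all $n$, so for every $t\ge0$ choosing $n$ with $n\tau\ge t$ places $x$ in $\cl\bigcup_{s\ge t}\varphi^{-s}(\cl W_0)$, and intersecting over $t$ yields $x\in\alpha(\cl W_0)=A^*$. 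Thus $\bigcap\mathcal{W}\subseteq\bigcap_n W_n=A^*$.

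I expect the main obstacle to be the simultaneous bookkeeping in the construction of the shrinking family: one must verify at once that each $W_n$ remains a \emph{repelling neighborhood} (the interior computation $\varphi^{-n\tau}(\Int W_0)\subseteq\Int W_n$) and that it keeps the \emph{same} dual, $\alpha(W_n)=A^*$, while also checking that the backward iterates collapse exactly onto the alpha-limit set rather than to some larger set. The pointwise identification $\bigcap_n\varphi^{-n\tau}(\cl W_0)=\alpha(\cl W_0)$ is the crux of the reverse inclusion; by contrast the easy inclusion and the translation to $\mathcal{W}$ via Lemmas~\ref{involattrep123} and~\ref{att1234} are routine.
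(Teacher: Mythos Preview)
Your proof is correct, but it takes a genuinely different route from the paper's. The paper argues on the attracting side: it identifies the complement of the right-hand side as the \emph{basin of attraction} $\Bb(A)=\bigcup\{U:\omega(U)=A\}$ and shows $\Bb(A)=X\smin A^*$. The key steps are the asymptotic characterization $A^*=\{x:\omega(x)\cap A=\varnothing\}$ (from Lemma~\ref{asympattrep1221}) and the observation that for any $x\notin A^*$ and any attracting neighborhood $U$ with $\omega(U)=A$, the set $\{x\}\cup U$ is again an attracting neighborhood with the same attractor (since $\omega(\{x\}\cup U)=\omega(x)\cup A=A\subset\Int U$). This single-point trick instantly puts every $x\notin A^*$ into some $U$ with $\omega(U)=A$.

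Your approach instead passes to the repelling side via Lemmas~\ref{involattrep123} and~\ref{att1234} and exhibits an explicit shrinking family $W_n=\varphi^{-n\tau}(\cl W_0)$ of repelling neighborhoods, verifying each remains in $\mathcal{W}$ by a sandwich on $\Inv^+$ and then computing $\bigcap_n W_n=\alpha(\cl W_0)=A^*$ directly from the definition. Your argument is entirely self-contained (it does not invoke Lemma~\ref{asympattrep1221} or the omega-limit characterization of $A^*$) and gives a concrete nested neighborhood basis for $A^*$ within $\mathcal{W}$, which is extra structural information. The paper's argument is shorter and highlights the basin of attraction as a dynamical object, at the cost of importing the asymptotic lemma.
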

\begin{proof}
By Lemma \ref{asympattrep1221}, $A^*= \{x\in X\mid \omega(x)\cap A=\varnothing\}$.
 Define $\Bb(A) = \bigcup \bigl\{ U~|~U\in \sANbhd(\varphi)~\hbox{with~~}\omega(U) =A \bigr\}$, which is the \emph{basin of attraction of $A$}.
Then, $\Bb(A) \supseteq X\smin A^*$. Indeed, if $x\in X\setminus A^*$ , then $\omega(x) \subset A$ and $x\in \{x\}\cup U\in \sANbhd(\varphi)$, which yields $x\in \Bb(A)$. 
Moreover, if $x\in \Bb(A)$ then $\omega(x) \subset A$ which implies that $\Bb(A) \subset X\smin A^*$.
    \qed
\end{proof}

\section{Recurrence}
\label{recurrence}

In the prelude of this section and Section \ref{reccompsect} we start with a definition of recurrence that applies to arbitrary dynamical systems.
Based on the lattice of attractor-repeller pairs for a dynamical system, we
define the \emph{recurrent set} $\sR(\varphi)$ as the union of all complete orbits $\gamma_x$ such that
$\gamma_x\subset A\cup R$ for all attractor-repeller pairs $P=(A,R)$, and thus

\begin{equation}\label{eqn:RS}
\sR(\varphi)=\bigwedge_{P\in\sARpair(\varphi)}A\cup R \in \sInvset(\varphi).  
\end{equation}

This definition is equivalent to $x\in \sR(\varphi)$ if and only if there exists a complete orbit   $\gamma_x\subset A\cup R$ for all $P=(A,R)$. The latter is equivalent to  $\gamma_x\subset A$ or $\gamma_x\subset R$ for all $P=(A,R)$.
Indeed, suppose $\gamma_x\subset A\cup R$ and $y\in \gamma_x$. If $y\in R$, then
$\gamma_x\subset R$, since $R$ is forward-backward invariant. This implies $\gamma_x\subset A$ or $\gamma_x\subset R$.

Note that if $x\in \sR(\varphi)$, then 
 $x\in A$ or $x\in R$ for all attractor-repeller pairs $P\in \sARpair(\varphi)$, but the converse need not hold without further conditions.
However, the recurrent set is invariant by definition.
 We emphasize that no conditions on $\varphi$ or $X$ are assumed in this definition.
\begin{definition}
\label{pro}
Define a preorder on $\sR(\varphi)$ as follows: $x\le x'$ if and only if for every $P\in \sARpair(\varphi)$,  
$x'\in A$ implies $x\in A$, or equivalently, 
$x \in R$ implies $x'\in R$. 
The preorder is denoted as $\bigl(\sR(\varphi),\le\bigr)$.
\end{definition}

For a  \emph{finite} sublattice $\sAR\subset\sARpair(\varphi)$   define 
\begin{equation}\label{eqn:RSnow1}
\sR(\varphi;\sAR) = \bigwedge_{P\in \sAR} A\cup R \in \sInvset(\varphi) 
\end{equation}
On $\sR(\varphi;\sAR)$ we define a preorder as before: $x\le x'$ if and only for every $P\in \sAR$, $x'\in A$ implies $x\in A$, or equivalently
$x \in R$ implies $x'\in R$. 

\subsection{Recurrent components}
\label{reccompsect}
From Definition \ref{pro}, for $x,x'\in \sR(\varphi)$ both $x\le x'$ and $x'\le x$ if and only if either $x,x'\in A$ or $x,x'\in R$
for every $P\in\sARpair(\varphi)$. 
This condition defines an equivalence relation on $\sR(\varphi)$, denoted by $x\sim x'$, and the equivalence classes are denoted by $\xi=[x] $. This relation also defines a partial equivalence relation on $X$.

\begin{definition}\label{def:crs}
The equivalence classes $\xi\subset \sR(\varphi)$ of $\sim$
are the {\em  recurrent components} of $\varphi$. The set of recurrent components is denoted by $\RC(\varphi) = \sR(\varphi)/_\sim$. In particular,
\[
\sR(\varphi)=\bigcup_{\xi\in\RC(\varphi)} \xi.
\]
\end{definition}

By construction $(\RC(\varphi),\le)$ is a poset and the partial order can be characterized as follows: $\xi \le \xi'$ if and only if for every $P\in\sARpair(\varphi)$,
 $\xi'\subset A$ implies $\xi\subset A$, or equivalently, 
 $\xi \subset R$ implies $\xi'\subset R$.

\begin{lemma}
    \label{charofcompofR}
   Let $\xi\in \RC(\varphi)$ be a recurrent component. Then,
    \begin{equation}
        \label{charofcompofRform}
        \xi = \Biggl(\bigwedge_{\xi\subset A} A \Biggr)\cap \Biggl(\bigcap_{\xi\subset R} R\Biggr),
    \end{equation}
    and $\xi$ is invariant.
\end{lemma}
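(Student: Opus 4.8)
The plan is to prove the set equality by two inclusions, with invariance of $\xi$ serving as a common tool. Write $Y:=\bigl(\bigwedge_{\xi\subset A}A\bigr)\cap\bigl(\bigcap_{\xi\subset R}R\bigr)$ for the right-hand side, where $\bigwedge_{\xi\subset A}A=\Inv\bigl(\bigcap_{\xi\subset A}A\bigr)$ is the infimum in $\sInvset(\varphi)$ and hence an invariant set. First I would show $\xi$ is invariant. For $x\in\xi\subset\sR(\varphi)$ the witnessing complete orbit satisfies $\gamma_x\subset A$ or $\gamma_x\subset R$ for every pair $P=(A,R)$, as observed after \eqref{eqn:RS}. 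Thus every $y\in\gamma_x$ lies on the same complete orbit and makes the same $A$-or-$R$ choice as $x$ for each pair, so $y\sim x$ and $\gamma_x\subset\xi$; hence $\xi$ is a union of complete orbits and therefore invariant. In particular, for each pair $P=(A,R)$ one has $\xi\subset A$ or $\xi\subset R$.

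The inclusion $\xi\subset Y$ is then immediate: since $\xi\subset A$ for every $A$ indexing the wedge, monotonicity of $\Inv$ together with $\Inv(\xi)=\xi$ gives $\xi\subset\Inv\bigl(\bigcap_{\xi\subset A}A\bigr)=\bigwedge_{\xi\subset A}A$, while $\xi\subset\bigcap_{\xi\subset R}R$ trivially.

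The substantive direction is $Y\subset\xi$. Let $z\in Y$. Because $z$ lies in the invariant set $\bigwedge_{\xi\subset A}A$, and every invariant set is a union of complete orbits (the atoms of $\sInvset(\varphi)$), the point $z$ lies on a complete orbit $\gamma_z$ contained in that set. Now fix any pair $P=(A,R)$. If $\xi\subset A$, then $\gamma_z\subset\bigwedge_{\xi\subset A'}A'\subset A$. If instead $\xi\subset R$, then $z\in\bigcap_{\xi\subset R'}R'\subset R$, and since $R$ is forward-backward invariant the entire orbit satisfies $\gamma_z\subset R$. In either case $\gamma_z\subset A\cup R$, and as this holds for every pair, $z\in\sR(\varphi)$. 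Finally $z$ makes exactly the same $A$-or-$R$ choice as $\xi$ on each pair, so $z\sim x$ for $x\in\xi$, whence $z\in\xi$.

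I expect the crux to be the existence of the complete orbit through $z$ used above. For a non-invertible system a general point of $X$ need not lie on any complete orbit, yet membership in $\sR(\varphi)$ is defined via complete orbits; this is precisely why one argues inside the invariant set $\bigwedge_{\xi\subset A}A$, where every point does carry a complete orbit staying in the set, and then uses forward-backward invariance of the repellers to control the backward tail of that orbit as well.
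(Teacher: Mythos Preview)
Your proof is correct and follows essentially the same approach as the paper's: both first establish invariance of $\xi$ via the witnessing orbit, then verify the two inclusions, the key step being that any $z$ in the right-hand side lies on a complete orbit making the same $A$-or-$R$ choice as $\xi$ for every pair. The only cosmetic difference is that the paper invokes Lemma~\ref{invofint} to show the whole right-hand side $Y$ is invariant (so the orbit through $z$ stays in $Y$ automatically), whereas you take the orbit inside $\bigwedge_{\xi\subset A}A$ and then use forward-backward invariance of each $R$ separately---these amount to the same argument.
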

\begin{proof}
If $x\in \sR(\varphi)$, then for every $\gamma_x\subset \sR(\varphi)$, $\gamma_x\subset A$ if and only if $x\in A$, and $\gamma_x\subset R$ if and only if $x\in R$, for all $P\in \sARpair(\varphi)$. This implies in particular that $x\sim x'$ for all $x'\in \gamma_x$. Consequently, if  $x\in \xi$ 
and $\gamma_x\subset \sR(\varphi)$, then  $\gamma_x\subset \xi$ which proves that $\xi$ is invariant.

By definition $\xi\subset \bigcap\bigl\{ A\mid \xi\subset A\bigr\}\cap \bigcap\bigl\{ R\mid \xi\subset R\bigr\}$, and since $\xi$ is invariant, we have that 
\[
\xi\subset \Inv\Bigl(\bigcap\bigl\{ A\mid \xi\subset A\bigr\}\cap \bigcap\bigl\{ R\mid \xi\subset R\bigr\}\Bigr) = \bigwedge\bigl\{ A\mid \xi\subset A\bigr\}\cap \bigcap\bigl\{ R\mid \xi\subset R\bigr\},
\]
where the latter equality follows from Lemma \ref{invofint}.
If $x'\in \bigwedge\bigl\{ A\mid \xi\subset A\bigr\}\cap \bigcap\bigl\{ R\mid \xi\subset R\bigr\}$, then due to the invariance, there exists a complete orbit $\gamma_{x'}$ such that 
$\gamma_{x'}\subset A$ for all $P$ with $\xi\subset A$, and $\gamma_{x'}\subset R$ for all $P$ with $\xi\subset R$. Therefore, $x'\in \xi$ which proves the equality in \eqref{charofcompofRform}.
    \qed
\end{proof}

\begin{lemma}\label{lem:wedge-cap-forAR}
Let $\xi\in\RC(\varphi)$. Then, for $P=(A,R)\in \sARpair(\varphi)$,
\begin{description}
\item[{\rm (i)}] If $\xi\cap A\neq\varnothing$,
 then $\xi\subset A$. Likewise, if $\xi\cap R\neq\varnothing$,
 then $\xi\subset R$;
\item[{\rm (ii)}] $x\in\xi$ if and only if there exists a complete orbit $\gamma_x\subset \xi$; 
\item[{\rm (iii)}] For all $\xi\in\RC(\varphi)$ and all $P,P'\in\sARpair(\varphi)$ we have
\[
\xi\cap(A\wedge A')=\varnothing \Longleftrightarrow 
\xi\cap(A\cap A')=\varnothing.
\]
\end{description}
\end{lemma}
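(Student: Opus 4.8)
The plan is to prove the three statements in order, since (i) is the key structural fact and (ii), (iii) follow from it together with earlier results. The whole lemma rests on the characterization of $\xi$ in Lemma~\ref{charofcompofR}, namely $\xi = \bigl(\bigwedge_{\xi\subset A} A\bigr)\cap\bigl(\bigcap_{\xi\subset R} R\bigr)$, and on the definition of the recurrent set as $\sR(\varphi)=\bigwedge_{P}A\cup R$, which forces every point of $\xi$ to lie in $A$ or in $R$ for each pair.

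For part (i), suppose $\xi\cap A\neq\varnothing$ and pick $x\in\xi\cap A$. By Lemma~\ref{charofcompofR} the orbit $\gamma_x$ lies in $\xi\subset\sR(\varphi)$, and since $x\in A$ with $A$ invariant we get $\gamma_x\subset A$. Now for any other $x'\in\xi$ we have $x\sim x'$, so by the definition of $\sim$ (either $x,x'\in A$ or $x,x'\in R$ for this pair $P$), and since $x\in A$, it follows that $x'\in A$; hence $\xi\subset A$. The statement for $R$ is identical, using forward-backward invariance of $R$ from Proposition~\ref{listofreppropsa}(iii). This is essentially just unwinding the definition of the equivalence relation, so I expect it to be short.

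Part (ii) is the assertion $x\in\xi\iff$ there is a complete orbit $\gamma_x\subset\xi$. The forward direction is exactly the invariance of $\xi$ established in Lemma~\ref{charofcompofR}: if $x\in\xi$ then $\gamma_x\subset\xi$. The reverse direction is immediate: if some complete orbit through $x$ lies in $\xi$, then in particular $x\in\xi$. So (ii) is really a restatement of the invariance already proved, and I would present it in one or two lines.

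Part (iii) is where the real content lies, and I expect it to be the main obstacle. Recall $A\wedge A'=\Inv(A\cap A')\subset A\cap A'$, so the implication $\xi\cap(A\cap A')=\varnothing\Rightarrow\xi\cap(A\wedge A')=\varnothing$ is trivial. The substance is the converse: assuming $\xi\cap(A\wedge A')=\varnothing$, show $\xi\cap(A\cap A')=\varnothing$. I would argue by contraposition: suppose $\xi\cap(A\cap A')\neq\varnothing$, so $\xi\cap A\neq\varnothing$ and $\xi\cap A'\neq\varnothing$; by part~(i) this yields $\xi\subset A$ and $\xi\subset A'$, hence $\xi\subset A\cap A'$. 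Since $\xi$ is invariant (Lemma~\ref{charofcompofR}) and $A\wedge A'=\Inv(A\cap A')$ is the maximal invariant subset of $A\cap A'$, the inclusion $\xi\subset A\cap A'$ forces $\xi\subset\Inv(A\cap A')=A\wedge A'$, so $\xi\cap(A\wedge A')=\xi\neq\varnothing$. The crux is thus combining part~(i), which upgrades nonempty intersection to full containment, with the invariance of $\xi$ and the maximality defining $\wedge$; the only point requiring care is that $\xi$ is nonempty as a genuine equivalence class so that $\xi\neq\varnothing$ gives a contradiction.
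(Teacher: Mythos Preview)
Your proposal is correct and follows essentially the same approach as the paper: part~(i) is the definition of $\sim$, part~(ii) is the invariance of $\xi$ from Lemma~\ref{charofcompofR}, and part~(iii) combines~(i) with the invariance of $\xi$ to pass from $\xi\subset A\cap A'$ to $\xi\subset\Inv(A\cap A')=A\wedge A'$. The orbit detour in your argument for~(i) is unnecessary (the definition of $\sim$ alone suffices, as the paper notes in one line), but it does no harm.
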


\begin{proof}
\noindent{\bf(i)}
Let $x\in\xi\cap A$, then $\xi=[x] \subset A$ by definition of $\sim$, and the same holds for repellers.

\noindent{\bf(ii)}
Since the recurrent component $\xi$ is invariant, we have that
$x\in \xi$ if and only if there exists a complete orbit $\gamma_x\subset \xi$. 

\noindent{\bf(iii)}
By the invariance of $\xi$ we have that $\xi\subset A\cap A'$ if and only if $\xi\subset A\wedge A'$.
\qed
\end{proof}

We now establish some properties concerning the relationship between 
recurrent components and attractors.
In particular, $\xi < \xi'$ if and only if $\xi \le \xi'$ and there exists a
$P\in\sARpair(\varphi)$ such that $\xi \subset A$ and
$\xi'\subset R$. 
Moreover, $\xi$ and $\xi'$ are unordered, denoted by  $\xi\Vert \xi'$, if and only if 
$\xi\not\le\xi'$ and $\xi'\not\le\xi$. The latter is equivalent to the existence distinct 
attractor-repeller pairs $P\neq P'$ 
such that $\xi\subset A$ and $\xi'\subset R$,
and $\xi'\subset A'$ and $\xi\subset R'$, i.e.
$\xi \subset A,R'$ and $\xi' \subset R,A'$.

In the case of $\sR(\varphi;\sAR)$ the equivalence classes of the preorder yield the partially ordered set
$\RC(\varphi;\sAR) := \sR(\varphi;\sAR)/_\sim$.

\begin{remark}
    \label{equivexprforrec}
    In Section \ref{emdofPr} we show $\sR(\varphi;\sAR)$  is given by
\begin{equation}\label{eqn:RS2}
\sR(\varphi;\sAR) = \bigwedge_{P\in \sAR} A\cup R =\bigcap_{P\in\sAR}A\cup R.  
\end{equation}
The expression in \eqref{eqn:RS2} implies, since  $\sAR$ is finite, that the components $\xi\in \RC(\varphi;\sAR)$ are given as the intersection of an attractor and a repeller, cf.\ Rem.\ \ref{invisint}. As such,  $\RC(\varphi;\sAR)$ is the \emph{Morse representation} subordinate to $\sAR$, cf.\ Sect.\ \ref{profcc}, Sect.\ \ref{emdofPr} and \cite[Sect.\ 8.1]{LSoA3}.
\end{remark}

\subsection{Recurrence for continuous dynamics on compact spaces}
\label{reccontcomp}

Equations~\eqref{eqn:RS} and~\eqref{eqn:RSnow1} defining $\sR(\varphi)$ and $\RC(\varphi)$
provide a general characterization of recurrence that applies to all dynamical systems. In the case where a system has some topological properties, a more classical characterization can be obtained. In this section, we give alternative definitions of $\sR(\varphi)$ and $\RC(\varphi)$, which we show coincide with those in Section~\ref{reccompsect}.

In particular, in a compact space omega-limit sets of nonempty sets are nonempty. This motivates the alternative definition of the recurrent set, using Lemma \ref{pairsandatt}, which states that the attractor-repeller pairs are determined by the attractors. In Section \ref{subsec:OI} we show that the definition of recurrent set below is identical to the definition in \eqref{eqn:RS}.

\vskip 6pt
\noindent\textbf{From this section through the end of Section \ref{attnbhds}, we assume that $\varphi$ is a continuous and proper dynamical system  and  $X$ is a compact space.}
\vskip 6pt
 
 Let $\varphi$ be a continuous  and proper dynamical system on a compact space $X$.
Define the \emph{recurrent set} $\sR(\varphi)$ as the set of all points $x\in X$ for which $x\in A\cup A^*$ for all  attractors $A\in \sAtt(\varphi)$, and thus
\begin{equation}\label{eqn:RS1221}
\sR(\varphi)=\bigcap_{A\in\sAtt(\varphi)}A\cup A^*,   
\end{equation}
which is equivalent to $x\in \sR(\varphi)$ if and only if $x\in A$ or $x\in A^*$ for all $A\in \sAtt(\varphi)$.

As before, both $x\le x'$ and $x'\le x$ if and only if either $x,x'\in A$ or $x,x'\in A^*$
for every $A\in\sAtt(\varphi)$. 
This   defines an equivalence relation 
and the set of  recurrent components is denoted by  $\RC(\varphi) =\sR(\varphi)/_\sim$. The order on the  recurrent components is obtained as before.

\begin{lemma}\label{lem:wedge-cap}
Let $\xi\in\RC(\varphi)$. Then,
\begin{description}
\item[{\rm (i)}] If $\xi\cap A\neq\varnothing$,
 then $\xi\subset A$. Likewise, if $\xi\cap A^*\neq\varnothing$,
 then $\xi\subset A^*$;
\item[{\rm (ii)}] $x\in\xi$ if and only if $\omega(x)\subset \xi$;
\item[{\rm (iii)}] For all $\xi\in\RC(\varphi)$ and all $A,A'\in\sAtt(\varphi)$ we have
\[
\xi\cap(A\wedge A')=\varnothing \Longleftrightarrow 
\xi\cap(A\cap A')=\varnothing.
\]
\end{description}
\end{lemma}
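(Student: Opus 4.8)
The plan is to transport the proof of Lemma~\ref{lem:wedge-cap-forAR} to the present setting, where Lemma~\ref{pairsandatt} lets me replace a general pair $P=(A,R)$ by the canonical pair $(A,A^*)$, and to exploit two features special to the continuous, proper, compact case: each attractor $A$ is closed and invariant (Prop.~\ref{contclosedom}) while each dual repeller $A^*$ is closed and forward(-backward) invariant (Prop.~\ref{linktoalpha}), with $A\cap A^*=\varnothing$ since $A=\Inv(U)\subset U$ and $A^*=\alpha(U^c)\subset U^c$; and every $\omega$-limit set is a nonempty, invariant subset of the compact space $X$. As $\xi\subset\sR(\varphi)$, I take $x\in\sR(\varphi)$ throughout, so that the relation $\sim$ is defined. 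For (i) I mimic Lemma~\ref{lem:wedge-cap-forAR}(i): choose $x\in\xi\cap A$ and an arbitrary $x'\in\xi$; since $x\sim x'$, the defining property of $\sim$ puts $x$ and $x'$ both in $A$ or both in $A^*$, and because $x\in A$ and $A\cap A^*=\varnothing$ the latter is impossible, so $x'\in A$ and hence $\xi\subset A$. The claim for $A^*$ is symmetric.

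For (ii), consider first $x\in\xi$. For every attractor $A$ we have $x\in A$ or $x\in A^*$, and by (i) this upgrades to $\xi\subset A$ or $\xi\subset A^*$. In the first case $\omega(x)\subset A$ because $A$ is closed and invariant; in the second $\omega(x)\subset A^*$ because $A^*$ is closed and forward invariant. Hence each $y\in\omega(x)$ lies on the same side as $x$ for every $A$, so $y\in\bigcap_A(A\cup A^*)=\sR(\varphi)$ and $y\sim x$, giving $\omega(x)\subset\xi$. Conversely, assume $\omega(x)\subset\xi$ with $x\in\sR(\varphi)$, and let $\xi'=[x]$ be its own component; the forward implication gives $\omega(x)\subset\xi'$. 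Because $X$ is compact, $\omega(x)$ is a nested intersection of nonempty compact sets and is therefore nonempty, so $\omega(x)\subset\xi\cap\xi'$, and since distinct recurrent components are disjoint this forces $\xi=\xi'$, i.e.\ $x\in\xi$.

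For (iii), the implication from $\xi\cap(A\cap A')=\varnothing$ to $\xi\cap(A\wedge A')=\varnothing$ is immediate since $A\wedge A'=\Inv(A\cap A')\subset A\cap A'$. For the converse I argue contrapositively: if $x\in\xi\cap(A\cap A')$, then (i) gives $\xi\subset A$ and $\xi\subset A'$, so $\xi\subset A\cap A'$; by (ii), $\omega(x)\subset\xi\subset A\cap A'$, and since $\omega(x)$ is invariant and nonempty, $\omega(x)\subset\Inv(A\cap A')=A\wedge A'$, whence $\xi\cap(A\wedge A')\supset\omega(x)\neq\varnothing$. This mirrors the invariance argument used in Lemma~\ref{lem:wedge-cap-forAR}(iii), with the nonempty limit set $\omega(x)$ playing the role of a witnessing orbit.

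The main obstacle is part (ii): one must ensure the limit set cannot leak out of the component $\xi$, which rests entirely on the closedness together with invariance/forward-invariance of \emph{both} $A$ and $A^*$ in this setting, and on the nonemptiness of $\omega$-limit sets on the compact $X$. A subtle point worth flagging is that the reverse implication in (ii) genuinely requires $x\in\sR(\varphi)$: a transient point asymptotic to $\xi$ has $\omega(x)\subset\xi$ yet need not lie in $\xi$, so recurrence of $x$ cannot be dropped. Everything else is a routine transcription of Lemma~\ref{lem:wedge-cap-forAR} once the dictionary $R\leftrightarrow A^*$ and the asymptotic statements of Lemma~\ref{asympattrep1221} are in place.
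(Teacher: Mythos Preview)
Your proof is correct and follows essentially the same route as the paper: part (i) is the definition of $\sim$, part (ii) uses closedness and forward invariance of $A$ and $A^*$ together with nonemptiness of $\omega(x)$ and disjointness of components, and part (iii) combines (i), (ii), and the fact that the nonempty $\omega(x)$ lands in $A\wedge A'$. The only cosmetic difference is that in (iii) the paper invokes monotonicity, $\omega(x)\subset\omega(A\cap A')=A\wedge A'$, whereas you use invariance of $\omega(x)$ to get $\omega(x)\subset\Inv(A\cap A')=A\wedge A'$; both are valid, and your remark that the converse in (ii) requires $x\in\sR(\varphi)$ is a useful clarification the paper leaves implicit.
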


\begin{proof}
\noindent{\bf(i)}
Let $x\in\xi\cap A$, then $\xi=[x] \subset A$ by definition of $\sim$, and the same holds for repellers. 

\noindent{\bf(ii)}
Let $x\in \xi$, then for all $A\in \sAtt(\varphi)$ either $x\in A$ or $x\in A^*$. Since attractors and repellers are forward invariant and closed,
it follows that $\omega(x)\subset A$ whenever $x\in A$, and $\omega(x)\subset A^*$ whenever $x\in A^*.$ Hence $\omega(x)\subset\xi$, which proves $x\in\xi$ implies $\omega(x)\subset \xi$. Since recurrent components are disjoint, and both $x$ and $\omega(x)$ lie in some component, $x\in\xi$ if and only if $\omega(x)\subset\xi$.

\noindent{\bf(iii)}
Now, suppose $x\in\xi\cap(A\cap A')$.
By (i), $x \in \xi \subset A\cap A'.$ Since $X$ is compact, $\omega(x)\neq\varnothing,$ and by (ii), $\omega(x) \subset \xi.$ Again, since attractors are forward invariant and closed, $\omega(x)\subset\omega(A\cap A')= A\wedge A'$.
Consequently, $\xi\cap (A\wedge A')\neq\varnothing$.
The converse implication is immediate.
\qed
\end{proof}

\begin{theorem}
\label{prop:RCinv}
Let $\varphi$ be a continuous  and proper dynamical system on a compact topological space $(X,\scrT)$. Then, 
every recurrent component $\xi\in\RC(\varphi)$ is invariant. In particular,
$\sR(\varphi)$ in an invariant set.
\end{theorem}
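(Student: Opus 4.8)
The plan is to prove the two inclusions $\varphi^t(\xi)\subseteq\xi$ and $\xi\subseteq\varphi^t(\xi)$ separately, for each fixed $\xi\in\RC(\varphi)$ and each $t\ge 0$; invariance of $\sR(\varphi)$ then follows at once since $\sR(\varphi)=\bigcup_{\xi}\xi$. I will use three structural facts already available here: by Proposition~\ref{contclosedom} each attractor $A$ is closed and invariant, so $\varphi^t(A)=A$; by Propositions~\ref{linktoalpha} and \ref{listofreppropsa}(iii) each dual repeller $A^*$ is closed and forward–backward invariant, so $\varphi^{-t}(A^*)=A^*$; and $A\cap A^*=\varnothing$, since $A\subset\Int U$ and $A^*\subset\Int(U^c)$ for a common attracting neighborhood $U$ (Propositions~\ref{listofattpropsa}(i) and \ref{listofreppropsa}(i)). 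In particular, for $x\in\sR(\varphi)$ and each $A\in\sAtt(\varphi)$ exactly one of $x\in A$, $x\in A^*$ holds, and $x\sim x'$ precisely when $x$ and $x'$ have the same membership profile across all attractors.

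For the forward inclusion, let $x\in\xi$. Since the orbital $\omega$-limit set is unchanged under a forward time shift, $\omega(\varphi^t(x))=\omega(x)$, and by Lemma~\ref{lem:wedge-cap}(ii) we have $\omega(x)\subseteq\xi$; hence $\omega(\varphi^t(x))\subseteq\xi$ and a second application of Lemma~\ref{lem:wedge-cap}(ii) gives $\varphi^t(x)\in\xi$. Thus $\varphi^t(\xi)\subseteq\xi$. (Equivalently, membership is preserved directly: $x\in A$ forces $\varphi^t(x)\in\varphi^t(A)=A$, while $x\in A^*$ forces $\varphi^t(x)\in A^*$ via $\varphi^{-t}(A^*)=A^*$, so the profile of $\varphi^t(x)$ matches that of $x$.)

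The substantive step is the reverse inclusion $\xi\subseteq\varphi^t(\xi)$, i.e.\ producing for each $x\in\xi$ a preimage $z\in\xi$ with $\varphi^t(z)=x$. Consider the family of closed sets $\{A\cap\varphi^{-t}(x):A\in\sAtt(\varphi),\ x\in A\}$ inside the fiber $\varphi^{-t}(x)$, which is compact by properness (Definition~\ref{properdynsys}(i)). I claim this family has the finite intersection property. Given finitely many attractors $A_1,\dots,A_k$ containing $x$, their wedge $B=A_1\wedge\cdots\wedge A_k$ is again an attractor (Proposition~\ref{listofattpropsa}(iii)), and since $x\in\xi\cap(A_1\cap\cdots\cap A_k)$, repeated application of Lemma~\ref{lem:wedge-cap}(iii) and (i) gives $\xi\subseteq B$, whence $x\in B$. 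As $B$ is invariant, $x\in B=\varphi^t(B)$ produces a point of $\varphi^{-t}(x)\cap B\subseteq\varphi^{-t}(x)\cap A_1\cap\cdots\cap A_k$, so the finite intersection property holds. By compactness of the fiber the total intersection $\varphi^{-t}(x)\cap\bigcap_{A\ni x}A$ is nonempty; choose $z$ in it. Then $z\in A$ whenever $x\in A$, and $z\in\varphi^{-t}(x)\subseteq\varphi^{-t}(A^*)=A^*$ whenever $x\in A^*$; since $A\cap A^*=\varnothing$ these alternatives are mutually exclusive, so $z$ has exactly the membership profile of $x$. Hence $z\in\sR(\varphi)$ with $z\sim x$, that is $z\in\xi$, and $\varphi^t(z)=x$, proving $\xi\subseteq\varphi^t(\xi)$.

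Combining the two inclusions gives $\varphi^t(\xi)=\xi$ for all $t\ge 0$, so every recurrent component is invariant, and therefore so is $\sR(\varphi)=\bigcup_\xi\xi$. I expect the reverse inclusion to be the only genuine obstacle: it is precisely here that properness is used, since compactness of the fibers $\varphi^{-t}(x)$ is what upgrades the finite intersection property into an actual preimage lying in $\xi$. The forward inclusion, by contrast, is purely formal and uses only the invariance of attractors together with the forward–backward invariance of repellers.
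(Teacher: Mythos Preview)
Your argument is correct. The forward inclusion is routine, and your reverse inclusion---selecting a preimage in the compact fiber $\varphi^{-t}(x)$ by the finite intersection property applied to the closed sets $A\cap\varphi^{-t}(x)$ over attractors $A\ni x$---is exactly the right use of properness. The wedge $B=A_1\wedge\cdots\wedge A_k$ is an invariant attractor containing $x$ by Lemma~\ref{lem:wedge-cap}(i),(iii), so $B\cap\varphi^{-t}(x)\neq\varnothing$ witnesses the finite intersection property; compactness of the fiber then yields the desired $z$.

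The paper takes a different route. It first develops the bijection $\rmPsi\colon\sfS\sAtt(\varphi)\to\RC(\varphi)$ (Lemma~\ref{thm:bijection}) and represents each $\xi=\rmPsi(I)$ as the intersection $\bigcap_{\sA}\pi_\sA(I)$ of an inverse system of closed invariant Morse sets indexed by finite sublattices $\sA\subset\sAtt(\varphi)$ (Lemma~\ref{nonemptylimit}); invariance then follows from the general fact that images under a map with compact fibers commute with such intersections (Theorem~\ref{inverssysofinvsetsappl}, Corollary~\ref{invariance}). The analytic heart of that general fact---see the proof of Theorem~\ref{inverssysofinvsets} in the appendix---is precisely your finite-intersection-property argument on a compact fiber. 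So you have inlined the same mechanism directly against the attractors, bypassing the profinite and Priestley scaffolding. Your route is shorter and more elementary for this particular statement; the paper's route has the advantage of simultaneously yielding the structural identification $\RC(\varphi)\cong\sfS\sAtt(\varphi)$ needed later.
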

The proof Theorem \ref{prop:RCinv} is provided in Section \ref{subsec:OI}.

As before let $\sA\subset\sAtt(\varphi)$ be a \emph{finite} sublattice, and define 
\begin{equation}\label{eqn:RS33}
\sR(\varphi;\sA)=\bigcap_{A\in\sA}A\cup A^*.  
\end{equation}
The equivalence classes  yield 
$\RC(\varphi;\sA) := \sR(\varphi;\sA)/_\sim$.
The restricted recurrent sets with respect to a finite sublattice $\sA$ will play a prominent role in Section \ref{profcc} in order to derive properties of  recurrence such as invariance.

\begin{remark}
 For a continuous and invertible dynamical system 
 the invariance of $\sR(\varphi)$ is immediate, since images of intersections are equal to intersection of images.
\end{remark}

\begin{remark}
   The fact that the expression for $\sR(\varphi)$ in \eqref{eqn:RS1221} is invariant by Theorem~\ref{prop:RCinv} shows that the definition of the recurrent set in \eqref{eqn:RS1221} coincides with the expression in \eqref{eqn:RS} whenever $\varphi$ is a proper system on a compact space, see Section~\ref{emdofPr}. 
\end{remark}

\subsection{The Conley  relation}
\label{conleychainrecsect}
In \cite[Defn.\ 6.1]{Conley} Conley defines a  transitive relation on $X$ which captures certain characteristics of the gradient behavior of a continuous and invertible dynamical system $\varphi$ on a compact, Hausdorff space. 
This relation induces a partial order on $\RC(\varphi)$, which is not mentioned in \cite{Conley}. We show that this induced order is the opposite of the  order $(\RC(\varphi),\le)$, cf.\ Sect.\ \ref{reccontcomp}, for spaces $X$ for which Conley's relation is well-behaved, such as compact, Hausdorff spaces, c.f.\ \cite{Conley}, \cite{patrao}.
We start by recalling 
Conley's construction. 
\begin{definition}
\label{chain}
Let $\scrU$ be an open covering of $X$. Given $\tau > 0$,
a \emph{$(\scrU,\tau)$-chain} from $x$ to $x'$ is a   sequence
of points
$\{x_1,\cdots, x_{n+1}\}$ with $x_1=x$ and $x_{n+1} = x'$, and a sequence of times $\{t_1,\cdots, t_n\}$ with $t_i\ge \tau$, such that for every $i=1,\cdots,n$ there exist open sets $U_i\in \scrU$ such that
$\varphi^{t_i}(x_i), x_{i+1} \in U_i$.
Define the {\em Conley  relation} $\scrC\subset X\times X$ as follows. A pair $(x,x') \in \scrC$ if and only if for every open covering $\scrU$ and every $\tau>0$ there exists a $(\scrU,\tau)$-chain from $x$ to $x'$.
\end{definition}

By definition $\scrC$ is a transitive relation on $X$, cf.\ \cite[Assrt.\ 6.1.A]{Conley}.
A point $x$ is called \emph{chain recurrent} if $(x,x)\in \scrC$. The set of all chain recurrent points is exactly $\sR(\varphi)$ by  \cite[Assrt.\ 6.2.A]{Conley}. Furthermore, the {\em chain components} are the equivalence classes of the relation defined by $x\sim_{\scrC}x'$ if and only if $(x,x')\in\scrC$ and $(x',x)\in\scrC.$
Finally, the relation $\scrC$ induces the following partial order on the chain components $[x]_{\sim_\scrC}\le_\scrC[x']_{\sim_\scrC}$ if $(x,x')\in\scrC.$ This is well-defined by transitivity of $\scrC.$
It is proved in \cite[Assrt.\ 6.1.A]{Conley} that $\scrC$ is a closed relation provided that $X$ is a compact, Hausdorff space. Under these conditions the following equivalence is true.

\begin{proposition}\label{prop:conley-order}
Let $\varphi$ be a continuous and invertible dynamical system on a compact, Hausdorff space $X$. 
The chain components identically correspond to the recurrent components in $\RC(\varphi).$
Moreover, the partial order $\le$ on $\RC(\varphi)$ from Definition \ref{def:crs} is opposite to the order $\le_{\scrC}$ induced on $\RC(\varphi)$ by the Conley  relation.
\end{proposition}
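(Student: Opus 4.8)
The plan is to funnel the entire statement through one equivalence that ties the Conley relation to membership in attractors. Precisely, for $x,x'\in\sR(\varphi)$ I would establish
\[
(x,x')\in\scrC \quad\Longleftrightarrow\quad \bigl(\, x\in A \ \Rightarrow\ x'\in A\ \text{ for every } A\in\sAtt(\varphi)\,\bigr).
\]
Granting this, the proposition is pure bookkeeping. By \cite[Assrt.\ 6.2.A]{Conley} the chain recurrent set equals $\sR(\varphi)$, so the chain components and the recurrent components partition the same set. Now $x\sim_{\scrC}x'$ means $(x,x')\in\scrC$ and $(x',x)\in\scrC$, which by the displayed equivalence reads ``$x\in A \Leftrightarrow x'\in A$ for every attractor $A$''; this is exactly the relation $\sim$ of Definition~\ref{def:crs}, so the two partitions coincide. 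For the orders, $[x]\le_{\scrC}[x']$ means $(x,x')\in\scrC$, i.e.\ $x\in A\Rightarrow x'\in A$ for all $A$, which is precisely $[x']\le[x]$ in the order of Definition~\ref{def:crs}; hence $\le_{\scrC}$ is the opposite of $\le$.

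The forward implication is the statement that \emph{attractors are forward chain invariant}. Fix an attractor $A$ and an attracting neighborhood $N$ with $A=\omega(N)\subset\Int N$. By Remark~\ref{trappexist} the set $K:=\cl\bigcup_{s\ge\tau}\varphi^s(N)$ is, for $\tau$ large, a compact subset of the open set $\Int N$. Using normality of the compact Hausdorff space $X$, I choose an open cover $\scrU$ and a time $\tau'\ge\tau$ fine enough that every $U\in\scrU$ meeting $K$ is contained in $\Int N$. An induction then shows that any $(\scrU,\tau')$-chain beginning in $\cl N$ stays in $\cl N$: if $x_i\in\cl N$ and $t_i\ge\tau'$ then $\varphi^{t_i}(x_i)\in K$, so the cover element $U_i\ni\varphi^{t_i}(x_i)$ lies in $\Int N$ and $x_{i+1}\in\Int N$. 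Thus, if $x\in A$ and $(x,x')\in\scrC$, realizing the chain for this $(\scrU,\tau')$ forces $x'\in\cl N$. Letting $N$ range over the attracting neighborhoods $K_t=\cl\bigcup_{s\ge t}\varphi^s(N_0)$, which satisfy $\bigcap_t K_t=\omega(N_0)=A$ by Remark~\ref{trappexist} and Proposition~\ref{contclosedom}, gives $x'\in A$.

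For the reverse implication I argue by contraposition: assuming there is no chain from $x$ to $x'$, I build an attractor $A$ with $x\in A$ and $x'\notin A$. Fix $(\scrU_0,\tau_0)$ for which no $(\scrU_0,\tau_0)$-chain runs from $x$ to $x'$, and let $V$ be the set of all endpoints of $(\scrU_0,\tau_0)$-chains issuing from $x$ (together with $x$ itself); then $x\in V$ and $x'\notin V$. Because a chain may always be extended by one further step, $V$ absorbs a star-neighborhood of each forward image, so the open set $W:=\bigcup_{y\in V,\,t\ge\tau_0}\St(\varphi^t(y),\scrU_0)$ obeys $\varphi^t(V)\subset W\subset V$ for $t\ge\tau_0$. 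Upgrading the forward invariance $\varphi^t(W)\subset W$ to the strict attracting inclusion $\varphi^t(\cl W)\subset\Int W$ exhibits $W$ as an attracting neighborhood, and I set $A=\omega(W)=\Inv(W)$. Since $\varphi^t(x)\in W$ for $t\ge\tau_0$, we get $\omega(x)\subset\omega(W)=A$, whence $x\in A$ by Lemma~\ref{lem:wedge-cap}; and $A\subset W\subset V$ while $x'\notin V$, so $x'\notin A$.

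The main obstacle is exactly this last upgrade from forward invariance to the \emph{strict} attracting inclusion in the compact Hausdorff (non-metric) setting: one must pass between two scales---a coarser cover and a longer time---so that the extra tolerance built into a $(\scrU_0,\tau_0)$-chain, combined with the closedness of $\varphi$ (so that $\varphi^t(\cl W)=\cl\varphi^t(W)$), pushes the flowed closure strictly inside $W$. This is the compact Hausdorff analogue of Conley's fundamental separating-attractor construction, cf.\ \cite{Conley,Akin1,patrao}, and it is where continuity, invertibility, and compact Hausdorffness of $X$ are genuinely used, both to run the chain argument and---via Proposition~\ref{contclosedom} and the closedness of $\scrC$---to guarantee that the sets produced are honest attractors.
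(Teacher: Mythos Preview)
Your reduction to the equivalence
\[
(x,x')\in\scrC \ \Longleftrightarrow\ \bigl(x\in A\ \Rightarrow\ x'\in A\text{ for every }A\in\sAtt(\varphi)\bigr),\qquad x,x'\in\sR(\varphi),
\]
is exactly the engine the paper uses, and your subsequent bookkeeping (chain components $=$ recurrent components; $\le_\scrC$ opposite to $\le$) matches the paper's. The difference lies only in how this equivalence is obtained.

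The paper does not reprove it. It quotes \cite[Assrt.\ 6.1.C]{Conley}, which states $\rmOmega(x)=\bigcap_{A\supseteq\omega(x)}A$, and combines this with Lemma~\ref{lem:wedge-cap}(ii) (for $x\in\sR(\varphi)$ one has $\omega(x)\subset A\iff x\in A$) to obtain $\rmOmega(x)=\bigcap_{A\ni x}A$. Since $(x,x')\in\scrC$ means $x'\in\rmOmega(x)$, the displayed equivalence drops out in one line.

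Your forward direction is sound; the sets $K_t=\cl\bigcup_{s\ge t}\varphi^s(N_0)$ really are attracting neighborhoods once you note $K_{t+\tau}=\varphi^t(K_\tau)\subset\varphi^t(\Int N_0)\subset\Int K_t$ via Remark~\ref{trappexist} and the fact that $\varphi^t$ is a homeomorphism. But your reverse direction is where the content lies, and you explicitly leave the ``upgrade'' from $\varphi^t(W)\subset W$ to $\varphi^t(\cl W)\subset\Int W$ unproved, deferring to \cite{Conley,Akin1,patrao}. That upgrade \emph{is} the heart of \cite[Assrt.\ 6.1.C]{Conley}. Since you already accept \cite[Assrt.\ 6.2.A]{Conley} as a citation, the paper's route of also citing 6.1.C is both shorter and gap-free; your attempted reconstruction ultimately appeals to the same reference at its hardest step and so buys nothing.
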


\begin{proof}
Following \cite[Sect.\ 6.1]{Conley}, for $x\in X$ define $\rmOmega(x)=\{x'~|~(x,x')\in \scrC\}.$ 
By definition, $(x,x')\in\scrC$ if and only if $x'\in\rmOmega(x)$. 
From  \cite[Assrt.\ 6.1.C]{Conley}, 
\begin{equation}\label{eqOmega12}
\rmOmega(x)=\bigcap_{A\supseteq \omega(x)}A,
\end{equation}
and thus $(x,x')\in \scrC$ if and only if $x'\in A$ for all attractors $A$ such that $\omega(x)\subset A$.
Note that if $x\in\sR(\varphi),$ then $x\in\rmOmega(x)$ by Lemma~\ref{lem:wedge-cap}(ii).
Thus $x$ and $x'$ lie in the same chain component if and only if both $x,x'\in\rmOmega(x)$ and $x,x'\in\rmOmega(x')$.
For $x\in \sR(\varphi)$  Lemma~\ref{lem:wedge-cap}(ii) implies that  $x$ and $\omega(x)$ lie in the same recurrent component, which is equivalent to $x\in A$ if and only if $\omega(x)\subset A.$ Therefore 
\begin{equation}\label{eqOmega}
\rmOmega(x)=\bigcap_{A\ni x}A \quad \text{for } x\in \sR(\varphi).
\end{equation}
Consequently, $x$ and $x'$ lie in the same chain component if and only if $x\in A \iff x'\in A$ for all $A\in\sAtt(\varphi),$ i.e.\ $x$ and $x'$ lie in the same recurrent component in $\RC(\varphi)$. Hence the chain components coincide with  recurrent components.

We now restrict $\scrC$ to $\sR(\varphi)$, and by transitivity this induces a well-defined partial order  on the recurrent components $\RC(\varphi)$.
Then, for $\xi\in \RC(\varphi)$ denote $\{\xi'~|~(\xi,\xi')\in \scrC\}$ again by $\rmOmega(\xi)$.
By definition, $\xi\le_\scrC \xi'$ if and only if $\xi'\in \rmOmega(\xi).$
From \eqref{eqOmega}, it follows by transitivity that
$$
\rmOmega(\xi)=\bigcap_{A\supseteq \xi}A.
$$
Therefore, $\xi'\in \rmOmega(\xi)$ if and only if $\xi\subset A$ implies $\xi'\subset A$ for all $A\in\sAtt(\varphi)$, i.e. $\xi'\le\xi$.
\qed
\end{proof}

We continue with the relation between lattice theory and  recurrence in the case of a continuous and proper dynamical system on a compact topological space.
We show that the partial order defined on $\RC(\varphi)$ can be derived from the lattice theoretic properties of the lattice of attractors $\sAtt(\varphi)$.

\section{Profinite properties of  recurrent components}
\label{profcc}
In this section $\varphi$ is a continuous and proper dynamical system on a compact space $X$,
which allows us to use the attractors $\sAtt(\varphi)$ to characterize attractor-repeller pairs by Lemma~\ref{pairsandatt}. 
For bounded, distributive lattices there exists a powerful duality between order, topology, and algebra.

For the lattice of  attractors, $\sAtt(\varphi)$,  a
subset $I\subset \sAtt(\varphi)$  is a prime ideal if and only if $I=h^{-1}(0)$ for some lattice homomorphism $h\colon \sAtt(\varphi)\to {\bf 2}:=\{0,1\}$. The set of prime ideals, the spectrum $\sfS\sAtt(\varphi)$, is a poset with partial order given by set-inclusion of prime ideals.
The map
\begin{equation}\label{jmap}
\begin{aligned}
\jmath\colon \sAtt(\varphi) & \to \sSet\bigl(\sfS\sAtt(\varphi)\bigr) \\
A &\mapsto \jmath(A) =  \{I\in \sfS\sAtt(\varphi)~|~A\not \in I\}
\end{aligned}
\end{equation}
 is a lattice homomorphism, cf.\ \cite[Lem.\ 10.20]{DP02}. 
Moreover, $\jmath(A)$ is a down-set, i.e.~$\jmath(A)\in\sO\bigl(\sfS\sAtt(\varphi)\bigr)$. 
The collection 
\[
\cB := \bigl\{\jmath(A) \smin \jmath(A')~|~ A,A'\in \sAtt(\varphi)\bigr\},
\]
are the convex sets in $\sfS\sAtt(\varphi)$. We choose $\cB$ as the basis of a topology, $\scrT_{\sfS\sAtt}$, called the {\em Priestley topology}, and $\sfS\sAtt(\varphi)$ is called a \emph{Priestley space}. The Priestley topology is compact, Hausdorff, zero-dimensional, and order-separating.
The Priestley representation theorem states that
the lattice
$\sAtt(\varphi)$ is isomorphic to the lattice of clopen down-sets in the Priestley space $\bigl(\sfS\sAtt(\varphi),\scrT_{\sfS\sAtt}\bigr)$, and the map $\jmath\colon\sAtt(\varphi) \to \sO^{\rm clp}\bigl(\sfS\sAtt(\varphi)\bigr)$ defines the isomorphism of lattices, cf.\ \cite{DP02,Roman}. These properties are do \emph{not} depend on the topology on $X$.
 
For $\sA\in\sub_F\sAtt(\varphi)$,
the collection of spectra $\sfS\sA$ forms an inverse system of posets over $\sub_F\sAtt(\varphi)$, which is a directed set via inclusion. Indeed, for finite sublattices $\sA\subset \sA'$ we denote the inclusions by  $\iota_{\sA\sA'}\colon \sA\rightarrowtail \sA'$, and by Birkhoff duality we have the maps $\iota_{\sA\sA'}^{-1}\colon \sfS\sA'\to\sfS\sA$ given by $I'\mapsto I=\iota_{\sA\sA'}^{-1}(I') = \{A\in \sA\mid A\in I'\}=I'\cap \sA$. Then,
\begin{description}
    \item [(i)] $\sA \subset \sA'\subset \sA''$ implies $\iota^{-1}_{\sA\sA''} = \iota^{-1}_{\sA\sA'}\circ \iota^{-1}_{\sA'\sA''}$;
    \item [(ii)] $\iota^{-1}_{\sA\sA} = \id$ for all $\sA\in \sub_F\sAtt(\varphi)$,
\end{description}
which makes $\{(\sfS\sA,\subset)\}_{\sA\in \sub_F\sAtt(\varphi)}$ an inverse system of posets over $\sub_F\sAtt$.
By the results in \cite{speed1,speed2} we have that $\sfS\sAtt(\varphi)$ is a \emph{profinite poset} 
 and is given as
the inverse limit: 
\[
\sfS\sAtt(\varphi) \cong \varprojlim_{\sA\in \sub_F\sAtt} \sfS\sA.
\]
The latter is both an order-isomorphism and a homeomorphism.\footnote{From the above results it also follows that $\sAtt(\varphi) \cong \varinjlim_{\sA\in \sub_F\sAtt} \sA$.}
In this section we make this isomorphism more explicit in terms of representations of $\sfS\sAtt(\varphi)$.

\subsection{Representation of finite spectra via Morse sets}
\label{Morsesets12}

As in the case of finite sublattices the
the prime ideals for $\sA\in \sub_F\sAtt(\varphi)$ are related to the prime ideals in $\sAtt(\varphi)$. 

\begin{lemma}
\label{restrpi}
$I$ is a prime ideal in $\sA$ if and only if $I=I'\cap \sA$ for some prime ideal $I'$ in
$\sAtt(\varphi)$. 
\end{lemma}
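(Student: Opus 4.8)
The backward implication is immediate. If $I'$ is a prime ideal in $\sAtt(\varphi)$, write $I'=h^{-1}(0)$ for a lattice homomorphism $h\colon\sAtt(\varphi)\to{\bf 2}$. Restricting $h$ to the sublattice $\sA$ yields a lattice homomorphism $h|_{\sA}\colon\sA\to{\bf 2}$ whose kernel is $(h|_{\sA})^{-1}(0)=\{A\in\sA\mid A\in I'\}=I'\cap\sA$, so $I'\cap\sA$ is a prime ideal in $\sA$.

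For the forward implication the plan is to invoke the separation form of the prime ideal theorem for bounded distributive lattices, cf.\ \cite{DP02}. Let $I$ be a prime ideal in $\sA$, so that its complement $F:=\sA\smin I$ is a prime filter in $\sA$. Because $\sA$ is a sublattice of $\sAtt(\varphi)$, joins and meets computed in $\sA$ agree with those in $\sAtt(\varphi)$; since $I$ is closed under finite joins and $F$ under finite meets, the ideal and filter they generate in $\sAtt(\varphi)$ are the down-set $\down I$ and the up-set $\up F$. The crucial step is to check that $\down I\cap\up F=\varnothing$. Indeed, if $A\in\down I\cap\up F$, then $b\le A\le a$ for some $a\in I$ and $b\in F$; hence $b\le a$ in $\sA$ with $b\in F$, and since $F$ is an up-set in $\sA$ this forces $a\in F$, contradicting $a\in I=\sA\smin F$.

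With disjointness in hand, the prime ideal theorem produces a prime ideal $I'$ in $\sAtt(\varphi)$ with $\down I\subseteq I'$ and $I'\cap\up F=\varnothing$. It then remains to verify $I'\cap\sA=I$. The inclusion $I\subseteq I'\cap\sA$ follows from $I\subseteq\down I\subseteq I'$ together with $I\subseteq\sA$. Conversely, if $A\in I'\cap\sA$ but $A\notin I$, then $A\in F\subseteq\up F$, contradicting $I'\cap\up F=\varnothing$; hence $A\in I$, and the two sets coincide.

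The one genuine obstacle is the existence of the separating prime ideal $I'$: although $\sA$ is finite, $\sAtt(\varphi)$ generally is not, so the extension is not automatic and rests on the (Zorn-based) prime ideal theorem. The disjointness $\down I\cap\up F=\varnothing$ is precisely the hypothesis that theorem requires, so once it is established the remaining verifications are routine bookkeeping.
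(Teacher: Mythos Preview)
Your proof is correct. Both directions are handled cleanly: the backward implication via restriction of the homomorphism $h$ (noting implicitly that $\sA$ shares the bounds $\varnothing$ and $\omega(X)$, so $h|_{\sA}$ is still surjective onto ${\bf 2}$), and the forward implication via the separation form of the prime ideal theorem after verifying that the generated ideal $\down I$ and generated filter $\up F$ in $\sAtt(\varphi)$ are disjoint.

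The paper takes a more categorical route: it simply observes that the inclusion $\iota\colon\sA\rightarrowtail\sAtt(\varphi)$ is a lattice embedding, and then invokes Priestley duality to conclude that the dual map $\iota^{-1}\colon\sfS\sAtt(\varphi)\to\sfS\sA$, $I'\mapsto I'\cap\sA$, is a \emph{surjection} of Priestley spaces. Surjectivity is exactly the forward implication. Your argument is essentially what lies beneath that one-line appeal to duality --- the functorial statement ``injective lattice maps yield surjective spectral maps'' is itself proved via the prime ideal separation theorem, which you apply directly. So the two approaches rest on the same foundation; the paper's is shorter but presupposes the duality machinery, while yours is self-contained and makes the dependence on the prime ideal theorem explicit.
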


\begin{proof}
As in the finite case denote the inclusion $\sA\subset \sAtt(\varphi)$ by the lattice embedding
$\iota\colon \sA \rightarrowtail \sAtt(\varphi)$.
By Priestley duality the induced map between the Priestley duals
is a surjective order-preserving map $\iota^{-1}\colon \sfS\sAtt(\varphi) \twoheadrightarrow \sfS\sA$ and is given by
\[
\iota^{-1}(I') = \{ A\in \sA~|~A\in I'\} = I'\cap \sA,
\]
which 
 completes the proof.
\qed
\end{proof}

Consider the map $\rmPsi_{\sA}\colon\sfS\sA \to\RC(\varphi;\sA)$ given by
\[
I \mapsto \left(\bigcap_{A\in I^c}A \right) \bigcap 
 \left( \bigcap_{A\in I}A^*\right),\quad I\in \sfS\sA.
\]
Note that if $x\in\rmPsi_{\sA}(I)$, then $\rmPsi_{\sA}(I)=[x]\in\RC(\varphi;\sA)$. 
Indeed, by definition of $\sim$ and $\rmPsi_{\sA}$, we have
$x,x'\in\rmPsi_{\sA}(I)$ implies that $x,x'\in A$ for all
$A\notin I$ and $x,x'\in A^*$ for all $A\in I.$ 
Since $I^c$ is a prime filter, we have that $A:=\min I^c \in \sJ\sA$, the join-irreducible elements in $\sA$ (see \cite{Roman,DP02} for a definition), and $I^c = \up A$.
Similarly, $I$ is a prime ideal, and $A':=\max I\in \sJ^*\sA$, the meet-irreducible elements in $\sA$, (see \cite{Roman,DP02} for a definition), and $I = \down A'$.
By the expression for $\rmPsi_{\sA}(I)$
we have that $\rmPsi_{\sA}(I) = A\cap A'^*$, since $\bigcap_{A\in I}A^*=\bigl(\bigcup_{A\in I}A\bigr)^*$. Due to the above characterization of $I$ and $I^c$, 
we have that the unique immediate predecessor to the join-irreducible attractor $A$ is given by $A^\pred = A\wedge A'$.
Indeed, $A\wedge A'$ is a predecessor for both $A$ and $A'$ and thus $A\wedge A'\in I$. 
Since $A\in \sJ(\sA),$ we have that $A\wedge  A'\subseteq  A^\pred\subset A$ and thus $A^\pred\not\in I^c$. Moreover, $I\cup I^c=\sA$
which implies that $A^\pred\in I$ and consequently $A^\pred\subset A'$. Therefore, $A^\pred  \subset A\wedge A'$, and
$A\cap (A^\pred)^* = A\cap (A\wedge A')^*= A\cap (A^*\cup A'^*) = A\cap A'^*$.
For $I \in \sfS\sA$,
\begin{equation*}
    \label{transtojoin}
\rmPsi_{\sA}(I) = A\cap (A^\pred)^* \not = \varnothing.
\end{equation*}
The results in \cite{LSoA3} now imply that $\RC(\varphi;\sA) = \sM\sA$, the \emph{Morse representation subordinate to $\sA$}, 
and $\rmPsi_{\sA}\colon\sfS\sA \to\RC(\varphi;\sA)$ is an order-isomorphism.

Using Lemma \ref{restrpi}, we can now give an explicit description of the dual map of the embedding $\sA\subset \sAtt(\varphi)$.
\begin{corollary}
\label{surj}
Let $\sA\subset \sAtt(\varphi)$ be finite sublattice formalized by the lattice embedding
$\iota\colon \sA \rightarrowtail \sAtt(\varphi)$.
Then, $\pi_{\sA}=\rmPsi_{\sA}\circ \iota^{-1}\colon\sfS\sAtt(\varphi) \twoheadrightarrow \RC(\varphi;\sA)$ is the induced order-preserving surjection  
 given by
\begin{equation}
    \label{finiteidealchar}
\pi_{\sA}(I)=\left(\bigcap_{A\in I^c\cap \sA}A \right) \bigcap 
 \left( \bigcap_{A\in I\cap \sA}A^*\right) \neq \varnothing,\quad I\in \sfS\sAtt(\varphi).
\end{equation}
\end{corollary}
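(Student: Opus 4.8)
The plan is to recognize this statement as an immediate corollary obtained by composing two maps whose relevant properties are already in hand: the dual restriction map $\iota^{-1}\colon\sfS\sAtt(\varphi)\twoheadrightarrow\sfS\sA$ and the representation map $\rmPsi_{\sA}\colon\sfS\sA\to\RC(\varphi;\sA)$. From Lemma~\ref{restrpi} and its proof, $\iota^{-1}$ is an order-preserving surjection sending a prime ideal $I\in\sfS\sAtt(\varphi)$ to $\iota^{-1}(I)=I\cap\sA\in\sfS\sA$. In the discussion immediately preceding the corollary it was established that $\rmPsi_{\sA}$ is an order-isomorphism onto the Morse representation subordinate to $\sA$, and in particular that $\rmPsi_{\sA}(J)=A\cap(A^\pred)^*\neq\varnothing$ for every $J\in\sfS\sA$. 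Since both maps are order-preserving, $\iota^{-1}$ is surjective, and $\rmPsi_{\sA}$ is bijective, the composite $\pi_{\sA}=\rmPsi_{\sA}\circ\iota^{-1}$ is automatically an order-preserving surjection onto $\RC(\varphi;\sA)$, and $\pi_{\sA}(I)=\rmPsi_{\sA}(I\cap\sA)\neq\varnothing$.

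It then remains only to unwind the explicit formula~\eqref{finiteidealchar} by substituting $J=I\cap\sA$ into the definition of $\rmPsi_{\sA}$. The single point requiring care is that the complement in the definition of $\rmPsi_{\sA}$ is taken within the sublattice $\sA$, not within $\sAtt(\varphi)$. I would verify the identification $(I\cap\sA)^c=I^c\cap\sA$, where the outer complement is computed in $\sA$: an element $A\in\sA$ fails to lie in $I\cap\sA$ precisely when $A\notin I$, i.e.\ when $A\in I^c\cap\sA$. With this in place,
\[
\pi_{\sA}(I)=\rmPsi_{\sA}(I\cap\sA)=\left(\bigcap_{A\in I^c\cap\sA}A\right)\bigcap\left(\bigcap_{A\in I\cap\sA}A^*\right),
\]
which is exactly the asserted expression~\eqref{finiteidealchar}.

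Since the corollary is in essence bookkeeping built on Lemma~\ref{restrpi} together with the order-isomorphism $\rmPsi_{\sA}$, I do not expect a genuine obstacle. The only place a careless argument could slip is the complement convention just noted; making the identification $(I\cap\sA)^c=I^c\cap\sA$ explicit resolves it. Nonemptiness requires no separate argument, as it is inherited verbatim from the nonemptiness of $\rmPsi_{\sA}(J)=A\cap(A^\pred)^*$ already proved for the finite spectrum $\sfS\sA$.
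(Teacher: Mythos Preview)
Your proposal is correct and follows essentially the same approach as the paper's proof, which simply says that the characterization of prime ideals in Lemma~\ref{restrpi} combined with the definition of $\rmPsi_{\sA}$ yields the result. Your version is in fact more thorough, as you make explicit the complement identification $(I\cap\sA)^c=I^c\cap\sA$ that the paper leaves implicit.
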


\begin{proof}
The characterization of the prime ideals in Lemma  \ref{restrpi}  combined with
the definition of $\rmPsi_{\sA}$ yields the desired result. 
\qed
\end{proof}

\subsection{Properties of inverse limits of finite spectral components}

The isomorphisms between $\sfS\sA$ and $\RC(\varphi;\sA)$
for $\sA\in \sub_F\sAtt(\varphi)$ yield the following correspondences: 
\begin{equation}
    \label{profinforatt}
\sfS\sAtt(\varphi) \cong \varprojlim  \sfS\sA \cong \varprojlim  \RC(\varphi;\sA) = \varprojlim \sM\sA.
\end{equation}
Since the latter is the inverse limit is  of subsets of $X$, it can be characterized in terms of intersections.
We start with a general  version of the Cantor's intersection theorem.

An \emph{inverse system of subsets} $\{U_\alpha\}_{\alpha\in \cA}$, with $\cA$ a directed set,
is a collection of subsets   $U_\alpha \subset X$ such that
\begin{equation}
    \label{ISclosed}
U_\beta \subset U_\alpha \quad \hbox{for }\quad \alpha \le \beta.
\end{equation}

\begin{lemma}
\label{CIT}
Let $X$ be a compact topological space, and
let $\{U_\alpha\}_{\alpha\in \cA}$ be an inverse system of nonempty, closed subsets of $X$
defined by \eqref{ISclosed}. Then,
\[
\varprojlim U_\alpha \cong \bigcap_{\alpha\in \cA} U_\alpha \not = \varnothing.
\]
\end{lemma}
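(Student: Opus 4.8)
The plan is to split the assertion into two independent parts: the identification $\varprojlim U_\alpha \cong \bigcap_{\alpha\in\cA} U_\alpha$, and the nonemptiness of this intersection, which is where compactness enters.

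First I would record that the bonding maps of the inverse system $\{U_\alpha\}_{\alpha\in\cA}$ are simply the inclusions $U_\beta \hookrightarrow U_\alpha$ for $\alpha \le \beta$, as dictated by \eqref{ISclosed}. Therefore a thread $(x_\alpha)_{\alpha\in\cA}\in\varprojlim U_\alpha$ satisfies $x_\alpha = x_\beta$ whenever $\alpha\le\beta$. Since $\cA$ is directed, any two indices have a common upper bound, so all coordinates of a thread agree with a single point $x\in X$, and the membership $x_\alpha\in U_\alpha$ forces $x\in\bigcap_\alpha U_\alpha$. Conversely, every $x$ in the intersection determines the constant thread. This yields a bijection $\varprojlim U_\alpha \to \bigcap_\alpha U_\alpha$; I would then note that its inverse is the restriction to $\varprojlim U_\alpha$ of any coordinate projection $\pi_\alpha$, and since each $\pi_\alpha$ is continuous, the bijection is a homeomorphism, giving the claimed $\cong$.

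Second, and this is the substantive step, I would prove $\bigcap_\alpha U_\alpha \neq \varnothing$ via the finite intersection property. Given finitely many indices $\alpha_1,\dots,\alpha_n$, directedness of $\cA$ supplies $\beta$ with $\beta\ge\alpha_i$ for all $i$; then \eqref{ISclosed} gives $\varnothing\neq U_\beta \subset U_{\alpha_i}$ for each $i$, so $\bigcap_{i=1}^n U_{\alpha_i}\supset U_\beta \neq\varnothing$. Thus the family of closed sets $\{U_\alpha\}$ has the finite intersection property, and compactness of $X$, in its finite-intersection-property formulation, yields $\bigcap_\alpha U_\alpha \neq\varnothing$.

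There is no deep obstacle here; the content lies entirely in recognizing that the inclusion bonding maps collapse each thread to a single point of the intersection, and in applying compactness through the finite intersection property. The only point requiring care is to invoke directedness twice, once to identify the limit with the intersection and once to verify the finite intersection property, and to use that each $U_\alpha$ is both nonempty and closed, since closedness is precisely what the compactness argument consumes.
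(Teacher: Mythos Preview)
Your proof is correct and follows essentially the same approach as the paper: both arguments verify the finite intersection property via directedness of $\cA$ and invoke the FIP characterization of compactness for nonemptiness, and both identify threads in the inverse limit with constant tuples lying in the intersection. The only minor difference is that you explicitly upgrade the bijection to a homeomorphism via the coordinate projections, whereas the paper records only the bijection (with a citation), and you present the two parts in the opposite order.
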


\begin{proof}
A topological space $X$ is compact if and only if $\bigcap_{\alpha\in \cA} U_\alpha \not = \varnothing$ for every collection of closed subsets $\{U_\alpha\}_{\alpha\in \cA}$ with the finite intersection property\footnote{A collection $\{U_\alpha\}_{\alpha\in \cA}$ has the finite intersection property if
every finite intersection $\bigcap_{\alpha\in \cA'} U_\alpha$ is nonempty.}, cf.\ \cite[Sect.\ 26]{Munkres}.
Since $\cA$ is a directed set, and  all the sets $U_\alpha$ are nonempty, we have that for every pair $\alpha,\beta\in \cA$ there exists a $\gamma\in \cA$
with $\alpha\le \gamma$ and $\beta \le\gamma $
such that $U_\gamma \subset U_\alpha$ and $U_\gamma \subset U_\beta$. Therefore,
\[
U_\gamma \subset U_\alpha \cap U_\beta \not = \varnothing.
\]
Consequently, every finite intersection $\bigcap_{\alpha\in \cA'} U_\alpha$ is nonempty, and therefore  $\{U_\alpha\}_{\alpha\in \cA}$ satisfies the finite intersection property. The compactness of $X$ then implies that $\bigcap_{\alpha\in \cA} U_\alpha$ is nonempty.

The inverse limit $\varprojlim U_\alpha$ defined above then consists of points $\bar x\in \prod_{\alpha\in \cA} U_\alpha$ such that $\pi_\alpha(\bar x) = \pi_\beta(\bar x)$, i.e.
points $\bar x$ such that $\pi_\alpha(\bar x) = x \in U_\alpha$ for all $\alpha$.
In other words $x \in \bigcap _\alpha U_\alpha$. The map $x \mapsto \bar x = (\cdots,x,\cdots)$ gives the required bijection, cf.\ \cite{hochster}.
\qed
\end{proof}

Fix $I\in \sfS\sAtt(\varphi)$. 
From Corollary \ref{surj} we have that for $\sA\in \sub_F\sAtt(\varphi)$
\[
\pi_{\sA}(I) = \left(\bigcap_{A\in I^c\cap \sA}A \right) \bigcap 
 \left( \bigcap_{A\in I\cap \sA}A^*\right) \not = \varnothing.
\]
By construction $\pi_{\sA'}(I) \subset \pi_{\sA}(I)$ for all  $\sA\subset \sA'$  which 
provides the desired inverse system of nonempty, closed subsets of $X$. 
From Lemma \ref{CIT} we have following result, cf.\ \cite[Sect.\ 2.5]{Ingram}, \cite[pp.\ 153]{hochster}.

\begin{lemma}
\label{nonemptylimit}
The map $\rmPsi\colon\sfS\sAtt(\varphi)\to\sSet(X)$ is given by
\begin{equation}
    \label{nonempty}
\rmPsi(I):= \left(\bigcap_{A\in I^c}A \right) \bigcap 
 \left( \bigcap_{A\in I}A^*\right) = \bigcap_{\sA\in \sub_F\sAtt} \pi_{\sA}(I)  \not = \varnothing.
\end{equation}
\end{lemma}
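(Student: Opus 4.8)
The plan is to prove the two assertions of the lemma separately: first the set-theoretic identity $\rmPsi(I)=\bigcap_{\sA\in\sub_F\sAtt}\pi_{\sA}(I)$, and then the nonemptiness of this common value. The identity is a purely combinatorial statement about exhausting $\sAtt(\varphi)$ by its finite sublattices, whereas the nonemptiness is an immediate application of the generalized Cantor intersection theorem, Lemma~\ref{CIT}, once its hypotheses are verified.

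For the identity I would argue by double inclusion. The inclusion $\rmPsi(I)\subseteq\bigcap_{\sA}\pi_{\sA}(I)$ is immediate from monotonicity: if $x\in A$ for every $A\in I^c$ and $x\in A^*$ for every $A\in I$, then these memberships hold in particular for the attractors lying in $I^c\cap\sA$ and in $I\cap\sA$, so $x\in\pi_{\sA}(I)$ for every $\sA\in\sub_F\sAtt$. For the reverse inclusion, suppose $x\in\pi_{\sA}(I)$ for all finite sublattices $\sA$. Given any $A_0\in I^c$, let $\sA_0$ be the sublattice generated by $A_0$ together with the least and greatest elements of $\sAtt(\varphi)$; this is finite, and $A_0\in I^c\cap\sA_0$, so the formula for $\pi_{\sA_0}$ in Corollary~\ref{surj} forces $x\in\pi_{\sA_0}(I)\subseteq A_0$. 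The same argument applied to any $A_0\in I$ yields $x\in A_0^*$. Hence $x\in\bigcap_{A\in I^c}A\cap\bigcap_{A\in I}A^*=\rmPsi(I)$, establishing the identity.

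For the nonemptiness I would check that $\{\pi_{\sA}(I)\}_{\sA\in\sub_F\sAtt}$ is an inverse system of nonempty, closed subsets of $X$ indexed by the directed set $\sub_F\sAtt$, and then invoke Lemma~\ref{CIT}. Each $\pi_{\sA}(I)$ is nonempty by Corollary~\ref{surj}. Each is closed, since for a continuous and proper system on a compact space every attractor is closed by Proposition~\ref{contclosedom} and every repeller, hence every dual repeller $A^*$, is closed by Proposition~\ref{linktoalpha}, and $\pi_{\sA}(I)$ is a \emph{finite} intersection of such sets. The nesting $\pi_{\sA'}(I)\subseteq\pi_{\sA}(I)$ for $\sA\subseteq\sA'$ was recorded just before the lemma, and $\sub_F\sAtt$ is directed under inclusion because the sublattice generated by the union of two finite sublattices is again finite. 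Lemma~\ref{CIT} then gives $\bigcap_{\sA}\pi_{\sA}(I)\neq\varnothing$, which together with the identity yields $\rmPsi(I)\neq\varnothing$.

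The only genuinely nontrivial point is the reverse inclusion in the identity: one must ensure that each individual attractor actually appears inside some finite sublattice, which is exactly what the finite generated sublattice $\sA_0$ provides. Everything else — monotonicity, closedness of attractors and dual repellers, the nesting of the $\pi_{\sA}(I)$, and directedness of the index set — is routine bookkeeping drawing on results already in place.
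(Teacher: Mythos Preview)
Your proof is correct and follows essentially the same route as the paper. The paper writes the finite sublattice containing a given attractor $A$ explicitly as $\3_A=\{\varnothing,A,\omega(X)\}$ and computes $\pi_{\3_A}(I)$ in one stroke, whereas you generate the same sublattice abstractly and read off the containment $\pi_{\sA_0}(I)\subseteq A_0$ (or $\subseteq A_0^*$) from Corollary~\ref{surj}; both arguments reduce the reverse inclusion to these three-element sublattices, and both finish nonemptiness via Corollary~\ref{surj} and Lemma~\ref{CIT}.
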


\begin{proof}
By definition $\rmPsi(I) \subset \pi_{\sA}(I)$, and thus
$\rmPsi(I) \subset \bigcap_{\sA\in \sub_F\sAtt} \pi_{\sA}(I)$.
Define $\3_A:= \{\varnothing,A,\omega(X)\} \in \sub_F\sAtt$. Then,
\[
\bigcap_{\sA\in \sub_F\sAtt} \pi_{\sA}(I) \subset
\bigcap_{A\in \sAtt(\varphi)} \pi_{\3_A}(I),
\]
since $\{\3_{\sA}\}_{A\in \sAtt(\varphi)}\subset \sub_F\sAtt(\varphi)$.
For $I\in \sfS\sAtt(\varphi)$ we have that 
\[
I\cap \3_A = \begin{cases}
    \varnothing  & \text{ if  } A\not \in I \\
    \{\varnothing,A\}  & \text{ if  } A\in I.
\end{cases}
\]
This implies $\pi_{\3_A}(I) = A\cap X = A$ if $A \in I^c$
and $\pi_{\3_A}(I) = X\cap X \cap A^*= A^*$ if $A \in I$.
For the above intersection this gives
\[
\bigcap_{A\in \sAtt(\varphi)} \pi_{\3_A}(I) = \left(\bigcap_{A\in I^c}A \right) \bigcap 
 \left( \bigcap_{A\in I}A^*\right) = \rmPsi(I),
 \]
 which proves the expression in \eqref{nonempty}.
 The nonemptiness of the set $\pi_{\sA}(I)$ follows from 
 Corollary \ref{surj}, and  Lemma \ref{CIT} implies $\bigcap_{\sA\in \sub_F\sAtt} \pi_{\sA}(I)  \not = \varnothing$.
 \qed
\end{proof}

\subsection{Invariance}
\label{invarianceforHaus}
The invariance of attractors  yields
 invariance properties for the poset of recurrent components $\RC(\varphi)$.
\begin{lemma}
\label{invfinite}
Each $\xi\in \RC(\varphi;\sA)$ is an invariant set.
\end{lemma}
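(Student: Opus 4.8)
The plan is to use the explicit description of the components of $\RC(\varphi;\sA)$ obtained in Section~\ref{Morsesets12} together with the invariance properties of attractors and dual repellers valid for continuous and proper systems on compact spaces.

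First I would recall that, by the analysis of $\rmPsi_\sA$ in Section~\ref{Morsesets12}, every component $\xi\in\RC(\varphi;\sA)$ has the form $\xi=A\cap(A^\pred)^*$, where $A\in\sA$ is a join-irreducible attractor and $A^\pred\in\sA$ is its immediate predecessor, itself an attractor, so that $(A^\pred)^*$ is the corresponding dual repeller. Thus $\xi$ is the intersection of a single attractor with a single dual repeller. The two facts I would then invoke are: an attractor is invariant, i.e.\ $\varphi^t(A)=A$ for all $t\ge0$ (Proposition~\ref{contclosedom}), and a dual repeller is forward-backward invariant, i.e.\ $\varphi^{-t}\bigl((A^\pred)^*\bigr)=(A^\pred)^*$ for all $t\ge0$ (Proposition~\ref{linktoalpha}).

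Write $R:=(A^\pred)^*$ for brevity. For the inclusion $\varphi^t(\xi)\subset\xi$ I would use $\varphi^t(R)=\varphi^t\bigl(\varphi^{-t}(R)\bigr)\subset R$, so that $\varphi^t(\xi)\subset\varphi^t(A)\cap\varphi^t(R)=A\cap\varphi^t(R)\subset A\cap R=\xi$. For the reverse inclusion $\xi\subset\varphi^t(\xi)$, take $x\in\xi=A\cap R$. Since $A=\varphi^t(A)$, there exists $y\in A$ with $\varphi^t(y)=x$; and since $x\in R$ while $\varphi^{-t}(R)=R$, we obtain $y\in\varphi^{-t}(R)=R$. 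Hence $y\in A\cap R=\xi$ and $x=\varphi^t(y)\in\varphi^t(\xi)$. Combining the two inclusions gives $\varphi^t(\xi)=\xi$ for all $t\ge0$, which is the asserted invariance.

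The step I expect to require the most care---and the conceptual crux---is the asymmetry between the two factors: an attractor is genuinely invariant, whereas a dual repeller is in general only forward-backward invariant (it is invariant precisely when $\varphi$ is surjective, cf.\ Proposition~\ref{listofreppropsa}(iii)). The argument is arranged so that forward-backward invariance $\varphi^{-t}(R)=R$ does double duty: it yields the forward containment $\varphi^t(R)\subset R$, and, more importantly, it forces the $A$-preimage $y$ of a point $x\in R$ to lie in $R$ as well, so that no surjectivity assumption on $\varphi$ is needed.
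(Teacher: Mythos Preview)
Your proof is correct and follows essentially the same approach as the paper: both identify $\xi$ as the intersection of an attractor (invariant) with a dual repeller (forward-backward invariant). The only difference is that the paper invokes \cite[Lemma~2.9]{LSoA1} for the invariance of such an intersection, whereas you prove this fact directly by hand---your two-inclusion argument is precisely the content of that cited lemma.
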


\begin{proof}
Since $\sA$ is a finite lattice, a prime ideal is of the form $I = (\up A)^c$, where $A\in \sJ\sA$
a join-irreducible attractor. Prime filters are of the form $I^c = \up A$.
From the formula for
$\xi = \pi_{\sA}(I)$ we have that $\min I^c = \bigcap_{A\in I^c}A$ is a join-irreducible attractor and therefore invariant. The intersection of dual repellers is forward-backward invariant. By Equation \eqref{finiteidealchar}, $\xi$ is  the intersection of an invariant set with a forward-backward invariant set, which is invariant by \cite[Lemma 2.9]{LSoA1}.
\qed
\end{proof}

Lemma \ref{nonemptylimit} shows that the nonemptiness of the components $\pi_{\sA}(I)$ is carried over to the limit.
A similar property can be proved for the invariance  of the sets $\pi_{\sA}(I)$.
Let $(\cA,\le)$ be a directed set and let 
$\{S_\alpha\}_{\alpha\in \cA}$ be a collection of closed, invariant sets such that
\[
 S_\beta \subset S_\alpha \quad \hbox{for}\quad \alpha \le \beta,
\]
which makes it an inverse system.

\begin{theorem}
    \label{inverssysofinvsetsappl}
    Let $\varphi$ be a continuous and proper dynamical system on a compact space $X$, and let $\{S_\alpha\}_{\alpha\in \cA}$ be an inverse system of closed invariant sets. Then, $\bigcap_{\alpha\in \cA} S_\alpha$ is invariant.
\end{theorem}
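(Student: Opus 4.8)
The plan is to set $S:=\bigcap_{\alpha\in\cA}S_\alpha$ and verify the two inclusions $\varphi^t(S)\subset S$ and $S\subset\varphi^t(S)$ for every $t\ge 0$, which together give invariance. The case $t=0$ is trivial since $\varphi^0=\id$, so I fix $t>0$ throughout.

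The forward inclusion is the routine one. Since $S\subset S_\alpha$ and each $S_\alpha$ is invariant, hence forward invariant, we have $\varphi^t(S)\subset\varphi^t(S_\alpha)=S_\alpha$ for every $\alpha\in\cA$; intersecting over $\alpha$ yields $\varphi^t(S)\subset S$. Note this direction uses neither properness nor compactness of $X$.

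The backward inclusion $S\subset\varphi^t(S)$ is the heart of the matter, and I expect it to be the main obstacle: for an arbitrary intersection of invariant sets, surjectivity onto the limit can fail, and it is precisely the properness hypothesis (compact fibers) together with compactness that rescues it. Fix $x\in S$. By Definition~\ref{properdynsys}(i) the fiber $F:=\varphi^{-t}(x)$ is compact. Set $K_\alpha:=F\cap S_\alpha$. Because $S_\alpha$ is invariant, $x\in S_\alpha=\varphi^t(S_\alpha)$, so some preimage of $x$ lies in $S_\alpha$, giving $K_\alpha\neq\varnothing$. Each $K_\alpha$ is closed in the compact space $F$ (as $S_\alpha$ is closed in $X$), and $\alpha\le\beta$ forces $S_\beta\subset S_\alpha$, hence $K_\beta\subset K_\alpha$, so $\{K_\alpha\}$ is an inverse system of nonempty compact subsets of $F$.

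Finally I would run the finite-intersection argument exactly as in Lemma~\ref{CIT}, but now inside $F$: given finitely many indices $\alpha_1,\dots,\alpha_n$, directedness of $\cA$ produces an upper bound $\gamma$ with $K_\gamma\subset\bigcap_{i}K_{\alpha_i}$ and $K_\gamma\neq\varnothing$, so the family has the finite intersection property. Compactness of $F$ then gives $\bigcap_{\alpha\in\cA}K_\alpha\neq\varnothing$. Any $y$ in this intersection satisfies $\varphi^t(y)=x$ (since $y\in F$) and $y\in S_\alpha$ for all $\alpha$ (so $y\in S$), whence $x=\varphi^t(y)\in\varphi^t(S)$. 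Combining the two inclusions yields $\varphi^t(S)=S$ for all $t\ge 0$, so $S=\bigcap_{\alpha\in\cA}S_\alpha$ is invariant.
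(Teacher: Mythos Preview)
Your proof is correct and is essentially the paper's argument inlined: the paper applies Theorem~\ref{inverssysofinvsets} (for a map with compact fibers, the image of an intersection of a directed system of closed sets equals the intersection of the images) to $\varphi^t$ and then uses $\varphi^t(S_\alpha)=S_\alpha$, whereas you reproduce the compact-fiber/finite-intersection-property step of that theorem directly to obtain $S\subset\varphi^t(S)$. Neither proof actually uses compactness of $X$ beyond the properness hypothesis.
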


\begin{proof}
Since $\varphi$ is proper, the maps $\varphi^t$ have compact fibers for all $t\ge 0$. Therefore,
  by the invariance of the sets $S_\alpha$ for all $\alpha \in \cA$, Theorem \ref{inverssysofinvsets}  implies that 
$\varphi^t\bigl( \bigcap_{\alpha\in \cA} S_\alpha \bigr)
=\bigcap_{\alpha\in \cA} \varphi^t(S_\alpha) = \bigcap_{\alpha\in \cA} S_\alpha$ for all $t\ge 0$, which proves the 
  invariance of $\bigcap_{\alpha\in \cA} S_\alpha$.
    \qed
\end{proof}

\begin{corollary}
\label{invariance}
\begin{equation*}
 \rmPsi(I)=\left(\bigcap_{A\in I^c}A \right) \bigcap 
 \left( \bigcap_{A\in I}A^*\right) = \bigcap_{\sA\in \sub_F\sAtt} \pi_{\sA}(I)  \not = \varnothing
\end{equation*}
is invariant, which shows that the definition of $\sR(\varphi)$ in \eqref{eqn:RS1221} coincides with \eqref{eqn:RS}.
\end{corollary}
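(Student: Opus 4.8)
The plan is to recognize that this corollary is the endpoint of the machinery built in the preceding subsections, and that the work has essentially been done: I would apply the deferred Theorem \ref{inverssysofinvsetsappl} to the inverse system $\{\pi_{\sA}(I)\}_{\sA\in\sub_F\sAtt}$ furnished by Lemma \ref{nonemptylimit}. Concretely, fix a prime ideal $I\in\sfS\sAtt(\varphi)$. The first step is to record that each member of this family is both closed and invariant. Invariance follows because, by Corollary \ref{surj}, $\pi_{\sA}(I)$ is a component of the Morse representation $\RC(\varphi;\sA)$, and Lemma \ref{invfinite} shows every such component is invariant. Closedness follows because formula \eqref{finiteidealchar} exhibits $\pi_{\sA}(I)$ as a \emph{finite} intersection of attractors $A$ and dual repellers $A^*$, all of which are closed sets by Propositions \ref{contclosedom} and \ref{linktoalpha} under the standing assumption that $\varphi$ is continuous and proper on a compact space.

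Second, I would confirm the inverse-system hypothesis. The index set $\sub_F\sAtt(\varphi)$ is directed by inclusion, and the containment $\pi_{\sA'}(I)\subset\pi_{\sA}(I)$ for $\sA\subset\sA'$ (noted just before Lemma \ref{nonemptylimit}) makes $\{\pi_{\sA}(I)\}$ an inverse system of nonempty, closed, invariant subsets of $X$ in precisely the sense required by Theorem \ref{inverssysofinvsetsappl}. Applying that theorem gives $\varphi^t\bigl(\bigcap_{\sA}\pi_{\sA}(I)\bigr)=\bigcap_{\sA}\varphi^t\bigl(\pi_{\sA}(I)\bigr)=\bigcap_{\sA}\pi_{\sA}(I)$ for all $t\ge 0$, and by Lemma \ref{nonemptylimit} this intersection is exactly $\rmPsi(I)$. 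Hence $\rmPsi(I)$ is invariant (and nonempty).

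Finally, to deduce the coincidence of the two definitions of the recurrent set, I would observe that every recurrent component of $\RC(\varphi)$ is of the form $\rmPsi(I)$ for some prime ideal $I\in\sfS\sAtt(\varphi)$, and that the set defined in \eqref{eqn:RS1221} decomposes as the union $\bigcap_{A}A\cup A^*=\bigcup_{I}\rmPsi(I)$ of these components. Since a union of invariant sets is invariant, $\bigcap_{A}A\cup A^*$ coincides with its own invariant part; that is, $\bigcap_{A}A\cup A^*=\Inv\bigl(\bigcap_{A}A\cup A^*\bigr)=\bigwedge_{P}A\cup R$, which is the expression \eqref{eqn:RS}. This identifies the two definitions of $\sR(\varphi)$, and simultaneously establishes the deferred invariance statement of Theorem \ref{prop:RCinv}.

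The only genuinely nontrivial step is the commutation of $\varphi^t$ with the intersection over the infinite directed family, which is encapsulated in Theorem \ref{inverssysofinvsetsappl} and rests essentially on \emph{properness} (the compact fibers of the maps $\varphi^t$); without this hypothesis the image of an infinite intersection need not equal the intersection of images, and the argument would fail. Every other ingredient is bookkeeping that assembles results already proved.
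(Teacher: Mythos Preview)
Your proof is correct and follows essentially the same approach as the paper: recognize that $\{\pi_{\sA}(I)\}_{\sA\in\sub_F\sAtt}$ is an inverse system of closed, invariant sets (via Lemma~\ref{invfinite} and the closedness of attractors and dual repellers), then invoke Theorem~\ref{inverssysofinvsetsappl} for invariance and Lemma~\ref{nonemptylimit} for nonemptiness. Your additional paragraph deriving the coincidence of \eqref{eqn:RS} and \eqref{eqn:RS1221} via the decomposition $\bigcap_A A\cup A^*=\bigcup_I\rmPsi(I)$ is sound, though the $\subseteq$ direction tacitly uses that $\{A:x\notin A\}$ is a prime ideal for each $x$ in the intersection---essentially the content of Lemma~\ref{lem:Phi}, which the paper proves just afterward; the paper itself defers this coincidence to the remark following Theorem~\ref{prop:RCinv}.
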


\begin{proof}
The collection $\{\pi_{\sA}(I)\}$, $\sA\in \sub_F\sAtt(\varphi)$ is an inverse system of compact, closed invariant sets, and
  Lemma \ref{nonemptylimit} shows that the intersection is nonempty. 
  Invariance follows from Theorem \ref{inverssysofinvsetsappl}.
\qed
  \end{proof}

\begin{remark}
    The conclusion of Lemma \ref{nonemptylimit} shows that $\bigcap_{\sA\in \sub_F\sAtt} \pi_{\sA}(I)$ is a profinite set. This property is used to establish nonemptyness and invariance. In Section \ref{priso} we show that the latter is in fact $\RC(\varphi)$ and isomorphic to $\sfS\sAtt(\varphi)$. This extends the abstract result that Priestley spaces are profinite to include partial order and Priestley topology. 
\end{remark}

\section{Priestley isomorphisms}
\label{priso}
In this section
we continue to assume that $\varphi$ is a continuous and proper dynamical system on a compact space $X$.
We now show that the poset of recurrent components $\RC(\varphi)$ is related to the  Priestley space  $\sfS\sAtt(\varphi)$.
The relation can be expressed with explicit maps and topologies.

\subsection{$\RC(\varphi)$ and $\sfS\sAtt(\varphi)$ are order-isomorphic}\label{subsec:OI}
For the lattice of attractors $\sAtt(\varphi)$ the spectrum is the Priestley space denoted by $\bigl(\sfS\sAtt(\varphi),\scrT_{\sfS\sAtt}\bigr)$.
Define the map $\rmPhi\colon\RC(\varphi) \to\sfS\sAtt(\varphi)$ by
\begin{equation}
    \label{Phimap}
\xi \mapsto \{A\in\sAtt(\varphi)~|~\xi\cap A=\varnothing\}
=\{A\in\sAtt(\varphi)~|~\xi\not\subset A\}.
\end{equation}
The last equality is due to Lemma~\ref{lem:wedge-cap}(i).

\begin{lemma}\label{lem:Phi}
The map $\rmPhi\colon\RC(\varphi) \to\sfS\sAtt(\varphi)$ is well-defined.
\end{lemma}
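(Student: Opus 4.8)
The plan is to exhibit $\rmPhi(\xi)$ as $h_\xi^{-1}(0)$ for an explicit lattice homomorphism $h_\xi\colon\sAtt(\varphi)\to{\bf 2}$; by the characterization of prime ideals recalled at the start of this section, this places $\rmPhi(\xi)$ in $\sfS\sAtt(\varphi)$ and settles well-definedness. First I would record the dichotomy underlying the definition of the map. Since $\xi$ is nonempty, being an equivalence class of $\sim$, Lemma~\ref{lem:wedge-cap}(i) gives, for each $A\in\sAtt(\varphi)$, that exactly one of $\xi\subset A$ or $\xi\cap A=\varnothing$ holds. This simultaneously justifies the asserted equality $\{A\mid\xi\cap A=\varnothing\}=\{A\mid\xi\not\subset A\}$ and allows me to set $h_\xi(A)=1$ when $\xi\subset A$ and $h_\xi(A)=0$ when $\xi\cap A=\varnothing$, so that $h_\xi^{-1}(0)=\rmPhi(\xi)$ by construction.

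Next I would check the boundary values and preservation of joins. For the bottom element $\varnothing$ of $\sAtt(\varphi)$ we have $\xi\neq\varnothing$, hence $h_\xi(\varnothing)=0$. The top element is $\omega(X)=\Inv(X)$, the attractor of the attracting neighborhood $X$; its dual repeller is $\alpha(\varnothing)=\varnothing$, so $\sR(\varphi)\subset\omega(X)$ by~\eqref{eqn:RS1221} and therefore $\xi\subset\omega(X)$, giving $h_\xi(\omega(X))=1$. Preservation of joins is purely set-theoretic: from $\xi\cap(A\cup A')=(\xi\cap A)\cup(\xi\cap A')$ one reads off that $\xi\cap(A\cup A')\neq\varnothing$ exactly when $\xi\cap A\neq\varnothing$ or $\xi\cap A'\neq\varnothing$, i.e.\ $h_\xi(A\cup A')=h_\xi(A)\vee h_\xi(A')$.

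The one step requiring the recurrence structure is preservation of meets, and I expect this to be the crux. I would argue $h_\xi(A\wedge A')=h_\xi(A)\wedge h_\xi(A')$ as follows: by Lemma~\ref{lem:wedge-cap}(iii), $\xi\cap(A\wedge A')=\varnothing$ if and only if $\xi\cap(A\cap A')=\varnothing$; and by the dichotomy above, $\xi\cap(A\cap A')\neq\varnothing$ holds precisely when $\xi\subset A$ and $\xi\subset A'$, i.e.\ when $h_\xi(A)=h_\xi(A')=1$. This is exactly the assertion that $h_\xi(A\wedge A')=1$ iff both $h_\xi(A)=1$ and $h_\xi(A')=1$. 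Hence $h_\xi$ is a lattice homomorphism into ${\bf 2}$, so $\rmPhi(\xi)=h_\xi^{-1}(0)$ is a prime ideal and $\rmPhi$ is well-defined.

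A direct verification of the order-ideal, join-closure, properness, and primeness axioms is equally available and runs through the same case analysis: the only nontrivial reduction is again the passage from $A\wedge A'$ to $A\cap A'$ supplied by Lemma~\ref{lem:wedge-cap}(iii), which is where the invariance of $\xi$ and the $\omega$-limit argument behind that lemma do the essential work. I would therefore present the homomorphism construction as the main line, as it matches the definition of prime ideal used in this section most directly.
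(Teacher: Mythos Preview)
Your proof is correct and follows essentially the same route as the paper: you define the two-valued map $h_\xi$ via the dichotomy from Lemma~\ref{lem:wedge-cap}(i), verify the boundary values, handle joins set-theoretically, and use Lemma~\ref{lem:wedge-cap}(iii) to reduce $A\wedge A'$ to $A\cap A'$ for meets, concluding that $\rmPhi(\xi)=h_\xi^{-1}(0)$ is a prime ideal. The only difference is that you spell out why $\xi\subset\omega(X)$ via the dual repeller $\omega(X)^*=\varnothing$, which the paper leaves implicit.
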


\begin{proof}
Let $\xi\in \RC(\varphi)$. Define the function $h_\xi\colon \sAtt(\varphi) \to {\bf 2}$ by
\[
h_\xi(A) = \begin{cases}
  0    & \text{ if } \xi\cap A=\varnothing\\ 
   1   & \text{ if } \xi\subset A,
\end{cases}
\] 
which uses Lemma~\ref{lem:wedge-cap}(i) to describe the alternative.
The definition implies that $h_\xi(\varnothing) =0$ and $h_\xi(\omega(X)) = 1$.
As for meets and joins of attractors, we argue as follows. Consider $A\cup A'$. Then, $\xi\cap (A\cup A') = (\xi\cap A)\cup(\xi\cap A') = \varnothing$ if and only if both $\xi\cap A=\varnothing$ and  $\xi\cap A'=\varnothing$,
and $\xi\subset  A\cup A'$ if and only if $\xi\subset A$ or  $\xi\subset A'$.  This shows $h_\xi(A\cup A')
= \max\{h_\xi(A),h_\xi(A')\}$.

Consider $A\wedge A'$. Then, by Lemma \ref{lem:wedge-cap}(iii), $\xi\cap (A\wedge A')=\varnothing$ if and only if  $\xi\cap (A\cap A')=\varnothing$.
 Thus, $h_\xi(A\wedge A')=0$ if and only if 
 $\xi\cap A=\varnothing$ or  $\xi\cap A'=\varnothing$, and
$h_\xi(A\wedge A')=1$ if $\xi\subset A\wedge A'$ which is equivalent to $\xi\subset A\cap A'$. Consequently, $h_\xi(A\wedge A')=1$ if and  only if
 $\xi\subset A$ and   $\xi\subset  A'$.
 This shows that $h_\xi(A\wedge A')
= \min\{h_\xi(A),h_\xi(A')\}$, and therefore $h_\xi$ is a lattice homomorphism.

By definition $h_\xi^{-1}(0)$ is a prime ideal of $\sAtt(\varphi)$, and we observe that
$\rmPhi(\xi) = h_\xi^{-1}(0)\in \sfS\sAtt(\varphi)$, which proves that $\rmPhi$ is a well-defined map.
\qed
\end{proof}

In \eqref{nonempty} we introduced the expression $\rmPsi(I)$ for $I\in \sfS\sAtt(\varphi)$, which defines 
 the map $\rmPsi\colon\sfS\sAtt(\varphi) \to\RC(\varphi)$
given by 
\begin{equation}
    \label{Psimap}
 I \mapsto \left(\bigcap_{A\in I^c}A \right) \bigcap 
 \left( \bigcap_{A\in I}A^*\right).
\end{equation}

By Lemma \ref{nonemptylimit} we know that $\rmPsi(I)\neq\varnothing$ for all $I\in\sfS\sAtt(\varphi)$.
Note that if $x\in\rmPsi(I)$, then $x\in \sR(\varphi)$.
Indeed, by the definitions of $\sim$ and $\rmPsi$, we have
$x,x'\in\rmPsi(I)$ implies that $x,x'\in A$ for all
$A\in I^c$ and $x,x'\in A^*$ for all $A\in I$,
and thus $\rmPsi(I)\subset \xi$ for some $\xi\in \RC(\varphi)$.
Moreover, 
every $x\in \xi$ satisfies $x\in \left(\bigcap_{A\in I^c}A \right) \bigcap 
 \left( \bigcap_{A\in I}A^*\right)=\rmPsi(I)$. 
 Therefore,
 $\rmPsi(I)=\xi \in\RC(\varphi)$ is a recurrent component.

\begin{lemma}\label{thm:bijection}
The map $\rmPhi\colon\RC(\varphi) \to\sfS\sAtt(\varphi)$ is bijective
with $\rmPhi^{-1}=\rmPsi.$
\end{lemma}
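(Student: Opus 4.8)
The plan is to prove bijectivity by verifying that the two composites $\rmPsi\circ\rmPhi$ and $\rmPhi\circ\rmPsi$ are the identity maps on $\RC(\varphi)$ and $\sfS\sAtt(\varphi)$ respectively; this is exactly what is needed to conclude that $\rmPhi$ is a bijection with $\rmPhi^{-1}=\rmPsi$. I may freely use that $\rmPhi$ is well-defined (Lemma \ref{lem:Phi}), that $\rmPsi(I)\neq\varnothing$ for every $I$ (Lemma \ref{nonemptylimit}), and that $\rmPsi(I)$ is a single recurrent component (as shown in the paragraph preceding the statement). I also record the disjointness $A\cap A^*=\varnothing$: writing $A=\Inv(U)$ and $A^*=\Inv^+(U^c)$ for an attracting neighborhood $U$ as in Definition \ref{defn:duals1}, Propositions \ref{listofattpropsa}(i) and \ref{listofreppropsa}(i) give $A\subset\Int U\subset U$ and $A^*\subset\Int U^c\subset U^c$, so that $A\cap A^*\subset U\cap U^c=\varnothing$.

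First I would establish $\rmPsi\circ\rmPhi=\id$. Fixing $\xi\in\RC(\varphi)$ and setting $I=\rmPhi(\xi)=\{A~|~\xi\cap A=\varnothing\}$, Lemma \ref{lem:wedge-cap}(i) identifies the complement as $I^c=\{A~|~\xi\subset A\}$. For $A\in I^c$ one has $\xi\subset A$ by definition; for $A\in I$, every $x\in\xi\subset\sR(\varphi)$ satisfies $x\in A$ or $x\in A^*$ by \eqref{eqn:RS1221}, and since $\xi\cap A=\varnothing$ rules out the first alternative we get $x\in A^*$, hence $\xi\subset A^*$. Together these give $\xi\subset\bigl(\bigcap_{A\in I^c}A\bigr)\cap\bigl(\bigcap_{A\in I}A^*\bigr)=\rmPsi(I)$. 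Because $\rmPsi(I)$ is itself a recurrent component and distinct recurrent components are disjoint, the inclusion $\xi\subset\rmPsi(I)$ of nonempty classes forces $\rmPsi(\rmPhi(\xi))=\xi$.

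Next I would establish $\rmPhi\circ\rmPsi=\id$. Fixing $I\in\sfS\sAtt(\varphi)$ and $\xi=\rmPsi(I)$, I must check that $\rmPhi(\xi)=\{A~|~\xi\cap A=\varnothing\}$ coincides with $I$. For $I\subset\rmPhi(\xi)$: if $A\in I$ then $\xi=\rmPsi(I)\subset A^*$ directly from the defining intersection, so the disjointness fact yields $\xi\cap A\subset A^*\cap A=\varnothing$, i.e.\ $A\in\rmPhi(\xi)$. For the reverse inclusion it suffices to argue contrapositively: if $A\in I^c$ then $\xi=\rmPsi(I)\subset A$, and since $\xi\neq\varnothing$ this gives $\xi\cap A=\xi\neq\varnothing$, so $A\notin\rmPhi(\xi)$. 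Hence $\rmPhi(\rmPsi(I))=I$.

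I do not anticipate a serious obstacle; the argument is a standard round-trip verification once the right facts are in place. The two subtle ingredients that upgrade the inclusions to equalities are the dichotomy of Lemma \ref{lem:wedge-cap}(i)---each component either misses an attractor entirely or is contained in it---and the nonemptiness of $\rmPsi(I)$ from Lemma \ref{nonemptylimit}, which is what prevents $\rmPsi(I)$ from collapsing and forces the prime ideals and recurrent classes to match exactly.
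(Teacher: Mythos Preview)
Your proof is correct and follows essentially the same approach as the paper's: verify both composites $\rmPsi\circ\rmPhi$ and $\rmPhi\circ\rmPsi$ are identities, using the dichotomy of Lemma~\ref{lem:wedge-cap}(i), the nonemptiness from Lemma~\ref{nonemptylimit}, and the fact that $\rmPsi(I)$ is a single recurrent component. Your version is in fact more explicit than the paper's in the second half---the paper dispatches $\rmPhi(\rmPsi(I))=I$ in one line ``by definition of $\rmPsi$,'' whereas you carefully split into the two inclusions and invoke the disjointness $A\cap A^*=\varnothing$ and the nonemptiness of $\xi$.
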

\begin{proof}
We need to show that $\rmPhi\circ\rmPsi=\id_{\sfS\sAtt}$ and $\rmPsi\circ\rmPhi=\id_{\RC}$.
Let $\xi\in\RC(\varphi)$. Then, $I=\rmPhi(\xi)=\{A\in\sAtt(\varphi)~|~\xi\not\subset A\}\in \sfS\sAtt(\varphi)$.
Therefore, 
\[
\rmPsi(I)=\left(\bigcap_{A\in I^c}A \right) \bigcap 
 \left( \bigcap_{A\in I}A^*\right)
 =\left(\bigcap_{\xi\subset A}A \right) \bigcap 
 \left( \bigcap_{\xi\cap A=\varnothing}A^*\right)=\xi.
\]
By definition $\xi\subset\rmPsi(I)$ so that  the last equality follows from the fact that $\rmPsi(I)$ is a recurrent component.
Finally,
let $I\in\sfS\sAtt(\varphi)$ and $\xi=\rmPsi(I).$ Then
$\rmPhi(\xi)=\{A\in\sAtt(\varphi)~|~\xi\not\subset A\}=I$ by 
definition of $\rmPsi$.
\qed
\end{proof}

\begin{corollary}
    \label{cornonempty}
    $\sR(\varphi) \neq\varnothing$.
\end{corollary}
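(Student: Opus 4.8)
The plan is to reduce the statement to the nonemptiness of the Priestley space $\sfS\sAtt(\varphi)$ and then transport a single prime ideal back into $X$ via $\rmPsi$. Indeed, by Lemma~\ref{nonemptylimit} the set $\rmPsi(I)=\bigl(\bigcap_{A\in I^c}A\bigr)\cap\bigl(\bigcap_{A\in I}A^*\bigr)$ is nonempty for every $I\in\sfS\sAtt(\varphi)$, and by the discussion preceding Lemma~\ref{thm:bijection} it is a recurrent component, so $\rmPsi(I)\subset\sR(\varphi)$. Hence producing a single prime ideal $I$ already forces $\sR(\varphi)\neq\varnothing$. Equivalently, via the bijection $\rmPhi^{-1}=\rmPsi$ of Lemma~\ref{thm:bijection}, once $\sfS\sAtt(\varphi)\neq\varnothing$ we obtain $\RC(\varphi)\neq\varnothing$, and every recurrent component is by construction a nonempty subset of $\sR(\varphi)$.

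First I would verify that $\sAtt(\varphi)$ is a \emph{nontrivial} bounded distributive lattice, i.e.\ that its bottom $\varnothing$ differs from its top. Since $X$ is compact and nonempty, $X$ is itself an attracting neighborhood, because $\varphi^t(\cl X)=\varphi^t(X)\subset X=\Int X$; hence by Proposition~\ref{contclosedom} the top attractor equals $\omega(X)=\Inv(X)$, and compactness together with nonemptiness of $X$ gives $\omega(X)\neq\varnothing$. Thus the top $\omega(X)$ of $\sAtt(\varphi)$ is distinct from the bottom $\varnothing$.

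Next I would invoke the Prime Ideal Theorem for bounded distributive lattices, cf.\ \cite{DP02}: the principal ideal $\{\varnothing\}$ and the principal filter $\{\omega(X)\}$ are disjoint precisely because $\omega(X)\neq\varnothing$, so there exists a prime ideal $I\in\sfS\sAtt(\varphi)$ with $\omega(X)\notin I$. In particular $\sfS\sAtt(\varphi)\neq\varnothing$. Applying Lemma~\ref{nonemptylimit} to this $I$ yields $\varnothing\neq\rmPsi(I)\subset\sR(\varphi)$, which completes the argument.

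The main obstacle is not the final transport step, which is handed to us directly by Lemmas~\ref{nonemptylimit} and~\ref{thm:bijection}, but rather guaranteeing that $\sAtt(\varphi)$ is nondegenerate: if $\varnothing$ coincided with the top in $\sAtt(\varphi)$, then $\sfS\sAtt(\varphi)$ would be empty and the whole argument would collapse. This is exactly where compactness and nonemptiness of $X$, together with properness, are essential, entering through $\omega(X)=\Inv(X)\neq\varnothing$. The existence of a prime ideal in a possibly infinite lattice is the nonconstructive ingredient, resting on the Prime Ideal Theorem that already underlies the Priestley-duality framework used throughout this section.
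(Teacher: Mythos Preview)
Your proposal is correct and follows essentially the same route as the paper's proof: show $\sfS\sAtt(\varphi)\neq\varnothing$ because $\varnothing\neq\omega(X)$ in $\sAtt(\varphi)$, then invoke the bijection $\rmPsi=\rmPhi^{-1}$ of Lemma~\ref{thm:bijection} to conclude $\RC(\varphi)\neq\varnothing$ and hence $\sR(\varphi)\neq\varnothing$. Your version is simply more explicit about why $\omega(X)\neq\varnothing$ and about the appeal to the Prime Ideal Theorem, both of which the paper leaves implicit.
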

\begin{proof}
Since $\varnothing,\omega(X)\in \sAtt(\varphi)$, it follows that $\sfS\sAtt(\varphi)\neq \varnothing$. By Lemma \ref{thm:bijection} this implies that $\RC(\varphi)\neq \varnothing$ from which the statement follows. 
    \qed
\end{proof}

\begin{theorem}\label{cor:iso}
$\RC(\varphi)$ and $\sfS\sAtt(\varphi)$ are  order-isomorphic.
\end{theorem}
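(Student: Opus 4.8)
The plan is to leverage the bijection $\rmPhi$ established in Lemma~\ref{thm:bijection} and to show that it, together with its inverse $\rmPsi$, preserves and reflects order, hence is an order-isomorphism. Concretely, for all $\xi,\xi'\in\RC(\varphi)$ I would prove the single equivalence
\[
\xi \le \xi' \quad\Longleftrightarrow\quad \rmPhi(\xi)\subset\rmPhi(\xi'),
\]
where the order on $\sfS\sAtt(\varphi)$ is inclusion of prime ideals.

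First I would recall the two orders explicitly. By the definition of the order on $\RC(\varphi)$ in Section~\ref{reccontcomp}, $\xi \le \xi'$ holds precisely when, for every $A\in\sAtt(\varphi)$, the implication $\xi'\subset A \Rightarrow \xi\subset A$ is valid. On the spectrum side, $\rmPhi(\xi) = \{A~|~\xi\cap A = \varnothing\}$, which by Lemma~\ref{lem:wedge-cap}(i) equals $\{A~|~\xi\not\subset A\}$, since $\xi\cap A\neq\varnothing$ forces $\xi\subset A$.

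The key step is then a contrapositive computation. The inclusion $\rmPhi(\xi)\subset\rmPhi(\xi')$ asserts that every attractor $A$ with $\xi\not\subset A$ also satisfies $\xi'\not\subset A$; taking contrapositives, this is exactly the statement that $\xi'\subset A \Rightarrow \xi\subset A$ for all $A$, i.e.\ $\xi\le\xi'$. This establishes the equivalence in both directions at once, so $\rmPhi$ is an order-embedding. Since Lemma~\ref{thm:bijection} shows that $\rmPhi$ is a bijection with inverse $\rmPsi$, this order-embedding is automatically an order-isomorphism, and its inverse $\rmPsi$ is order-preserving as well.

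I do not anticipate a genuine obstacle: all the substantive work—nonemptiness of $\rmPsi(I)$, the fact that $\rmPhi$ lands in prime ideals, and bijectivity—has already been carried out in Lemmas~\ref{lem:Phi}, \ref{nonemptylimit}, and~\ref{thm:bijection}. The only point requiring care is the bookkeeping of order conventions, in particular remembering that $\rmPhi(\xi)$ is the ideal of attractors that \emph{miss} $\xi$, so that inclusion of ideals matches the stated order rather than its opposite. Once Lemma~\ref{lem:wedge-cap}(i) is invoked to replace ``$\xi\cap A=\varnothing$'' by ``$\xi\not\subset A$,'' the equivalence reduces to a one-line contrapositive.
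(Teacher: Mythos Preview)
Your proposal is correct and follows essentially the same approach as the paper: both prove the equivalence $\xi\le\xi' \iff \rmPhi(\xi)\subset\rmPhi(\xi')$ directly from the definitions, relying on Lemma~\ref{thm:bijection} for bijectivity. The paper phrases the verification as a three-case trichotomy (whether $x,x'$ lie in $A$ or $A^*$), whereas you compress this into a single contrapositive; the logical content is identical.
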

\begin{proof}
We must show that $\xi\le\xi' $ if and only if $\rmPhi(\xi)\subseteq \rmPhi(\xi' )$.
Let $\xi\le\xi' $. Then, for every $A\in\sAtt(\varphi)$,
we have one of three possibilities. Set $\xi=[x]$ and $\xi'=[x']$.
Recall that $\rmPhi(\xi) = \{A\in\sAtt(\varphi)~|~[x]\cap A=\varnothing\}$
and $\rmPhi(\xi') = \{A\in\sAtt(\varphi)~|~[x']\cap A=\varnothing\}$.
Then,
\begin{description}
    \item [(i)] $x,x'\in A  \iff A\notin\rmPhi(\xi) \hbox{ and } A\notin\rmPhi(\xi' )$;
    \item [(ii)] $x,x'\in A^*  \iff A\in\rmPhi(\xi)\hbox{ and } A\in\rmPhi(\xi' )$;
    \item [(iii)] $x\in A \hbox{ and } x'\in A^*  \iff A\notin\rmPhi(\xi) \hbox{ and } 
A\in\rmPhi(\xi' )$,
\end{description}
which shows that $\xi\le\xi' $ if and only if $\rmPhi(\xi)\subseteq\rmPhi(\xi' ).$
\qed
\end{proof}

\begin{proof}[of Theorem \ref{prop:RCinv}]
By Lemma \ref{thm:bijection} every recurrent component $\xi$ is given by $\xi = \rmPsi(I)$. Invariance follows from  Corollary \ref{invariance}.
\qed
\end{proof}

\subsection{$\RC(\varphi)$ and $\sfS\sAtt(\varphi)$ are homeomorphic}\label{subsec:H}

The poset $\bigl( \RC(\varphi),\le\bigr)$, as a profinite ordered space, has the natural the topology of a Priestley space  via the bijection $\rmPsi$. That is,
letting $\scrT_{\sfS\sAtt}$ denote the Priestley topology on $\sfS\sAtt(\varphi)$, then $\scrT_\sfS:=\rmPsi\scrT_{\sfS\sAtt}$ defines a Priestley topology on $\RC(\varphi)$ and
makes $\rmPsi$ and $\rmPhi$ homeomorphisms. The following proposition is an immediate consequence.

\begin{proposition}
\label{priestRC}
The Priestley spaces $ \bigl( \RC(\varphi),\scrT_\sfS \bigr)$ and $(\sfS\sAtt(\varphi),\scrT_{\sfS\sAtt} )$ are homeomorphic, i.e.  homeomorphic and order-isomorphic.
Moreover, the lattice $\sO^{\rm clp}\bigl(\RC(\varphi)\bigr)$ of clopen downsets in $\RC(\varphi)$ is isomorphic to $\sAtt(\varphi)$.
\end{proposition}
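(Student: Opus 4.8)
The plan is to observe that, by the very way $\scrT_\sfS$ was just defined, almost the entire statement is already in hand, so the proof reduces to assembling the earlier results. The homeomorphism is built in: since $\scrT_\sfS:=\rmPsi(\scrT_{\sfS\sAtt})$ is by definition the image topology transported along the bijection $\rmPsi$ of Lemma~\ref{thm:bijection}, the map $\rmPsi\colon(\sfS\sAtt(\varphi),\scrT_{\sfS\sAtt})\to(\RC(\varphi),\scrT_\sfS)$ is tautologically a homeomorphism, with inverse homeomorphism $\rmPhi=\rmPsi^{-1}$. The order-isomorphism is exactly Theorem~\ref{cor:iso}. First I would record that these two facts together say precisely that $\rmPhi$ and $\rmPsi$ are mutually inverse isomorphisms of ordered topological spaces. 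Consequently $(\RC(\varphi),\scrT_\sfS,\le)$ inherits from $(\sfS\sAtt(\varphi),\scrT_{\sfS\sAtt},\subseteq)$ every defining property of a Priestley space — compactness, Hausdorffness, zero-dimensionality, and order-separation — so it is itself a Priestley space and the two are isomorphic as such. This establishes the first assertion.

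For the second claim I would use that a map which is simultaneously an order-isomorphism and a homeomorphism carries clopen down-sets bijectively to clopen down-sets. Concretely, for a clopen down-set $D\subset\sfS\sAtt(\varphi)$ the image $\rmPsi(D)$ is clopen because $\rmPsi$ is a homeomorphism, and it is a down-set because $\rmPsi$ is an order-isomorphism: if $\eta\le\rmPsi(\delta)$ with $\delta\in D$, then applying the order-preserving $\rmPhi$ gives $\rmPhi(\eta)\subseteq\delta$, hence $\rmPhi(\eta)\in D$ and $\eta=\rmPsi(\rmPhi(\eta))\in\rmPsi(D)$. Thus $\rmPsi$ induces a lattice isomorphism $\rmPsi_*\colon\sO^{\rm clp}(\sfS\sAtt(\varphi))\to\sO^{\rm clp}(\RC(\varphi))$. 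Composing with the Priestley representation isomorphism $\jmath\colon\sAtt(\varphi)\to\sO^{\rm clp}(\sfS\sAtt(\varphi))$ recalled in Section~\ref{profcc} yields the desired isomorphism $\sAtt(\varphi)\cong\sO^{\rm clp}(\RC(\varphi))$.

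There is no substantial obstacle here; as the surrounding text flags, the statement is genuinely a corollary. The only points requiring (routine) care are the two transfer verifications: that the image of a down-set under an order-isomorphism is again a down-set, and that each structural axiom defining a Priestley space is preserved under a map that is at once a homeomorphism and an order-isomorphism. Both follow immediately from the fact that $\rmPhi$ and $\rmPsi$ are mutually inverse and both preserve and reflect $\le$ and openness, so the argument is short and purely formal once Theorem~\ref{cor:iso} and Lemma~\ref{thm:bijection} are invoked.
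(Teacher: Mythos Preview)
Your proposal is correct and matches the paper's approach exactly: the paper does not even give a formal proof, stating only that the proposition ``is an immediate consequence'' of the definition $\scrT_\sfS:=\rmPsi\scrT_{\sfS\sAtt}$ together with Theorem~\ref{cor:iso} and the Priestley representation theorem, which is precisely what you spell out.
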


The Priestley topology on $\RC(\varphi)$ can be dynamically characterized via the  Morse sets of the dynamical system. 
Recall that a set $M\subset X$ is a \emph{Morse set} if $M=A\cap R$ for some attractor $A\in \sAtt(\varphi)$ and repeller $R\in \sRep(\varphi)$. 
Morse sets are invariant, cf.\ \cite[Lem.\ 8]{LSoA3} and form the bounded meet-semilattice $\sMorse(\varphi)$ with $M\wedge M' :=\Inv(M\cap M')$.
For a given $M\in \sMorse(\varphi)$ define the set
\begin{equation}
    \label{viaMorse1}
\zeta_{M} := \{ \xi\in \RC(\varphi)~|~ \xi \subset M\} = \bigl(\sR(\varphi)\cap M\bigr)/_\sim,
\end{equation}
where the latter characterization follows from Lemma \ref{lem:wedge-cap}(i). Indeed, $\xi\cap M \neq \varnothing$ is equivalent to $\xi\cap A\neq\varnothing$ and 
$\xi\cap A'^*\neq\varnothing$ which implies $\xi\subset A$ and
$A'^*\subset \xi$, cf.\  Lem.\ \ref{lem:wedge-cap}(i). Therefore, $\xi\subset M$.
We obtain the following characterization of the Priestley topology on the recurrent components $\RC(\varphi)$.
\begin{lemma}
\label{viaMorse}
The topology $\scrT_\sfS$ is generated by the basis
\[
\cB_{\sfS} := \{\zeta_{M}~|~ M\in \sMorse(\varphi)\},
\]
which are the clopen convex sets in $\bigl(\RC(\varphi),\scrT_\sfS\bigr)$.
\end{lemma}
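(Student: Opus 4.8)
The plan is to move everything through the homeomorphism and order-isomorphism $\rmPsi=\rmPhi^{-1}$ of Proposition~\ref{priestRC}. By construction $\scrT_\sfS=\rmPsi\,\scrT_{\sfS\sAtt}$, and $\cB=\{\jmath(A)\smin\jmath(A')~|~A,A'\in\sAtt(\varphi)\}$ is precisely the family of clopen convex sets that generates $\scrT_{\sfS\sAtt}$. Since a homeomorphism that is also an order-isomorphism carries clopen convex sets to clopen convex sets bijectively, and carries a basis to a basis, it suffices to prove the single identity $\rmPsi(\cB)=\cB_\sfS$; this yields both assertions of the lemma at once.

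First I would compute the images of the generating sets. From $\rmPhi(\xi)=\{A~|~\xi\not\subset A\}$ and $\jmath(A)=\{I~|~A\notin I\}$ it follows immediately that $\xi\in\rmPsi(\jmath(A))$ iff $A\notin\rmPhi(\xi)$ iff $\xi\subset A$, so
\[
\rmPsi(\jmath(A))=\{\xi\in\RC(\varphi)~|~\xi\subset A\}.
\]
As $\rmPsi$ is a bijection, $\rmPsi(\jmath(A)\smin\jmath(A'))=\{\xi~|~\xi\subset A\ \text{and}\ \xi\not\subset A'\}$.

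The key step is the dichotomy: for every $\xi\in\RC(\varphi)$ and every attractor $A'$, exactly one of $\xi\subset A'$ and $\xi\subset A'^*$ holds. Indeed $\xi\subset\sR(\varphi)\subset A'\cup A'^*$ by~\eqref{eqn:RS1221}, so any $x\in\xi$ lies in $A'$ or in $A'^*$, and Lemma~\ref{lem:wedge-cap}(i) upgrades this to $\xi\subset A'$ or $\xi\subset A'^*$; the alternatives are mutually exclusive because $A'\cap A'^*=\varnothing$ (Proposition~\ref{linktoalpha} gives $A'^*=\alpha(U^c)\subset\Int U^c$, while $A'\subset\Int U$). Hence $\xi\not\subset A'$ is equivalent to $\xi\subset A'^*$, and writing $M=A\cap A'^*$,
\[
\rmPsi\bigl(\jmath(A)\smin\jmath(A')\bigr)=\{\xi~|~\xi\subset A,\ \xi\subset A'^*\}=\{\xi~|~\xi\subset M\}=\zeta_{M}.
\]
Since $M=A\cap A'^*$ is a Morse set, and conversely every Morse set $M=A\cap R$ can be written as $A\cap A'^*$ with $A'=R^*\in\sAtt(\varphi)$ by the involutive duality of Definition~\ref{defn:duals1}, the family $\rmPsi(\cB)$ coincides with $\{\zeta_M~|~M\in\sMorse(\varphi)\}=\cB_\sfS$.

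Finally I would assemble the pieces: $\rmPsi(\cB)=\cB_\sfS$ together with the fact that $\cB$ is a basis for $\scrT_{\sfS\sAtt}$ shows that $\cB_\sfS$ is a basis for $\scrT_\sfS$, and the fact that $\cB$ is exactly the collection of clopen convex sets of $\sfS\sAtt(\varphi)$ shows, after transport along the order-isomorphic homeomorphism $\rmPsi$, that the $\zeta_M$ are exactly the clopen convex sets of $\bigl(\RC(\varphi),\scrT_\sfS\bigr)$. I expect the main obstacle to be the dichotomy step and the accompanying bookkeeping that matches an arbitrary Morse set with a pair $(A,A')$ through the attractor-repeller duality; once this correspondence is pinned down, everything else is formal transport along $\rmPsi$.
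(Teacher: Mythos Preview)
Your proposal is correct and follows essentially the same route as the paper: transport the Priestley basis $\cB$ along the homeomorphism $\rmPsi$ and identify $\rmPsi\bigl(\jmath(A)\smin\jmath(A')\bigr)$ with $\zeta_{A\cap A'^*}$ via the dichotomy $\xi\subset A'$ versus $\xi\subset A'^*$ (the paper invokes the definitions of $\rmPsi,\rmPhi$ directly, you invoke \eqref{eqn:RS1221} and Lemma~\ref{lem:wedge-cap}(i), which amounts to the same thing). Your write-up is in fact slightly more complete than the paper's, since you explicitly close the loop that every Morse set $A\cap R$ equals $A\cap (R^*)^*$ and that the clopen-convex claim follows by transport along an order-isomorphic homeomorphism.
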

\begin{proof}
A basic (open/closed) set for $\scrT_{\sfS\sAtt}$ is given by $\jmath(A)\smin \jmath(A')$ for some $A,A'\in \sAtt(\varphi)$ and thus is the set of all prime ideals $I\in \sfS\sAtt(\varphi)$ such that  $A\not\in I$ and $A'\in I$. In particular, by the definition in \eqref{Psimap}, $\rmPsi(I)\subset A$ and $\rmPsi(I)\subset A'^*$. Therefore,
$\xi=\rmPsi(I)\subset M =A\cap A'^*$. Conversely, if $\xi\subset M =A\cap A'^*$, then by the definition in \eqref{Phimap},
$A\not\in \rmPhi(\xi)=I$. Similarly, since $\xi\subset A'^*$, it follows from Lemma \ref{lem:wedge-cap}(i) that $\xi\not\subset A'$, and thus $A'\in I$. Summarizing we have that $\rmPsi\bigl(\jmath(A)\smin \jmath(A') \bigr) = \zeta_{M}$.
\qed
\end{proof}

\begin{lemma}
\label{inrecset}
Let $U\in \sANbhd(\varphi)$ with $\omega(U) = A$. If $x\in U\cap \sR(\varphi)$, then $x\in A$. Similarly, if $x\in U^c\cap \sR(\varphi)$, then $x\in A^*$.
\end{lemma}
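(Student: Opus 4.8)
The plan is to read off the conclusion directly from the defining property of the recurrent set. Since we are in the standing setting of a continuous and proper system on a compact space $X$, the attractor-repeller pairs are parametrized by attractors (Lemma~\ref{pairsandatt}), and by \eqref{eqn:RS1221} a point $x\in\sR(\varphi)$ satisfies $x\in A'\cup A'^*$ for \emph{every} $A'\in\sAtt(\varphi)$. Applying this to the single attractor $A=\omega(U)$ gives $x\in A\cup A^*$, so it suffices to exclude the wrong alternative, using whether $x$ lies in $U$ or in $U^c$.

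First I would treat the case $x\in U\cap\sR(\varphi)$. By Lemma~\ref{involattrep123} the complement $U^c$ is a repelling neighborhood, and by Definition~\ref{defn:duals1} its dual repeller is exactly $A^*=\alpha(U^c)$. Proposition~\ref{linktoalpha} then supplies the localization $A^*=\alpha(U^c)\subset\Int U^c=(\cl U)^c$, so that $A^*\cap\cl U=\varnothing$ and in particular $A^*\cap U=\varnothing$. Since $x\in U$, this forces $x\notin A^*$, and combined with $x\in A\cup A^*$ we conclude $x\in A$.

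The second case $x\in U^c\cap\sR(\varphi)$ is dual. By Proposition~\ref{contclosedom}, continuity and properness together with compactness give $A=\omega(U)=\Inv(U)\subset\Int U\subset U$, hence $A\cap U^c=\varnothing$. Since $x\in U^c$, this forces $x\notin A$, and again using $x\in A\cup A^*$ we obtain $x\in A^*$.

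The argument is short and poses no genuine obstacle; its entire content sits in the two localization inclusions $A\subset U$ and $A^*\subset U^c$, which pin the attractor and dual repeller to opposite sides of the neighborhood $U$. This is precisely where the standing hypotheses enter: continuity for $A^*\subset\Int U^c$ (via Proposition~\ref{linktoalpha}), and properness together with compactness of $X$ for $A=\omega(U)\subset\Int U$ (via Proposition~\ref{contclosedom}). Once both sets are separated in this way, the recurrent-set dichotomy $x\in A\cup A^*$ selects the correct component automatically.
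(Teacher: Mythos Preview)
Your proof is correct and follows essentially the same approach as the paper: both arguments use the recurrent-set dichotomy $x\in A\cup A^*$ together with the localizations $A\subset U$ and $A^*\subset U^c$ to pick out the correct alternative. You supply more explicit references (Propositions~\ref{linktoalpha} and~\ref{contclosedom}) for these inclusions, whereas the paper simply states them, but the logic is identical.
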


\begin{proof}
By definition, $x\in \sR(\varphi)$ if and only if $x\in A$ or $x\in A^*$ for all attractors $A\in \sAtt(\varphi)$. If $x\in U\supset A$, then $x\not\in U^c$ and $U^c\supset A^*$, and thus
$x\in U\cap \sR(\varphi)$ implies that $x\in A$. Similarly, $x\in U^c\cap \sR(\varphi)$ implies that $x\in A^*$.
\qed
\end{proof}

Let $\scrT$ denote the topology on $X$, and consider the subspace topology on $\sR(\varphi)$. Then $\RC(\varphi)$ can be given the quotient topology $\scrT_\sim$ induced by the equivalence relation in Definition \ref{def:crs}. The following theorem shows that $\scrT_\sim$ is the same as the topology $\scrT_\sfS$.

\begin{theorem}\label{thm:homeo}
$(\RC(\varphi),\scrT_\sim)$  and $(\sfS\sAtt(\varphi),\scrT_{\sfS\sAtt} )$
are homeomorphic.
\end{theorem}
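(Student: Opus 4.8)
The plan is to show that the bijection $\rmPhi\colon\RC(\varphi)\to\sfS\sAtt(\varphi)$ of Lemma~\ref{thm:bijection} is a homeomorphism when $\RC(\varphi)$ carries the quotient topology $\scrT_\sim$. Since $(\sfS\sAtt(\varphi),\scrT_{\sfS\sAtt})$ is a Priestley space, hence compact and Hausdorff, it suffices to prove that $\rmPhi$ is continuous and that $(\RC(\varphi),\scrT_\sim)$ is compact; a continuous bijection from a compact space onto a Hausdorff space is automatically a homeomorphism. This avoids comparing the two topologies directly and instead leverages Proposition~\ref{priestRC}.

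For compactness, observe that by \eqref{eqn:RS1221} we have $\sR(\varphi)=\bigcap_{A\in\sAtt(\varphi)}(A\cup A^*)$, and since every attractor $A$ and every dual repeller $A^*$ is closed (Propositions~\ref{contclosedom} and~\ref{linktoalpha}), each $A\cup A^*$ is closed, so $\sR(\varphi)$ is a closed subset of the compact space $X$ and therefore compact. As the quotient map $q\colon\sR(\varphi)\to\RC(\varphi)$ is continuous and surjective, $(\RC(\varphi),\scrT_\sim)$ is the continuous image of a compact space and hence compact.

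For continuity, by the universal property of the quotient topology it is enough to show that $\rmPhi\circ q\colon\sR(\varphi)\to\sfS\sAtt(\varphi)$ is continuous, where $(\rmPhi\circ q)(x)=\{A\in\sAtt(\varphi)\mid x\notin A\}$ by Lemma~\ref{lem:wedge-cap}(i). Continuity into a Priestley space can be verified on the clopen subbasic sets $\jmath(A)=\{I\mid A\notin I\}$ and their complements, and one computes $(\rmPhi\circ q)^{-1}\bigl(\jmath(A)\bigr)=\sR(\varphi)\cap A$. The crux is that this set is \emph{clopen} in the subspace topology on $\sR(\varphi)$. Indeed, for an attracting neighborhood $U$ with $A=\omega(U)$ one has $A\subset\Int U$ and $A^*=\alpha(U^c)\subset\Int U^c$, so $A\cap A^*=\varnothing$; combined with the defining inclusion $\sR(\varphi)\subset A\cup A^*$, the two closed sets $\sR(\varphi)\cap A$ and $\sR(\varphi)\cap A^*$ partition $\sR(\varphi)$, whence each is the complement of the other and both are clopen. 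Thus $(\rmPhi\circ q)^{-1}\bigl(\jmath(A)\bigr)$ and $(\rmPhi\circ q)^{-1}\bigl(\jmath(A)^c\bigr)=\sR(\varphi)\cap A^*$ are open, so $\rmPhi\circ q$, and hence $\rmPhi$, is continuous.

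Assembling these facts, $\rmPhi$ is a continuous bijection from the compact space $(\RC(\varphi),\scrT_\sim)$ onto the Hausdorff space $(\sfS\sAtt(\varphi),\scrT_{\sfS\sAtt})$, hence a homeomorphism. The main obstacle is exactly the clopenness step: closedness of $\sR(\varphi)\cap A$ is immediate from closedness of attractors, but its openness is what requires work, and it is obtained from the disjointness $A\cap A^*=\varnothing$ together with $\sR(\varphi)\subset A\cup A^*$, which identify the complement of $\sR(\varphi)\cap A$ in $\sR(\varphi)$ with the closed set $\sR(\varphi)\cap A^*$. Finally, comparing with Proposition~\ref{priestRC} yields $\scrT_\sim=\scrT_\sfS$ as a byproduct.
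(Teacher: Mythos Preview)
Your proof is correct and follows the same overall strategy as the paper: both establish that the composite map $g=\rmPhi\circ\pi\colon\sR(\varphi)\to\sfS\sAtt(\varphi)$ is continuous and then invoke the compact-to-Hausdorff principle. The difference lies in the continuity step. The paper works with the full basic sets $\jmath(A)\smin\jmath(A')$, identifies $g^{-1}$ of such a set with $M\cap\sR(\varphi)$ for the Morse set $M=A\cap A'^*$, and then produces an explicit open neighborhood $U\cap U'^c\cap\sR(\varphi)$ using an open attracting neighborhood $U$ and a closed one $U'$, invoking Lemma~\ref{inrecset} to match the two descriptions. Your argument is more economical: you pass to the subbasis $\{\jmath(A),\jmath(A)^c\}$, and the openness of $\sR(\varphi)\cap A$ drops out immediately from the partition $\sR(\varphi)=(\sR(\varphi)\cap A)\sqcup(\sR(\varphi)\cap A^*)$ into two closed pieces, which uses only that $A,A^*$ are closed and disjoint with $\sR(\varphi)\subset A\cup A^*$. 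This bypasses Lemma~\ref{inrecset} and the choice of open/closed attracting neighborhoods entirely. The paper's route, on the other hand, makes the connection to Morse sets and to the basis $\cB_\sfS$ of Lemma~\ref{viaMorse} more transparent.
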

\begin{proof}
Let  $\pi\colon(\sR(\varphi),\scrT)\to (\RC(\varphi),\scrT_\sim)$ be the quotient map given by $x\mapsto   [x]=\xi$ and consider the commutative diagram:
\begin{equation}
\label{rc44}
\begin{diagram}
\node{\sR(\varphi)}\arrow{s,l,A}{\pi}\arrow{se,l}{g}\\
\node{\RC(\varphi)}\arrow{e,l,J}{\rmPhi}\node{\sfS\sAtt(\varphi)}
\end{diagram}
\end{equation}
with composition $g:=\rmPhi\circ \pi$.
Since $\sR(\varphi)$ is a closed subset of a compact space, it is compact.
By Lemma \ref{thm:bijection} the map $\rmPhi$ is bijective, and thus $g$ is a surjective map.
If $g$ is continuous, then since $\sR(\varphi)$ is compact, and $\sfS\sAtt(\varphi)$ is Hausdorff, $g$ is a closed map and hence a quotient map. By \cite[Corollary 22.3]{Munkres}, $\rmPhi$ is a homeomorphism and $\RC(\varphi)$ is a compact, Hausdorff space homeomorphic to
$\sfS\sAtt(\varphi)$.
It remains to show that $g$ is continuous.

A basic open set in $\sfS\sAtt(\varphi)$ is given by 
$C=j(A)\smin j(A')$
for some pair of attractors $A,A'\in \sAtt(\varphi)$. Then, using \eqref{viaMorse1} and Lemma \ref{viaMorse},
\begin{equation*}
\begin{aligned}
g^{-1}(C)&=\pi^{-1}(\rmPhi^{-1}(C))\\
&= \pi^{-1}\bigl(\rmPsi\bigl(j(A)\smin j(A')\bigr)\bigr)\\
&= \pi^{-1}(\zeta_M)
= M\cap \sR(\varphi),
\end{aligned}
\end{equation*}
where the intersection $M=A\cap A'^*$ is a Morse set.
Since $A$ is an attractor, there is an open attracting neighborhood $U$ 
such that $\omega (U)=A$, cf.\ Sect.\  Rem.\ \ref{intandclforattnbhd}. 
Likewise, since $A'^*$ is a dual repeller, there is an open repelling neigborhood  $U'^c\in \sRNbhd(\varphi)$ by choosing $U'$ a closed attracting neighborhood, cf.\  Rem. \ref{intandclforattnbhd}, 
such that $\omega (U')=A'$, cf.\  Lem.\ \ref{dualsviacapcup}. 
Consequently, $M\subset U\cap U'^c$ and $U\cap U'^c$ is open  in $X$ and thus $W:=U\cap U'^c\cap\sR(\varphi)$ is  open in  $\sR(\varphi)$.  We have $g^{-1}(C)\subseteq W$ by construction.  

Conversely,  let $x\in W$. 
By Lemma \ref{inrecset} we have that 
\[
x\in A\cap A'^*\cap \sR(\varphi) = M\cap \sR(\varphi) = g^{-1}(C),
\]
and  hence $W\subset g^{-1}(C)$.
Combining both inclusions yields $W=g^{-1}(C)$,
which is an open set. This shows that $g$ is continuous, and thus proves that $g$ is a quotient map, and $\rmPhi$ is a homeomorphism.
\qed
\end{proof}

\begin{theorem}
    \label{inverselimthm}
The space of recurrent components $(\RC(\varphi),\scrT_\sim,\le)$ is a profinite poset and 
\[
(\RC(\varphi),\scrT_\sim,\le) \cong \varprojlim \sM\sA,
\]
where the projective limit is over the finite sublattices $\sA\subset \sAtt(\varphi)$.
\end{theorem}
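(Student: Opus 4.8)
The plan is to assemble the theorem from three pieces that are already in place, all witnessed by the \emph{same} maps $\rmPhi$ and $\rmPsi$: the order-isomorphism of Theorem~\ref{cor:iso}, the homeomorphism of Theorem~\ref{thm:homeo}, and the inverse-limit identification recorded in \eqref{profinforatt}. Because these use one pair of mutually inverse maps, they compose coherently into a single isomorphism of ordered topological spaces.

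First I would observe that $\rmPhi\colon\RC(\varphi)\to\sfS\sAtt(\varphi)$ is simultaneously an order-isomorphism, by Theorem~\ref{cor:iso}, and, when $\RC(\varphi)$ carries the quotient topology $\scrT_\sim$, a homeomorphism onto $(\sfS\sAtt(\varphi),\scrT_{\sfS\sAtt})$, by Theorem~\ref{thm:homeo}. Hence $\rmPhi$ is an isomorphism of ordered topological spaces,
\[
(\RC(\varphi),\scrT_\sim,\le)\;\cong\;(\sfS\sAtt(\varphi),\scrT_{\sfS\sAtt},\subseteq),
\]
carrying $\le$ to $\subseteq$ and $\scrT_\sim$ to the Priestley topology; in particular $\scrT_\sim$ agrees with the transported topology $\scrT_\sfS$ of Proposition~\ref{priestRC}. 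Next I would invoke \eqref{profinforatt}, which already gives $\sfS\sAtt(\varphi)\cong\varprojlim\sfS\sA\cong\varprojlim\RC(\varphi;\sA)=\varprojlim\sM\sA$ over $\sA\in\sub_F\sAtt(\varphi)$, as both an order-isomorphism and a homeomorphism. Here each $\rmPsi_\sA\colon\sfS\sA\to\RC(\varphi;\sA)=\sM\sA$ is an order-isomorphism of finite posets (Section~\ref{Morsesets12}), and since finite posets carry the discrete topology, each $\rmPsi_\sA$ is automatically a homeomorphism as well. Composing the two identifications yields
\[
(\RC(\varphi),\scrT_\sim,\le)\;\cong\;\varprojlim\sM\sA
\]
in both order and topology. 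As the right-hand side is an inverse limit of finite posets, $\RC(\varphi)$ is a profinite poset, which is the remaining assertion.

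The hard part is not any single calculation but the compatibility bookkeeping: one must check that the finite-level isomorphisms $\rmPsi_\sA$ commute with the bonding maps $\iota^{-1}_{\sA\sA'}\colon\sfS\sA'\to\sfS\sA$ and with the projections $\pi_\sA=\rmPsi_\sA\circ\iota^{-1}$ of Corollary~\ref{surj}, so that they form a morphism of inverse systems and descend to an isomorphism of the limits. This is precisely the functorial data set up in Section~\ref{profcc}: the relation $\pi_\sA=\rmPsi_\sA\circ\iota^{-1}$ together with the inverse-system identities (i)--(ii) for $\iota^{-1}_{\sA\sA'}$ induce the bonding maps on the Morse side and guarantee the required commuting squares. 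Consequently the verification reduces to unwinding those definitions, and \eqref{profinforatt} already packages the resulting limit identification, so no new dynamical input is needed beyond Theorems~\ref{cor:iso} and~\ref{thm:homeo}.
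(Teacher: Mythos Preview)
Your proposal is correct and follows essentially the same route as the paper, whose proof is simply ``Combine Theorem~\ref{thm:homeo} with \eqref{profinforatt}.'' You have added the explicit invocation of Theorem~\ref{cor:iso} for the order part and spelled out the compatibility bookkeeping for the finite-level maps $\rmPsi_\sA$; this is a welcome elaboration, but the paper regards that content as already absorbed into \eqref{profinforatt}.
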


\begin{proof}
   Combine Theorem \ref{thm:homeo} with \eqref{profinforatt}.
   \qed
\end{proof}

\subsection{Decomposition}
\label{decompthm}
The prime ideals are linked to the asymptotic behavior of $\varphi$ which yields a version of Conley's Decomposition Theorem, cf.\ \cite{Conley}.
\begin{theorem}
\label{Conleydecomp}
For every $x\in X$ there exists a unique  $\xi_+\in \RC(\varphi)$ such that $\omega(x)\subset \xi_+$.
If $x\in X\smin\sR(\varphi)$ allows a complete orbit $\gamma_x$, then there exist a  $\xi_-\in\RC(\varphi)$, unique to the orbit $\gamma_x$, with 
$\xi_+<\xi_-$ such that 
\[
\omega(x)\subset \xi_+\quad\text{and}\quad \alphaOg \subset \xi_-.
\]
\end{theorem}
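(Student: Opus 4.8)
The plan is to realize each of $\xi_+$ and $\xi_-$ as the image under $\rmPsi$ of a suitable prime ideal, mirroring the inverse bijection $\rmPhi,\rmPsi$ of Lemma~\ref{thm:bijection}. The cornerstone for $\xi_+$ is a dichotomy: for every attractor $A\in\sAtt(\varphi)$ with dual repeller $A^*$, either $\omega(x)\subset A$ or $\omega(x)\subset A^*$. Since $X$ is compact, $\omega(x)\neq\varnothing$, and I would prove the dichotomy in two steps. First, if $\omega(x)\cap A\neq\varnothing$, pick an open attracting neighborhood $U$ with $\omega(U)=A$ and $\varphi^t(\cl U)\subset\Int U$ for $t\ge\tau$; a point of $\omega(x)\cap A\subset\Int U$ forces $\varphi^{t_n}(x)\in\cl U$ for some large $t_n$, and the trapping inclusion then keeps the forward orbit in $U$ for all later times, so $\omega(x)\subset\Inv(\cl U)=A$ by invariance of $\omega(x)$. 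Second, if $\omega(x)\cap A=\varnothing$, then for any $y\in\omega(x)$ one has $\omega(y)\subset\omega(x)$, hence $\omega(y)\cap A=\varnothing$, so $y\in A^*$ by the characterization $A^*=\{z\mid\omega(z)\cap A=\varnothing\}$ established in the proof of Lemma~\ref{dualsviacapcup}; thus $\omega(x)\subset A^*$. In particular this already shows $\omega(x)\subset\sR(\varphi)$.

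With the dichotomy in hand I would set $h_x(A)=1$ if $\omega(x)\subset A$ and $h_x(A)=0$ if $\omega(x)\cap A=\varnothing$. The checks $h_x(\varnothing)=0$, $h_x(\omega(X))=1$, $h_x(A\cup A')=\max$, and $h_x(A\wedge A')=\min$ are routine using invariance of $\omega(x)$ and $A\wedge A'=\Inv(A\cap A')$, exactly as in Lemma~\ref{lem:Phi}, so $h_x$ is a lattice homomorphism and $I_x:=h_x^{-1}(0)$ is a prime ideal. Then $\xi_+:=\rmPsi(I_x)\in\RC(\varphi)$, and reading off the defining intersection of $\rmPsi$ in \eqref{Psimap}, every $A\in I_x^c$ contains $\omega(x)$ while every $A\in I_x$ has $\omega(x)\subset A^*$, so $\omega(x)\subset\xi_+$. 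Uniqueness is immediate: the recurrent components partition $\sR(\varphi)$, so a nonempty set lying in two of them forces equality.

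For $\xi_-$ I would run the time-reversed argument on $\alphaOg$, which is nonempty (a backward branch in a compact space) and invariant. The dual dichotomy—if $\alphaOg\cap A^*\neq\varnothing$ then $\alphaOg\subset A^*$ via backward trapping in the repelling neighborhood $U^c$, and otherwise $\alphaOg\subset A$—produces a prime ideal $I_-:=\{A\mid\alphaOg\cap A=\varnothing\}$ and a unique recurrent component $\xi_-:=\rmPsi(I_-)$ with $\alphaOg\subset\xi_-$, depending only on the chosen branch $\gamma_x^-$. To get $\xi_+<\xi_-$: since $x\notin\sR(\varphi)$, there is an attractor $A_0$ with $x\notin A_0\cup A_0^*$, and Lemma~\ref{asympattrep1221} gives $\omega(x)\subset A_0$ and $\alphaOg\subset A_0^*$, whence $\xi_+\subset A_0$ and $\xi_-\subset A_0^*$ by Lemma~\ref{lem:wedge-cap}(i); disjointness of $A_0,A_0^*$ gives $\xi_+\neq\xi_-$. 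To upgrade to $\xi_+\le\xi_-$ I would show $\xi_-\subset A\Rightarrow\xi_+\subset A$ for every attractor: if $\xi_-\subset A$ then $\alphaOg\subset A$, so $\alphaOg\cap A^*=\varnothing$; both $x\in A^*$ (which forces $\gamma_x\subset A^*$ by forward–backward invariance, hence $\alphaOg\subset A^*$) and $x\notin A\cup A^*$ (which forces $\alphaOg\subset A^*$ by Lemma~\ref{asympattrep1221}) contradict this, so $x\in A$; then $\omega(x)\subset A$ and $\xi_+\subset A$ by Lemma~\ref{lem:wedge-cap}(i). Thus $\xi_+\le\xi_-$, and with the witness pair $(A_0,A_0^*)$ this yields $\xi_+<\xi_-$ by the strict-order characterization of Section~\ref{reccompsect}.

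The main obstacle is the dichotomy itself—equivalently, the statement that an $\omega$-limit set lies in a single recurrent component (internal chain recurrence). The whole argument hinges on the trapping step $\omega(x)\cap A\neq\varnothing\Rightarrow\omega(x)\subset A$, which is exactly where compactness, properness, and the inclusion $\varphi^t(\cl U)\subset\Int U$ are genuinely used; the parallel backward statement for $\alphaOg$ must be verified directly for non-invertible proper systems rather than inferred from a blanket time-reversal.
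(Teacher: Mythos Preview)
Your proposal is correct and follows essentially the same route as the paper: construct a lattice homomorphism $h_x\colon\sAtt(\varphi)\to{\bf 2}$ from the dichotomy ``$\omega(x)\subset A$ or $\omega(x)\subset A^*$'', take the prime ideal $I_+=h_x^{-1}(0)$, set $\xi_+=\rmPsi(I_+)$, do the same for $\alphaOg$ to get $\xi_-$, and finish the strict inequality by a case split on whether $x\in A$, $x\in A^*$, or neither. The only cosmetic difference is that the paper reads off both dichotomies directly from the position of $x$ (one appeal to Lemma~\ref{asympattrep1221} handles $x\notin A^*\Rightarrow\omega(x)\subset A$ and $x\notin A\Rightarrow\alphaOg\subset A^*$), whereas you re-derive them from the position of $\omega(x)$ and $\alphaOg$ via explicit trapping; both verifications are valid and produce the same homomorphisms.
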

\begin{proof}
For a point $x\in X$ it holds that $x\in X\smin A^*$ or $x\in A^*$ for all attractors $A\in \sAtt(\varphi)$.
Consequently, $\omega(x)\subset A$ whenever $x\in X\smin A^*$, or $\omega(x)\subset A^*$ whenever $x\in A^*$, cf.\  Lem.\ \ref{asympattrep1221}. 
Fix $x\in X$ and define the map  
\begin{equation}
    \label{hom1toasymp}
h_x(A) = \begin{cases}
  0    & \text{ if } \omega(x)\subset A^*\\ 
   1   & \text{ if } \omega(x)\subset A.
\end{cases}
\end{equation}
Note that $h_x(\varnothing)=0$ and $h_x(\omega(X)) = 1$.
For $h_x(A\cup A')$ we have the value $0$ whenever $\omega(x)\subset (A\cup A')^*=A^*\cap  A'^*$ which is realized if both $\omega(x)\subset A^*$ and $\omega(x)\subset A'^*$.
The value $1$ is attained whenever $\omega(x)\subset A\cup A'$
which is realized if  $\omega(x)\subset A$ or $\omega(x)\subset A'$. We conclude 
$h_x(A\cup A') = \max\{h_x(A),h_x(A')\}=h_x(A)\vee h_x(A')$.
For $h_x(A\wedge A')$ we have the value $0$ whenever $\omega(x)\subset (A\wedge A')^*=A^*\cup  A'^*$ which is realized if  $\omega(x)\subset A^*$ or $\omega(x)\subset A'^*$.
The value $1$ is attained whenever $\omega(x)\subset A\wedge A'$
which is realized if both $\omega(x)\subset A$ and $\omega(x)\subset A'$.\footnote{By the idempotency of $\omega$ we have that $\omega(x)\subset A\wedge A'$ if and only if $\omega(x)\subset A\cap A'$, cf.\ Lem.\ \ref{lem:wedge-cap}(iii). Similarly, $\alphaOg\subset A\wedge A'$ if and only if $\alphaOg\subset A\cap A'$ since $(A\cap A') \cap (A^*\cup A'^*)=\varnothing$.}
We conclude that
$h_x(A\wedge A') = \min\{h_x(A),h_x(A')\}=h_x(A)\wedge h_x(A')$.
The map $h_x\colon\sAtt(\varphi) \to {\bf 2}$ is therefore a lattice homomorphism, and 
$
I_+=h_x^{-1}(0) = \bigl\{ A\in \sAtt(\varphi)~|~ \omega(x)\subset A^*\bigr\}
$
is a prime ideal. Consequently,
\[
\omega(x) \subset \left(\bigcap_{A\in I_+^c}A \right) \bigcap 
 \left( \bigcap_{A\in I_+}A^*\right) =: \xi_+ \in \RC(\varphi).
\]

Let $\gamma_x$ be a complete orbit for $x\in X\smin\sR(\varphi)$.
Similar to omega-limit sets, we have that $\alphaOg\subset A$ or $\alphaOg\subset A^*$ whenever $x\in A$, or $\alphaOg\subset A^*$ whenever $x\in X\smin A$. 
In this case the complete orbit is the `parameter' and we define
\begin{equation}
    \label{hom2toasymp}
h_{\gamma_x}(A) = \begin{cases}
  0   & \text{ if } \alphaOg\subset A^*\\ 
   1   & \text{ if } \alphaOg\subset A.
\end{cases}
\end{equation} 
As before,
the map $h_{\gamma_x}$ is  a lattice homomorphism, and 
\[
I_-=h_{\gamma_x}^{-1}(0) = \bigl\{ A\in \sA~|~ \alphaOg\subset A^*\bigr\}
\]
is a prime ideal. Consequently,
\[
\alphaOg \subset \left(\bigcap_{A\in I_-^c}A \right) \bigcap 
 \left( \bigcap_{A\in I_-}A^*\right) =: \xi_- \in \RC(\varphi).
\]
To complete the proof, it remains to show that $\xi_+<\xi_-$, which follows from the definition of the partial order on $\RC(\varphi)$.
Indeed, for attractors $A$ for which $x\in X\smin(A\cup A^*)$ we have $\omega(x)\subset A$ and $\alphaOg\subset A^*$. 
For attractors $A$ for which $x\in A$   we have $\omega(x)\subset A$ and $\alphaOg\subset A$ or $\alphaOg\subset A^*$, and 
for attractors $A$ for which $x\in A^*$   we have $\omega(x)\subset A^*$ and $\alphaOg\subset A^*$.
For $\xi_+$ and $\xi_-$ this implies $\xi_+<\xi_-$.
\qed
\end{proof}

\begin{remark}
If we consider a finite sublattice $\sA\subset \sAtt(\varphi)$, or any sublattice, then the decomposition theorem applies to the restricted recurrent set $\sR(\varphi;\sA)$. In the case that $\sA$ is finite, this results in a Morse representation, cf.\ \cite{Conley}, \cite{LSoA3}.
\end{remark}

If we combine Theorem \ref{Conleydecomp} with the partial order on $\RC(\varphi)$, then we can define a transitive relation $\scrR\subset X\times X$. 

\begin{definition}
    \label{defnofrelR}
    A pair $(x,x') \in \scrR$ if and only there exist $\xi_-\le \xi'_+$   and a complete orbit $\gamma_{x}$     such that $\alphaOg\subset \xi_-$ and  $\omega(x') \subset \xi'_+$, cf.\ Fig.\ \ref{fig:compforR}.
\end{definition}

\begin{figure}
\centering
  \includegraphics[width=2in,height=0.8in]{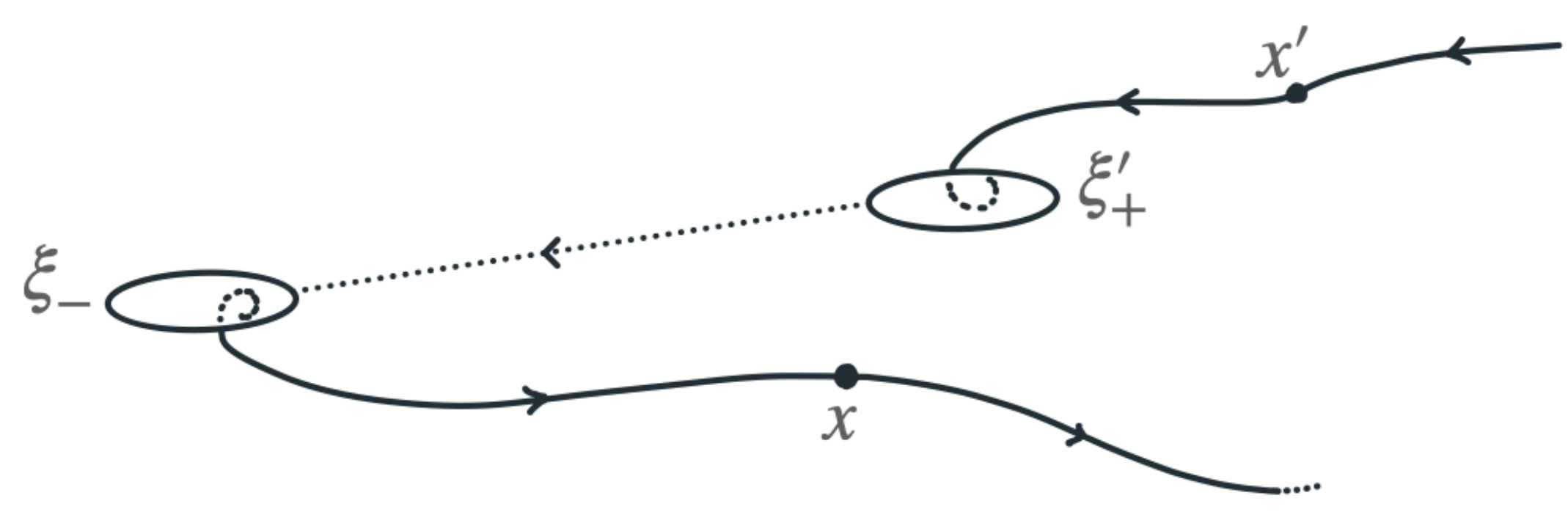}
  \caption{The relation $(x,x')\in \scrR\subset X\times X$  via the partial order on $\RC(\varphi)$ as in Definition~\ref{defnofrelR}.}
  \label{fig:compforR}
\end{figure}
\begin{remark}
    \label{inomega}
   Note that for a pair $(x,x')\in \scrR$ it necessarily holds  that $x\in \omega(X),$
    since the definition of $\scrR$ requires complete orbits $\gamma_x$ at $x$.
    For $x,x'\in\sR(\varphi)$ we have $(x,x')\in\scrR$ if and only if $\xi_-=[x]\le [x']=\xi'_+$.
\end{remark}

\begin{theorem}
    \label{comptoConleyrecrel}
    Let $\varphi$ be a flow on a compact, Hausdorff space.
Then,
    the relation $\scrR$ coincides with the opposite Conley  relation $\scrC^{-1}$.
\end{theorem}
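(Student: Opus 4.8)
The plan is to establish the set equality $\scrR=\scrC^{-1}$ pointwise, i.e.\ to show for all $x,x'\in X$ that $(x,x')\in\scrR$ if and only if $(x',x)\in\scrC$, by translating each relation into a condition on the lattice $\sAtt(\varphi)$. Since $\varphi$ is a flow, every point lies on a \emph{unique} complete orbit and, by compactness, $\alphaOg\neq\varnothing$; by Proposition~\ref{linktoboth} attractors and dual repellers are closed invariant sets. Thus Theorem~\ref{Conleydecomp} assigns to $x$ the unique component $\xi_-$ with $\alphaOg\subset\xi_-$ and to $x'$ the unique component $\xi'_+$ with $\omega(x')\subset\xi'_+$, so that Definition~\ref{defnofrelR} reduces to the single condition $(x,x')\in\scrR\iff\xi_-\le\xi'_+$.

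First I would express $\xi_-\le\xi'_+$ as an inclusion of prime ideals. By Theorem~\ref{cor:iso} the map $\rmPhi$ is an order-isomorphism, so $\xi_-\le\xi'_+$ is equivalent to $\rmPhi(\xi_-)\subseteq\rmPhi(\xi'_+)$. By Lemma~\ref{thm:bijection} and the construction of $\xi_-,\xi'_+$ in Theorem~\ref{Conleydecomp},
\[
\rmPhi(\xi_-)=\{A\in\sAtt(\varphi)\mid\alphaOg\subset A^*\},\qquad
\rmPhi(\xi'_+)=\{A\in\sAtt(\varphi)\mid\omega(x')\subset A^*\}.
\]
Using that each of $\alphaOg$ and $\omega(x')$ lies in precisely one of $A$ or $A^*$ (Lemma~\ref{asympattrep1221} together with invariance), taking contrapositives turns the inclusion $\rmPhi(\xi_-)\subseteq\rmPhi(\xi'_+)$ into the statement: for every attractor $A$, $\omega(x')\subset A$ implies $\alphaOg\subset A$.

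The crux is then the equivalence $\alphaOg\subset A\iff x\in A$ for every attractor $A$, which I would prove using invertibility and compactness. The implication $x\in A\Rightarrow\alphaOg\subset A$ is immediate since $A$ is closed and invariant, so $\gamma_x\subset A$. For the converse, suppose $\alphaOg\subset A$. By the dichotomy $x\in A$, $x\in A^*$, or $x\in X\smin(A\cup A^*)$; in the latter two cases Lemma~\ref{asympattrep1221} (and invariance of $A^*$) forces $\alphaOg\subset A^*$, whence $\alphaOg\subset A\cap A^*=\varnothing$, contradicting $\alphaOg\neq\varnothing$. Hence $x\in A$. Substituting this equivalence into the condition from the previous step yields: $\xi_-\le\xi'_+$ if and only if $x\in A$ for every attractor $A$ with $\omega(x')\subset A$.

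Finally I would invoke Proposition~\ref{prop:conley-order}: formula~\eqref{eqOmega12} gives $\rmOmega(x')=\bigcap_{A\supseteq\omega(x')}A$, so $(x',x)\in\scrC$ holds exactly when $x\in A$ for all attractors $A$ with $\omega(x')\subset A$. This is precisely the condition just obtained, completing the chain of equivalences $(x,x')\in\scrR\iff\xi_-\le\xi'_+\iff(x',x)\in\scrC$, and hence $\scrR=\scrC^{-1}$. The main obstacle is the equivalence $\alphaOg\subset A\iff x\in A$: this is where invertibility (giving a genuine backward orbit whose $\alpha$-limit detects membership in $A$) and compactness (ensuring $\alphaOg\neq\varnothing$, so that the empty-intersection contradiction bites) are both essential, and it is the step most easily gotten wrong by overlooking the case $x\in X\smin(A\cup A^*)$.
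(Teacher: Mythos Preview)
Your proof is correct and follows essentially the same route as the paper's: both use Theorem~\ref{Conleydecomp} to pin down $\xi_-$ and $\xi'_+$, translate $\xi_-\le\xi'_+$ into the prime-ideal inclusion $I_-\subset I'_+$ via the order-isomorphism $\rmPhi$, establish the key equivalence $\alphaOg\subset A\iff x\in A$ (the paper cites Lemma~\ref{asympattrep1221} tersely while you spell out the dichotomy argument), and then identify the resulting condition with $x\in\rmOmega(x')$ via~\eqref{eqOmega12}. Your write-up is in fact more explicit than the paper's at the crux step, but the structure and the ingredients are the same.
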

\begin{proof}
Let $x,x'\in X$. Since $\varphi$ is a flow on a compact space, alpha-limit sets and omega-limit sets exist and are nontrivial for both $x$ and $x'$. By Theorem \ref{Conleydecomp}   $\alpha(x)\subset \xi_-=\rmPsi(I)$ and $\omega(x')\subset \xi'_+=\rmPsi(I')$. 
From Lemma \ref{asympattrep1221}
we conclude that $\alpha(x)\subset A$ if and only $x\in A$. Therefore, by Proposition~\ref{prop:conley-order}, $I\subset I' \iff x\in A$ for all $A\supseteq \omega(x')$, which proves that $x\in \rmOmega(x')$ using
 \cite[Assrt.\ 6.1.C]{Conley} and the formula in $\eqref{eqOmega12}$.
The fact that $(x,x')\in \scrC^{-1}$ if and only if $x\in \rmOmega(x')$ completes the proof.
    \qed
\end{proof}

\begin{remark}
    \label{onlyequiv}
     Note that $x\sim x'$ in $\scrR$ if and only if $x,x'\in \xi\in \RC(\varphi)$. Indeed, $(x,x')\in\scrR$ and $(x',x)\in \scrR$ imply that $\xi_+=\xi'_-$ and
    $\xi'_+=\xi_-$.
    Since $\xi_+\le\xi_-$ and $\xi'_+\le\xi'_-$, we have $\xi:=\xi_-=\xi_+=\xi'_-=\xi'_+$.
    This implies $x,x'\in \xi\in \RC(\varphi)$, since for points in $\omega(X)\smin \sR(\varphi)$ the inequalities are strict.
\end{remark}

\section{Attracting neighborhoods and the fundamental cospan}
\label{attnbhds}

In this section
we continue to assume that $\varphi$ is a continuous and proper dynamical system on a compact space $X$.
We extend the order on the recurrent components $\RC(\varphi)$ to an order on all of $X$ using the lattice of attracting neighborhoods. 
In Definition \ref{defnofrelR} we use Theorem \ref{Conleydecomp} to define the transitive relation $\scrR$ using the asymptotic behavior of orbits, which retrieves the Conley relation in the case of flows, cf.\ Thm.\ \ref{comptoConleyrecrel}. 
In this section we use the duality theory for
 the lattices $\sANbhd(\varphi)$ and $\sAtt(\varphi)$ and the homomorphism
 $\omega\colon\sANbhd(\varphi)\to\sAtt(\varphi)$ in order to define $\scrR$ in an alternative manner. The advantage of the latter approach is that it is based on order theoretic properties of attractors and attracting neighborhoods, and the method extends beyond compact spaces, cf.\ Sect.\ \ref{noncompact}. The method also allows for a refinement of the relation $\scrR$, cf.\ Rem.\ \ref{otherattnebhd}.

\subsection{Strong components}
\label{strongcomp123}
Recall that $\sANbhd(\varphi)$ denotes the bounded, distributive lattice of attracting neighborhoods. The map $\omega\colon\sANbhd(\varphi)\to\sAtt(\varphi)$ given by $U\mapsto \omega(U)$ is a surjective lattice homomorphism.
The duals of attracting neighborhoods are the repelling neighborhoods, $\sRNbhd(\varphi)$, and the duality is given by $U\mapsto U^c$. Since $X = U\cup U^c$
we can define a preorder on $X$ directly.

\begin{definition}
\label{pro123}
Define a preorder on $X$ as follows: $x\le  x'$ if and only if for every $U\in\sANbhd(\varphi)$,
 $x'\in U$ implies $x\in U$, or equivalently $x \in U^c$ implies $x'\in U^c$. 
The preorder is denoted as $(X,\le )$.
\end{definition}

From the above defined preorder it follows that $x\le  x'$ and $x'\le  x$ if and only if either $x,x'\in U$ or $x,x'\in U^c$
for every $U\in\sANbhd(\varphi)$. This defines the  equivalence classes in $X$. The
  equivalence relation on $X$ is denoted by $x\sim  x'$ and the equivalence classes are again denoted by $\xi=[x]$.

\begin{definition}\label{def:str123}
The equivalence classes of $\sim $
are called the {\em  strong components} of $\varphi$ and are denoted by $\SC(\varphi) = X/_{\!{\sim }}$,
and the partial order on $\SC(\varphi)$ is again denoted by $\le $.
\end{definition}

Note that as in the case of recurrent components, we have that $\xi\cap U \not = \varnothing$ is equivalent to $\xi \subset U$, and similarly $\xi\cap U^c \not = \varnothing$ is equivalent to $\xi \subset U^c$.
By construction $\SC(\varphi)$ is a poset, and the partial order can be characterized as follows: $\xi \le \xi'$ if for every $U\in\sANbhd(\varphi)$
we have that $\xi'\subset U$ implies $\xi\subset U$, or equivalently $\xi \subset U^c$ implies $\xi'\subset U^c$.

\begin{lemma}
    \label{charofpreo}
    Let $(X,\le)$ be the preorder defined in Definition \ref{pro123}.
    If $x\le x'$ and $x\neq x'$, then for every $U\in \sANbhd(\varphi)$, $x\in U^c$ implies $x'\in \alpha(U^c)\neq \varnothing$.
\end{lemma}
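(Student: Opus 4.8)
The plan is to argue by contradiction. I fix $U\in\sANbhd(\varphi)$ and assume $x\in U^c$, so that I must produce $x'\in\alpha(U^c)$. First I would observe that the hypothesis $x\le x'$, applied to the attracting neighborhood $U$, already forces $x'\in U^c$: by Definition~\ref{pro123}, $x\in U^c$ implies $x'\in U^c$. Next, since $U^c\in\sRNbhd(\varphi)$ by Lemma~\ref{involattrep123}, Proposition~\ref{linktoalpha} identifies the repeller as $\alpha(U^c)=\Inv^+(U^c)$, so it suffices to show $x'\in\Inv^+(U^c)$. I would assume the contrary, $x'\notin\Inv^+(U^c)$, and derive a contradiction with $x\le x'$ by exhibiting an attracting neighborhood that contains $x'$ but not $x$.

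The key steps are as follows. The forward orbit $\gamma_{x'}^+$ is a forward invariant set containing $x'$, so $x'\notin\Inv^+(U^c)$ means $\gamma_{x'}^+\not\subset U^c$, giving $\varphi^{t_0}(x')\in U$ for some $t_0>0$. Because $U$ is an attracting neighborhood, $\varphi^{t_0+t}(x')\in\varphi^t(\cl U)\subset\Int U$ for all $t\ge\tau$, so $\varphi^s(x')\in\Int U$ for every $s$ beyond some $s_0$, whence $\omega(x')\subset\omega(U)=A\subset\Int U$ by Proposition~\ref{contclosedom}. I then form the candidate neighborhood
\[
V:=\{x'\}\cup U,
\]
which contains $x'$ but, crucially, not $x$: indeed $x\in U^c$ gives $x\notin U$, and the hypothesis $x\ne x'$ gives $x\notin\{x'\}$. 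This is precisely the place where $x\ne x'$ is used.

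The hard part will be verifying that $V$ is genuinely an attracting neighborhood, that is, $\varphi^t(\cl V)\subset\Int V$ for large $t$. In a merely compact (possibly non-Hausdorff) space the singleton $\{x'\}$ need not be closed, and $\cl\{\varphi^t(x')\}$ need not lie inside $\Int U$, so a direct check fails. I would resolve this with Lemma~\ref{nested45} applied to the nested family of nonempty compact tails $K_t:=\cl\{\varphi^s(x')\mid s\ge t\}$, whose intersection is $\omega(x')\subset\Int U$; the lemma yields $t_*$ with $K_t\subset\Int U$ for all $t\ge t_*$. Using that $\varphi$ is closed (Definition~\ref{properdynsys}), one computes, for $t\ge\max(\tau,t_*)$,
\[
\varphi^t(\cl V)=\cl\{\varphi^t(x')\}\cup\varphi^t(\cl U)\subset K_t\cup\Int U\subset\Int U\subset\Int V,
\]
so $V\in\sANbhd(\varphi)$. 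Since $x'\in V$ but $x\notin V$, this contradicts $x\le x'$ via Definition~\ref{pro123}. Hence $x'\in\Inv^+(U^c)=\alpha(U^c)$, which is in particular nonempty, completing the proof.
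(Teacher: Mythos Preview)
Your argument is correct and takes a route dual to the paper's. The paper supposes $x'\notin A^*=\alpha(U^c)$, invokes Lemma~\ref{dualsviacapcup} to find another attracting neighborhood $U'$ with $\alpha(U'^c)=A^*$ and $x'\notin U'^c$, and then adjoins the point $x$ to the \emph{repelling} side, setting $V^c=\{x\}\cup U'^c$; the verification that $V^c\in\sRNbhd(\varphi)$ reduces to the one-line check $\alpha(V^c)=\alpha(x)\cup A^*=A^*\subset\Int U'^c\subset\Int V^c$, using the characterization of repelling neighborhoods in compact spaces. You instead adjoin $x'$ to the \emph{attracting} side, $V=\{x'\}\cup U$, and verify the defining inequality $\varphi^t(\cl V)\subset\Int V$ directly. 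Your route avoids Lemma~\ref{dualsviacapcup} entirely but pays for it with Lemma~\ref{nested45} and an explicit appeal to closedness of $\varphi^t$ to handle $\cl\{x'\}$ in a non-Hausdorff space; the paper's route hides this work in the $\alpha$-characterization of repelling neighborhoods (the remark after Lemma~\ref{att1234}). Both contradictions land on the same definition of $\le$, just from opposite sides.
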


\begin{proof}
Let $U\in \sANbhd(\varphi)$ with $x\in U^c$ and $\alpha(x)\subset \alpha(U^c)=A^*\neq \varnothing$, and suppose that $x'\in U^c\smin A^*$. Then, Lemma \ref{dualsviacapcup}
yields the existence of an attracting neighborhood $U'\in \sANbhd(\varphi)$ such that $\alpha(U'^c)=A^*$ and $x'\notin U'^c$.
Define $V^c=\{x\}\cup U'^c$, then $\alpha(V^c)=A^*\subset \Int U'^c\subset
\Int U'^c\cup \Int\{x\}\subset \Int V^c$, and therefore, $V\in \sANbhd(\varphi)$ with $x\in V^c$ and $x'\notin V^c$.
Consequently, $x\in V^c$ does not imply that $x'\in V^c$,
which contradicts $x\le x'$. Therefore, $x'\in A^*=\alpha(U^c)\neq \varnothing$ for all $U\in \sANbhd(\varphi)$ for which $x\in U^c$.
    \qed
\end{proof}

\begin{lemma}
    \label{containment1}
    Let $A\in \sAtt(\varphi)$  and  $\gamma_x$ a complete orbit with $\alphaOg\subset A$.
    Then $\gamma_x\subset A$.
\end{lemma}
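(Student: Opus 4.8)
The plan is to work directly with an attracting neighborhood $U$ realizing $A$, rather than splitting into cases according to whether $x$ lies in $A$, in the dual repeller $A^*$, or in neither. Since $A$ is an attractor, I would fix $U\in\sANbhd(\varphi)$ with $A=\Inv(U)\subset\Int U$ (Proposition~\ref{listofattpropsa}(i)) together with $\tau>0$ such that $\varphi^t(\cl U)\subset\Int U$ for all $t\ge\tau$, the latter being the defining property of an attracting neighborhood (Definition~\ref{defnattfornoncomp}).

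First I would show that the backward orbit eventually enters $U$. By definition $\alphaOg=\bigcap_{t\ge 0}\cl\bigcup_{s\ge t}\gamma_x(-s)$, so the sets $K_n:=\cl\bigcup_{s\ge n}\gamma_x(-s)$ form a nested sequence of nonempty closed—hence compact, as $X$ is compact—subsets with $\bigcap_n K_n=\alphaOg\subset A\subset\Int U$. Lemma~\ref{nested45} then yields an $n_*$ with $K_n\subset\Int U$ for all $n\ge n_*$; in particular $\gamma_x(-s)\in\Int U\subset U$ for all $s$ sufficiently large, say $s\ge T$.

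Next I would upgrade this backward-tail containment to containment of the whole orbit. Fixing an arbitrary $r\in\T$ and choosing any $s\ge\max\{T,\tau-r\}$, we have $\gamma_x(-s)\in U\subset\cl U$ and $s+r\ge\tau$, so
\[
\gamma_x(r)=\varphi^{s+r}\bigl(\gamma_x(-s)\bigr)\in\varphi^{s+r}(\cl U)\subset\Int U .
\]
Since the value $\gamma_x(r)$ does not depend on the choice of such $s$, this gives $\gamma_x(r)\in\Int U$ for every $r$, i.e.\ $\gamma_x\subset U$. Finally, a single complete orbit is an invariant set (an atom of $\sInvset(\varphi)$), so $\gamma_x\subset\Inv(U)=A$, which is the claim.

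I expect the only genuinely delicate point to be the passage from \emph{the backward tail lies in $U$} to \emph{the whole orbit lies in $U$}: one must observe that every orbit point $\gamma_x(r)$ is the forward image, at some large time $s+r\ge\tau$, of a backward point $\gamma_x(-s)$ already trapped in $\cl U$, and then invoke the absorbing property $\varphi^{s+r}(\cl U)\subset\Int U$. The application of Lemma~\ref{nested45} is routine given compactness of $X$, and it is worth noting that no continuity or properness of $\varphi$ is actually needed here beyond what is packaged into the attracting-neighborhood definition.
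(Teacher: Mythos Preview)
Your argument is correct. The route differs from the paper's in one key choice: the paper first upgrades the attracting neighborhood to a \emph{trapping region} $V\supseteq A$ via Remark~\ref{trappexist}, and then uses only forward invariance of $V$. Concretely, the paper picks a single alpha-limit point $y\in\alphaOg\subset\Int V$, takes a net $\gamma_x(-t_\beta)\to y$ so that some $\gamma_x(-t_\beta)\in V$ with $t_\beta$ arbitrarily large, and then forward invariance of $V$ forces every $\gamma_x(-t_0)\in V$; hence $\gamma_x\subset V$ and $\gamma_x\subset\Inv(V)=A$. You instead keep a plain attracting neighborhood $U$, invoke Lemma~\ref{nested45} to trap the entire backward tail in $\Int U$, and then use the absorbing property $\varphi^{s+r}(\cl U)\subset\Int U$ to push each orbit point into $U$. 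The paper's version is shorter once the trapping region is in hand, but the construction of trapping regions in Remark~\ref{trappexist} relies on continuity of $\varphi$; your version sidesteps that and, as you note, uses only compactness of $X$ together with the raw attracting-neighborhood definition, so it is marginally more self-contained.
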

\begin{proof}
Let $V\supseteq A$ be a trapping region for $A$, cf.\ Rem.\ \ref{trappexist}. Then, for every orbital alpha-limit point $y\in \alphaOg$, there exists a net $\{\gamma_{x}(-t_\beta)\}$ with  $\gamma_{x}(-t_\beta)\to y$ as $t_\beta\to \infty$, i.e. there exists a $\beta_0$ such that $\gamma_{x}(-t_\beta)\in V$ for all $\beta_0\le \beta$. 
Suppose, $\gamma_{x}(-t_{0})\not\in V$ 
for some $t_0\in\R$. 
Then, there exists a $t_\beta>t_0$ such that $\gamma_{x}(-t_{\beta}) \in V$. Moreover, by forward invariance
$\gamma_{x}(-t_{0})=\varphi^{t_{\beta}-t_0}\bigl( \gamma_{x}(-t_{\beta})\bigr) \in V$, which contradicts the assumption that $\gamma_{x}(-t_{0})\not\in V$. We conclude that $\gamma_{x}\subset V$ and thus $\gamma_{x}\subset A$ since $A=\Inv(V)$. 
    \qed
\end{proof}

\begin{theorem}
    \label{comparetoCR}
    Let  $\scrR\subset X\times X$ be the relation defined in Definition \ref{defnofrelR}. For $x\neq x'$  and $x\in\omega(X)$,
  $(x,x')\in \scrR$ if and only if $x\le x'$.
  \end{theorem}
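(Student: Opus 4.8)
The plan is to translate membership in $\scrR$ into an inclusion of prime ideals and then read off the preorder $(X,\le)$ from the lattice of attracting neighborhoods. By Definition~\ref{defnofrelR}, $(x,x')\in\scrR$ means that for \emph{some} complete orbit $\gamma_x$ the recurrent components $\xi_-\ni\alphaOg$ and $\xi'_+\supseteq\omega(x')$ satisfy $\xi_-\le\xi'_+$. Using the order-isomorphism $\rmPhi$ of Theorem~\ref{cor:iso} together with $\rmPhi^{-1}=\rmPsi$ (Lemma~\ref{thm:bijection}) and the homomorphisms $h_{\gamma_x},h_{x'}$ from Theorem~\ref{Conleydecomp}, this is equivalent to the inclusion $I_-\subseteq I'_+$, where $I_-=\{A\mid \alphaOg\subset A^*\}$ and $I'_+=\{A\mid \omega(x')\subset A^*\}$. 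So the statement reduces to: there is a complete orbit $\gamma_x$ with $I_-\subseteq I'_+$ if and only if $x\le x'$ in the sense of Definition~\ref{pro123}.

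For the forward implication I would fix such an orbit and take an arbitrary $U\in\sANbhd(\varphi)$ with $x\in U^c$; setting $A=\omega(U)$, the inclusion $A\subset\Int U$ forces $x\notin A$, so Lemma~\ref{containment1} (in contrapositive form) gives $\alphaOg\not\subset A$, hence $\alphaOg\subset A^*$, i.e. $A\in I_-\subseteq I'_+$. Then $\omega(x')\subset A^*$; if $x'$ lay in $U$ we would get $\omega(x')\subset\omega(U)=A$ by monotonicity of $\omega$, contradicting $A\cap A^*=\varnothing$ and $\omega(x')\neq\varnothing$. Hence $x'\in U^c$, and since $U$ was arbitrary, $x\le x'$.

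For the reverse implication I would first choose the complete orbit carefully, since $I_-$ depends on $\gamma_x$: the goal is an orbit for which $I_-$ attains its minimal value $\{A\mid x\notin A\}$, i.e. an orbit confined to every attractor containing $x$. Such an orbit exists by a compactness argument: because $\varphi$ is proper the set of complete orbits through $x\in\omega(X)$ is a nonempty inverse limit of compact fibers; for each attractor $A\ni x$ the orbits contained in $A$ form a closed nonempty subset (each invariant, compact $A$ carries a backward orbit through $x$), and these subsets are downward directed along $A\wedge A'$, so the finite-intersection property yields an orbit inside $\bigcap\{A\mid x\in A\}$. For this orbit Lemma~\ref{containment1} (and the trivial converse $\gamma_x\subset A\Rightarrow\alphaOg\subset A$) gives exactly $I_-=\{A\mid x\notin A\}$. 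Then for each $A\in I_-$, so $x\notin A$, I would produce an attracting neighborhood $U$ with $\omega(U)=A$ and $x\in U^c$: using that $A$ is closed and that $\omega(U)\subset\Int U$ characterizes attracting neighborhoods (Remark following Lemma~\ref{att1234}), one shrinks a given attracting neighborhood of $A$ away from $x$ in the $T_1$/Hausdorff setting (and this is automatic when $\varphi$ is invertible). Now Lemma~\ref{charofpreo}, applied to $x\le x'$ with $x\neq x'$, upgrades $x\in U^c$ to $x'\in\alpha(U^c)=A^*$, so $A\in I'_+$; hence $I_-\subseteq I'_+$ and $(x,x')\in\scrR$.

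The main obstacle is this reverse direction, and specifically the orbit-dependence of $I_-$ for non-invertible $\varphi$: for a poorly chosen orbit one could have $x\in A$ yet $\alphaOg\subset A^*$, in which case $x\le x'$ supplies no control over $x'$ relative to $A$ (one cannot arrange $x\in U^c$ when $x\in A\subset\Int U$). The fix is precisely the selection of an orbit confined to all attractors through $x$, and this is where properness (compact fibers) and the inverse-limit/finite-intersection argument carry the weight. The auxiliary separation step producing an attracting neighborhood of $A$ that misses $x$ is routine once $\omega(U)\subset\Int U$ is used as the working definition, and it degenerates for flows, where the unique orbit through a point of an invariant set stays inside it.
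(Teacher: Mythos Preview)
Your forward direction is correct and is the paper's argument in contrapositive form.

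For the converse you correctly isolate the orbit-dependence of $I_-$ as the crux, and the paper's own proof is terse at exactly this point. But your fix has a real gap. The family $\{A\in\sAtt(\varphi)\mid x\in A\}$ need \emph{not} be closed under $\wedge$: one can have $x\in A\cap A'$ yet $x\notin A\wedge A'=\Inv(A\cap A')$, precisely when every complete backward orbit through $x$ leaves $A\cap A'$. For a concrete instance, take two copies of the shift $n\mapsto n-1$ on the one-point compactification $\{1,2,\dots,\infty\}$ and feed both into a common forward tail $x\mapsto y\mapsto y$ (so $f(a_1)=f(b_1)=x$, $f(x)=y=f(y)$, $f(a_{n+1})=a_n$, $f(a_\infty)=a_\infty$, etc.). The two branches are clopen attractors $A,A'$ with $A\cap A'=\{x,y\}$ but $A\wedge A'=\{y\}$, and no complete orbit through $x$ lies in both. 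Your finite-intersection argument therefore collapses, and no orbit achieves the minimal value $I_-=\{A\mid x\notin A\}$.

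The remedy is to replace $\{A\mid x\in A\}$ by the prime filter $I'^c_+=\{A\mid\omega(x')\subset A\}$, which \emph{is} closed under $\wedge$; by Theorem~\ref{inverssysofinvsetsappl} the intersection $\bigcap_{A\in I'^c_+}A$ is invariant. Your separation step combined with Lemma~\ref{charofpreo} then shows $x\in A$ for every $A\in I'^c_+$: if $x\notin A$, manufacture $U$ with $\omega(U)=A$ and $x\in U^c$, whence $x'\in A^*$, contradicting $\omega(x')\subset A$. Invariance now supplies an orbit $\gamma_x\subset\bigcap_{A\in I'^c_+}A$, and Lemma~\ref{containment1} gives $\alphaOg\subset A$ for all $A\in I'^c_+$, i.e.\ $I_-\subset I'_+$. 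This is much closer to the paper's route, which works with an unspecified orbit and passes directly from ``$x\in U^c\Rightarrow A\in I_-$ and $A\in I'_+$'' to ``$I_-\subset I'_+$''; your diagnosis of the difficulty is sharper than the paper's exposition, but the orbit you try to construct is too ambitious.
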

\begin{proof}
Let $(x,x')\in \scrR$ with $x\neq x'$. Then, by  Definition \ref{defnofrelR} there exist recurrent components $\xi_-,\xi'_+\in \RC(\varphi)$ such that 
$
\xi_-\le \xi'_+
$
 and a complete orbit $\gamma_{x}$
with $\omega(x')\subset \xi'_+$ and $\alphaOg\subset \xi_-$ 
By definition  $\rmPhi(\xi_-) = I_-\subset I'_+=\rmPhi(\xi'_+)$, and 
\begin{equation}
    \label{assum1}
\omega(x') \subset \xi'_+= \left(\bigcap_{A\in {I'_+}^c}A \right) \bigcap 
 \left( \bigcap_{A\in I'_+}A^*\right),
\end{equation}
and 
\begin{equation}
    \label{assum2}
\alphaOg \subset \xi_-=\left(\bigcap_{A\in {I_-}^c}A \right) \bigcap 
 \left( \bigcap_{A\in {I_-}}A^*\right). 
\end{equation}
Therefore, $\omega(x')\subset A$ for all $A\in {I'_+}^c$, and since ${I'_+}^c\subset I_-^c$, we conclude that 
$\alphaOg\subset A$ for all $A\in {I'_+}^c$.
Since the 
orbital alpha-limit set $\alphaOg$ is contained in $A$ for all attractors $A\in {I'_+}^c$, Lemma \ref{containment1} implies that the orbit $\gamma_{x}$ is contained in all $A\in {I'_+}^c$. Thus 
$x\in A\subset U$ for all $U\in \sANbhd(\varphi)$ with $\omega(U)=A\in {I'_+}^c$.
Now, suppose $(x,x')\in \scrR$ with $x\neq x'$ and $x'\in U$ for some $U\in \sANbhd(\varphi)$. Then $\omega(x')\subset \omega(U)=A$, and $A\in {I'_+}^c$ by \eqref{assum1}. By the previous considerations also $x\in U$.
Consequently,
 $x'\in U$ implies $x\in U$ for all $U\in \sANbhd(\varphi)$, which yields $x\le x'$.

As for the converse, we argue as follows.  
Suppose $x\le x'$ 
with $x\neq x'$  and $x\in\omega(X)$.
By assumption, for every $U\in \sANbhd(\varphi)$, $x\in U^c$ implies
$\alphaOg\subset \alpha(U^c)=A^*$, and $x'\in A^*\neq \varnothing$ by Lemma \ref{charofpreo}. 
By the forward invariance of $A^*$,
we deduce that $\omega(x')\subset A^*$. 
By Theorem \ref{Conleydecomp} there exists a recurrent component $\xi'_+$ such that $\omega(x')\subset \xi'_+$, 
and there exists a recurrent component $\xi_-$ such that $\alphaOg\subset \xi_-$.
By the inclusions for $\omega(x')$ and $\alphaOg$ in \eqref{assum1} and \eqref{assum2}, we conclude that if $x\in U^c$, then $\alphaOg\subset A^*\in I_-$, and
$\omega(x')\subset A^*\in  I'_+$, which implies $I_-\subset I'_+$. Using the isomorphism $\rmPsi$ in via Theorem \ref{cor:iso} this proves that $\xi_-\le \xi'_+$.
    \qed
\end{proof}

\begin{remark}
\label{ext1212}
    If $\omega(X)=X$, then 
    Theorem \ref{comparetoCR} implies that the reflexive closure  $\scrR^=$ is equal to the preorder $(X,\le)$ and yields the partial order $(\SC(\varphi),\le)$. 
    If $\omega(X)\neq X$, then $(X,\le)$ extends $\scrR^=$, but
    coincides for all pairs $(x,x')$ with $x\in \omega(X)$. In some cases one can prove that $\scrR^=$ and $(X,\le)$  coincide. For instance assume 
    that for every $x\not\in \omega(X)$ there exists a $\tau>0$ such that $\varphi^{-\tau}(x)=\varnothing$. Then,
    $\alpha(x)=\varnothing$ for all $x\not\in \omega(X)$. Let $x\not\in \omega(X)$ and $x'\in X$. By assumption $V^c=\{x\}$ is a repelling neighborhood, and
    $V\in\sANbhd(\varphi)$ with $\alpha(V^c)=\varnothing$,   $x\in V^c$ and $x'\notin V^c$, which proves that $x\not\le x'$ for all $x'\in X$.
In a $T_1$-topological a similar statement can be proved. Since $\{x\}$ is closed, $X\smin\{x\}$ is an attracting neighborhood for $\omega(X)$, and thus $V^c=\{x\}$ is a repelling neighborhood with $\alpha(V^c)=\alpha(x)=\varnothing$.This again shows that $x\not\le x'$ for all $x\not\in \omega(X)$ and $x'\in X$.
\end{remark}

\begin{remark}
    \label{no-equiv}
    Consider the case $x\in \omega(X)\smin \sR(\varphi)$, $x'\not\in \omega(X)$, and $x\le x'$. By assumption $x\not \in A\cup A^*$ for some $A\in \sAtt(\varphi)$.  By Theorem \ref{comparetoCR} this implies $\alphaOg\subset A^*$ and $\omega(x')\subset A^*$. 
    Indeed, $x\le x'$ if and only if $\alphaOg\subset \xi_- \le \xi'_+\supseteq \omega(x')$. Therefore, 
     $A\in I_-\subset I'_+$, 
    which yields $\omega(x')\subset A^*$.
    Let $U\in \sANbhd(\varphi)$ with $\omega(U)=A$.  Since $\omega(x')\subset A^*$, Lemma \ref{asympattrep1221} implies that 
    $x'\in A^*\subset U^c$ and $\omega(x)\subset A$.
    Suppose $x'\le x$. Then, by Lemma \ref{charofpreo}, $x'\in U^c$ implies
     that $x\in \alpha(U^c)=A^*$, and therefore $\omega(x)\subset A^*$, a contradiction, which proves that $x'\not\le x$. This shows, in particular, that an equivalence class $\xi \in\SC(\varphi)$ is either contained in $\omega(X)$ or in $X\smin\omega(X)$.
\end{remark}

The reflexive points in $\scrR$ correspond exactly with the recurrent points, which may be encoded by the cospan of posets:
\begin{equation}
    \label{emb1}
   X \xtwoheadrightarrow{~~} \SC(\varphi) \xhookleftarrow{~~~~~~~} \RC(\varphi),
\end{equation}
which is called the \emph{recurrence cospan}. 
The inclusion as sets follows from the definitions of $\RC(\varphi)$ and $\SC(\varphi)$.  Indeed, by Remark \ref{onlyequiv} an equivalence class for $(\sR(\varphi),\le)$ defines an equivalence class for $(X,\le)$. If $\xi\le \xi'$ for some $\xi,\xi'\in \RC(\varphi)$, then
$\xi'\subset A$ implies $\xi\subset A$ for all $A\in \sAtt(\varphi)$, and consequently $\xi' \subset U$
implies $\xi\subset U$ for all $\omega(U) = A$ and all $A\in \sAtt(\varphi)$, which proves that $\xi\le \xi'$
in $\RC(\varphi)$ if and only if $\xi \le \xi'$ in $\SC(\varphi)$.
The set $X$ in \eqref{emb1} is regarded as unordered set. The preorder $(X,\le )$ in Definition \ref{pro123} can be obtained from $x\mapsto [x]$ as follows: $x\le  x'$ if and only if $[x] \le  [x']$.

 Since the reflexive closure of $\scrR$ can be defined via the preorder $(X,\le)$, cf.\ Thm.\ \ref{comparetoCR} and Rem.\ \ref{ext1212}, we can utilize the cospan in \eqref{emb1}
to provide an alternative for definition $\scrR$, which is solely based on the order theoretic information of the homomorphism $\omega\colon\sANbhd(\varphi)\twoheadrightarrow\sAtt(\varphi)$:

\begin{description}
    \item[(i)]  $(x,x') \in \scrR$, $x\neq x'$ if and only if  $[x] \le  [x']$ in $\SC(\varphi)$;
      \item[(ii)] $(x,x)\in \scrR$ if and only if $[x]\in \RC(\varphi)$.
\end{description}

This approach yields an order-theoretic method to define the Conley relation which extends beyond the setting of Hausdorff flows in \cite{Conley} to arbitrary dynamical systems with no conditions on the topology of the phase space.
In  Section \ref{noncompact}, we discuss a Hausdorff compactification of $\scrR$ which allows a theory of recurrence in this more general setting.

\begin{remark}
\label{singlepoint}
By Remark \ref{onlyequiv} for every class $\xi \in \SC(\varphi)\smin \RC(\varphi)$ with $\xi=[x]$ for some $x\in \omega(X)$, it holds that $\xi = \{x\}$,  cf.\ Ex.\ \ref{examcomp}.
\end{remark}

\begin{remark}
    \label{otherattnebhd}
    If use the lattice of trapping regions, $\sANbhd_+(\varphi)$, then the lattice of attractors remains unchanged, and we obtain Diagram \eqref{dualitydiagno12} with $\sANbhd(\varphi)$ and $\sRNbhd(\varphi)$ replaced by trapping regions $\sANbhd_+(\varphi)$
    and repelling regions $\sRNbhd_-(\varphi)$ respectively. This implies that \eqref{emb1} only changes the partial order on $\SC(\varphi)$ as an extension of the order defined in Definition \ref{pro123}. 
   Indeed, if we define $(X,\le)$ using trapping regions, the forward invariance has the following implication. Suppose $x'\in U\in \sANbhd_+(\varphi)$,
    then every point $x=\varphi^t(x')$, for some  $t\ge 0$, is contained in $U$.
    This implies that $x\le x'$. In contrast, using attracting neighborhoods, for two points $x,x'$ satisfying $x=\phi^\tau(x')$ for some $\tau>0$ and $x\in \omega(X)\smin \sR(\varphi)$, and thus $x'\in X\smin\sR(\varphi)$, it holds that $\omega(x')=\omega(\varphi^\tau(x'))=\omega(x)$. Suppose $x\le x'$, then there exist $\xi_+<\xi_-\le \xi'_+$ such that $\omega(x')=\omega(x)\subset\xi_+< \xi_-\le \xi'_+$ and $\omega(x')\subset\xi'_+ $,
    which is a contradiction, and therefore
    $x\not\leq x'$.
    The induced relation on $X$ is denoted by $\scrS$ and $\scrR\subset \scrS\subset X\times X$. This relation contains information about individual orbits, whereas the Conley relation does not. The relation $\scrS$ may regarded  as a description of the action of $\varphi$ that discards the notion of time.
\end{remark}

\subsection{Order properties of $\sfS\sANbhd(\varphi)$}
\label{orderprops12}
Similar to  the map $\rmPhi$ in \eqref{Phimap}, we define the map $\rmXi\colon\SC(\varphi) \to\sfS\sANbhd(\varphi)$
given by
\[
\xi \mapsto \{U\in\sANbhd(\varphi)~|~\xi\cap U=\varnothing\}=\{U\in\sANbhd(\varphi)~|~\xi\not\subset U\}.
\]

\begin{lemma}\label{lem:Xi}
The map $\rmXi\colon\SC(\varphi) \to\sfS\sANbhd(\varphi)$ is well-defined.
\end{lemma}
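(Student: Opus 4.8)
The plan is to follow the proof of Lemma~\ref{lem:Phi} almost verbatim, exploiting the fact that $\sANbhd(\varphi)$ is a bounded, distributive lattice whose join and meet are the ordinary union and intersection (Proposition~\ref{listofattpropsa}(ii)). Fix $\xi\in\SC(\varphi)$ and define $h_\xi\colon\sANbhd(\varphi)\to{\bf 2}$ by $h_\xi(U)=0$ if $\xi\cap U=\varnothing$ and $h_\xi(U)=1$ if $\xi\subset U$. This is a well-defined and exhaustive case split because of the dichotomy recorded just after Definition~\ref{def:str123}: for a strong component, $\xi\cap U\neq\varnothing$ is equivalent to $\xi\subset U$. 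I would then show that $h_\xi$ is a lattice homomorphism, so that $\rmXi(\xi)=h_\xi^{-1}(0)=\{U\mid\xi\cap U=\varnothing\}=\{U\mid\xi\not\subset U\}$ is a prime ideal, i.e.\ an element of $\sfS\sANbhd(\varphi)$.

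The verification of the homomorphism property proceeds in three steps. First, the bounds: $h_\xi(\varnothing)=0$ since $\xi\cap\varnothing=\varnothing$, and $h_\xi(X)=1$ since $\xi\subset X$, where $\varnothing$ and $X$ are the bottom and top of $\sANbhd(\varphi)$. Second, the join: since $\xi\cap(U\cup U')=(\xi\cap U)\cup(\xi\cap U')$, we get $h_\xi(U\cup U')=0$ iff $\xi\cap U=\varnothing$ and $\xi\cap U'=\varnothing$, while by the dichotomy $\xi\subset U\cup U'$ iff $\xi\subset U$ or $\xi\subset U'$; hence $h_\xi(U\cup U')=\max\{h_\xi(U),h_\xi(U')\}$. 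Third, the meet: here the argument is in fact simpler than in Lemma~\ref{lem:Phi}, because the meet in $\sANbhd(\varphi)$ is the ordinary intersection $U\cap U'$ rather than $\Inv(U\cap U')$, so no analog of Lemma~\ref{lem:wedge-cap}(iii) is needed. If $\xi\subset U$ and $\xi\subset U'$ then $\xi\subset U\cap U'$ and $h_\xi(U\cap U')=1$; if instead (say) $\xi\cap U=\varnothing$, then $\xi\cap(U\cap U')\subset\xi\cap U=\varnothing$ and $h_\xi(U\cap U')=0$. Using the dichotomy once more, these two cases are exhaustive, so $h_\xi(U\cap U')=\min\{h_\xi(U),h_\xi(U')\}$.

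With $h_\xi$ shown to preserve $0$, $1$, $\cup$ and $\cap$, it is a lattice homomorphism into ${\bf 2}$, so its kernel $h_\xi^{-1}(0)$ is a prime ideal, identifying $\rmXi(\xi)$ as a point of $\sfS\sANbhd(\varphi)$. There is no serious obstacle here: the only point requiring care is the meet step, and the single input that makes everything go through is the equivalence $\xi\cap U\neq\varnothing\Leftrightarrow\xi\subset U$ for strong components, which lets each $U$ cleanly classify $\xi$ as either contained in $U$ or disjoint from it. Since strong components are nonempty equivalence classes, the two alternatives are genuinely exclusive, so $h_\xi$ takes a well-defined value on every $U$, completing the proof that $\rmXi$ is well-defined.
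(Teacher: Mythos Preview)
Your proposal is correct and takes essentially the same approach as the paper, whose proof is simply ``Follows the same argument as the proof of Lemma~\ref{lem:Phi}.'' Your observation that the meet step is actually simpler here---because $\sANbhd(\varphi)$ has ordinary intersection as meet rather than $\Inv(\cap)$, so no analog of Lemma~\ref{lem:wedge-cap}(iii) is needed---is a nice clarification beyond what the paper spells out.
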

\begin{proof}
Follows the same argument as the proof of Lemma \ref{lem:Phi}.
\qed
\end{proof}

Similar to the definition of $\rmPsi$ in \eqref{Psimap}, we define the map $\rmTheta\colon\sfS\sANbhd(\varphi) \to\sSet(X)$ given by
\[
 I \mapsto \left(\bigcap_{U\in I^c}U \right) \bigcap 
 \left( \bigcap_{U\in I}U^c\right).
\]
By the definition of $\sim$ and $\rmTheta$, we have
$x,y\in\rmTheta(I)$ implies that $x,y\in U$ for all
$U\in I^c$ and $x,y\in U^c$ for all $U\in I$. Therefore, if $\rmTheta(I)\neq\varnothing$, then $\rmTheta(I)\in \SC(\varphi)$. 
Note that $\rmTheta(I)$ may be void for some $I\in \sfS\sANbhd(\varphi)$, since we cannot apply Lemma \ref{nonemptylimit} and the Cantor intersection theorem in this setting.

\begin{theorem}\label{thm:bijection12}
The map $\rmXi\colon\SC(\varphi) \to\sfS\sANbhd(\varphi)$ is injective
and the left inverse is given by $\rmXi^{-1}\big|_{\rmTheta(\sfS\sANbhd)}=\rmTheta.$
\end{theorem}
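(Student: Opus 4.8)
The plan is to follow the template of Lemma~\ref{thm:bijection}, establishing $\rmTheta$ as a left inverse of $\rmXi$ by verifying $\rmTheta\circ\rmXi=\id_{\SC(\varphi)}$; injectivity of $\rmXi$ is then an automatic consequence of possessing a left inverse. The decisive structural difference from the $\RC(\varphi)$ setting is that $\rmXi$ is \emph{not} expected to be surjective: unlike in Lemma~\ref{nonemptylimit}, there is no Cantor-intersection argument forcing $\rmTheta(I)$ to be nonempty for every prime ideal $I$, and the prime ideals with $\rmTheta(I)=\varnothing$ are precisely those missed by $\rmXi$.

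First I would fix $\xi\in\SC(\varphi)$, say $\xi=[x]$, and rewrite $I:=\rmXi(\xi)$ in a usable form. Since $X=U\cup U^c$ with $U\cap U^c=\varnothing$ and $\xi$ is an equivalence class of $\sim$, for each $U\in\sANbhd(\varphi)$ exactly one of $\xi\subset U$ or $\xi\subset U^c$ holds; hence $\xi\not\subset U$ is equivalent to $\xi\subset U^c$. Consequently $I^c=\{U\mid\xi\subset U\}$ and $I=\{U\mid\xi\subset U^c\}$, and substituting into the definition of $\rmTheta$ yields
\[
\rmTheta(I)=\Bigl(\bigcap_{\xi\subset U}U\Bigr)\cap\Bigl(\bigcap_{\xi\subset U^c}U^c\Bigr)\supseteq\xi.
\]
The inclusion $\xi\subset\rmTheta(I)$ is immediate, and in particular $\rmTheta(I)\neq\varnothing$.

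For the reverse inclusion I would invoke the observation preceding the theorem that any two points of $\rmTheta(I)$ are $\sim$-equivalent (for $U\in I^c$ both lie in $U$, for $U\in I$ both lie in $U^c$, and $I\cup I^c=\sANbhd(\varphi)$ since $I$ is prime). Since $x\in\xi\subset\rmTheta(I)$, every $y\in\rmTheta(I)$ satisfies $y\sim x$, i.e.\ $y\in[x]=\xi$; thus $\rmTheta(I)\subset\xi$, and therefore $\rmTheta(I)=\xi$. This proves $\rmTheta\circ\rmXi=\id_{\SC(\varphi)}$, giving injectivity of $\rmXi$ and identifying $\rmTheta$ as its left inverse. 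To record that $\rmTheta$ genuinely inverts $\rmXi$ on its image I would finally check $\rmXi\circ\rmTheta=\id$ on $\{I\mid\rmTheta(I)\neq\varnothing\}$: for such $I$, $U\in I^c$ forces $\rmTheta(I)\subset U$ while $U\in I$ forces $\rmTheta(I)\subset U^c$, whence $\rmTheta(I)\not\subset U$, and together these give $\rmXi(\rmTheta(I))=I$.

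The main obstacle is conceptual rather than computational: the temptation is to try to prove nonemptiness of $\rmTheta(I)$ for \emph{all} $I$, as was done for $\rmPsi$ in Lemma~\ref{nonemptylimit}. No such statement is available for attracting neighborhoods, and its failure is exactly what prevents $\rmXi$ from being bijective. The only nonemptiness actually required is that of $\rmTheta(\rmXi(\xi))$, and this is free because it contains the nonempty equivalence class $\xi$; keeping the argument restricted to ideals of the form $\rmXi(\xi)$ is what makes the proof go through.
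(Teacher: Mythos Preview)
Your proposal is correct and follows essentially the same approach as the paper: both show $\rmTheta\circ\rmXi=\id_{\SC}$ by writing $\rmTheta(\rmXi(\xi))=\bigcap_{\xi\subset U}U\cap\bigcap_{\xi\not\subset U}U^c$, noting $\xi\subset\rmTheta(\rmXi(\xi))$, and then using that $\rmTheta(I)$ is itself a strong component to get equality. Your write-up is more detailed (you spell out the dichotomy $\xi\subset U$ versus $\xi\subset U^c$ and additionally verify $\rmXi\circ\rmTheta=\id$ on the nonempty locus), but the core argument is identical.
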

\begin{proof}
We need to show that 
$\rmTheta\circ\rmXi=\id_{\SC}$.
Let $\xi\in\SC(\varphi)$. Then $I=\rmXi(\xi)=\{U\in\sANbhd(\varphi)~|~\xi\not\subset U\}.$
Thus, 
\[
\rmTheta(I)=\left(\bigcap_{U\notin I}U \right) \bigcap 
 \left( \bigcap_{U\in I}U^c\right)
 =\left(\bigcap_{\xi\subset U}U \right) \bigcap 
 \left( \bigcap_{\xi\not\subset U}U^c\right)=\xi.
\]
By definition $\xi\subset\rmTheta(I)$ so that  the last equality follows from the fact that $\rmTheta(I)$ is a strong component.
\qed
\end{proof}

\begin{corollary}\label{cor:iso12}
$\rmXi\colon\SC(\varphi) \hookrightarrow\sfS\sANbhd(\varphi)$ is an   order-embedding.
\end{corollary}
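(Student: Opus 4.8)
The plan is to reduce the statement to the single biconditional $\xi\le\xi'\iff\rmXi(\xi)\subseteq\rmXi(\xi')$. Since an order-embedding is exactly a map that both preserves and reflects the order, and since injectivity of $\rmXi$ has already been established in Theorem \ref{thm:bijection12}, verifying this biconditional is all that remains. The argument is the exact analogue of the proof of Theorem \ref{cor:iso} for $\rmPhi$, with attractors replaced by attracting neighborhoods and dual repellers replaced by complements.

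First I would record the two-valued dichotomy: for any $\xi\in\SC(\varphi)$ and any $U\in\sANbhd(\varphi)$ exactly one of $\xi\subset U$ or $\xi\subset U^c$ holds, and $\xi\cap U\neq\varnothing$ is equivalent to $\xi\subset U$ (the remark following Definition \ref{def:str123}). Consequently $U\in\rmXi(\xi)$ precisely when $\xi\subset U^c$, and $U\notin\rmXi(\xi)$ precisely when $\xi\subset U$.

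Next, fixing representatives $\xi=[x]$ and $\xi'=[x']$, I would run the case analysis over $U\in\sANbhd(\varphi)$ as in Theorem \ref{cor:iso}: the only admissible configurations under $\xi\le\xi'$ are $x,x'\in U$ (so $U\notin\rmXi(\xi)$ and $U\notin\rmXi(\xi')$), $x,x'\in U^c$ (so $U\in\rmXi(\xi)$ and $U\in\rmXi(\xi')$), and $x\in U$ with $x'\in U^c$ (so $U\notin\rmXi(\xi)$ and $U\in\rmXi(\xi')$). The defining condition $\xi\le\xi'$ is precisely that the remaining configuration $x\in U^c$, $x'\in U$ never occurs, i.e.\ $\xi\subset U^c\Rightarrow\xi'\subset U^c$ for all $U$. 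Translating through the correspondence $U\in\rmXi(\xi)\iff\xi\subset U^c$ from the previous step, this reads $U\in\rmXi(\xi)\Rightarrow U\in\rmXi(\xi')$, i.e.\ $\rmXi(\xi)\subseteq\rmXi(\xi')$, and both implications are obtained at once.

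I do not anticipate a genuine obstacle. Because $\sANbhd(\varphi)$ sits inside $\sSet(X)$ as a sublattice (Proposition \ref{listofattpropsa}(ii)), its meet and join are literal intersection and union, so no analogue of the $A\wedge A'$ versus $A\cap A'$ subtlety of Lemma \ref{lem:wedge-cap}(iii) intervenes, and the case analysis is genuinely two-valued. The only point requiring care is the consistent use of $\xi\cap U\neq\varnothing\iff\xi\subset U$; with that in hand, and injectivity supplied by Theorem \ref{thm:bijection12}, the order-embedding property is immediate.
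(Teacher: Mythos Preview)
Your proposal is correct and matches the paper's approach exactly: the paper's proof simply states that the argument is the same as in Theorem \ref{cor:iso}, and you have spelled out that analogy in detail, replacing attractors and dual repellers by attracting neighborhoods and their complements. Your observation that the $A\wedge A'$ versus $A\cap A'$ subtlety disappears here (since $\sANbhd(\varphi)$ has set-theoretic meet) is a nice clarification beyond what the paper writes explicitly.
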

\begin{proof}
As in the proof of Theorem \ref{cor:iso}, one can show that $\xi\le\xi'$
     if and only if $\rmXi(\xi)\le \rmXi(\xi')$. The arguments are the same as in the proof of Theorem \ref{cor:iso}.
     \qed
\end{proof}

For a topology on $\SC(\varphi)$ there are various choices. One is the quotient topology induced by $(X,\scrT)$ and is denoted by $(\SC(\varphi),\scrT_\sim)$.
Another natural topology is defined by the subspace topology of $\rmXi(\SC(\varphi))$ in
$\sfS\sANbhd(\varphi)$ and is denoted by $(\SC(\varphi),\scrT_\sfS)$.
The latter makes 
\[
\rmXi\colon (\SC(\varphi),\scrT_{\sfS})\xhookrightarrow{~~~~~~} (\sfS\sANbhd(\varphi),\scrT_{\sfS\sANbhd}),
\]
a topological order-embedding.
As before, we  compare both topologies.
For the strong components we have the following commutative diagram:
\begin{equation}
\label{rc88}
\begin{diagram}
\node{X}\arrow{s,l,A}{\pi}\arrow{se,l}{f}\\
\node{\SC(\varphi)}\arrow{e,l,J}{\rmXi}\node{\sfS\sANbhd(\varphi)}
\end{diagram}
\end{equation}

A basic open set in $\sfS\sANbhd(\varphi)$ is given by 
$C=j(U)\smin j(U')$
for some pair of attracting neighborhoods $U,U'\in \sANbhd(\varphi)$. Then, $f^{-1}(C) =
f^{-1}\bigl( j(U)\smin j(U')\bigr) = U\cap U'^c=T$.
Regardless of the choice of either open or closed attracting neighborhoods, or arbitrary attracting neighborhoods, the set $T$ is not open in general. We therefore cannot conclude continuity of $f$ in general. Consequently $\rmXi\colon (\SC(\varphi),\scrT_{\sim})\to (\sfS\sANbhd(\varphi),\scrT_{\sfS\sANbhd})$ is generally not continuous  and the quotient space $(\SC(\varphi),\scrT_{\sim})$ is not Hausdorff in general.
Therefore the spaces $(\SC(\varphi),\scrT_{\sim})$ and $(\SC(\varphi),\scrT_{\sfS})$ are not  homeomorphic in general.
This is in sharp contrast with the recurrent components $\RC(\varphi)$.

In order to view the cospan \eqref{emb1} topologically we need to reconsider the commutative diagram in \eqref{rc88}.
The relevance of the the topological space $(X,\scrT)$ and the continuity of $\varphi$ is reflected in the lattices $\sANbhd(\varphi)$ and $\sAtt(\varphi)$. 
The natural step is to regard $X$ as discrete space. 
With the discrete topology on $X$,  the maps $f$ and $\pi$ are continuous.
 to the lattice embedding $\iota\colon \sANbhd(\varphi) \rightarrowtail\sSet(X)$ is 
the continuous, order-preserving surjection $\sfS\iota\colon \beta X \twoheadrightarrow \sfS\sANbhd(\varphi)$, where
the space $\beta X \cong \sfS\sSet(X)$ is the \v{C}ech-Stone compactification of the discrete space $X$. 
Since $\beta X$ is compact and $\sfS\sANbhd(\varphi)$ is Hausdorff, the map $\sfS\iota$ is a quotient map and thus closed, cf.\ \cite[Sect.\ 2.2]{gehrke2023topological}.
The latter implies that $\pi\colon X\to \SC(\varphi)$ is a quotient map with $\SC(\varphi)$ equipped with the topology $\scrT_\sfS$.

For the set $\sfS\sSet(X)$ define the map $i\colon X \to \sfS\sSet(X)$ given by
\[
x\mapsto \{U\in \sSet(X)\mid x\notin U \},
\]
which is a well-defined map. Moreover, define the  map $j\colon \sfS\sSet(X) \to \sSet(X)$ given by
\[
 I \mapsto \left(\bigcap_{U\in I^c}U \right) \bigcap 
 \left( \bigcap_{U\in I}U^c\right).
\]
As before,  $j(I)$ may be void for some $I\in \sfS\sSet(X)$.
By construction the map
$i\colon X\hookrightarrow \sfS\sSet(X)\cong \beta X$ is a topological embedding with dense image. 
In the following theorem we assume that $X$ has the discrete topology and $\SC(\varphi)$ is equipped with the topology $\scrT_\sfS$.

\begin{theorem}\label{fundcomp12}
The    diagram 
\begin{equation}
    \label{compdiag12}
    \begin{diagram}
    \node{\beta X}\arrow{e,l}{\sfS\iota}\node{\sfS\sANbhd(\varphi)}\node{\sfS\sAtt(\varphi)}\arrow{w,l}{\sfS\omega}\\
    \node{X}\arrow{n,l,A}{i}\arrow{e,l}{\pi}\node{\SC(\varphi)}\arrow{n,r,A}{\rmXi}\node{\RC(\varphi)}\arrow{w,l}{\supseteq}\arrow{n,lr,<>}{\rmPhi}{\cong}
    \end{diagram}
\end{equation}
commutes and all maps are continuous and  order-preserving.
\end{theorem}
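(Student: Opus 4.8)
The plan is to verify continuity and order-preservation of the individual arrows first, and then to check commutativity of the two squares separately, reducing everything to the explicit prime-ideal descriptions of the maps. For the algebraic arrows $\sfS\iota$ and $\sfS\omega$ there is nothing new: they are the Priestley duals of the lattice homomorphisms $\iota\colon\sANbhd(\varphi)\rightarrowtail\sSet(X)$ and $\omega\colon\sANbhd(\varphi)\twoheadrightarrow\sAtt(\varphi)$, hence automatically continuous and order-preserving, with explicit formulas $\sfS\iota(J)=J\cap\sANbhd(\varphi)$ and $\sfS\omega(I)=\omega^{-1}(I)=\{U\in\sANbhd(\varphi)\mid\omega(U)\in I\}$. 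Since $X$ carries the discrete topology (and, matching the antichain order on the Boolean Priestley space $\beta X$, is regarded as unordered), the maps $i$ and $\pi$ are automatically continuous and order-preserving; here $i$ is the stated topological embedding and $\pi$ the quotient map onto $(\SC(\varphi),\scrT_\sfS)$. Finally $\rmXi$ is a topological order-embedding by Corollary~\ref{cor:iso12} together with the definition of $\scrT_\sfS$ as the subspace topology, and $\rmPhi$ is a homeomorphism and order-isomorphism by Theorem~\ref{cor:iso} and Proposition~\ref{priestRC}.

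For the left square I would compute both composites on a point $x\in X$. On one side $\sfS\iota(i(x))=i(x)\cap\sANbhd(\varphi)=\{U\in\sANbhd(\varphi)\mid x\notin U\}$, while on the other $\rmXi(\pi(x))=\rmXi([x])=\{U\in\sANbhd(\varphi)\mid[x]\cap U=\varnothing\}$. These agree because of the elementary equivalence $x\in U\iff[x]\subset U\iff[x]\cap U\neq\varnothing$ recorded just after Definition~\ref{def:str123}, whose contrapositive gives $[x]\cap U=\varnothing\iff x\notin U$. Hence the left square commutes.

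The right square carries the genuine dynamical content. For $\xi\in\RC(\varphi)$, one composite gives $\sfS\omega(\rmPhi(\xi))=\{U\in\sANbhd(\varphi)\mid\xi\cap\omega(U)=\varnothing\}$, and the other gives $\rmXi(\xi)=\{U\in\sANbhd(\varphi)\mid\xi\cap U=\varnothing\}$, where $\xi$ is viewed inside $\SC(\varphi)$ via the inclusion of \eqref{emb1}. Equality of these two prime ideals amounts to the claim that, for a recurrent component $\xi$ and an attracting neighborhood $U$ with $\omega(U)=A$,
\[
\xi\cap U=\varnothing\iff\xi\cap A=\varnothing .
\]
The forward implication is immediate from $A=\omega(U)\subset U$. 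For the converse I would argue by contradiction: a point $x\in\xi\cap U$ lies in $\sR(\varphi)\cap U$, so Lemma~\ref{inrecset} forces $x\in A$, contradicting $\xi\cap A=\varnothing$. This is precisely the step where continuity, properness, and compactness enter, through Lemma~\ref{inrecset}.

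It remains to record continuity and order-preservation of the inclusion $\RC(\varphi)\hookrightarrow\SC(\varphi)$. Order-preservation follows from the cospan discussion preceding \eqref{emb1}, where the order on $\RC(\varphi)$ is shown to agree with the order induced from $\SC(\varphi)$. For continuity I would not argue directly but exploit the commutativity just established: since $\rmXi$ is a topological embedding whose inverse on its image is continuous, and $\sfS\omega\circ\rmPhi$ is continuous and, by the right square, takes values in $\rmXi(\SC(\varphi))$, the inclusion factors as $\rmXi^{-1}\circ(\sfS\omega\circ\rmPhi)$ and is therefore continuous. The only real obstacle in the whole argument is the backward implication of the displayed equivalence in the right square, which is exactly Lemma~\ref{inrecset}; everything else is bookkeeping with the explicit prime-ideal formulas.
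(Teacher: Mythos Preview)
Your proof is correct and follows essentially the same approach as the paper's: verify each square by computing both composites via the explicit prime-ideal formulas. Your write-up is in fact more thorough than the paper's, which simply asserts the equality $\{U\mid \xi\cap\omega(U)=\varnothing\}=\{U\mid \xi\cap U=\varnothing\}$ in the right square without justification; your explicit appeal to Lemma~\ref{inrecset} fills exactly that gap, and your separate treatment of continuity for the inclusion $\RC(\varphi)\hookrightarrow\SC(\varphi)$ is a point the paper does not address at all.
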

\begin{proof}
We start with the left part of the diagram. Note that $\sfS\iota =\iota^{-1}$, and $\iota^{-1}(I) = \{U\in I\mid U\in \sANbhd(\varphi)\}$.
Let $x\in X$. Then, $x \mapsto I=\{U\in \sSet(X)\mid x\notin U\} \mapsto \{U\in \sANbhd(\varphi)\mid x\notin U\}$. On the other hand $x\mapsto [x]\mapsto 
\{U\in \sANbhd(\varphi)\mid [x]\cap U=\varnothing\}$. If $x\notin U$, then $[x]\not\subset U$, which shows commutativity.
As for the right square, we argue as follows. The map $\sfS\omega=\omega^{-1}$ is given by
$I\mapsto \omega^{-1}(I) = \bigcup_{A\in I} \{U\in \sANbhd(\varphi)\mid \omega(U) =A\}$.
Therefore, $\xi \mapsto \rmPhi(\xi) = \{A\in \sAtt(\varphi)\mid \xi\cap A=\varnothing\}
\mapsto \{U\in \sANbhd(\varphi)\mid \omega(U)=A\in I, \xi\cap A=\varnothing\}$,
which is equal to $\{U\in \sANbhd(\varphi)\mid \xi\cap U=\varnothing\} = \rmXi(\xi)$.
    \qed
\end{proof}

\begin{remark}
\label{altConleydecomp}
Note that
using the partial order on $\SC(\varphi)$, 
    given $\xi_+<\xi<\xi_-$,
    we obtain $\rmXi(\xi_+)\subsetneq\rmXi(\xi)\subsetneq\rmXi(\xi_-)$ in $\sfS\sANbhd(\varphi)$ via the order embedding $\rmXi$.
    The latter is an order on the prime ideal space which serves as a compactification of $\SC(\varphi)$, cf.\ Sect.\ \ref{compti12}.
\end{remark}


\section{ Recurrence in the noncompact setting}
\label{noncompact}
The focus of this paper is  dynamical systems on compact spaces. Due to the compactness of the phase space $X$, and mild conditions on $\varphi$  as a continuous and proper system, there is a full duality between attractors and  repellers. 
Attractors are compact, invariant sets, 
which implies that the recurrent set is also a compact, invariant set. 
The set of recurrent components forms a compact Hausdorff space with respect to the induced quotient topology ($X$  is \emph{not} required to be Hausdorff!). 

If the compactness hypothesis on $X$ is dropped,
we can still define attracting and repelling neighborhoods as well as trapping and repelling regions, as in Section~\ref{ARpairsgencase}. However, since omega-limit sets may be empty, and not even invariant necessarily, the existence of nontrivial attractors is not guaranteed. 
    In Section~\ref{reccompsect}, we have used lattice theory to develop a theory of recurrence that applies to all dynamical systems. In Sections~\ref{reccontcomp}, \ref{profcc}-\ref{attnbhds}, we characterize this approach in the case of systems on a compact space and establish agreement with standard definitions of (chain) recurrence when they apply. In this section, we combine the general results in Section~\ref{reccompsect} and the order-theoretic methods in Sections~\ref{profcc}-\ref{attnbhds} to obtain a compactification of the recurrent set as well as a compactification of the dynamical system in terms of order models.

\vskip 6pt
\noindent\textbf{The results in this section hold for
 dynamical systems with no conditions on $\varphi$ or the topology of the phase space 
$X$. }

\subsection{Embedding $\RC(\varphi)$ in $\sfS\sARpair(\varphi)$}
\label{emdofPr}
The results in this section hold for
 dynamical systems with no conditions on $\varphi$ and on the topology of the phase space 
$X$. 
Recall the definition of $\sR(\varphi)$ in \eqref{eqn:RS} and $\RC(\varphi)$ as the poset of partial equivalence classes in Definition \ref{def:crs}.
For $\xi\in \RC(\varphi)$ we define the following set:
\[
\begin{aligned}
\rmPhi(\xi) &= \{P\in \sARpair(\varphi) \mid \xi\cap A=\varnothing\};\\
&= \{P\in \sARpair(\varphi) \mid \xi\subset R\}.
\end{aligned}
\]
Associated with $\rmPhi(\xi)$ we define the map $h_\xi\colon\sARpair(\varphi)\to {\bf 2}$ by 
\[
h_\xi(P) = \begin{cases}
  0    & \text{ if } \xi\cap A=\varnothing;\\ 
   1   & \text{ if } \xi\subset A.
\end{cases}
\]

\begin{lemma}
\label{cr2}
The map $h_\xi$ is a lattice homomorphism for every $\xi\in \RC(\varphi)$, and thus $\rmPhi(\xi) = h_\xi^{-1}(0)$ is a prime ideal for $\sARpair(\varphi)$ for all $\xi \in \RC(\varphi)$.
\end{lemma}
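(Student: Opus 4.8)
The lemma asserts that $h_\xi\colon\sARpair(\varphi)\to\{0,1\}$ is a lattice homomorphism, where $h_\xi(P)=1$ iff $\xi\subset A$ (for $P=(A,R)$) and $h_\xi(P)=0$ iff $\xi\cap A=\varnothing$. This is the general-setting (no topology, no conditions on $\varphi$) analog of Lemma \ref{lem:Phi}, which handled the compact case via $\sAtt(\varphi)$. So the proof strategy should mirror that of Lemma \ref{lem:Phi} closely.

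**Key preliminary: $h_\xi$ is well-defined (two-valued).** Before checking homomorphism properties, I need that for each $P=(A,R)$, exactly one of $\xi\cap A=\varnothing$ and $\xi\subset A$ holds. This is precisely Lemma \ref{lem:wedge-cap-forAR}(i): if $\xi\cap A\neq\varnothing$ then $\xi\subset A$. So the dichotomy holds, and $h_\xi$ takes values in $\{0,1\}$. The same lemma gives the repeller statement, so $h_\xi(P)=0$ is equivalent to $\xi\subset R$ (since $x\in\sR(\varphi)$ means $x\in A$ or $x\in R$ for the pair, hence $\xi\not\subset A$ forces $\xi\subset R$), justifying the two descriptions of $\rmPhi(\xi)$.

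**Main computations.** Recall the lattice operations on $\sARpair(\varphi)$ from \eqref{ARpairbins}: $P\vee P'=(A\cup A',\,R\cap R')$ and $P\wedge P'=(A\wedge A',\,R\cup R')$. I must verify
\[
h_\xi(P\vee P')=h_\xi(P)\vee h_\xi(P'),\qquad h_\xi(P\wedge P')=h_\xi(P)\wedge h_\xi(P').
\]
For the join: the attractor component of $P\vee P'$ is $A\cup A'$, and $\xi\cap(A\cup A')=(\xi\cap A)\cup(\xi\cap A')$, which is empty iff both $\xi\cap A=\varnothing$ and $\xi\cap A'=\varnothing$; dually $\xi\subset A\cup A'$ iff $\xi\subset A$ or $\xi\subset A'$ (using that $\xi$, being a single recurrent component, lands entirely in one of them whenever it meets the union). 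Hence $h_\xi(P\vee P')=\max\{h_\xi(P),h_\xi(P')\}$. For the meet: the attractor component is $A\wedge A'$, and here I invoke Lemma \ref{lem:wedge-cap-forAR}(iii), which states $\xi\cap(A\wedge A')=\varnothing\iff\xi\cap(A\cap A')=\varnothing$. Thus $h_\xi(P\wedge P')=0$ iff $\xi\cap A=\varnothing$ or $\xi\cap A'=\varnothing$, and $h_\xi(P\wedge P')=1$ iff $\xi\subset A$ and $\xi\subset A'$, giving $h_\xi(P\wedge P')=\min\{h_\xi(P),h_\xi(P')\}$. Finally, the bounds of $\sARpair(\varphi)$ map correctly: the bottom pair has empty attractor so $h_\xi=0$, and the top pair has attractor containing $\xi$ so $h_\xi=1$.

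**Expected obstacle.** The only real subtlety—and the step I would flag as requiring care rather than the routine Boolean bookkeeping—is the meet case, because the attractor component of $P\wedge P'$ is $A\wedge A'=\Inv(A\cap A')$ rather than $A\cap A'$. Without a topology one cannot reason via omega-limit sets as in Lemma \ref{lem:wedge-cap}(iii); instead I rely entirely on the invariance-based Lemma \ref{lem:wedge-cap-forAR}(iii), whose proof uses only that $\xi$ is invariant (so $\xi\subset A\cap A'\iff\xi\subset\Inv(A\cap A')$). Once that equivalence is in hand, the meet computation reduces to the same Boolean pattern as the join. Concluding that $\rmPhi(\xi)=h_\xi^{-1}(0)$ is a prime ideal is then standard: the preimage of $0$ under a lattice homomorphism to the two-element lattice is always a prime ideal.
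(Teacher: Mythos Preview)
Your proposal is correct and matches the paper's own proof, which simply states that the argument is identical to that of Lemma~\ref{lem:Phi} upon replacing Lemma~\ref{lem:wedge-cap} by Lemma~\ref{lem:wedge-cap-forAR}. You have accurately identified both the parallel structure and the one genuine subtlety (the meet case via invariance rather than omega-limit sets), and your write-up in fact supplies the details the paper leaves implicit.
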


\begin{proof}
The proof is identical to the proof of Lemma \ref{lem:Phi}
by employing Lemma \ref{lem:wedge-cap-forAR}.
\qed
\end{proof}

We can interpret $\rmPhi(\xi)$ as a map between posets. Let $\sfS\sARpair(\varphi)$ be the poset of prime ideals  ordered by inclusion.

\begin{lemma}
\label{cr3}
The map 
$\rmPhi\colon \RC(\varphi) \to \sfS\sARpair(\varphi)$  
is an order-embedding.
\end{lemma}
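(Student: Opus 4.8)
The plan is to verify directly the defining property of an order-embedding, namely that $\xi\le\xi'$ if and only if $\rmPhi(\xi)\subseteq\rmPhi(\xi')$, where the order on $\sfS\sARpair(\varphi)$ is inclusion of prime ideals. Well-definedness of $\rmPhi$ (that it lands in prime ideals) has already been secured by Lemma~\ref{cr2}, so no separate check of that is needed. Moreover, once the stated equivalence is in hand, injectivity comes for free: if $\rmPhi(\xi)=\rmPhi(\xi')$, then both inclusions hold, so $\xi\le\xi'$ and $\xi'\le\xi$, and antisymmetry of the poset $\bigl(\RC(\varphi),\le\bigr)$ forces $\xi=\xi'$. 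Thus the whole statement reduces to the two implications of the equivalence above.

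The essential tool is Lemma~\ref{lem:wedge-cap-forAR}(i), which gives, for every recurrent component $\xi$ and every pair $P=(A,R)$, the dichotomy $\xi\subset A$ or $\xi\subset R$ (these are mutually exclusive since $A\subset U$ and $R\subset U^c$). This lets me move freely between the two descriptions of $\rmPhi(\xi)$, the one via $\xi\cap A=\varnothing$ and the one via $\xi\subset R$. For the forward implication I would take $\xi\le\xi'$ and an arbitrary $P=(A,R)\in\rmPhi(\xi)$, so that $\xi\subset R$; the definition of the preorder on $\RC(\varphi)$ says precisely that $\xi\subset R$ forces $\xi'\subset R$, whence $P\in\rmPhi(\xi')$. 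For the converse, assuming $\rmPhi(\xi)\subseteq\rmPhi(\xi')$, any pair with $\xi\subset R$ lies in $\rmPhi(\xi)$ and hence in $\rmPhi(\xi')$, giving $\xi'\subset R$; since this holds for all $P$, the definition of $\le$ yields $\xi\le\xi'$. This is the exact analogue of the three-case bookkeeping in the proof of Theorem~\ref{cor:iso}, with the attractor $A$ and its dual repeller $A^*$ replaced by the two coordinates $A,R$ of the pair $P$; the configuration $\xi\subset R$, $\xi'\subset A$ that is excluded by $\xi\le\xi'$ corresponds exactly to the failure of the inclusion $\rmPhi(\xi)\subseteq\rmPhi(\xi')$.

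I do not anticipate a genuine obstacle here; the content is purely order-theoretic bookkeeping, and the argument is essentially a transcription of the proof of Theorem~\ref{cor:iso}. The only point requiring care is to keep the two equivalent forms of $\rmPhi(\xi)$ aligned with the two equivalent formulations of the order on $\RC(\varphi)$ (``$\xi'\subset A$ implies $\xi\subset A$'' versus ``$\xi\subset R$ implies $\xi'\subset R$''), so that the direction of inclusion of ideals matches the direction of $\le$. In particular I would double-check the orientation convention, since reversing it would produce an order-reversing rather than order-preserving embedding.
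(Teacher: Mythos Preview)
Your argument is correct and the core of it---verifying the biconditional $\xi\le\xi'\iff\rmPhi(\xi)\subseteq\rmPhi(\xi')$ directly from the definition of the preorder and the description $\rmPhi(\xi)=\{P\mid\xi\subset R\}$---is exactly what the paper does in its first paragraph. You are also right that injectivity is then automatic by antisymmetry on $\RC(\varphi)$.

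The one difference worth flagging is that the paper, after establishing the biconditional, nonetheless goes on to prove injectivity separately by constructing an explicit left-inverse
\[
\rmPsi(I)=\Bigl(\bigwedge_{P\in I^c}A\Bigr)\cap\Bigl(\bigcap_{P\in I}R\Bigr)
\]
and showing $\rmPsi(\rmPhi(\xi))=\xi$ via Lemma~\ref{charofcompofR}. As you observe, this is redundant for the lemma as stated; however, the map $\rmPsi$ and the identity $\rmPsi\circ\rmPhi=\id$ are used downstream (Remark~\ref{invisint}, Lemma~\ref{cr6}), so the paper is really smuggling in a construction it needs later. Your streamlined version proves the lemma cleanly; just be aware that if you follow the rest of the paper you will eventually need to introduce $\rmPsi$ anyway.
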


\begin{proof}
Suppose $\xi\le \xi'$, ie.\ for all $R\in \sARpair(\varphi)$, $\xi\subset R$ implies $\xi'\subset R$. 
Then $\rmPhi(\xi) \subset \rmPhi(\xi')$,
which proves that $\rmPhi$ is order-preserving. Moreover, $\rmPhi(\xi)\subset \rmPhi(\xi')$ if and only
if $\xi\le \xi'$, since $\rmPhi(\xi)\subset \rmPhi(\xi')$ implies that $\xi\le \xi'$.
It remains to show that $\rmPhi$ is injective.
Let $I\in \sfS\sARpair(\varphi)$ be a prime ideal and define, motivated by 
\eqref{nonempty}, the set
\[
\rmPsi(I) := \Biggl( \bigwedge_{P\in I^c} A\Biggr) \cap  \Biggl( \bigcap_{P \in I} R\Biggr).
\]
Let $I=\rmPhi(\xi)\in \sfS\sARpair(\varphi)$, then as in the proof of Lemma \ref{thm:bijection},
\[
\rmPsi(I) 
= \Biggl( \bigwedge_{P\in I^c} A\Biggr) \cap  \Biggl( \bigcap_{P \in I} R\Biggr) = \Biggl( \bigwedge_{\xi \subset A} A\Biggr) \cap  \Biggl( \bigcap_{\xi\subset R} R\Biggr) =\xi,
\] 
where the latter equality follows from Lemma \ref{charofcompofR}.
We conclude that $\rmPsi$ defines a left-inverse for $\rmPhi$, and thus $\rmPhi$ is injective. 
\qed
\end{proof}

\begin{remark}
    \label{invisint}
    For every prime ideal $I=\rmPhi(\xi)\in \sfS\sAR$ with $\xi\in \RC(\varphi;\sAR)$, the proof of Lemma \ref{cr3} implies that the expression $\rmPsi(I)$ is the recurrent component $\xi$.
    As in Section \ref{Morsesets12}, $\bigcap_{P\in I^c}A$ is invariant, and therefore
    $\xi= \left(\bigcap_{P\in I^c}A \right) \cap 
 \left( \bigcap_{P\in I}R\right)$,  which proves \eqref{eqn:RS2}.
\end{remark}

\subsection{Compactification via  Priestley spaces}
\label{compti12}

 space of prime ideals  $\sfS\sARpair(\varphi)$ is a Priestley space that is compact, Hausdorff, and zero-dimensional.
Let $\sfS_0\sARpair(\varphi):=\rmPhi\bigl(\RC(\varphi)\bigr)$ with the subspace topology $\scrT_{\sfS_0\sARpair}$ of $\sfS\sARpair(\varphi)$. Then $\scrT_\sfS=\rmPsi \scrT_{\sfS_0\sARpair}$ is a topology so that $(\RC(\varphi),\le,\scrT_\sfS)$ is an ordered topological space.
Another topology on $\RC(\varphi)$ is the quotient topology $\scrT_\sim$ induced by the projection $\pi$
so that $(\RC(\varphi),\le,\scrT_\sim)$ is also an ordered topological space.
We now compare these two topologies.

Consider the diagram:
\begin{equation*}
\label{rc5}
\begin{diagram}
\node{\sR(\varphi)}\arrow{s,l,A}{\pi}\arrow{se,l,A}{g}\\
\node{\RC(\varphi)}\arrow{e,l,<>}{\rmPhi}\node{\sfS_0\sARpair(\varphi) }\arrow{e,l,J}{}\node{\sfS\sARpair(\varphi)}
\end{diagram}
\end{equation*}
where  
$\rmPhi$ is a bijection. By construction $\pi$ is a quotient map, and 
if $g$ is continuous, then $\rmPhi\colon \RC(\varphi) \to \sfS_0\sARpair(\varphi)$ is a bijective, continuous map, but not necessarily a homeomorphism. Since $\sfS\sARpair(\varphi)$ is Hausdorff, so is $\sfS_0\sARpair(\varphi)$, and therefore $(\RC(\varphi),\scrT_\sim)$ is a Hausdorff space, cf.\ \cite[Corollary 22.3]{Munkres}.
 If $\RC(\varphi)$ is compact, then $\rmPhi$ is a homeomorphism, cf.\ Theorem~\ref{thm:homeo}.

\begin{lemma}
\label{cr6a}
Let $U\in \sANbhd(\varphi)$ with $\Inv(U) = A$. If $x\in U\cap \sR(\varphi)$, then $x\in A$. Similarly, if $x\in U^c$  and $(A,R)\in\sARpair(\varphi)$, then $x \in R$.
\end{lemma}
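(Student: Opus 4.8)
The plan is to reproduce, in the fully general setting of Section~\ref{emdofPr}, the argument used for the compact analogue Lemma~\ref{inrecset}, with the only change that the attractor/dual-repeller dichotomy available on compact spaces is replaced by the attractor-repeller pair characterization of the recurrent set that holds with no hypotheses on $\varphi$ or $X$. The organizing observation is that the pair $P=(A,R)$ with $A=\Inv(U)$ and $R=\Inv^+(U^c)$ is genuinely an attractor-repeller pair in the sense of Definition~\ref{ARpairdefn234}, so it is a legitimate test pair against which membership in $\sR(\varphi)$ can be probed; for the second assertion any pair in $\sARpair(\varphi)$ whose attractor equals $A$ will serve equally well.

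First I would record the two elementary containments $A\subset U$ and $R\subset U^c$. The first is immediate from Proposition~\ref{listofattpropsa}(i), which gives $A=\Inv(U)\subset\Int U\subset U$. For the second, Lemma~\ref{involattrep123} guarantees that $U^c\in\sRNbhd(\varphi)$, whence Proposition~\ref{listofreppropsa}(i) yields $R=\Inv^+(U^c)\subset\Int U^c\subset U^c$. The essential point is that both of these facts were established \emph{without} any compactness or continuity assumptions, so they are available here verbatim.

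Next I would invoke the defining property of the recurrent set recorded just after \eqref{eqn:RS}: if $x\in\sR(\varphi)$, then for every attractor-repeller pair $(A,R)\in\sARpair(\varphi)$ one has $x\in A$ or $x\in R$. For the first claim, suppose $x\in U\cap\sR(\varphi)$ and apply this dichotomy to the pair $(A,R)$ with $R=\Inv^+(U^c)$. Since $U$ and $U^c$ are disjoint and $R\subset U^c$, the point $x\in U$ cannot lie in $R$, so the dichotomy forces $x\in A$. The second claim is symmetric: if $x\in U^c\cap\sR(\varphi)$, then $x\notin A$ because $A\subset U$, and for any pair $(A,R)\in\sARpair(\varphi)$ with this attractor the dichotomy again gives $x\in R$.

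I do not expect a genuine obstacle; the proof collapses to a one-line disjunction elimination once the containments are in place. The only points requiring care are bookkeeping rather than substance: in the noncompact setting one has only the implication $x\in\sR(\varphi)\Rightarrow(x\in A$ or $x\in R)$ and not its converse, so the reasoning must run in that single direction, which is exactly what is needed here; and the second assertion tacitly carries the hypothesis $x\in\sR(\varphi)$ in parallel with the first, after which the conclusion $x\in R$ holds for every admissible choice of $R$ paired with $A=\Inv(U)$.
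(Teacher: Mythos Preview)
Your proposal is correct and follows essentially the same approach as the paper's proof: use the dichotomy $x\in A$ or $x\in R$ for recurrent points together with the containments $A\subset U$ and $R\subset U^c$ to eliminate one alternative in each case. The paper's proof is a terse three-line version of exactly this argument; your additional care in citing Propositions~\ref{listofattpropsa}(i) and~\ref{listofreppropsa}(i) for the containments, and in noting that the second assertion tacitly carries the hypothesis $x\in\sR(\varphi)$, simply makes explicit what the paper leaves implicit.
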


\begin{proof}
If $x\in \sR(\varphi)$, then $x\in A$ or $x\in R$ for all $P\in \sARpair(\varphi)$. If $x\in U$, then $x\not \in
R$ and therefore $x\in A$. Since $U^c\cap A=\varnothing$,  $x\in U^c$ implies that $x\in R$.
\qed
\end{proof}

\begin{lemma}
\label{cr6}
The map $g\colon \sR(\varphi) \to \sfS_0\sARpair(\varphi)$ is continuous.
\end{lemma}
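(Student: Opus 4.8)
The plan is to follow the proof of Theorem~\ref{thm:homeo}, replacing the lattice of attractors by the lattice of attractor-repeller pairs and using Lemma~\ref{cr6a} in place of Lemma~\ref{inrecset}. Since $g=\rmPhi\circ\pi$ takes values in $\sfS_0\sARpair(\varphi)$, continuity of $g$ into the subspace topology is equivalent to continuity into $\sfS\sARpair(\varphi)$, so it suffices to show that $g^{-1}(C)$ is open in the subspace topology on $\sR(\varphi)$ for every basic clopen convex set $C=\jmath(P)\smin\jmath(P')$ of the Priestley space $\sfS\sARpair(\varphi)$, with $P=(A,R)$ and $P'=(A',R')$ attractor-repeller pairs. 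First I would identify $\rmPhi^{-1}(C)$. A prime ideal $\rmPhi(\xi)$ lies in $\jmath(P)\smin\jmath(P')$ exactly when $P\notin\rmPhi(\xi)$ and $P'\in\rmPhi(\xi)$; using the description $\rmPhi(\xi)=\{P\mid\xi\cap A=\varnothing\}=\{P\mid\xi\subset R\}$ together with Lemma~\ref{lem:wedge-cap-forAR}(i), this says precisely $\xi\subset A$ and $\xi\subset R'$. Hence $\rmPhi^{-1}(C)=\{\xi\in\RC(\varphi)\mid\xi\subset A\cap R'\}$, and pulling back along the quotient map $\pi$, again via Lemma~\ref{lem:wedge-cap-forAR}(i) (so that for $x\in\sR(\varphi)$ one has $[x]\subset A\iff x\in A$ and $[x]\subset R'\iff x\in R'$), I obtain
\[
g^{-1}(C)=\pi^{-1}\bigl(\rmPhi^{-1}(C)\bigr)=(A\cap R')\cap\sR(\varphi).
\]

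Next I would realize $P$ and $P'$ by attracting neighborhoods that produce \emph{open} ambient sets. By Remark~\ref{intandclforattnbhd} I may replace the defining neighborhood of $P$ by its interior and that of $P'$ by its closure, and by Propositions~\ref{listofattpropsa}(i) and~\ref{listofreppropsa}(i) these still realize the same pairs; thus I can pick $U,U'\in\sANbhd(\varphi)$ with $A=\Inv(U)$, $R'=\Inv^+(U'^c)$, and with $U$ and $U'^c$ open. Since $A\subset\Int U\subset U$ and $R'\subset\Int U'^c\subset U'^c$, the set $M:=A\cap R'$ satisfies $M\subset U\cap U'^c$, where $U\cap U'^c$ is open in $X$. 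The key equality to establish is
\[
(A\cap R')\cap\sR(\varphi)=U\cap U'^c\cap\sR(\varphi).
\]
The inclusion $\subseteq$ is immediate from $M\subset U\cap U'^c$. For $\supseteq$, take $x\in U\cap U'^c\cap\sR(\varphi)$: Lemma~\ref{cr6a} applied to $U$ (with $\Inv(U)=A$) gives $x\in A$, and applied to $U'^c$ (with the pair $(A',R')$ and $R'=\Inv^+(U'^c)$) gives $x\in R'$, so $x\in A\cap R'$. Combining with the computation above, $g^{-1}(C)=U\cap U'^c\cap\sR(\varphi)$ is the intersection of $\sR(\varphi)$ with an open subset of $X$, hence open in the subspace topology; therefore $g$ is continuous.

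The main obstacle, relative to the compact case, is that here we have no compactness of $X$ to lean on: the Morse-set machinery of Lemma~\ref{viaMorse} and the nonemptiness/inverse-limit arguments of Lemma~\ref{nonemptylimit} and the Cantor intersection theorem are unavailable, so one cannot route the computation through $\zeta_M$ and the profinite structure. Instead the argument must be carried out directly at the level of $\sR(\varphi)$, and the essential point is that Lemma~\ref{cr6a} lets the open set $U\cap U'^c$ detect membership in the pair $(A,R')$ on the recurrent set. I expect no further difficulty, since the lattice identities for $\rmPhi$ and $\rmPsi$ used above were already established in Lemmas~\ref{cr2} and~\ref{cr3} without any topological hypotheses on $\varphi$ or $X$.
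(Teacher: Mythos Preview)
Your proposal is correct and follows essentially the same approach as the paper: identify the basic clopen sets of the Priestley space, compute the preimage under $g$ as $(A\cap R')\cap\sR(\varphi)$, choose $U$ open and $U'$ closed so that $U\cap U'^c$ is open, and use Lemma~\ref{cr6a} to conclude $g^{-1}(C)=U\cap U'^c\cap\sR(\varphi)$. Your justification for passing to open/closed attracting neighborhoods via Remark~\ref{intandclforattnbhd} and Propositions~\ref{listofattpropsa}(i), \ref{listofreppropsa}(i) is slightly more explicit than the paper's, but the argument is the same.
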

\begin{proof}
For the Priestley space $\sfS\sARpair(\varphi)$, the Priestley topology  consists of basic open sets of the form
$B=\{ I\in \sfS\sARpair(\varphi)\mid P\in I,~P'\not \in I\}$ for some $P,P'\in \sARpair(\varphi)$. To show that $g$ is continuous we prove that $g^{-1}(B)$ is open in $X$ and thus in $\sR(\varphi)$.
For $g^{-1}(B)$ we have:
\begin{equation*}
\begin{aligned}
g^{-1}(B)&=\pi^{-1}\bigl(\rmPhi^{-1}(B)\bigr)\\
&=\pi^{-1}\left(\{\xi\in \RC(\varphi) ~|~\xi\cap A=\varnothing,\xi\subset A'\}\right)\\
&=\{x\in \sR(\varphi)~|~x\in A\cap R'\},
\end{aligned}
\end{equation*}
due to the characterization of $\rmPhi^{-1}$ given by $\rmPsi$. Consider   attracting neighborhoods $U,U'$ with $A = \Inv(U)$ and $A'=\Inv(U')$ such that $U$ is open and $U'$ closed. By definition $W = U\cap U'^c\cap \sR(\varphi)$ is open and 
$g^{-1}(B) \subset W$. Let $x\in W$, then
by Lemma \ref{cr6a}, $x\in A\cap R'\cap \sR(\varphi)$ and thus $W\subset g^{-1}(B)$. Combining 
both inclusions yields $g^{-1}(B) = W$, which shows that $g^{-1}(B)$ is open, and therefore $g$ is continuous.
\qed
\end{proof}

Summarizing, the following theorem compares the induced quotient topology $\scrT_\sim$ and the topology $\scrT_\sfS$.
Note that a map between ordered topological spaces is called a \emph{topological order-embedding} if it both an order-embedding and an embedding of topological spaces.
\begin{theorem}
\label{cr7}
The space of recurrent components $(\RC(\varphi),\scrT_\sim)$ is a Hausdorff space, and there exists a continuous bijection from
$(\RC(\varphi),\scrT_\sim)$ to $(\RC(\varphi),\scrT_\sfS)$. In particular, the map
$\rmPhi\colon (\RC(\varphi),\scrT_\sim) \hookrightarrow (\sfS\sARpair(\varphi),\scrT_{\sfS\sARpair})$ is a continuous injection, and moreover
$\rmPhi\colon (\RC(\varphi),\scrT_\sfS) \hookrightarrow (\sfS\sARpair(\varphi),\scrT_{\sfS\sARpair})$ is a topological order-embedding.
 If the poset  $\RC(\varphi)$ is compact, then $\rmPhi\colon (\RC(\varphi),\scrT_\sim) \hookrightarrow (\sfS\sARpair(\varphi),\scrT_{\sfS\sARpair})$ is a topological order-embedding.
\end{theorem}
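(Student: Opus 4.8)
The plan is to reduce everything to the continuity of $g=\rmPhi\circ\pi$ established in Lemma~\ref{cr6}, together with the fact that $\sfS\sARpair(\varphi)$ is compact, Hausdorff, and zero-dimensional. First I would exploit that $\pi\colon\sR(\varphi)\to(\RC(\varphi),\scrT_\sim)$ is by construction a quotient map. Since $g=\rmPhi\circ\pi$ is continuous and constant on each fiber of $\pi$ (the fiber over $\xi$ being the component $\xi$ itself), the universal property of the quotient topology yields that $\rmPhi\colon(\RC(\varphi),\scrT_\sim)\to\sfS\sARpair(\varphi)$ is continuous. Injectivity of $\rmPhi$ is already contained in Lemma~\ref{cr3} (it is an order-embedding), so $\rmPhi$ is a continuous injection, which is the first displayed assertion. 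Because $\sfS\sARpair(\varphi)$ is Hausdorff, any continuous injection into it has Hausdorff domain: given distinct $\xi,\xi'$, separate the distinct points $\rmPhi(\xi)$ and $\rmPhi(\xi')$ by disjoint opens and pull them back along $\rmPhi$. This shows $(\RC(\varphi),\scrT_\sim)$ is Hausdorff.

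Next I would compare the two topologies. By definition $\scrT_\sfS=\rmPsi\,\scrT_{\sfS_0\sARpair}$, so that $\rmPhi\colon(\RC(\varphi),\scrT_\sfS)\to\sfS_0\sARpair(\varphi)$ is a homeomorphism; equivalently, the $\scrT_\sfS$-open sets are exactly the sets $\rmPhi^{-1}(V)$ for $V$ open in $\sfS\sARpair(\varphi)$. Since $\rmPhi$ is $\scrT_\sim$-continuous, each such $\rmPhi^{-1}(V)$ is $\scrT_\sim$-open, whence $\scrT_\sfS\subseteq\scrT_\sim$. The identity map $(\RC(\varphi),\scrT_\sim)\to(\RC(\varphi),\scrT_\sfS)$ is therefore a continuous bijection, as claimed. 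For the embedding into $\sfS\sARpair(\varphi)$ equipped with $\scrT_\sfS$, the same construction shows $\rmPhi\colon(\RC(\varphi),\scrT_\sfS)\to\sfS\sARpair(\varphi)$ is a homeomorphism onto the subspace $\sfS_0\sARpair(\varphi)$, hence a topological embedding; combined with the order-embedding property from Lemma~\ref{cr3}, it is a topological order-embedding.

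For the final statement I would add the compactness hypothesis on $(\RC(\varphi),\scrT_\sim)$. A continuous map from a compact space into a Hausdorff space is closed, so $\rmPhi\colon(\RC(\varphi),\scrT_\sim)\to\sfS\sARpair(\varphi)$ is a continuous closed injection and therefore a topological embedding onto $\sfS_0\sARpair(\varphi)$; with Lemma~\ref{cr3} it is again a topological order-embedding. Equivalently, in this case the continuous bijection of the previous paragraph becomes a homeomorphism, so $\scrT_\sim=\scrT_\sfS$. The only genuine dynamical content sits in Lemma~\ref{cr6} (via Lemma~\ref{cr6a}); everything else here is formal. The one point requiring care is the direction of the topology comparison: continuity of $\rmPhi$ on $\scrT_\sim$ forces $\scrT_\sfS\subseteq\scrT_\sim$, which is exactly what makes the identity $(\RC(\varphi),\scrT_\sim)\to(\RC(\varphi),\scrT_\sfS)$ continuous rather than its inverse, so the Priestley model sharpens rather than coarsens the quotient topology.
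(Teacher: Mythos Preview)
Your proposal is correct and follows essentially the same approach as the paper: the paper assembles the result from the quotient diagram preceding the theorem together with Lemmas~\ref{cr6a}, \ref{cr6}, and \ref{cr3}, and you reproduce exactly this route (universal property of $\pi$ applied to $g$, Hausdorffness pulled back along the continuous injection, the definition of $\scrT_\sfS$ giving the embedding, and compact-to-Hausdorff for the final clause). The only quibble is the closing remark: since $\scrT_\sfS\subseteq\scrT_\sim$, the Priestley topology is \emph{coarser} than the quotient topology, so your phrase ``sharpens rather than coarsens'' is inverted, though the mathematics you derive from the inclusion is correct.
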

From Theorem \ref{cr7} and Lemma \ref{cr3} we have that
\[
(\RC(\varphi),\le,\scrT_\sim) \xhookrightarrow{~~~~~~}(\RC(\varphi),\le,\scrT_\sfS)\xhookrightarrow{~~~~\rmPhi~~~~} (\sfS\sARpair(\varphi),\subset,\scrT_{\sfS\sARpair})
\]
is a continuous order-injection, and  $\rmPhi$ is topological order-embedding. 
As described in the introduction,
$\scrT_\sfS$ is the natural choice for a topology on $\RC(\varphi)$. The topology on the phase space characterizes the asymptotic behavior of the system and determines the attractors and repellers. From that point, order theory provides a topological order-embedding of $\bigl(\RC(\varphi),\scrT_\sfS \bigr)$ into the compact Hausdorff space $\sfS\sARpair(\varphi)$. We now explore the how this compactification can describe dynamics.  

The definitions of the preorder $(X,\le)$ and the poset of strong components
$\SC(\varphi)$ use the attracting neighborhoods and are identical to Definitions \ref{pro123} and \ref{def:str123}. In the compact and proper, continuous case Remark \ref{onlyequiv} establishes that equivalence classes in $\sR(\varphi)$ are also equivalence classes in $\SC(\varphi)$. In the general setting the following result holds:
\begin{lemma}
    \label{compareequivclasses}
    If $x\sim x'$ in $\sR(\varphi)$, then $x\sim x'$ in $(X,\le)$.
\end{lemma}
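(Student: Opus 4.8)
The plan is to unwind both equivalence relations into their membership characterizations and then connect them through the homomorphism $\varpi\colon\sANbhd(\varphi)\to\sARpair(\varphi)$ of the span \eqref{fundmaps12}. Recall that $x\sim x'$ in $\sR(\varphi)$ means that for every attractor-repeller pair $P=(A,R)\in\sARpair(\varphi)$ one has either $x,x'\in A$ or $x,x'\in R$ (cf.\ Definition \ref{pro} and Section \ref{reccompsect}), whereas $x\sim x'$ in $(X,\le)$ (Definition \ref{pro123}) means that for every attracting neighborhood $U\in\sANbhd(\varphi)$ one has either $x,x'\in U$ or $x,x'\in U^c$. Thus the task is to show that the first condition forces the second.

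First I would fix an arbitrary $U\in\sANbhd(\varphi)$ and form the associated pair $\varpi(U)=(A,R)$ with $A=\Inv(U)$ and $R=\Inv^+(U^c)$, which is an attractor-repeller pair by Definition \ref{ARpairdefn234}. The decisive inclusions are $A=\Inv(U)\subset U$ and $R=\Inv^+(U^c)\subset U^c$, both immediate since $\Inv$ and $\Inv^+$ return subsets of their arguments. Applying the hypothesis $x\sim x'$ in $\sR(\varphi)$ to this particular pair yields either $x,x'\in A\subset U$ or $x,x'\in R\subset U^c$; in either case $x$ and $x'$ lie together in $U$ or together in $U^c$.

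Since $U$ was arbitrary, this shows that for every attracting neighborhood both points lie in $U$ or both lie in $U^c$, which is exactly the statement $x\sim x'$ in $(X,\le)$. There is essentially no obstacle here; the only point worth emphasizing is the direction of the correspondence between the two families of witnesses. The $\sR(\varphi)$-relation is tested against all attractor-repeller pairs, while the $(X,\le)$-relation is tested against all attracting neighborhoods, and every such neighborhood $U$ produces an attractor-repeller pair $\varpi(U)$, so each witness for the $X$-relation is covered by a witness for the $\sR(\varphi)$-relation. (The converse implication is not claimed and indeed need not hold, so no symmetry argument is required.)
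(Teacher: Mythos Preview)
Your proof is correct and follows essentially the same approach as the paper: fix an attracting neighborhood $U$, pass to the associated attractor-repeller pair $(A,R)=\varpi(U)$, use the inclusions $A\subset U$ and $R\subset U^c$, and conclude that membership in $A$ or $R$ forces joint membership in $U$ or $U^c$. If anything, your version is slightly more careful with notation than the paper's own proof, which slips into the compact-case notation $A^*$ and $\omega(U)=A$ even though the lemma is stated in the general setting.
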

\begin{proof}
Two points $x,x'\in \sR(\varphi)$ are equivalent if for all $P\in \sARpair(\varphi)$,
$x\in A^*$ if and only if $x'\in A^*$, or equivalently $x'\in A$ if and only if $x\in A$.
Since $A\subset U\in \sANbhd(\varphi)$ and $A^*\subset U^c$ for all $\omega(U)=A$,
it follows that $x\in U^c$ if and only if $x'\in U^c$, which proves that $x\sim x'$ in $(X,\le)$.
    \qed
\end{proof}

Lemma \ref{compareequivclasses} implies that every equivalence class in $\RC(\varphi)$ corresponds to a unique equivalence class in $\SC(\varphi)$ which yields the order-embedding
$\RC(\varphi)\hookrightarrow\SC(\varphi)$.
Dualizing Diagram \ref{dualitydiagno34}
 follows along the same argument as in Section \ref{attnbhds} and
yields the diagrams

\begin{equation}
\label{rc1010}
\begin{diagram}
\node{\sfS\sANbhd(\varphi)}\node{\sfS\sARpair(\varphi)}\arrow{w,l,L}{\sfS\varpi}\\
\node{\SC(\varphi)}\arrow{n,l,J}{\rmXi}\node{\RC(\varphi)}\arrow{w,l,L}{}\arrow{n,r,J}{\rmPhi}
\end{diagram}
\end{equation}
where the map $\varpi$ is defined in \eqref{dualitydiagno34}.
 The left commutative square in \eqref{compdiag12} also fits in \eqref{rc1010}, and the analysis of the above diagram is the same as in Section \ref{attnbhds}. Diagram \eqref{rc1010} can be used to define a relation $\scrR$, as displayed in Section \ref{strongcomp123}, generalizing the Conley relation.

\subsection{Examples and discussion of compact order models}
\label{examplesofcomp}
In this section, we observe that the top rows in Diagram \eqref{rc1010} maybe regarded as a compactification of the dynamical system in terms of an order model. 
We start with an example to illustrate the compactification.
\begin{example}
    \label{examcomp}
    Let $X=\R$ and $\varphi(t,x) = xe^t$, with $t\in \R$. Figure \ref{fig:comp}[top-left] illustrates the flow  with the poset of recurrent components $\RC(\varphi) =\{0\}$. The space $\sfS\sARpair(\varphi)$
    yields the compactification of $\RC(\varphi)$. For a point $x<0$ it holds that $\alpha(x) =\{0\}$ and $\omega(x) = \varnothing$. 
    Moreover, the strong component of $x$ is the singleton set, $\xi=\{x\}$,  which can be argued as follows.
    Suppose $x\sim x'$ with $x\neq x'$. This is equivalent to $x\in U$ if and only if $x'\in U$,
    or equivalently $x\in U^c$ if and only if $x'\in U^c$ for all $U\in \sANbhd(\varphi)$.
    Suppose $x\in U$ and $0\not\in U$. Define $\tilde U=U\smin\{x'\}$. Then, $\tilde U^c = U^c\cup \{x'\}$ so that
    $x\not\in \tilde U^c$ and $x'\in \tilde U^c$. Moreover, $\alpha(\tilde U^c) = 
    \alpha(U^c)\cup \alpha(x') = \alpha(U^c)\subset \Int U^c\subset \Int\tilde U^c$, which makes $\tilde U$ an attracting neighborhood,  a contradiction.
    This implies that $x\sim x'$ if and only if $x=x'$, so that $\xi=\{x\}.$
    The prime ideal corresponding to $\xi$ is given by
    $\rmXi(\xi)=I_x= \{U\in \sANbhd(\varphi)\mid \xi\cap U=\varnothing\}= \{U\in \sANbhd(\varphi)\mid x\not\in U\}\in \sfS\sANbhd(\varphi)$. 
        
    The origin as recurrent point corresponds to the prime ideal
    $J_3\in \sfS\sARpair(\varphi)$ given by 
    \[
    \begin{aligned}
    J_3 &= \rmPhi(\{0\}) = \bigl\{ P\in \sARpair(\varphi)\mid 0\not\in A\bigr\}\\
    &= \bigl\{(\varnothing,X), (\varnothing,L_-), (\varnothing,L_+),(\varnothing,0)\bigr\}
        \end{aligned}
    \]
    Moreover, the corresponding prime ideal in $\sfS\sANbhd(\varphi)$ is given by
    \[
    \begin{aligned}
      I_3 &= \varpi^{-1}(J_3) = \bigl\{U\in \sANbhd(\varphi)\mid \bigl(\omega(U),\alpha(U^c)\bigr)=(A,R)\in J_3\bigr\}\\
      &= \bigl\{ U\in \sANbhd(\varphi)\mid 0\not\in U\bigr\} = \rmXi(\{0\}).
    \end{aligned}
    \]
    Define the prime ideal $J_1 =\{(A,R)\in \sARpair(\varphi)\mid x\in R\}\in \sfS\sARpair(\varphi)$. It is important to emphasize that $J_1$ is \emph{not} in the image of $\rmPhi$, and the prime ideals are given by
    $J_1=\bigl\{(\varnothing,X), (\varnothing,L_-) \bigr\}$ and $I_1=\varpi^{-1}(J_1)$. Then $I_1$ consists of attracting neighborhoods $U=\varnothing$ and $\alpha(U^c)=L_-$, which implies that  $I_1\subsetneq I_x$.
    If $0\in U$, then $U=X$ and therefore $I_x\subsetneq I_3$.
    Consequently, 
    $I_1\subsetneq I_x\subsetneq I_3$. The case $x>0$ follows along the same lines. The above procedure provides a compactified order model for the dynamical system $\varphi$, as shown in Figure~\ref{fig:comp} 
\end{example}

\begin{figure}[h!]
  \includegraphics[width=\linewidth]{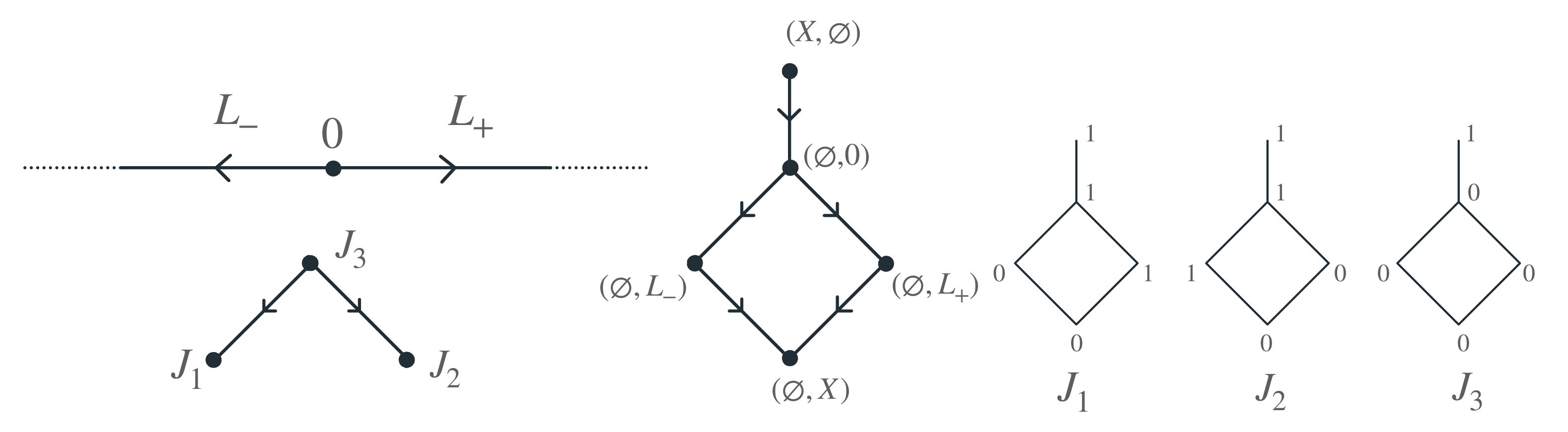}
  \caption{[top-left] The flow $\varphi$ in Example~\ref{examcomp}. [middle] The lattice of attractor-repeller pairs $\sARpair(\varphi)$. [right] The prime ideals in $\sfS\sARpair(\varphi)$. [bottom-left] The poset $\sfS\sARpair(\varphi)$ .}
  \label{fig:comp}
\end{figure}

The top row in \eqref{rc1010} defines a transitive relation $\overline\scrR$ on
the prime ideals $I\in \sfS\sANbhd(\varphi)$ as per Section \ref{strongcomp123}:
\begin{description}
    \item[(i)]  $(I,I') \in \overline\scrR$ for $I\neq I'$ if and only if  $I \le  I'$ in $\sfS\sANbhd(\varphi)$;
    \item[(ii)] $(I,I)\in \overline\scrR$ if and only if $I=\varpi^{-1}(J)$, $J\in \sfS\sARpair(\varphi)$.
\end{description}
As before, the reflexive closure of $\overline\scrR$ yields the poset  $\sfS\sANbhd(\varphi)$.

\begin{example}
    \label{parallel}
    Consider the flow $\varphi^t(x) = x+t$ for $x\in X=\R$.
    As in Example \ref{examcomp}, the lattice of attractor-repeller pairs and its prime ideal space are given in Figure \ref{fig:parallel}[middle]. The ideals $J_1$ and $J_2$ are computed using similar reasoning as in Example \ref{examcomp}. Figure \ref{fig:parallel}[left] shows the translation flow and the Hasse diagram of the compactified relation $\overline \scrR$. Points $x\in \R$ correspond to ideals $I_x$ which relate, or are `asymptotic to'
    the ideals $I_1$ and $I_2$. The relation $\overline\scrR$ characterizes the compactification of the system. The Conley relation in this case is void:  $\scrC=\varnothing$. 
\end{example}

\begin{figure}[h!]
  \includegraphics[width=\linewidth]{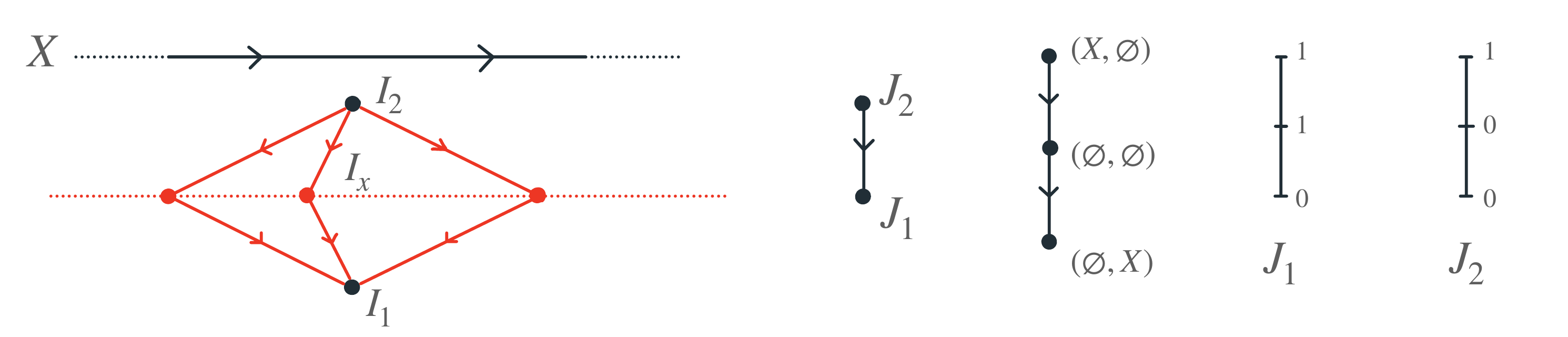}
  \caption{[top-left] The flow $\varphi$ in Example~\ref{parallel}. [bottom-left] The Hasse diagram of the relation $\overline\scrR$. [middle] The poset $\sfS\sARpair(\varphi)$ and  the lattice of attractor-repeller pairs $\sARpair(\varphi)$. [right] The prime ideals in $\sfS\sARpair(\varphi)$ .}
  \label{fig:parallel}
\end{figure}

We now give an outline  without proofs on how $\overline\scrR$ can be interpreted as a compactification of $\scrR$ defined
in a similar way in Section  \ref{strongcomp123}. The compactification procedure is a discussion and will be studied in more detail in future work.

In general $x\in X\smin \sR(\varphi)$ corresponds to a strong component $\xi=[x]$. The associated prime ideal is given by 
$I_x= \{U\in \sANbhd(\varphi)\mid \xi\cap U=\varnothing\}$ contained in $\sfS\sANbhd(\varphi)$. 
Define the homomorphism $h_x\colon \sARpair(\varphi)\to{\bf 2}$ given by
\[
h_x(P) = \begin{cases}
  0    & \text{ if } x \in R;\\ 
   1   & \text{ if } x\not\in R,
\end{cases}
\] 
which is equivalent to the definition in \eqref{hom1toasymp}.  In this framework, no limit sets are required.
The latter defines a 
prime ideal $J_+= h_x^{-1}(0) =\{ P\in \sARpair(\varphi)\mid x\in R\}\in \sfS\sARpair(\varphi)$. This yields a prime ideal $I_+\in \sfS\sANbhd(\varphi)$ given by 
\[
I_+=\varpi^{-1}(J_+) = \{ U\in \sANbhd(\varphi)\mid \bigl( \omega(U),\alpha(U^c)\bigr)=(A,R)\in J_+\}.
\]
As in Example \ref{examcomp}, it is important to emphasize that $J_+$ is not necessarily in the image of $\rmPhi$. 
For $U\in I_+$ it holds that $x\in R\subset U^c$, i.e. $x\not\in U$,  and thus $I_+\subsetneq I_x$. This `compactifies' the asymptotic behavior of a forward orbit $\gamma_x^+$. 
If $\omega(x)\neq \varnothing$, then $J_+ = \rmPhi(\xi_+)$ for some $\xi_+\in \RC(\varphi)$. 

Suppose $x$ allows a complete orbit $\gamma_x$.
As before we define the homomorphism $h_{\gamma_x}\colon \sARpair(\varphi)\to{\bf 2}$ given by
\[
h_{\gamma_x}(P) = \begin{cases}
  0    & \text{ if } \gamma_x\cap A=\varnothing;\\ 
   1   & \text{ if } \gamma_x\subset A,
\end{cases}
\] 
which is equivalent to the definition in \eqref{hom2toasymp}. From $h_{\gamma_x}$ we have the
prime ideal $J_-= h_{\gamma_x}^{-1}(0) =\{ P\in \sARpair(\varphi)\mid \gamma_x\cap A=\varnothing\}\in \sfS\sARpair(\varphi)$ and $I_- = \varpi^{-1}(J_-)$. For $U\in I_x$ it holds that $x\in U^c$, and thus $\gamma_x\cap A=\varnothing$ for $A=\omega(U)$. Consequently, $U\in I_-$ and thus $I_x\subsetneq I_-$.
If $x$ allows a complete orbit $\gamma_x$ such that $\alphaOg\neq \varnothing$, then 
$J_- = \rmPhi(\xi_-)$ for some $\xi_-\in \RC(\varphi)$.

Summarizing, given $x\in X\smin\sR(\varphi)$, then
\begin{equation}
    \label{fundinclforasymp}
I_+\subsetneq I_x\subsetneq I_-,
\end{equation}
which is a compactified version of Conley's decomposition theorem, cf.\ Theorem~\ref{Conleydecomp}. In the compact case this follows from the map $\rmXi$ as explained in Remark \ref{altConleydecomp}. Equation \eqref{fundinclforasymp} yields `limits' even though they do not necessarily arise as the image of $\rmXi$.
%
In general $\sfS\sARpair(\varphi)$ is a large compact ordered  space. Of importance in the compactification from a dynamics point of view are representations of  forward obits $\gamma_x^+$ for which $\omega(x)\neq \varnothing$ and complete orbits $\gamma_x$ for which at least one of the orbital limit sets $\omega(x)$ or $\alphaOg$ is nonempty.

The choice of attractor-repeller pairs $\sARpair(\varphi)$ is crucial in the above construction. The choice between  repelling neighborhoods or repelling regions has no effect on   $\sR(\varphi)$, or on its image in $\sfS\sARpair(\varphi)$. However, tt does affect the order structure on the ideals $I_x$.

The compactification procedure can also be performed using finite sublattices. Let $\sAR\subset\sARpair(\varphi)$ be a finite sublattice, and let $\sN=\sANbhd(\varphi;\sAR)\subset\sANbhd(\varphi)$ be the sublattice defined as the attracting neighborhoods $U$ for which the condition $\varpi(U)=\bigl(\omega(U),\alpha(U^c)\bigr)\in \sAR$ holds.  For $\RC(\varphi;\sAR)$ and $\SC(\varphi;\sN)$
we obtain the following commutative diagram:
\begin{equation}
    \label{compdiag89}
    \begin{diagram}
    \node{\beta X}\arrow{e,l}{\sfS\iota}\node{\sfS\sN}\node{\sfS\sAR}\arrow{w,l}{\sfS\omega}\\
    \node{X}\arrow{n,l,A}{i}\arrow{e,l}{\pi}\node{\SC(\varphi;\sN)}\arrow{n,r,A}{\rmXi}\node{\RC(\varphi;\sAR)}\arrow{w,l}{\supseteq}\arrow{n,r}{\rmPhi}
    \end{diagram}
\end{equation}
where $\SC(\varphi;\sN)$ is defined via isolating neighborhoods in $\sN$.
The embedding map $\sfS\varpi\colon\sfS\sAR\hookrightarrow \sfS\sN$  provides a $\sAR$-compactification for $\varphi$. This construction will 
 be subject of future study. If $\sN$ is chosen as a finite sublattice with $\varpi\colon \N\twoheadrightarrow \sAR$, then the construction yields a compactification of a Morse tessellation, cf.\ \cite{LSoA3}.
\begin{figure}
  \includegraphics[width=\linewidth]{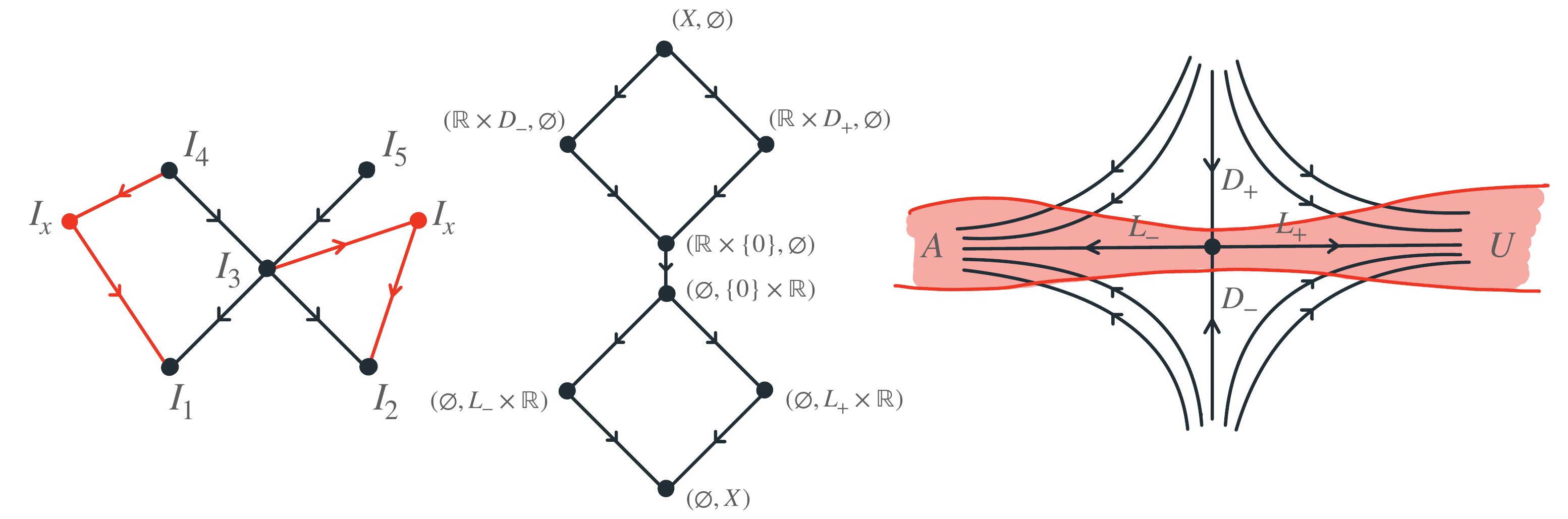}
  \caption{[right] The flow $\varphi$ in Example~\ref{exofsaddle} with an attracting neighborhood $U$ and attractor $A$ shown. [middle] The sub-poset $\sfS\sAR$ of prime ideals in $\sfS\sARpair(\varphi)$. [left] Some miscellaneous orders between prime ideals in $\sfS\sANbhd(\varphi)$. }
  \label{fig:comp23}
\end{figure}
The following example describes this procedure. 
\begin{example}
    \label{exofsaddle}
    Let $X=\R^2$ and $\varphi^t(x) = (x_1e^t,x_2e^{-t})$, where $x=(x_1,x_2)$, see Figure~\ref{fig:comp23}[right]. The recurrent set of $\varphi$ is given by $\sR(\varphi)=\{0\}$ and $\RC(\varphi)=\{0\}$.
    The lattice of attractor-repellers pairs is infinite. Indeed, every orbit disjoint from the coordinate axis may serve as attractor as well as repeller.
    In this example, it makes sense to choose a sublattice $\sAR\subset \sARpair(\varphi)$ such that $\sR(\varphi;\sAR)=\{0\}$.
    We only consider attractor-repeller pairs that are built from the coordinate axes, and the lattice $\sAR$ is shown in 
 Figure \ref{fig:comp23}[middle]. The prime ideals of $\sAR$ are given by
\begin{enumerate}
    \item[$\bullet$] $J_1\!=\!\{(\varnothing,X),\!(\varnothing,L_-\!\!\times\! \R)\}$;
    \item[$\bullet$] $J_2\!=\!\{(\varnothing,X),\!(\varnothing,L_+\!\!\times\! \R)\}$;
    \item[$\bullet$] $J_3\!=\!\{(\varnothing,X),\!(\varnothing,L_-\!\!\times\!\R),\!(\varnothing,L_+\!\!\times\!\R),\!(\varnothing,\{0\}\!\!\times\!\R)\}$;
    \item[$\bullet$] $J_4\!=\!\{(\varnothing,X),\!(\varnothing,L_-\!\!\times\!\R),\!(\varnothing,L_+\!\!\times\!\R),\!(\varnothing,\{0\}\!\!\times\!\R),\!(\R\!\times\!\{0\},\varnothing),\!(\R\!\times \!D_-,\varnothing)\}$;
    \item[$\bullet$] $J_5\!=\!\{(\varnothing,X),\!(\varnothing,L_-\!\!\times\!\R),\!(\varnothing,L_+\!\!\times\!\R),\!(\varnothing,\{0\}\!\!\times\!\R),\!(\R\!\times\!\{0\},\varnothing),\!(\R\!\times \!D_+,\varnothing)\}$,
\end{enumerate}
and the poset structure of $\sfS\sAR$ is given in Figure \ref{fig:comp23}[left].
For $x\in \R^2\smin\{0\}$ the prime ideals $I_x \in \sfS\sANbhd(\varphi;\sAR)$ satisfy $I_i\subsetneq I_x\subsetneq I_j$, via 
\eqref{fundinclforasymp}. Figure \ref{fig:comp23}[left] provides a compactification of $\sR(\varphi)$ with $\RC(\varphi)\hookrightarrow \sfS\sAR$.
In Figure \ref{fig:comp23}[right] we displayed a typical attractor that contributes to $\sfS\sAR$. 
\end{example}



\bibliographystyle{spmpsci}
\bibliography{priestley.bib}

\begin{thebibliography}{10}
\providecommand{\url}[1]{{#1}}
\providecommand{\urlprefix}{URL }
\expandafter\ifx\csname urlstyle\endcsname\relax
  \providecommand{\doi}[1]{DOI~\discretionary{}{}{}#1}\else
  \providecommand{\doi}{DOI~\discretionary{}{}{}\begingroup
  \urlstyle{rm}\Url}\fi

\bibitem{Akin2}
Akin, E.: Dynamical systems: the topological foundations.
\newblock In: Six lectures on dynamical systems ({A}ugsburg, 1994), pp. 1--43.
  World Sci. Publ., River Edge, NJ (1996)

\bibitem{Akin1}
Akin, E.: Topological dynamics.
\newblock In: Mathematics of complexity and dynamical systems. {V}ols. 1--3,
  pp. 1726--1747. Springer, New York (2012).
\newblock \doi{10.1007/978-1-4614-1806-1\_111}.
\newblock
  \urlprefix\url{https://doi-org.vu-nl.idm.oclc.org/10.1007/978-1-4614-1806-1_111}

\bibitem{AHK2}
Akin, E., Hurley, M., Kennedy, J.: Generic homeomorphisms of compact manifolds.
\newblock In: Proceedings of the 1998 {T}opology and {D}ynamics {C}onference
  ({F}airfax, {VA}), vol.~23, pp. 317--337 (1998)

\bibitem{AHK1}
Akin, E., Hurley, M., Kennedy, J.A.: Dynamics of topologically generic
  homeomorphisms.
\newblock Mem. Amer. Math. Soc. \textbf{164}(783), viii+130 (2003).
\newblock \doi{10.1090/memo/0783}.
\newblock \urlprefix\url{https://doi-org.vu-nl.idm.oclc.org/10.1090/memo/0783}

\bibitem{bourbaki}
Bourbaki, N.: General topology. {C}hapters 1--4.
\newblock Elements of Mathematics (Berlin). Springer-Verlag, Berlin (1998).
\newblock Translated from the French, Reprint of the 1989 English translation

\bibitem{Conley}
Conley, C.: Isolated {I}nvariant {S}ets and the {M}orse {I}ndex, \emph{CBMS
  Regional Conference Series in Mathematics}, vol.~38.
\newblock American Mathematical Society, Providence, R.I. (1978)

\bibitem{DP02}
Davey, B.A., Priestley, H.A.: Introduction to {L}attices and {O}rder, second
  edn.
\newblock Cambridge University Press, New York (2002).
\newblock \doi{10.1017/CBO9780511809088}.
\newblock
  \urlprefix\url{https://doi-org.ezproxy.fau.edu/10.1017/CBO9780511809088}

\bibitem{gehrke2023topological}
Gehrke, M., van Gool, S.: Topological duality for distributive lattices: Theory
  and applications (2023)

\bibitem{hochster}
Hochster, M.: {Lecture notes} (2018)

\bibitem{Hurley2}
Hurley, M.: {Chain recurrence, semiflows, and gradients}.
\newblock Journal of Dynamics and Differential Equations  (1995)

\bibitem{Hurley1}
Hurley, M.: {Chain recurrence and attraction in non-compact spaces}.
\newblock Ergodic Theory and dynamical systems pp. 1--21 (2007)

\bibitem{Ingram}
Ingram, W.T., Mahavier, W.S.: {Inverse Limits}, \emph{From Continua to Chaos},
  vol.~25.
\newblock Springer Science {\&} Business Media, New York, NY (2011)

\bibitem{LSoA1}
Kalies, W.D., Mischaikow, K., Vandervorst, R.C.A.M.: Lattice structures for
  attractors {I}.
\newblock J. Comput. Dyn. \textbf{1}(2), 307--338 (2014).
\newblock \doi{10.3934/jcd.2014.1.307}.
\newblock
  \urlprefix\url{https://doi-org.ezproxy.fau.edu/10.3934/jcd.2014.1.307}

\bibitem{LSoA2}
Kalies, W.D., Mischaikow, K., Vandervorst, R.C.A.M.: Lattice structures for
  attractors {II}.
\newblock Found. Comput. Math. \textbf{16}(5), 1151--1191 (2016).
\newblock \doi{10.1007/s10208-015-9272-x}.
\newblock
  \urlprefix\url{https://doi-org.ezproxy.fau.edu/10.1007/s10208-015-9272-x}

\bibitem{LSoA3}
Kalies, W.D., Mischaikow, K., Vandervorst, R.C.A.M.: Lattice structures for
  attractors {III}.
\newblock J. Dynam. Differential Equations \textbf{34}(3), 1729--1768 (2022).
\newblock \doi{10.1007/s10884-021-10056-8}.
\newblock \urlprefix\url{https://doi.org/10.1007/s10884-021-10056-8}

\bibitem{Kelley}
Kelley, J.L.: General topology.
\newblock D. Van Nostrand Co., Inc., Toronto-New York-London (1955)

\bibitem{Mcgehee}
McGehee, R.: Attractors for closed relations on compact {H}ausdorff spaces.
\newblock Indiana Univ. Math. J. \textbf{41}(4), 1165--1209 (1992).
\newblock \doi{10.1512/iumj.1992.41.41058}.
\newblock
  \urlprefix\url{https://doi-org.vu-nl.idm.oclc.org/10.1512/iumj.1992.41.41058}

\bibitem{Munkres}
Munkres, J.R.: Topology.
\newblock Prentice Hall, Inc., Upper Saddle River, NJ (2000).
\newblock Second edition of [ MR0464128]

\bibitem{Nishiguchi}
Nishiguchi, J.: Asymptotic compactness in topological spaces.
\newblock Topology Appl. \textbf{287}, Paper No. 107451, 26 (2021).
\newblock \doi{10.1016/j.topol.2020.107451}.
\newblock
  \urlprefix\url{https://doi-org.vu-nl.idm.oclc.org/10.1016/j.topol.2020.107451}

\bibitem{patrao}
Patrao, M., Martin, L.A.B.S.: {Semiflows on Topological Spaces: Chain
  Transitivity and Semigroups}.
\newblock Journal of Dynamics and Differential Equations \textbf{19}(1),
  155--180 (2006)

\bibitem{poincare}
Poincar\'e, H.: Sur le probleme des trois corps et les equations de la
  dynamique.
\newblock Acta Math. \textbf{13}, 1--270 (1890)

\bibitem{Roman}
Roman, S.: Lattices and ordered sets.
\newblock Springer, New York (2008)

\bibitem{Smale}
Smale, S.: Differentiable dynamical systems.
\newblock Bull. Amer. Math. Soc. \textbf{73}, 747--817 (1967).
\newblock \doi{10.1090/S0002-9904-1967-11798-1}.
\newblock \urlprefix\url{https://doi.org/10.1090/S0002-9904-1967-11798-1}

\bibitem{speed1}
Speed, T.P.: {On the order of prime ideals}.
\newblock Algebra Universalis \textbf{2}(1), 85--87 (1972)

\bibitem{speed2}
Speed, T.P.: {Profinite posets}.
\newblock Bulletin of the Australian Mathematical Society \textbf{6}(02),
  177--7 (2009)

\end{thebibliography}


\appendix 
\section{Omega-limit sets}
\label{eldyn}

Important for asymptotic behavior of a dynamical system  is the notion of omega-limit set.
For a dynamical system $\varphi$ (continuity not assumed) consider the net of sets
$\T^+ \to \sSet(X)$ given by $t\mapsto \varphi^t(U)\subset X$, $U\subset X$.
For the definition of limit set we follow \cite[Defn.\ 2.13]{Nishiguchi}.

 \begin{definition}
 \label{defn:limitset-a}
Let $U\subset X$ and let $\bigl(\varphi^t(U)\bigr)_{t\in \T^+}$ be a net of sets in $X$.
Then, the limit set
\begin{equation}
\label{eqn:om-char-10}
\omega(U)=\bigcap_{t\ge 0}\,\cl \bigcup_{s\ge t}\varphi^s(U).
\end{equation}
is called the 
 \emph{omega-limit set} $\omega(U)$ of $U$ with respect to $\varphi$.
 The latter is closed by definition, and contained in
 $\cl\rmGamma^+(U)$ with $\rmGamma^+(U):=\bigcup_{t\ge 0} \varphi^t(U)$.
\end{definition}

The definition of omega-limit set can be captured in the following equivalent characterization, cf.\ \cite{Nishiguchi}. It is important to point out that the notion of subnet in \cite{Nishiguchi} follows the definition of Kelley, cf.\ \cite[pp.\ 70]{Kelley}.\footnote{Let $(x_\alpha)_{\alpha\in A}$ be a net. 
A net $(y_\beta)_{\beta\in B}$ 
is a \emph{subnet} of $(x_\alpha)_{\alpha\in A}$ if
(i) there exists a map $h\colon B\to A$ such that for every $\alpha\in A$ there exists a $\beta_0\in B$ such that $h(\beta)\ge \alpha$ for all $\beta\ge \beta_0$, and 
(ii) $y_\beta = x_{h(\beta)}$ for all $\beta\in B$.
}

 \begin{proposition}
\label{lem:om-2}
The following statements are equivalent:
\begin{description}
\item [(i)] $y\in \omega(U)$;
\item [(ii)] there exists a subnet $\bigl(\varphi^{t_\alpha}(U)\bigr)_{\alpha\in \cA}$ of $\bigl(\varphi^t(U)\bigr)_{t\in \T^+}$ and $y_\alpha\in \varphi^{t_\alpha}(U)$
such that the net $(y_\alpha)_{\alpha\in \cA}$ converges to $y$.
\end{description}
\end{proposition}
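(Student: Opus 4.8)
The plan is to establish the two implications separately, treating $(ii)\Rightarrow(i)$ as the routine direction and reserving the real work for the construction needed in $(i)\Rightarrow(ii)$.

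For $(ii)\Rightarrow(i)$, suppose we are given a subnet $\bigl(\varphi^{t_\alpha}(U)\bigr)_{\alpha\in\cA}$ of $\bigl(\varphi^t(U)\bigr)_{t\in\T^+}$, with associated index map $h\colon\cA\to\T^+$, $h(\alpha)=t_\alpha$, together with points $y_\alpha\in\varphi^{t_\alpha}(U)$ such that $y_\alpha\to y$. First I would fix $t\ge 0$ and apply clause (i) of Kelley's subnet definition (the footnote) to the element $t\in\T^+$: there is an $\alpha_0\in\cA$ with $t_\alpha=h(\alpha)\ge t$ for all $\alpha\ge\alpha_0$. Hence for $\alpha\ge\alpha_0$ we have $y_\alpha\in\varphi^{t_\alpha}(U)\subset\bigcup_{s\ge t}\varphi^s(U)$, so the net $(y_\alpha)$ is eventually in $\bigcup_{s\ge t}\varphi^s(U)$. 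Since $y_\alpha\to y$, this gives $y\in\cl\bigcup_{s\ge t}\varphi^s(U)$. As $t\ge 0$ was arbitrary, the defining formula \eqref{eqn:om-char-10} yields $y\in\omega(U)$.

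For $(i)\Rightarrow(ii)$, assume $y\in\omega(U)$, so that $y\in\cl\bigcup_{s\ge t}\varphi^s(U)$ for every $t\ge 0$. The key step is to build an appropriate indexing directed set. Let $\cN_y$ denote the neighborhood filter of $y$, directed by reverse inclusion, and set $\cA:=\cN_y\times\T^+$ with the product order; this is directed since $\cN_y$ is closed under finite intersection and $\T^+$ is directed. For each $\alpha=(V,t)\in\cA$, the hypothesis $y\in\cl\bigcup_{s\ge t}\varphi^s(U)$ means $V$ meets $\bigcup_{s\ge t}\varphi^s(U)$, so I may choose $s_\alpha\ge t$ and a point $y_\alpha\in V\cap\varphi^{s_\alpha}(U)$; define $h(\alpha):=s_\alpha$. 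Then I would verify the two required properties. Convergence: given a neighborhood $W$ of $y$, for all $\alpha=(V,t)\ge(W,0)$ one has $V\subset W$, hence $y_\alpha\in V\subset W$, so $y_\alpha\to y$. Subnet cofinality: given $\tau\in\T^+$, for all $\alpha=(V,t)\ge(X,\tau)$ one has $t\ge\tau$ and therefore $h(\alpha)=s_\alpha\ge t\ge\tau$, which is exactly clause (i) of Kelley's definition; clause (ii) holds by construction since $y_\alpha\in\varphi^{h(\alpha)}(U)$. This exhibits the desired subnet and selection.

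The main obstacle I anticipate is not any analytic content but bookkeeping: making the construction conform \emph{exactly} to Kelley's notion of subnet, in which the index map $h$ need not be monotone or injective and only the cofinality condition is imposed. The delicate points are keeping the order on $\cN_y$ (reverse inclusion) consistent throughout, ensuring $\cA$ is genuinely directed, and remembering that the objects of the net are the \emph{sets} $\varphi^t(U)$ while the convergence is of the \emph{selected points} $y_\alpha$. Once the directed set and the map $h$ are set up correctly, both the convergence and the cofinality verifications are immediate, so I would present the construction carefully and let the two checks follow in a line each.
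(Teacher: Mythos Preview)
Your argument is correct in both directions; the construction via $\cN_y\times\T^+$ is the standard one and your verification of Kelley's cofinality condition is clean. The paper itself does not prove this proposition at all but merely cites \cite[Thm.\ 2.15]{Nishiguchi}, so you have supplied a complete proof where the paper defers to the literature.
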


\begin{proof} cf.\ \cite[Thm.\ 2.15]{Nishiguchi}.
\qed
\end{proof}

\begin{remark}
\label{class}
By the definition of subnet
for every $t>0$ there exists a $\alpha_t\in \cA$ such that $t_\alpha\ge t$ for all $\alpha\ge \alpha_t$. In this case we write $t_\alpha \to \infty$.
Omega-limit points are thus characterized as points $y$ for which there exist nets $(t_\alpha,x_\alpha)_{\alpha\in \cA}$ in $\T^+\times U$
with $t_\alpha\to \infty$  such that $y_\alpha= \varphi^{t_\alpha}(x_\alpha)\to y$. 
This is in accordance with the standard definition of omega-limit set in compact metric spaces, cf.\ \cite{Conley}.
\end{remark}

We now consider some properties of omega-limit sets. An immediate property following from the definition is:
\[
U\subset V \quad\implies \quad \omega(U) \subset \omega(V).
\]
In particular, $\omega(U\cap V)\subset \omega(U)\cap\omega(V)$.
Another  crucial property is the behavior  of omega-limit sets with respect to set-union.

\begin{lemma} 
\label{additive}
Let $U,V\subset X$. Then,
\begin{equation}
    \label{addom}
    \omega(U\cup V) = \omega(U) \cup \omega(V).
\end{equation}
\end{lemma}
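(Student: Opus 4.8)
The plan is to reduce the identity to an interchange of an intersection of a nested family with a binary union, which is the only nontrivial point. The inclusion $\omega(U)\cup\omega(V)\subset \omega(U\cup V)$ is immediate from monotonicity of $\omega$: since $U\subset U\cup V$ and $V\subset U\cup V$, the property $U\subset W\Rightarrow \omega(U)\subset\omega(W)$ recorded just before the lemma gives both $\omega(U)\subset\omega(U\cup V)$ and $\omega(V)\subset\omega(U\cup V)$.

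For the reverse inclusion, first I would record the two elementary set-theoretic facts that make the computation go through. Since each map sends unions to unions, $\varphi^s(U\cup V)=\varphi^s(U)\cup\varphi^s(V)$, and hence $\bigcup_{s\ge t}\varphi^s(U\cup V)=\bigl(\bigcup_{s\ge t}\varphi^s(U)\bigr)\cup\bigl(\bigcup_{s\ge t}\varphi^s(V)\bigr)$. Because the closure of a finite union equals the union of the closures, writing $A_t:=\cl\bigcup_{s\ge t}\varphi^s(U)$ and $B_t:=\cl\bigcup_{s\ge t}\varphi^s(V)$ yields $\cl\bigcup_{s\ge t}\varphi^s(U\cup V)=A_t\cup B_t$. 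Thus $\omega(U\cup V)=\bigcap_{t\ge 0}(A_t\cup B_t)$, while $\omega(U)=\bigcap_{t\ge 0}A_t$ and $\omega(V)=\bigcap_{t\ge 0}B_t$.

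It then remains to prove $\bigcap_{t\ge 0}(A_t\cup B_t)\subset\bigl(\bigcap_{t\ge 0}A_t\bigr)\cup\bigl(\bigcap_{t\ge 0}B_t\bigr)$. Here I would exploit that both families are \emph{nested}: for $t\le t'$ one has $A_{t'}\subset A_t$ and $B_{t'}\subset B_t$, since the inner unions shrink as $t$ grows. Arguing by contradiction, suppose $y$ lies in every $A_t\cup B_t$ but in neither $\bigcap_t A_t$ nor $\bigcap_t B_t$. Then there are $t_1,t_2$ with $y\notin A_{t_1}$ and $y\notin B_{t_2}$; setting $t_*=\max\{t_1,t_2\}$, nesting forces $y\notin A_{t_*}$ and $y\notin B_{t_*}$, so $y\notin A_{t_*}\cup B_{t_*}$, contradicting the choice of $y$. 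This establishes the missing inclusion and hence the lemma.

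The only real obstacle is the interchange of $\bigcap_{t}$ with the binary union, which fails for arbitrary families; the argument above shows it holds here precisely because the directed index set $\T^+$ makes $\{A_t\}$ and $\{B_t\}$ decreasing. I would emphasize that continuity of $\varphi$ is not needed—only that images commute with unions—so the statement holds for arbitrary dynamical systems, consistent with the hypotheses of the appendix.
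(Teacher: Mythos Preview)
Your proof is correct and takes a genuinely different route from the paper's. The paper argues via the net characterization of omega-limit points (Proposition~\ref{lem:om-2}(ii)): given $y\in\omega(U\cup V)$ one has a net $(t_\alpha,x_\alpha)$ in $\T^+\times(U\cup V)$ with $t_\alpha\to\infty$ and $\varphi^{t_\alpha}(x_\alpha)\to y$, and since one of the sets $\{\alpha\mid x_\alpha\in U\}$, $\{\alpha\mid x_\alpha\in V\}$ is cofinal one passes to a subnet lying entirely in $U$ or entirely in $V$, landing $y$ in $\omega(U)$ or $\omega(V)$. You instead work directly from the defining formula~\eqref{eqn:om-char-10}, reducing the reverse inclusion to the purely set-theoretic identity $\bigcap_t(A_t\cup B_t)=(\bigcap_t A_t)\cup(\bigcap_t B_t)$ for two decreasing families over the directed set $\T^+$. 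Your argument is more elementary in that it bypasses Proposition~\ref{lem:om-2} altogether and isolates exactly what is needed (nestedness of the closures), while the paper's version is perhaps more suggestive dynamically; both are equally short and neither uses continuity of $\varphi$.
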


\begin{proof}
By monotonicity $\omega(U) \cup \omega(V)\subset \omega(U\cup V)$. 
On other hand,
let $y\in \omega(U\cup V)$. By Proposition
\ref{lem:om-2}(ii) there exist nets $(t_\alpha,x_\alpha)$, with $t_\alpha\to \infty$ and $\varphi^{t_\alpha}(x_\alpha) \to y$.
We can choose subnets that are either contained in $U$ or in $V$, and therefore $y \in \omega(U)\cup \omega(V)$. The latter implies $\omega(U\cup V) \subset \omega(U) \cup \omega(V)$. 
\qed
\end{proof}

If we require $\varphi$ to be a continuous dynamical system, then
the continuity of the maps $\varphi^t$ guarantees that the omega-limit set is a forward invariant set. Indeed, for $r\ge0$,
\begin{equation}
    \label{theforwinv}
\begin{aligned}
\varphi^r(\omega(U)) &= \varphi^r\Bigl( \bigcap_{t\ge 0} \cl \bigcup_{s\ge t} \varphi^s(U)\Bigr) \subset 
\bigcap_{t\ge 0}  \varphi^r\Bigl( \cl \bigcup_{s\ge t} \varphi^s(U)\Bigr) \\
&\subset \bigcap_{t\ge 0} \cl \varphi^r\Bigl( \bigcup_{s\ge t} \varphi^s(U)\Bigr) = \bigcap_{t\ge 0} \cl  \bigcup_{s\ge t} \varphi^{s+r}(U) = \bigcap_{t'\ge r} \cl  \bigcup_{s'\ge t'} \varphi^{s'}(U)\\
&=\omega(U),
\end{aligned}
\end{equation}
which proves the forward invariance of $\omega(U)$ with respect to $\varphi$.

\begin{example}\label{noncompexom}
    In general an omega-limit set is forward invariant, but not necessarily invariant. Consider $X= \bigl(\R\times [-1,1)\bigr)\cup L$ with $L=\{(x,y)\mid x\ge 0,~y=1\}$ and the semiflow $\varphi$ given by the differential equation
    $\dot x=y$, $\dot y=(-x+y/10)(1-y^2)$. 
    The space $X\subset\R^2$ with the subspace topology is non-compact.
    For any point $(x,y)$, $x\neq 0$, $-1<y<1$ the omega limt set is given by $(\R\times\{-1\}) \cup L$, which forward invariant but not invariant. Note that $\varphi$ is not closed. Indeed,  $U=\{(-x,x)\mid x\ge 0\}\cap X$ is closed and $\phi^t(U)$ is not closed for $t$ sufficiently large.
\end{example}

There are additional properties of omega-limit sets in the case that subsets $U\subset X$ possess certain compactness properties.
Define $\rmGamma^+_\tau(U) := \bigcup_{t\ge \tau} \varphi^t(U)$.

\begin{proposition}[Topological properties]
\label{lem:props-al-om4}
Let $\varphi$ be continuous dynamical system on $X$.
Let $U\subset X$ with $\rmGamma^+_\tau(U)$ precompact for some $\tau\ge 0$.  Then,
\begin{description}
\item [(i)] $\omega(U)$ is  compact and closed;
\item [(ii)] $U\not = \varnothing$ implies $\omega(U)\neq \varnothing$;
\item [(iii)] $\omega\bigl(\omega(U)\bigr) \subset \omega(U)$;
\item [(iv)] $\omega(\cl U) = \omega(U)$.
\end{description}
\end{proposition}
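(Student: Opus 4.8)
The plan is to work throughout with the nested family of closed sets $C_t := \cl\rmGamma^+_t(U)$, $t\ge 0$, whose intersection over $t\ge 0$ is exactly $\omega(U)$ by Definition~\ref{defn:limitset-a}. The family is decreasing, $C_t\supseteq C_{t'}$ for $t\le t'$, and the precompactness hypothesis says precisely that $C_\tau$ is compact. Two observations organize everything: first, by nestedness $\omega(U)=\bigcap_{t\ge 0}C_t=\bigcap_{t\ge\tau}C_t$; second, for every $t\ge\tau$ the set $C_t$ is a closed subset of the compact set $C_\tau$, hence itself compact. With these in hand, (i) is immediate: $\omega(U)$ is closed as an intersection of closed sets, and it is a closed subset of the compact set $C_\tau$, so it is compact.

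For (ii) I would argue by the finite intersection property. If $U\neq\varnothing$, then each $\varphi^s(U)$ is nonempty, so each $\rmGamma^+_t(U)$ and hence each $C_t$ is nonempty. The decreasing family $\{C_t\}_{t\ge\tau}$ therefore consists of nonempty closed subsets of the compact space $C_\tau$, and since any finite subfamily has nonempty intersection (equal to its smallest member), compactness of $C_\tau$ forces $\omega(U)=\bigcap_{t\ge\tau}C_t\neq\varnothing$.

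For (iii) I would invoke the forward invariance of $\omega(U)$ already established in \eqref{theforwinv}, namely $\varphi^r(\omega(U))\subset\omega(U)$ for all $r\ge0$. This gives $\rmGamma^+(\omega(U))=\bigcup_{r\ge0}\varphi^r(\omega(U))\subset\omega(U)$, and since $\omega(U)$ is closed (Definition~\ref{defn:limitset-a}) we get $\cl\rmGamma^+(\omega(U))\subset\omega(U)$. As $\omega(\omega(U))\subset\cl\rmGamma^+(\omega(U))$ directly from the definition of the omega-limit set, the containment $\omega(\omega(U))\subset\omega(U)$ follows at once. Finally, for (iv) one inclusion $\omega(U)\subset\omega(\cl U)$ is monotonicity. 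For the reverse the key input is continuity of each $\varphi^s$, which yields $\varphi^s(\cl U)\subset\cl\varphi^s(U)$; hence for every $t$ one has $\bigcup_{s\ge t}\varphi^s(\cl U)\subset\cl\bigcup_{s\ge t}\varphi^s(U)$, and passing to closures gives $\cl\bigcup_{s\ge t}\varphi^s(\cl U)\subset\cl\bigcup_{s\ge t}\varphi^s(U)$. Intersecting over $t\ge0$ produces $\omega(\cl U)\subset\omega(U)$, so the two sets coincide.

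The only genuinely delicate point is the nonemptiness in (ii): it is essential to reduce to the compact set $C_\tau$ \emph{before} applying the finite intersection property, since without the precompactness of some $\rmGamma^+_\tau(U)$ a decreasing intersection of nonempty closed sets can perfectly well be empty. Everything else is routine once the nested-family viewpoint is adopted and the forward invariance \eqref{theforwinv} is quoted; in particular (iii) and (iv) rely only on continuity and closedness and not on the compactness estimates used for (i) and (ii).
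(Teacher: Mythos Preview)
Your proof is correct and follows essentially the same approach as the paper: the nested family $C_t=\cl\rmGamma^+_t(U)$ inside the compact set $C_\tau$ for (i)--(ii), forward invariance plus closedness of $\omega(U)$ for (iii), and continuity ($\varphi^s(\cl U)\subset\cl\varphi^s(U)$) for (iv). Your observation that (iii) and (iv) do not require the precompactness hypothesis is also consistent with the paper's argument.
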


\begin{proof}
\noindent{\bf(i)} 
For $t\ge \tau$ we have $  \cl \bigcup_{s\ge t} \varphi^s(U)   \subset 
\cl(\rmGamma^+_\tau(U))$ so that
\[
\omega(U) \subset \bigcap_{t\ge \tau} \cl \bigcup_{s\ge t} \varphi^s(U)
\subset \cl(\rmGamma^+_\tau(U))
\]
is compact as a closed subset of a compact space.

\noindent{\bf(ii)} 
If $U\not = \varnothing$, then $\omega(U)$ is the intersection of nested nonempty, compact, closed subsets of the compact space $\cl\rm\Gamma^+_\tau(U)$, which is nonempty by the Cantor intersection theorem.

\noindent{\bf(iii)} 
If $U$ is a forward invariant set, then $\omega(U) = \bigcap_{t\ge 0} \cl \varphi^t(U)\subset \cl U$. Since $\omega(U)$ closed and forward invariant, we obtain
$\omega(\omega(U)) \subset \cl \bigl(\omega(U)\bigr) = \omega(U)$.

\noindent{\bf(iv)} 
By continuity of $\varphi$ we have that 
\[
\bigcup_{s\ge t}\varphi^s(\cl U) \subset \bigcup_{s\ge t}\cl \varphi^s(U)\subset \cl\bigcup_{s\ge t}\varphi^s(U),
\]
which implies  $\omega(\cl U) \subset \omega(U)$. By monotonicity we have $\omega(U) \subset \omega(\cl U)$.
\qed
\end{proof}

\begin{remark}
If $X$ is compact, then every subset is precompact. 
In particular, by Proposition~\ref{lem:props-al-om4}(i), omega-limit sets are compact.
\end{remark}

In order to obtain invariance properties for omega-limit sets, we have the following result concerning images of infinite intersections.

\begin{theorem}
    \label{inverssysofinvsets}
    Let 
 $f\colon X\to Y$ be a map with compact fibers,\footnote{A map has compact fibers if the preimages $f^{-1}(y)$ are compact sets for all $y\in Y$.}  and let $\{U_\alpha\}_{\alpha\in \cA}$ be an inverse system of closed  sets, cf.\ Eqn.\ \eqref{ISclosed}. Then,
\begin{equation}
    \label{intcomminfHaus12}
f\Bigl( \bigcap_{\alpha\in \cA} U_\alpha \Bigr)
=\bigcap_{\alpha\in \cA} f(U_\alpha).
\end{equation}
\end{theorem}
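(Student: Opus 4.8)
The plan is to establish the two inclusions in \eqref{intcomminfHaus12} separately. The inclusion $f\bigl(\bigcap_{\alpha\in\cA} U_\alpha\bigr)\subset \bigcap_{\alpha\in\cA} f(U_\alpha)$ holds for an arbitrary map $f$ and an arbitrary family of sets, and requires no hypotheses whatsoever: if $y=f(x)$ with $x\in\bigcap_{\alpha} U_\alpha$, then $x\in U_\alpha$ for every $\alpha$, so $y\in f(U_\alpha)$ for every $\alpha$. The content of the theorem therefore lies entirely in the reverse inclusion, which is where the compact fibers and the inverse-system (directedness) hypotheses come into play.

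For the reverse inclusion I would fix $y\in\bigcap_{\alpha} f(U_\alpha)$ and set $F:=f^{-1}(y)$, which is compact by assumption. For each $\alpha\in\cA$ choose a point $x_\alpha\in U_\alpha$ with $f(x_\alpha)=y$; then $x_\alpha\in F\cap U_\alpha$, so each set $F_\alpha:=F\cap U_\alpha$ is nonempty. Since $U_\alpha$ is closed, $F_\alpha$ is closed in the subspace $F$, hence a compact subset of $F$. The key step is to observe that the family $\{F_\alpha\}_{\alpha\in\cA}$ has the finite intersection property: given $\alpha_1,\dots,\alpha_n\in\cA$, directedness of $\cA$ provides $\beta$ with $\alpha_i\le\beta$ for all $i$, so that $U_\beta\subset U_{\alpha_i}$ by \eqref{ISclosed}, whence $\varnothing\neq F_\beta\subset\bigcap_{i=1}^n F_{\alpha_i}$.

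By compactness of $F$ together with the finite intersection property, $\bigcap_{\alpha} F_\alpha\neq\varnothing$. Any point $x$ in this intersection satisfies $x\in f^{-1}(y)$, so $f(x)=y$, and $x\in U_\alpha$ for all $\alpha$, so $x\in\bigcap_{\alpha} U_\alpha$. Hence $y=f(x)\in f\bigl(\bigcap_{\alpha} U_\alpha\bigr)$, which finishes the reverse inclusion and the proof.

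The main obstacle is this reverse inclusion: the crux is to manufacture a single preimage point of $y$ lying simultaneously in every $U_\alpha$, and this is precisely where compactness of the fiber is indispensable. The argument is structurally parallel to Lemma \ref{CIT}, but localized: there the finite intersection property is applied in the ambient compact space $X$, whereas here $X$ may fail to be compact, so the argument is carried out inside the compact fiber $F=f^{-1}(y)$. Directedness of $\cA$ is exactly what upgrades the nonemptiness of each individual $F_\alpha$ to the finite intersection property; without it the identity \eqref{intcomminfHaus12} can fail, for instance for two disjoint closed sets with overlapping images.
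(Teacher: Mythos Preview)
Your proof is correct and follows essentially the same approach as the paper: both pass to the compact fiber $f^{-1}(y)$, intersect with the closed sets $U_\alpha$ to obtain a directed family of nonempty closed subsets, and conclude by the finite intersection property. The only cosmetic difference is that the paper packages the compactness step as an appeal to Lemma~\ref{CIT}, whereas you spell out the finite-intersection argument directly.
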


\begin{proof}
Note that $f\bigl( \bigcap_{\alpha\in \cA} U_\alpha \bigr)
\subset \bigcap_{\alpha\in \cA} f(U_\alpha)$. Thus the condition
 $\bigcap_{\alpha\in \cA} f(U_\alpha)=\varnothing$, implies $f\bigl( \bigcap_{\alpha\in \cA} U_\alpha \bigr)=\varnothing$ so that \eqref{intcomminfHaus12} is satisfied.

Now suppose $\bigcap_{\alpha\in \cA} f(U_\alpha)\neq\varnothing$, and
let $y\in \bigcap_{\alpha\in \cA} f(U_\alpha)$.
Then, for all $\alpha\in \cA$  there exist $x_\alpha \in U_\alpha$  such that
$f(x_\alpha) = y$. 
Consequently, if we define
 $K = f^{-1}(y)$, then 
 $K_\alpha := K\cap U_\alpha \not = \varnothing$ for all $\alpha$. 
Since $K$ is compact by assumption, and $U_\alpha$ is closed, $K_\alpha$ is closed in $K$ and hence compact.
 Therefore, $\{K_\alpha\}_{\alpha\in \cA}$ is an inverse system of nonempty, compact, closed subsets of $K$.
Lemma \ref{CIT} implies that $\bigcap_{\alpha\in \cA} K_\alpha$ is also a nonempty, closed, compact subset of $K$.
Furthermore, $\bigcap_{\alpha\in \cA} K_\alpha \subset \bigcap_{\alpha\in \cA} U_\alpha$ and $\bigcap_{\alpha\in \cA} K_\alpha \subset K$.
Now choose $x\in \bigcap_{\alpha\in \cA} K_\alpha$, then $f(x) = y$, which implies that $ \bigcap_{\alpha\in \cA} f(U_\alpha)\subset f\bigl( \bigcap_{\alpha\in \cA} U_\alpha \bigr)$, which establishes \eqref{intcomminfHaus12}.
    \qed
\end{proof}

\begin{remark}
    \label{counterofintinv}
    Theorem \ref{inverssysofinvsets} is not true in general for inverse systems of closed sets.
\end{remark}

    \begin{proposition}
    \label{invofomega123}
Let $\varphi$ be continuous and proper dynamical system.
Then, 
\begin{description}
    \item[(i)] $\omega(U)$ is a closed, invariant set;
    \item[(ii)]  if $\varphi^t(U)\subset U$ for all $t\ge \tau\ge 0$, then $\omega(U) =\Inv\bigl(\cl U\bigr)$.
\end{description}
\end{proposition}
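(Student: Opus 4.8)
The plan is to treat the two assertions separately, reserving the real work for the invariance in (i). Throughout I write $W_t:=\cl\bigcup_{s\ge t}\varphi^s(U)$, so that by Definition~\ref{defn:limitset-a} we have $\omega(U)=\bigcap_{t\ge 0}W_t$, and the family $\{W_t\}_{t\in\T^+}$ is decreasing in $t$, hence an inverse system of closed sets.

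For (i), closedness is immediate: $\omega(U)$ is an intersection of the closed sets $W_t$. Forward invariance $\varphi^r(\omega(U))\subset\omega(U)$ is already recorded in \eqref{theforwinv} and uses only continuity, so the substance is the reverse inclusion, i.e.\ $\varphi^r(\omega(U))=\omega(U)$ for $r>0$. The key observation is that a proper map commutes with the intersection defining $\omega(U)$: since $\varphi$ has compact fibers (Definition~\ref{properdynsys}(i)), Theorem~\ref{inverssysofinvsets} applied to $f=\varphi^r$ gives
\[
\varphi^r(\omega(U))=\varphi^r\Bigl(\bigcap_{t\ge 0}W_t\Bigr)=\bigcap_{t\ge 0}\varphi^r(W_t).
\]
It then remains to identify $\varphi^r(W_t)$. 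Using that $\varphi$ is closed (Definition~\ref{properdynsys}(ii)) I would compute
\[
\varphi^r(W_t)=\varphi^r\Bigl(\cl\bigcup_{s\ge t}\varphi^s(U)\Bigr)=\cl\,\varphi^r\Bigl(\bigcup_{s\ge t}\varphi^s(U)\Bigr)=\cl\bigcup_{s\ge t}\varphi^{r+s}(U)=W_{t+r}.
\]
Intersecting over $t\ge 0$ and using that $\{W_t\}$ is decreasing (so the cofinal family $\{W_{t+r}\}_{t\ge 0}$ has the same intersection) yields $\varphi^r(\omega(U))=\bigcap_{t\ge 0}W_{t+r}=\omega(U)$. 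I expect this step---pushing $\varphi^r$ through the infinite intersection---to be the main obstacle, and it is precisely where properness is indispensable; Remark~\ref{counterofintinv} warns that the interchange fails for general inverse systems of closed sets without the compact-fiber hypothesis.

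For (ii), assume $\varphi^t(U)\subset U$ for all $t\ge\tau$, and prove the two inclusions of $\omega(U)=\Inv(\cl U)$ directly. For $\omega(U)\subset\Inv(\cl U)$: eventual forward invariance gives $\bigcup_{s\ge\tau}\varphi^s(U)\subset U$, hence $W_\tau\subset\cl U$, and therefore $\omega(U)\subset W_\tau\subset\cl U$; since $\omega(U)$ is invariant by part (i) and $\Inv(\cl U)$ is the maximal invariant subset of $\cl U$, this forces $\omega(U)\subset\Inv(\cl U)$. For the reverse inclusion set $S=\Inv(\cl U)$; then $S$ is invariant with $S\subset\cl U$, so for every $t\ge 0$, using $\varphi^t(S)=S$ together with closedness,
\[
S=\varphi^t(S)\subset\varphi^t(\cl U)=\cl\,\varphi^t(U)\subset W_t,
\]
whence $S\subset\bigcap_{t\ge 0}W_t=\omega(U)$. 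Combining the two inclusions gives the claimed equality. I note that the forward inclusion is exactly where the eventual-forward-invariance hypothesis is used (to place $\omega(U)$ inside $\cl U$), whereas the reverse inclusion relies only on $\varphi$ being closed.
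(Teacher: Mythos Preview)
Your proof is correct and follows essentially the same approach as the paper: in (i) you invoke compact fibers via Theorem~\ref{inverssysofinvsets} to push $\varphi^r$ through the intersection and then use closedness to identify $\varphi^r(W_t)=W_{t+r}$, exactly as the paper does; in (ii) you place $\omega(U)\subset\cl U$ via eventual forward invariance and then compare with $\Inv(\cl U)$, which is the paper's argument as well. The only cosmetic difference is that for the inclusion $\Inv(\cl U)\subset\omega(U)$ the paper routes through $\omega(S)\subset\omega(\cl U)=\omega(U)$ rather than your direct $S=\varphi^t(S)\subset\varphi^t(\cl U)\subset W_t$; note incidentally that your step needs only continuity ($\varphi^t(\cl U)\subset\cl\varphi^t(U)$), not closedness, so your closing remark slightly overstates the hypothesis used.
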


\begin{proof}
\noindent{\bf(i)} 
 By definition $\omega(U) = \bigcap_{t\ge 0} U_t$ where
$U_t = \cl \rmGamma^+_t(U)$.
The latter defines a nested family of closed sets, i.e. $U_{t'}\subset U_{t}$ for $t\le t'$.
Since $\varphi^r\colon X\to X$ has compact fibers for all $r\ge 0$,
 Theorem \ref{inverssysofinvsets} yields 
\begin{equation}
    \label{intcomminf}
\varphi^r\Bigl( \bigcap_{t\ge 0} U_t \Bigr)
=\bigcap_{t\ge 0} \varphi^r(U_t),\quad \forall r\ge 0.
\end{equation}
By assumption $\varphi^r$ are closed maps for all $r\ge 0$. Therefore,
analogously to \eqref{theforwinv}, and using \eqref{intcomminf} and the closedness of $\varphi^r$ we have
\[
\begin{aligned}
\varphi^r(\omega(U)) &= \varphi^r\Bigl( \bigcap_{t\ge 0} U_t\Bigr) = 
\bigcap_{t\ge 0}  \varphi^r(U_t) =\bigcap_{t\ge 0}  \varphi^r\Bigl( \cl \bigcup_{s\ge t} \varphi^s(U)\Bigr)\\
&= \bigcap_{t\ge 0} \cl \varphi^r\Bigl( \bigcup_{s\ge t} \varphi^s(U)\Bigr) = \bigcap_{t\ge 0} \cl  \bigcup_{s\ge t} \varphi^{s+r}(U) = \bigcap_{t'\ge r} \cl  \bigcup_{s'\ge t'} \varphi^{s'}(U)\\
&=\omega(U),\quad\forall r\ge 0,
\end{aligned}
\]
which proves the invariance of $\omega(U)$.

\noindent{\bf(ii)}  Note that $\omega(U) = \bigcap_{t\ge 0} \cl \bigcup_{s\ge t} \varphi^s(U) =  \bigcap_{t\ge \tau} \cl \bigcup_{s\ge t} \varphi^s(U)\subset \cl U$.
Let $S=\Inv(\cl U)$. Then,
$\omega(U) \subset S$ and 
\[ 
S =    \bigcap_{t\ge 0} \varphi^t(S)
\subset  \bigcap_{t\ge 0} \cl \varphi^t(S) = \omega(S)\subset \omega(\cl U) =\omega(U),
\]
which proves that $S=\omega(U)$ is the maximal  invariant set in $\cl U$.
\qed
\end{proof}

\begin{example}
    \label{ordertopnoninv}
    Consider the set  $X=\{-2,-1,0,+1,+2\}$ with order topology $\scrT_\le$, where $\le$ is the standard total order on the integers. This is a compact, $T_0$-topology  and the open sets are the up-sets $\{x\mid x\ge x_0\text{~for some~} x_0\in X\}$. 
    Consider the dynamical system given by the continuous map $f(x)= 0$, which has compact fibers but is not closed (not proper). The open set $U=\{0,+1,+2\}$ is an attracting neighborhood and  
$\omega(U)= \{-2,-1,0\}$ and $\Inv(U)=\{0\}$. The omega-limit set $\omega(U)$ is  forward invariant, but \emph{not} invariant, and not contained in $\Int U$. The omega-limit set does clearly not satisfy Proposition \ref{invofomega123}(ii) since it is not proper.
\end{example}

\begin{remark}
\label{speccaseforom}
Some immediate special cases for which $\varphi$ is a continuous and proper dynamical system are for instance: 

\noindent{\bf (i)} $\varphi$ is a continuous and invertible dynamical system. 
Clearly, $\varphi$ is closed since the maps $\varphi^t$ are homeomorphisms and $\varphi^{-t}(y)$ are singleton sets and thus compact;

\noindent{\bf (ii)} $\varphi$ is a continuous  dynamical system on a compact, Hausdorff  space, which implies that $\varphi$ is closed.
    Moreover, since  singleton sets $\{y\}$ are closed, $\varphi^{-t}(y)$ is closed and thus compact;

\noindent{\bf (iii)} $\varphi$ is a continuous and closed dynamical system on a compact $T_1$ space $X$. On a $T_1$ space points are closed. Therefore, by continuity, $\varphi^{-t}(y)$ is closed and thus compact for all $y$ and for all $t\ge 0$. 
The latter, 
together with the assumption that $\varphi$ is closed, implies that $\varphi$ is proper;

\noindent{\bf (iv)} $\varphi$ is a continuous and proper dynamical system on a compact topological space $(X,\scrT)$. In this case omega-limit sets are compact, closed and invaraint and have the additional property that $U\neq \varnothing$ implies $\omega(U)\neq \varnothing$.
\end{remark}

\section{Alpha-limit sets}
\label{alphaApp}
In backward time we have the asymptotic notion of alpha-limit set. For invertible dynamical systems, flows and homeomorphisms, there is symmetry between past and future  given by time-reversal $t\to -t$, which allows one to define an alpha-limit as the omega-limit set of the time-reversed system.
In noninvertible settings, this symmetry is lost, which has implications not only for the definition of alpha-limit set, but also for the basic properties that alpha-limit sets possess.

 \begin{definition}
 \label{defn:limitset-alpha}
Let $U\subset X$ for which $\bigl(\varphi^{-t}(U)\bigr)_{t\in \T^+}$ is a net of sets in $X$.
Then, the limit set
\begin{equation}
\label{eqn:al-char-101}
\alpha(U)=\bigcap_{t\ge 0}\,\cl \bigcup_{s\ge t}\varphi^{-s}(U),
\end{equation}
is called the 
 \emph{alpha-limit set} $\alpha(U)$ of $U$ with respect to $\varphi$.
An alpha-limit set is closed by definition and contained in
$\cl \rmGamma^-(U)$ with $\rmGamma^-(U):=\bigcup_{t\ge 0} \varphi^{-t}(U)$.
 \end{definition}

As for omega-limit sets, we have that $U\subset V$ implies that
$\alpha(U)\subset \alpha(V)$, and thus $\alpha(U\cap V) \subset \alpha(U)\cap \alpha(V)$.
\begin{lemma} 
\label{additivealpha}
Let $U,V\subset X$. Then,
\begin{equation}
    \label{addom11}
    \alpha(U\cup V) = \alpha(U) \cup \alpha(V).
\end{equation}
\end{lemma}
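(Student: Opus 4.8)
The plan is to mirror the proof of Lemma~\ref{additive} for omega-limit sets, exploiting the fact that $\alpha(U)$ is the omega-limit set of the reversed net $\bigl(\varphi^{-t}(U)\bigr)_{t\in\T^+}$, so that the structural features used in the omega case carry over verbatim. First I would dispose of the easy inclusion: since $U,V\subset U\cup V$, the monotonicity property $U\subset V\Rightarrow \alpha(U)\subset\alpha(V)$ recorded just above the statement gives $\alpha(U)\cup\alpha(V)\subset\alpha(U\cup V)$.

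For the reverse inclusion I would use the subnet characterization of alpha-limit points, i.e.\ the analog of Proposition~\ref{lem:om-2}(ii): a point $y$ lies in $\alpha(U)$ precisely when there is a net $(t_\alpha,x_\alpha)$ in $\T^+\times U$ with $t_\alpha\to\infty$ and $\varphi^{-t_\alpha}(x_\alpha)\to y$. This characterization holds by the identical argument, since \eqref{eqn:al-char-101} has the same form as \eqref{eqn:om-char-10} with $\varphi^s$ replaced by $\varphi^{-s}$. Given $y\in\alpha(U\cup V)$, pick such a net with $x_\alpha\in U\cup V$. Since the index set is directed, at least one of the two index subsets $\{\alpha\mid x_\alpha\in U\}$ and $\{\alpha\mid x_\alpha\in V\}$ is cofinal; restricting to a subnet supported on that cofinal set yields a net witnessing $y\in\alpha(U)$ or $y\in\alpha(V)$. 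Hence $\alpha(U\cup V)\subset\alpha(U)\cup\alpha(V)$.

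A purely set-theoretic alternative avoids Proposition~\ref{lem:om-2} altogether: using $\varphi^{-s}(U\cup V)=\varphi^{-s}(U)\cup\varphi^{-s}(V)$ and $\cl(A\cup B)=\cl A\cup\cl B$, one rewrites $\alpha(U\cup V)=\bigcap_{t\ge0}\bigl(A_t\cup B_t\bigr)$ with $A_t=\cl\bigcup_{s\ge t}\varphi^{-s}(U)$ and $B_t=\cl\bigcup_{s\ge t}\varphi^{-s}(V)$, and then invokes the distributive identity $\bigcap_t(A_t\cup B_t)=\bigl(\bigcap_tA_t\bigr)\cup\bigl(\bigcap_tB_t\bigr)$. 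The main point to watch is exactly this last identity: the distributive law of intersection over union fails for general families, so the argument genuinely uses that $\{A_t\}$ and $\{B_t\}$ are nested decreasing over the totally ordered set $\T^+$. Concretely, if $y\notin\bigcap_tA_t$ and $y\notin\bigcap_tB_t$, one chooses $t_1,t_2$ with $y\notin A_{t_1}$, $y\notin B_{t_2}$, and a common upper bound $t_*$, whence $y\notin A_{t_*}$ and $y\notin B_{t_*}$ by nesting, contradicting $y\in A_{t_*}\cup B_{t_*}$. Neither route requires continuity, properness, or compactness, consistent with the generality of the statement.
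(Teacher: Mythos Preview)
Your first route---monotonicity for one inclusion, the subnet characterization for the other, splitting on whether the base point lies in $U$ or $V$---is exactly what the paper does (its proof simply refers back to Lemma~\ref{additive}). One cosmetic point: $\varphi^{-t}$ here denotes the set-valued preimage, so the characterization is more precisely ``$y_\alpha\in\varphi^{-t_\alpha}(U)$ with $t_\alpha\to\infty$ and $y_\alpha\to y$'' rather than ``$\varphi^{-t_\alpha}(x_\alpha)\to y$''; the cofinality split then runs on whether $\varphi^{t_\alpha}(y_\alpha)\in U$ or $\in V$, and goes through unchanged. Your second, purely set-theoretic route is a valid and slightly more elementary alternative: it bypasses Proposition~\ref{lem:om-2} entirely and isolates the single structural fact actually needed, namely that for nested decreasing families over a directed index set one has $\bigcap_t(A_t\cup B_t)=\bigl(\bigcap_t A_t\bigr)\cup\bigl(\bigcap_t B_t\bigr)$.
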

\begin{proof}
By the same arguments as Lemma \ref{additive}.
    \qed
\end{proof}

If we assume continuity of $\varphi$, then for all $r\ge 0$
\[
\begin{aligned}
\varphi^r(\alpha(U)) &=\varphi^r\Bigl( \bigcap_{t\ge 0}\,\cl \bigcup_{s\ge t}\varphi^{-s}(U)\Bigr) \subset \bigcap_{t\ge 0}
\varphi^r\Bigl( \,\cl \bigcup_{s\ge t}\varphi^{-s}(U)\Bigr)\\
&\subset\bigcap_{t\ge 0}
\cl \varphi^r\Bigl( \bigcup_{s\ge t}\varphi^{-s}(U)\Bigr)
= \bigcap_{t\ge 0}
\cl \bigcup_{s\ge t}\varphi^{r}\bigl(\varphi^{-s}(U)\bigr)\subset\bigcap_{t\ge 0}
\cl \bigcup_{s\ge t}\varphi^{r-s}(U)\\
&\subset\bigcap_{t\ge r}
\cl \bigcup_{s\ge t}\varphi^{r-s}(U)
=\alpha(U),
\end{aligned}
\]
 which proves that $\alpha(U)$ is forward invariant.
Define $\rmGamma^-_\tau(U) := \bigcup_{t\ge \tau} \varphi^{-t}(U)$.

\begin{proposition}[Topological properties]
\label{lem:props-al-om14}
Let $\varphi$ be continuous dynamical system on  $X$, and let
 $U\subset X$. Then,
\begin{description}
\item [(i)]   $\rmGamma^-_\tau(U)$  precompact implies $\alpha(U)$ is compact and closed;
\item [(ii)]  $U\neq \varnothing$, $\rmGamma^-_\tau(U)$  precompact, and $\varphi$ surjective implies that $\alpha(U) \neq \varnothing$;
\item[(iii)] $U$ backward invariant implies $\alpha(U) \subset\cl U$, and $U$  closed and backward invariant implies  $\alpha(U)$ is closed and forward-backward invariant with  $\alpha(U) = \Inv^+\!(U)$;
\item [(iv)] $U$  closed and backward invariant, and
$\varphi$ surjective, implies that $\alpha(U)$ is  closed, strongly invariant, and $\alpha(U) = \Inv(U)$;
\item [(v)] $\rmGamma^-(U)$ closed and $\varphi$ surjective implies $\alpha(U)$ closed, strongly invariant,
and $\alpha(U) = \Inv(\rmGamma^-(U))$;
\item [(vi)] $U$ forward invariant implies $\cl U \subset \alpha(U)$. In particular, when $U$
is forward-backward invariant, then $\cl U = \alpha(U)$;
\item [(vii)] $\alpha\bigl(\alpha(U)\bigr) \supseteq \alpha(U)$;
\item[(viii)] if $\gamma_x^+\subset U$, then $x \in \alpha(U)$. In particular $\Inv^+\!(U) \subset \alpha(U)$, and $\Inv^+\!(U) = \alpha(U)$ whenever $\alpha(U) \subset  U$.
\end{description}
\end{proposition}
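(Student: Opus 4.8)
The plan is to treat Proposition~\ref{lem:props-al-om14} as the backward-time counterpart of Proposition~\ref{lem:props-al-om4}, while carefully tracking the asymmetry caused by noninvertibility: the preimages $\varphi^{-s}(U)$ may be empty, so several parts genuinely require surjectivity. The engine of the whole proof is part~(viii), which I would establish first and directly from the defining formula~\eqref{eqn:al-char-101}. The key observation is the elementary equivalence $x\in\varphi^{-s}(U)\iff\varphi^s(x)\in U$. Hence if $\gamma_x^+\subset U$, then $\varphi^s(x)\in U$ for all $s\ge0$, so $x\in\varphi^{-s}(U)$ for every $s$, and in particular $x\in\varphi^{-t}(U)\subset\cl\bigcup_{s\ge t}\varphi^{-s}(U)$ for every $t\ge0$, i.e.\ $x\in\alpha(U)$. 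Applying this to points of the maximal forward invariant set gives $\Inv^+(U)\subset\alpha(U)$; and when $\alpha(U)\subset U$, the forward invariance of $\alpha(U)$ (established just before the proposition) makes $\alpha(U)$ a forward invariant subset of $U$, whence $\alpha(U)=\Inv^+(U)$.

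Next I would dispatch the topological statements. For (i), since $\cl\bigcup_{s\ge t}\varphi^{-s}(U)\subset\cl\rmGamma^-_\tau(U)$ for $t\ge\tau$, the set $\alpha(U)$ is a closed subset of the compact set $\cl\rmGamma^-_\tau(U)$, hence compact. For (ii), surjectivity of each $\varphi^s$ forces $\varphi^{-s}(U)\neq\varnothing$ whenever $U\neq\varnothing$, so the nested family $\bigl\{\cl\bigcup_{s\ge t}\varphi^{-s}(U)\bigr\}_{t\ge\tau}$ consists of nonempty compact sets, and the Cantor intersection argument of Lemma~\ref{CIT} yields $\alpha(U)\neq\varnothing$. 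This mirrors Proposition~\ref{lem:props-al-om4}(i)--(ii), the only new ingredient being the surjectivity needed to keep the preimages nonempty.

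The invariance statements (iii)--(vii) all reduce to (viii) together with the forward invariance of $\alpha(U)$. For (iii), backward invariance of $U$ gives $\bigcup_{s\ge t}\varphi^{-s}(U)\subset U$, so $\alpha(U)\subset\cl U$ (and $\subset U$ when $U$ is closed); combined with the first paragraph this gives $\alpha(U)=\Inv^+(U)$. To see $\alpha(U)$ is forward--backward invariant I would prove backward invariance directly: if $\varphi^t(x)\in\alpha(U)=\Inv^+(U)$, then $\gamma_{\varphi^t(x)}^+\subset U$ covers the tail $s\ge t$, while for $0\le s\le t$ backward invariance of $U$ applied to $\varphi^{t-s}(\varphi^s(x))=\varphi^t(x)\in U$ gives $\varphi^s(x)\in U$; thus $\gamma_x^+\subset U$ and $x\in\alpha(U)$ by~(viii). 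Part~(vi) follows because a forward invariant $U$ satisfies $\gamma_x^+\subset U$ for every $x\in U$, so $U\subset\alpha(U)$ and $\cl U\subset\alpha(U)$ by closedness of $\alpha(U)$; the forward--backward case then combines this with $\alpha(U)\subset\cl U$ from~(iii). Part~(vii) is immediate upon applying~(vi) to the forward invariant closed set $\alpha(U)$. For (iv) I would upgrade the forward--backward invariance of~(iii) to full invariance using surjectivity: for $y\in\alpha(U)$ choose $z$ with $\varphi^t(z)=y$, note $z\in\varphi^{-t}(\alpha(U))=\alpha(U)$, so $\alpha(U)\subset\varphi^t(\alpha(U))$, giving invariance and $\alpha(U)=\Inv(U)$. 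Finally (v) follows from (iv) applied to $V:=\rmGamma^-(U)$, which is backward invariant and satisfies $\alpha(V)=\alpha(U)$ via the identity $\varphi^{-s}\bigl(\rmGamma^-(U)\bigr)=\rmGamma^-_s(U)$.

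The main obstacle I anticipate is not any single computation but correctly isolating where noninvertibility bites. Establishing the forward--backward invariance in~(iii) needs the backward invariance of $U$ precisely to control the \emph{finite} initial segment $0\le s\le t$ of the forward orbit, and the passage from forward--backward invariance to genuine (strong) invariance in~(iv)--(v) truly requires surjectivity. Keeping these hypotheses in their exact places---and resisting the temptation to import the stronger symmetric statements valid for omega-limit sets---is the delicate part of the argument.
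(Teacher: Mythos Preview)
Your argument is correct, but you organize it differently from the paper. The paper proceeds in the stated order (i)--(viii): for (iii) it first observes that when $U$ is closed and backward invariant the closures disappear, i.e.\ $\alpha(U)=\bigcap_{t\ge 0}\varphi^{-t}(U)$, and then backward invariance of $\alpha(U)$ is a one--line computation $\varphi^{-r}\bigl(\bigcap_t\varphi^{-t}(U)\bigr)=\bigcap_t\varphi^{-(r+t)}(U)\subset\alpha(U)$; it then derives (viii) from (vi) by applying (vi) to the forward invariant set $\gamma_x^+$. You invert this: you prove (viii) first from the elementary equivalence $x\in\varphi^{-s}(U)\iff\varphi^s(x)\in U$ and use it as the engine, obtaining (vi) from (viii), and proving backward invariance in (iii) by an orbit argument (splitting $0\le s\le t$ and $s\ge t$) rather than by the preimage formula. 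Both routes are short; yours has the merit of a single unifying lemma, while the paper's exploits the exact intersection formula $\alpha(U)=\bigcap_t\varphi^{-t}(U)$, which is worth recording in its own right and makes (iii) and (iv) slightly more transparent.
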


\begin{proof}
\noindent{\bf(i)} 
By definition $\alpha(U)$ is closed and contained in
$\cl \rmGamma^-_\tau(U)$. Since the latter is compact also $\alpha(U)$ is compact.

\noindent{\bf(ii)}
If, in addition $U\neq \varnothing$ and $\varphi$ is surjective (i.e.\ $\varphi^t$ is surjective for all $t\ge 0$), then $\rmGamma^-_t(U) \neq \varnothing$. The alpha-limit set is the intersection of nested nonempty, closed subsets $\cl\rmGamma^-_t(U)$, contained in the compact set $\cl\rmGamma^-_\tau(U)$. By  Cantor's intersection theorem the intersection is nonempty.

\noindent{\bf(iii)}
Backward invariance of $U$ implies that 
$\alpha(U) = \bigcap_{t\ge 0} \cl\varphi^{-t}(U)\subset \cl U$.
If in addition $U$ is closed, then, by continuity, $\varphi^{-t}(U)$ is closed for all $t\ge 0$ and thus $\alpha(U)  = \bigcap_{t\ge 0} \varphi^{-t}(U)$.
For $r\ge 0$ we have
\[
\begin{aligned}
\varphi^{-r}(\alpha(U)) &= \varphi^{-r}\Bigl( \bigcap_{t\ge 0} \varphi^{-t}(U)\Bigr) 
= \bigcap_{t\ge 0} \varphi^{-r}\bigl(\varphi^{-t}(U)\bigr)\\
&= \bigcap_{t\ge 0} \varphi^{-t}\bigl(\varphi^{-r}(U)\bigr)
 \subset \bigcap_{t\ge 0} \varphi^{-t}(U) =\alpha(U),
\end{aligned}
\]
since $\varphi^{-r}(U) \subset U$, $r\ge 0$. 
This establishes the forward-backward invariance of $\alpha(U)$.
Since $\alpha(U)$ is forward invariant, we have that $\alpha(U)\subset \Inv^+(U)=:S$. This implies
$S\subset \varphi^{-t}(S)$ for all $t\ge 0,$ and thus
$S\subset \cl\rmGamma_t^-(S)\subset \cl \rmGamma_t^-(U)$.
Consequently, $S\subset \alpha(S)=\bigcap_{t\ge 0}\cl\rmGamma_t^-(S) \subset \bigcap_{t\ge 0}\cl \rmGamma_t^-(U)=\alpha(U)$, which proves that
 $S=\alpha(U)$.

\noindent{\bf(iv)}
By (iii), $\alpha(U)$ is closed and forward-backward invariant and contained in $U$. 
Since $\varphi$ is  surjective, forward-backward invariance implies strong invariance, cf.\ \cite[Lem.\ 2.9]{LSoA1}, and thus $\alpha(U)\subset \Inv(U)$.
Let $S = \Inv(U)$, then $S\subset \varphi^{-t}(S)$ for all $t\ge 0$ and, as before,
\[
S \subset  \bigcap_{t\ge 0}\cl\rmGamma_t^-(S) =\alpha(S)
\subset \alpha(U),
\]
which shows that $\alpha(U)$ is the maximal invariant set in $U$.

\noindent{\bf(v)}
Note that $\alpha(U) = \alpha(\rmGamma^-(U))$. Since $\rmGamma^-(U)$ is backward invariant and  closed, we can apply
(iv) to $\alpha(\rmGamma^-(U))$, which establishes (v).

\noindent{\bf(vi)}
 From forward invariance
it follows that $U\subset \varphi^{-t}(U)$ for all $t\ge 0$.
Therefore, $\cl U\subset \cl \rmGamma_t^-(U)$ and thus $\cl U \subset \bigcap_{t\ge 0}\cl \rmGamma_t^-(U)=\alpha(U)$.
If $U$ is forward-backward invariant, then  (iii) implies that 
 $\cl U \subset \alpha(U) \subset \cl U$, which proves  (vi).

 \noindent{\bf(vii)}
 By definition $\alpha(U)$ is closed and forward invariant. By (vi) this yields 
 $\alpha(U)\subset \alpha(\alpha(U))$.

 \noindent{\bf(viii)}
 Since $x\in \cl \gamma_x^+$ and $\gamma_x^+\subset U$ is forward invariant (vi) implies 
 $x\in \cl\gamma_x^+ \subset \alpha(\gamma_x^+)\subset\alpha(U)$. In particular, $\Inv^+(U)\subset \alpha(U)$. If $\alpha(U)\subset U$, then the forward invariance of $\alpha(U)$ implies that $\alpha(U)\subset \Inv^+(U)$ and thus $\Inv^+(U) = \alpha(U)$.
 \qed
\end{proof}

For backward orbits it is also important to consider limit sets restricted to a single complete orbit. This leads to the notion of {orbital alpha-limit set}.
Given a complete orbit $\gamma_x$ the \emph{orbital alpha-limit set} is defined as 
\begin{equation}
    \label{orbitalalpha1}
    \alphaOg := \bigcap_{t\ge 0} \cl \bigcup_{s\ge t} \gamma_x(-s).
\end{equation}
The latter coincides with $\alpha(x)$ if $\varphi$ is injective.

\begin{proposition}
    \label{orbitalalpha2}
Let $\varphi$ be continuous and proper dynamical system on a  topological space $(X,\scrT)$ and $\gamma_x$ be a complete orbit.
Then, 
\begin{description}
    \item[(i)] $\alphaOg$ is a closed, invariant set;
    \item[(ii)]  $\gamma_x^-$  precompact implies that $\alphaOg\neq \varnothing$.
\end{description}
\end{proposition}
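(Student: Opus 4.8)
The plan is to mirror the proofs of Proposition~\ref{invofomega123}(i)--(ii), carried out along the single complete orbit $\gamma_x$ instead of for a set $U$. Write $\alphaOg=\bigcap_{t\ge 0}U_t$, where $U_t:=\cl\bigcup_{s\ge t}\gamma_x(-s)$. Each $U_t$ is closed by definition, and since $\bigcup_{s\ge t}\gamma_x(-s)\supseteq\bigcup_{s\ge t'}\gamma_x(-s)$ for $t\le t'$, the family $\{U_t\}_{t\ge 0}$ is a nested (decreasing) inverse system of closed sets. The closedness asserted in part~(i) is then immediate, as $\alphaOg$ is an intersection of closed sets.

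For the invariance in part~(i), I would fix $r\ge 0$ and establish $\varphi^r(\alphaOg)=\alphaOg$ in a single step. Because $\varphi$ is proper, the maps $\varphi^r$ have compact fibers, so Theorem~\ref{inverssysofinvsets} applies to the inverse system $\{U_t\}$ and gives $\varphi^r\bigl(\bigcap_{t\ge 0}U_t\bigr)=\bigcap_{t\ge 0}\varphi^r(U_t)$. Properness also forces $\varphi^r$ to be closed, so $\varphi^r(U_t)=\cl\bigcup_{s\ge t}\varphi^r\bigl(\gamma_x(-s)\bigr)$. Using the complete-orbit relation $\varphi^r(\gamma_x(-s))=\gamma_x(r-s)$ and the substitution $u=s-r$, I obtain $\varphi^r(U_t)=\cl\bigcup_{u\ge t-r}\gamma_x(-u)=U_{t-r}$ for all $t\ge r$. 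Since $\{U_t\}$ is decreasing, the intersection over $t\ge 0$ equals its tail over $t\ge r$, whence $\bigcap_{t\ge 0}\varphi^r(U_t)=\bigcap_{t\ge r}U_{t-r}=\bigcap_{t'\ge 0}U_{t'}=\alphaOg$. This yields $\varphi^r(\alphaOg)=\alphaOg$ for every $r\ge 0$, i.e.\ invariance.

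For part~(ii), precompactness of $\gamma_x^-$ means that $U_0=\cl\gamma_x^-$ is compact. Each $U_t$ is nonempty (it contains $\gamma_x(-t)$), closed, and contained in the compact space $U_0$, and the family is nested. Applying the Cantor intersection theorem (Lemma~\ref{CIT}, with the ambient compact space taken to be $U_0$) to the inverse system $\{U_t\}_{t\ge 0}$ then gives $\alphaOg=\bigcap_{t\ge 0}U_t\neq\varnothing$.

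I expect the invariance in part~(i) to be the main obstacle, since it is the only place where both halves of properness are genuinely used: compact fibers to interchange $\varphi^r$ with the infinite intersection via Theorem~\ref{inverssysofinvsets}, and closedness of $\varphi^r$ to push the closure operation inside the image. The remaining care is purely bookkeeping—tracking the index shift $\varphi^r(\gamma_x(-s))=\gamma_x(r-s)$ and exploiting the nestedness of $\{U_t\}$ to pass from the intersection over $t\ge 0$ to the shifted tail. Closedness and nonemptiness, by contrast, follow directly from the definition and from a standard compactness argument, exactly as in the omega-limit case.
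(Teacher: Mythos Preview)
Your proposal is correct and follows essentially the same approach as the paper: you introduce the nested closed sets $U_t=\cl\bigcup_{s\ge t}\gamma_x(-s)$, use compact fibers via Theorem~\ref{inverssysofinvsets} to commute $\varphi^r$ with the intersection, use closedness of $\varphi^r$ to push the closure inside, and then the orbit identity $\varphi^r(\gamma_x(-s))=\gamma_x(r-s)$ together with an index shift to recover $\alphaOg$; part~(ii) is the same Cantor argument. The only cosmetic difference is that you phrase the shift as $\varphi^r(U_t)=U_{t-r}$ for $t\ge r$ and then pass to the tail, whereas the paper carries the indices through in one display.
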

\begin{proof}
\noindent{\bf(i)} By definition $\alphaOg=\bigcap_{t\ge 0} \cl\gamma_x^{-t}$, where $\gamma_x^{-t} = \bigcup_{s\ge t} \gamma_x(-s)$.
The sets $\cl \gamma_x^{-t}$  define a nested family of closed sets, i.e. $\cl\gamma_x^{-t'}\subset \cl\gamma_x^{-t}$ for $t\le t'$.
Since $\varphi^r\colon X\to X$ has compact fibers for all $r\ge 0$,
 Theorem \ref{inverssysofinvsets} yields 
\begin{equation}
    \label{intcomminf12}
\varphi^r\Bigl( \bigcap_{t\ge 0} \cl\gamma_x^{-t} \Bigr)
=\bigcap_{t\ge 0} \varphi^r(\cl \gamma_x^{-t}),\quad \forall r\ge 0.
\end{equation}
By assumption $\varphi^r$ are closed maps for all $r\ge 0$. Therefore, using that the identity $\varphi^r(\gamma_x(-s)) = \gamma_x(r-s)$ for all $s\ge 0$,
\[
\begin{aligned}
\varphi^r(\alphaOg) &= \varphi^r\Bigl( \bigcap_{t\ge 0} \cl\gamma_x^{-t} \Bigr)
=\bigcap_{t\ge 0} \varphi^r(\cl \gamma_x^{-t}) =\bigcap_{t\ge 0}  \varphi^r\Bigl( \cl \bigcup_{s\ge t} \gamma_x(-s)\Bigr)\\
&= \bigcap_{t\ge 0} \cl \varphi^r\Bigl( \bigcup_{s\ge t} \gamma_x(-s)\Bigr) = \bigcap_{t\ge r} \cl  \bigcup_{s\ge t} \gamma_x(r-s) = \bigcap_{t'\ge 0} \cl  \bigcup_{s'\ge t'} \gamma_x(-s')\\
&=\alphaOg,\quad \forall r\ge 0,
\end{aligned}
\]
which proves the invariance of $\alphaOg$.

\noindent{\bf(ii)} As before by the Cantor's intersection theorem 
$\alphaOg$ is nonempty, since $\cl\gamma_x^{-t}\subset \cl\gamma_x^-$ yields a nested family of nonempty, compact, closed subsets.
    \qed
\end{proof}
\end{document}